%&LaTeX
% Buchweitz-Faber-Ingalls
% 26 Oct 2014 in Toronto

% May 2015: IML
%%% Ragnar's macro's - latex file from Ragnar_MagicSquarReflectionsandRotations.tex

%July 2015: Oberwolfach

%March 2017 Toronto version

%JULY 2017 in Oberwolfach: new version
 
%master file: arXiv SEPT 2017  
 
%arXiv version: FEB 2018

%REVISION MARCH 2019
%July 2019, Toronto

%THIS VERSION: FINAL  Feb 2020

\documentclass[11pt]{amsart}
\usepackage[english]{babel}
\usepackage[usenames,dvipsnames]{color}  
\usepackage{array,mathpazo,amsmath,xcolor}
\usepackage{amscd}
\usepackage{stmaryrd, mathtools, enumerate, amssymb}
\usepackage[colorlinks=true,linkcolor=Cerulean, citecolor=LimeGreen, pagebackref]{hyperref} 
\usepackage{stackrel}

\usepackage{ytableau} %for Young tableaux

%%%%next three packages for drawing curves
\usepackage{curves} %%%% Circunferencias grandes y curvas de bezier
\usepackage{epic} %%%% Circunferencias grandes y curvas de bezier
\usepackage{eepic}  %%%%% Puedes necesitarlo
%%%%%%%%%%%%%%

\usepackage[all]{xy}
\SelectTips{eu}{} 
\xyoption{curve}

%%%%%%%%%  tikz commands
\usepackage{tikz}
\usetikzlibrary{arrows,decorations.pathmorphing,decorations.pathreplacing,positioning,shapes.geometric,shapes.misc,decorations.markings,decorations.fractals,calc,patterns,matrix}

\tikzset{>=stealth',
     cvertex/.style={circle,draw=black,inner sep=1pt,outer sep=3pt},
     vertex/.style={circle,fill=black,inner sep=1pt,outer sep=3pt},
     star/.style={circle,fill=yellow,inner sep=0.75pt,outer sep=0.75pt},
     tvertex/.style={inner sep=1pt,font=\criptsize},
     gap/.style={inner sep=0.5pt,fill=white}}

\newcommand{\arrowrl}[3][20]
{
\hspace{-5pt}
\begin{tikzpicture}
\node (A) at (0,0) {};
\node (B) at (1,0) {};
\draw[->] ($(A)+(0,0.2)$) -- node [above] {$\scriptstyle f^*$} ($(B)+(0,0.2)$);
\draw [->] ($(B)+(0,0.2)$) -- node [below] {$\scriptstyle f_*$} ($(A)+(0,0.2)$);
\end{tikzpicture}
\hspace{-5pt}
}
\newcommand{\adjj}[2][20]{\arrowrl}

%%For brackets
%% http://tex.stackexchange.com/questions/55068/is-there-a-tikz-equivalent-to-the-pstricks-ncbar-command
\tikzset{
    ncbar angle/.initial=90,
    ncbar/.style={
        to path=(\tikztostart)
        -- ($(\tikztostart)!#1!\pgfkeysvalueof{/tikz/ncbar angle}:(\tikztotarget)$)
        -- ($(\tikztotarget)!($(\tikztostart)!#1!\pgfkeysvalueof{/tikz/ncbar angle}:(\tikztotarget)$)!\pgfkeysvalueof{/tikz/ncbar angle}:(\tikztostart)$)
        -- (\tikztotarget)
    },
    ncbar/.default=0.5cm,
}

\tikzset{square left brace/.style={ncbar=0.5cm}}
\tikzset{square right brace/.style={ncbar=-0.5cm}}

\tikzset{round left paren/.style={ncbar=0.5cm,out=120,in=-120}}
\tikzset{round right paren/.style={ncbar=0.5cm,out=60,in=-60}}

%%%%%%%%%%%%%%%%%

%%%% Decrease White Space %%%%%%%%%
\addtolength{\textwidth}{2.4cm}
\addtolength{\textheight}{2cm}
\addtolength{\topmargin}{-2cm}
\topmargin -1.3cm
\addtolength{\evensidemargin}{-1.2cm}
\addtolength{\oddsidemargin}{-1.2cm}
\setlength{\parindent}{0cm}
\addtolength{\parskip}{0.15cm}

%%%%%%%%%%%%%%%%%%%%%%%%%%%%%%%%%%%%%%%%%%%%%%%%%%%%%%%
%%%  To view the paper without queries, %%%%%%%%%%%%%%%
%%%  comment the following 6 lines out  %%%%%%%%%%%%%%%
%%%%%%%%%%%%%%%%%%%%%%%%%%%%%%%%%%%%%%%%%%%%%%%%%%%%%%%
% \hoffset-33mm
% \marginparwidth70mm
% \evensidemargin90mm
% \newcommand{\query}[1]%
% {\mbox{}\marginpar{\raggedright%
% \hspace{0pt}{\rm\em$\bigstar$#1}}}%
%%%%%%%%%%%%%%%%%%%%%%%%%%%%%%%%%%%%%%%%%%%%%%%%%%%%%%%
%%%  and uncomment the next line %%%%%%%%%%%%%%%%%%%%%%
%%%%%%%%%%%%%%%%%%%%%%%%%%%%%%%%%%%%%%%%%%%%%%%%%%%%%%%
%\newcommand{\query}[1]{}%%%%%%%%%%%%%%%%%%%%%
%%%%%%%%%%%%%%%%%%%%%%%%%%%%%%%%%%%%%%%%%%%%%%%%%%%%%%%
%

%\diagramstyle[noPostScript]

%%%%%%%%% General Stuff %%%%%%%%%%%%%%

\arraycolsep2pt

%%%%%%%%% Arrows %%%%%%%%%%%%%%
%\newarrow{Equal}{}{=}{}{=}{}

\def\lto{{\longrightarrow}}
\def\into{{\hookrightarrow}}
\def\xto{\xrightarrow}
\def\onto{\twoheadrightarrow}
\newcommand{\qurep}[2]{\operatorname{\rightleftarrows}^{#1}_{#2}}  %for modules over the Path algebra of the quiver for B
%%%%%%%%% Fraktur Symbols %%%%%%%%%%%%%%

\newcommand{\fc}{{\mathfrak c}}

\newcommand{\fm}{{\mathfrak m}}

\newcommand{\fp}{{\mathfrak p}}

%%%%%%%%% Script Symbols %%%%%%%%%%%%%%

\newcommand{\cala}{{\mathcal A}}

\newcommand{\cali}{{\mathcal I}}

%%%%%%%%% Sheaves %%%%%%%%%%%%%%%%%%

    %%% "s" stands for _{*}

%%%%%%%%% Blackboardbold Symbols %%%%%%%%%%%%%%

\newcommand{\CC}{{\mathbb C}}

\newcommand{\KK}{{\mathbb K}}

\newcommand{\MM}{{\mathbb M}}
\newcommand{\NN}{{\mathbb N}}

\newcommand{\RR}{{\mathbb R}}

%%%%%%%%% Bold Symbols %%%%%%%%%%%%%%

\newcommand{\boldf}{{\mathbf f}}

\newcommand{\bx}{{\mathbf x}}

\newcommand{\bdot}{\bullet}

%%%%%%%%% Sans Serif Symbols %%%%%%%%%%%%%%

%%%%%%%%% Greek %%%%%%%%%%%%%%

\newcommand{\vp}{\varphi}

%%%%%%%%% Math Symbols %%%%%%%%%%%%%%

\DeclareMathOperator{\adj}{adj}
\DeclareMathOperator{\ann}{ann}
\DeclareMathOperator{\Ann}{Ann}

\DeclareMathOperator{\Aut}{Aut}
\DeclareMathOperator{\car}{char}

\DeclareMathOperator{\coker}{coker}
\DeclareMathOperator{\cok}{coker}
\DeclareMathOperator{\Coker}{Coker}

\DeclareMathOperator{\End}{End}

\DeclareMathOperator{\Ext}{Ext}

\DeclareMathOperator{\ev}{ev}

\DeclareMathOperator{\GL}{GL}
\DeclareMathOperator{\gl}{gldim}
\DeclareMathOperator{\gldim}{gldim}
\DeclareMathOperator{\gr}{gr}

\DeclareMathOperator{\Hom}{Hom}
\DeclareMathOperator{\id}{id}
\DeclareMathOperator{\Imm}{Im}

\DeclareMathOperator{\jac}{jac}
\DeclareMathOperator{\Ker}{Ker}

\DeclareMathOperator{\Maps}{\mathsf{Maps}}

\DeclareMathOperator{\rank}{rank}
\DeclareMathOperator{\red}{red}

\DeclareMathOperator{\Sing}{Sing}
\DeclareMathOperator{\SL}{SL}

\DeclareMathOperator{\Spec}{Spec}
\DeclareMathOperator{\supp}{ supp}
\DeclareMathOperator{\Sym}{{\mathbb S}ym}

\newcommand{{\sbullet}}{{\scriptstyle\bullet}}

\newcommand{\diag}{\mathsf {diag}}
\newcommand{\st}{\,\mid\,}

\newcommand{\trv}{\mathrm{trv}}

%%%%%%%%%%%% Categories %%%%%%%%%%%%

\DeclareMathOperator{\add}{\mathbf{add}}

\DeclareMathOperator{\Mod}{\ensuremath{ \mathbf{Mod}}}

\DeclareMathOperator{\fmod}{\ensuremath{ \mathbf{mod}}}

\DeclareMathOperator{\CM}{\ensuremath{ \mathbf{CM}}}

\def\Abar{\overline A}

%%%%%%%%% Declarations %%%%%%%%%%%%%%%%%

\theoremstyle{definition}
\newtheorem{defn}{Definition}[section]

\newtheorem{rem}[defn]{Remark}
\newtheorem{remark}[defn]{Remark}
\newtheorem{rems}[defn]{Remarks}

\newtheorem{question}[defn]{Question}
\newtheorem{sit}[defn]{}
\newtheorem{example}[defn]{Example}

\newtheorem{exam}[defn]{Example}

\theoremstyle{plain}

\newtheorem{prop}[defn]{Proposition}
\newtheorem{proposition}[defn]{Proposition}
\newtheorem{theorem}[defn]{Theorem}
\newtheorem{lem}[defn]{Lemma}
\newtheorem{lemma}[defn]{Lemma}
\newtheorem{cor}[defn]{Corollary}
\newtheorem{corollary}[defn]{Corollary}

%\theoremstyle{remark}

%fuer einleitung
\newtheorem*{thma}{Theorem A}
\newtheorem*{thmb}{Theorem B}
\newtheorem*{thmc}{Theorem C}
\newtheorem*{thmd}{Theorem D}

\def\bexa{\begin{exam}}
\def\eexa{\end{exam}}

\def\bpro{\begin{prop}}
\def\epro{\end{prop}}

\def\bcor{\begin{cor}}
\def\ecor{\end{cor}}

\def\bthm{\begin{theorem}}
\def\ethm{\end{theorem}}

\def\bdfn{\begin{eefn}}
\def\edfn{\end{eefn}}

\def\brem{\begin{rem}}
\def\erem{\end{rem}}

\def\brems{\begin{rems}}
\def\erems{\end{rems}}

\def\bsit{\begin{sit}}
\def\esit{\end{sit}}

\def\blem{\begin{lem}}
\def\elem{\end{lem}}

\def\bdi{\pdfsyncstop\begin{eiagram}}
\def\edi{\end{eiagram}\pdfsyncstart}

\def\ba{\begin{array}}
\def\ea{\end{array}}

\def\bnum{\begin{enumerate}}
\def\enum{\end{enumerate}}

\def\be{\begin{equation}}
\def\ee{\end{equation}}

\def\bproof{\begin{proof}}
\def\eproof{\end{proof}}

%%%NEW-Eleonore 05/23
%%%%comments 

 %%% NEW-Ragnar 19 Nov 2015
 %%% NEW-Colin 15 Aug 2017

%%%%%%%% Here it starts finally! %%%%%%%%

\begin{document}
\title[McKay for reflections]
{A McKay correspondence for reflection groups}

\author[Ragnar-Olaf Buchweitz]{Ragnar-Olaf Buchweitz$^\dagger$}
\address{Dept.\ of Computer and Math\-ematical Sciences,
University  of Tor\-onto at Scarborough, 
1265 Military Trail, 
Toronto, ON M1C 1A4,
Canada}
%\email{ragnar@utsc.utoronto.ca}

\author{Eleonore Faber}
%\address{Department of Mathematics, University of Michigan, Ann Arbor MI, 48109, USA }
\address{School of Mathematics, University of Leeds, Leeds LS2 9JT, UK}
\email{e.m.faber@leeds.ac.uk}

\author{Colin Ingalls}
\address{
School of Mathematics and Statistics,
Carleton University, 
Ottawa, ON K1S 5B6,
Canada}
\email{cingalls@math.carleton.ca}

\thanks{
R.-O. \!B.~was partially supported by an NSERC Discovery grant, E.F.~was partially supported by an Oberwolfach Leibniz Fellowship and by a Marie Sk{\l}odowska-Curie fellowship under grant agreement No 789580, C.I.~was partially supported by an NSERC Discovery grant.
} 
\thanks{${}^\dagger$The first author passed away on November 11, 2017.}
\subjclass[2010]{14E16, 13C14, 14E15, 14A22 }  
\keywords{Reflection groups, hyperplane arrangements, maximal Cohen--Macaulay modules,
matrix factorizations, noncommutative desingularization}
\date{\today}

\dedicatory{For Ragnar}

\begin{abstract} 
We construct a noncommutative desingularization of the discriminant of a finite reflection group $G$ as a quotient of the skew group ring $A=S*G$. If $G$ is generated by order two reflections, then this quotient identifies with the endomorphism ring of the reflection arrangement $\mathcal{A}(G)$ viewed as a module over the coordinate ring $S^G/(\Delta)$ of the discriminant of $G$. This yields, in particular, a correspondence between the nontrivial irreducible representations of $G$ to certain maximal Cohen--Macaulay modules over the coordinate ring $S^G/(\Delta)$. These maximal Cohen--Macaulay modules are precisely the nonisomorphic direct summands of the coordinate ring of the reflection arrangement $\cala (G)$ viewed as a module over $S^G/(\Delta)$. We identify some of the corresponding matrix factorizations, namely the so-called logarithmic (co-)residues of the discriminant.  \end{abstract}
\maketitle

%%%%%%%%%%%%
\section{Introduction}
%%%%%%%%%%%

The classical McKay correspondence relates representations of a finite subgroup $G \leqslant \SL(2, \CC)$ to exceptional curves on the minimal resolution of singularities of the Kleinian singularity $\CC^2/G$.  By a theorem of Maurice Auslander \cite{Auslander86}, this correspondence can be extended to maximal Cohen--Macaulay (=CM)-modules over the invariant ring of the $G$-action. In particular, Auslander's version of the correspondence holds more generally for \emph{small} finite subgroups $G \leqslant \GL(n,\CC)$.  We study the case where $G$ is a pseudo-reflection group, that is, a group that is \emph{generated} by pseudo-reflections. \\

To this end, let $G \leqslant \GL(n,\CC)$ be a finite group acting on $\CC^n$. By the theorem of Chevalley--Shephard--Todd the quotient $\CC^n/G$ is smooth if and only if $G$ is a pseudo-reflection group, that is, it is generated by pseudo-reflections. Thus, if $G$ is a pseudo-reflection group, at first sight there are no singularities to resolve and it is impossible to ``see'' the irreducible representations as CM-modules over the invariant ring $R$ of the group action: $R$ is a regular ring and it is well-known that in this case all CM-modules are isomorphic to some $R^n$! However, the key idea of this work is to consider the irregular orbits of the group action, on $\CC^n$ this is the \emph{reflection arrangement} $\cala(G)$ (the set of mirrors of $G$) and in the quotient $\CC^n/G$ this is the projection of $\cala(G)$, the so-called \emph{discriminant} of $G$. \\
The group $G \leqslant \GL(n,\CC)$ also acts on $S:=\Sym_\CC(\CC^n)$, then $\CC^n=\Spec(S)$, the quotient $\CC^n/G=\Spec(R)$, where $R:=S^G$ is the invariant ring. If $G$ is a pseudo-reflection group, then $R$ is itself isomorphic to a polynomial ring, and $\cala(G)$ is defined by the \emph{Jacobian} $J \in S$, a (not necessarily reduced) product of linear forms in $S$. The discriminant is given by a polynomial $\Delta \in R$ and its coordinate ring is $R/(\Delta)$. \\

Let us follow this train of thought further: Auslander's theorem states that for a small subgroup $G \leqslant \GL(n,\CC)$ acting on the polynomial ring $S$ the twisted group ring $A=S*G$ is isomorphic to the endomorphism ring $\End_R(S)$, where $R=S^G$. In particular, $\gl A = \dim R =n$, $A$ is a CM-module over $R$ and the nonisomorphic $R$-direct summands of $S$ correspond to the indecomposable projectives of $A$ and consequently to the irreducible representations of $G$, as these correspond to the simple modules over the group ring $\CC G$. For $G$ a pseudo-reflection group, the twisted group ring $A$ still has global dimension $n$ and is a CM-module over the invariant ring $R$. Following our idea, we would like to write $A$ as endomorphism ring over the discriminant, whose coordinate ring is $R/(\Delta)$, but an easy computation shows that the centre of $A$ is in some sense too large: $Z(A)=R$. In order to remedy this, we will consider the quotient $\Abar = A/AeA$, where $e=\frac{1}{|G|}\sum_{g \in G }g \in A$ is the idempotent for the trivial representation. This quotient has nice properties: 

\begin{thma}[=Thm.~\ref{thm:Abar}, Cor.~\ref{Cor:gdimA}, and Prop.~\ref{JisAnnBarA}]
Let $G \leqslant \GL(n,\CC)$ be a finite group (more generally: $G \leqslant \GL(n,K)$, where $K$ is an algebraically closed field such that $|G|$ is invertible in $K$) and assume that $G$ is generated by pseudo-reflections. Denote $A=S*G$ the twisted group ring and set $\Abar=A/AeA$.  Then $\Abar$ is a CM-module over $S/(J)$, the coordinate ring of the reflection arrangement, as well as over $R/(\Delta)$. Moreover, $\Abar$ is Koszul,  and $\gl \Abar \leq n$. If $G \not \cong \mu_2$, then $\gl \Abar =n$.   
\end{thma}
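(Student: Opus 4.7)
The plan rests on Auslander's identification $A = S*G \cong \End_R(S)$, under which $e$ corresponds to the Reynolds projector $S \onto R$ and $eAe \cong R$. The annihilator calculation drives everything else: expressing the Jacobian as $J = \det(\partial f_i/\partial x_j)$ (up to a unit) for a system of basic invariants $f_1,\dots,f_n$ of $G$ writes $J$ as a sum of $n$-fold products of elements $f_i\cdot \partial_{x_j} \in A$ in which each $f_i$ lies in $Re = eAe \subseteq AeA$. Expanding the determinant and using that $AeA$ is two-sided yields $J\cdot A \subseteq AeA$, hence $J\cdot\Abar = 0$; an elementary relation between $J$ and the discriminant (a semi-invariance/norm calculation) then promotes this to $\Delta\cdot\Abar = 0$, so $\Abar$ is naturally a module over both $S/(J)$ and $R/(\Delta)$.

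For Cohen--Macaulayness, $A$ is $S$-free with basis $G$, so $A$ is MCM over $S$. The ideal $AeA = Ae\otimes_{eAe}eA$ is MCM over the regular ring $R = eAe$ (since $Ae$, $eA$ are projective over $R$), and hence over $S$. The depth lemma on $0\to AeA\to A\to\Abar\to 0$ gives $\mathrm{depth}_S\Abar \geq n-1$, and since $\dim\Abar \leq n-1$ this forces $\Abar$ to be MCM over $S/(J)$. The $R/(\Delta)$-version follows along the finite inclusion $R/(\Delta)\hookrightarrow S/(J)$.

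For Koszulness, $A = S*G$ is Koszul over $A_0 = \CC G$ because $S$ is Koszul over $\CC$ and smashing with the semisimple algebra $\CC G$ in degree $0$ preserves the quadratic structure. The ideal $AeA$ is generated in degree $0$ by the idempotent $e$, so $\Abar$ is quadratic over $\Abar_0 = \CC G/\CC G e\CC G$, and Koszulness transfers to the quotient. For the upper bound $\gl\Abar\leq n$, I would use $\gl\Abar = \pdim_{\Abar}(\Abar_0)$ (from Koszulness) and observe that the Koszul dual of $\Abar$ embeds into that of $A$, namely $\Lambda^\bullet(V^*) * G$, which is concentrated in degrees $\leq n$. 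For the lower bound $\gl\Abar\geq n$ when $G\nleqslant \mu_2$, I would argue that $\Abar$ is MCM of Krull dimension $n-1$ over its centre $R/(\Delta)$, which is singular precisely outside the degenerate $G\leqslant\mu_2$ range; if $\gl\Abar$ were $n-1$, a local Auslander--Buchsbaum-style argument would force the centre to be regular, a contradiction.

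The main obstacle is the explicit identification $J\in AeA$: while the determinant-of-partials formula is suggestive, checking that each summand actually lies in the \emph{two-sided} ideal $AeA$ (and not merely in the left ideal $Ae$) requires careful use of the skew commutation rules between $G$ and $V$ inside $A$. The lower-bound half of the global-dimension statement is also delicate because it hinges on pinpointing exactly when $R/(\Delta)$ fails to be regular.
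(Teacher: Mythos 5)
Your proposal has a fatal error at the outset and several further gaps. Auslander's isomorphism $A=S*G\cong\End_R(S)$ requires $G$ to be \emph{small} (no pseudo-reflections). Here $G$ is generated by pseudo-reflections, so that identification fails; already for $G=\mu_2$ acting on $K[x]$ one computes that the image of $A\to\End_R(S)$ is the proper suborder $\begin{pmatrix}R & x^2R\\ R & R\end{pmatrix}$ of $\Mat_2(R)$. Indeed the whole point of the paper is that $A$ is \emph{not} an endomorphism ring over $R$ and one must pass to $\Abar$. Your annihilator calculation is also not well-posed: $\partial_{x_j}$ and $f_i\cdot\partial_{x_j}$ are not elements of $A=S*G$, so "$J$ is a sum of $n$-fold products of elements $f_i\cdot\partial_{x_j}\in A$" does not make sense as written. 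The paper instead writes $f_i(\bx'')-f_i(\bx')=\sum_j\nabla_i^j(\bx',\bx'')(x_j''-x_j')$ in $S\otimes_K S$ and shows, via the normalization map $\vp:S\otimes_R S\to\Maps(G,S)$ from Galois descent, that $\vp(\det(\nabla_i^j))=J\delta_1$, which identifies with an element of $AeA$.

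Beyond that, your CM argument silently uses that $AeA\cong Ae\otimes_{eAe}eA$ \emph{as an $A$-bimodule} (i.e.\ the multiplication map is injective); this is exactly the content of Watanabe's theorem --- $S\otimes_R S$ is Cohen--Macaulay (indeed a complete intersection) precisely when $G$ is a pseudo-reflection group --- and is the crux of the paper's Theorem~\ref{thm:watanabe}. Without it you do not know $AeA$ is MCM over $S$, only that $Ae\otimes_{eAe}eA$ is. Once that isomorphism is in hand, the depth-lemma route to CM-ness of $\Abar$ is fine and essentially equivalent to the conductor argument of Lemma~\ref{lem:cond}, but the point is that it does not stand independently of Watanabe. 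Similarly, "Koszulness transfers to the quotient" is not automatic; the paper gets it from the homological-epimorphism property of $A\onto\Abar$ (Theorem~\ref{thm:Abar}(\ref{Extiso}), itself a consequence of $AeA$ being one-sided projective), which guarantees that the Koszul-type $A$-resolutions of simple modules stay exact after $-\otimes_A\Abar$. Finally, the lower bound $\gl\Abar=n$ cannot be deduced from a noncommutative ``Auslander--Buchsbaum'' appeal: there is no general theorem forcing the centre to be regular when a CM order has small global dimension, and establishing $Z(\Abar)=R/(\Delta)$ at this stage of the paper is itself nontrivial. The paper's route is concrete: for $G\neq\mu_2$ it picks an irreducible $W\notin\{K,\det(V)^{-1}\}$ and exhibits a nonzero class in $\Ext^n_{\Abar}(W,\det V\otimes W)\cong\Ext^n_S(W,\det V\otimes W)^G$, namely the top Koszul projection.
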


The referee noted that this implies that the map $A \to \Abar$ is a homological epimorphism in the sense of~\cite{MarksVitoria}, and their Theorem 3.3 implies that $\Abar$ is a universal localization of $A$.

In particular, interpreting $A$, $AeA$ and $\Abar$ geometrically, we exhibit a matrix factorization $(\varphi, \psi)$ of $J \in S$ whose cokernel is $\Abar$ as left $S$-module. Curiously, this matrix factorization comes from the group matrix of $G$ (see Section  \ref{Sub:groupmatrix}) and it is (skew-)symmetric in that the $S$--dual (or transpose matrix) $\psi^*$ is equivalent to $\vp$. Further, we determine the decomposition of $\Abar$ into indecomposable summands over $R/(\Delta)$ and the rank of $\Abar$ over the discriminant (Prop.~ \ref{Prop:groupmatrix}). We can also deduce that $\Abar$ is not an endomorphism ring over the discriminant if $G$ has generating pseudo-reflections of order $\geq 3$ (Cor.~\ref{Cor:AbarEndomorphismring}).

The next step is to show that the quotient $\Abar$ is isomorphic to an endomorphism ring over $R/(\Delta)$ if $G$ is generated by reflections of order two.  First we generalize Auslander's theorem ``noncommutatively'': For any $G \leqslant \GL(n,\CC)$ consider the small group $\Gamma:=G \cap \SL(n,\CC)$ and its invariant ring $S^\Gamma$. Then $\Gamma \leqslant G$ is a normal subgroup and 
$$1 \xrightarrow{} \Gamma \xrightarrow{} G \xrightarrow{} G/\Gamma \xrightarrow{} 1 $$
is a short exact sequence of groups. Assume that $H:=G/\Gamma$ is complementary to $\Gamma$, as will be the case for $H$ cyclic of prime order.  From this we obtain the following generalization of Auslander's theorem:

\begin{thmb}[see Prop.~\ref{Prop:BintermsofA} for a more general formulation] In this situation we have $\CC$-algebra isomorphisms 
$$A= S*G  \cong (S*\Gamma)*H \cong \End_{S^\Gamma *H}(S*H) \ ,$$
and $S*H \cong Ae_\Gamma$ as right $S^\Gamma*H\cong e_\Gamma A e_\Gamma$-module, where $e_\Gamma \in A$ is the idempotent $\frac{1}{|\Gamma|}\sum_{\gamma \in \Gamma}\gamma$.
In particular, if $G=\Gamma$ is in $\SL(n,\CC)$, then the above homomorphism reduces to the one considered by Maurice Auslander. \end{thmb}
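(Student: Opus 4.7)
The strategy is to bootstrap from Auslander's theorem applied to the small subgroup $\Gamma$, using that the averaging idempotent $e_\Gamma$ is $H$-invariant so that the Morita-like identification promotes through the extension by $H$.

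First, since $H$ is complementary to $\Gamma$, the exact sequence $1\to\Gamma\to G\to H\to 1$ splits, giving $G\cong\Gamma\rtimes H$ and hence the first claimed isomorphism
\[A=S*G\;\cong\;(S*\Gamma)*H,\]
in which $H$ acts on $S*\Gamma$ via its given action on $S$ together with conjugation on $\Gamma$. Every element of $\Gamma\leqslant\SL(n,\CC)$ has determinant one, while a pseudo-reflection has non-trivial determinant, so $\Gamma$ is automatically small. Auslander's theorem then provides a $T$-algebra isomorphism $S*\Gamma\cong\End_T(S)$ under which $(S*\Gamma)e_\Gamma\cong S$ as left $S*\Gamma$-module, $e_\Gamma(S*\Gamma)e_\Gamma\cong T$, and $(S*\Gamma)\,e_\Gamma\,(S*\Gamma)=S*\Gamma$ (the progenerator property).

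The key observation is that $e_\Gamma=\tfrac{1}{|\Gamma|}\sum_{\gamma\in\Gamma}\gamma$ is fixed by $H$-conjugation because $\Gamma$ is normal in $G$, so $e_\Gamma$ commutes with every $h\in H$ inside $A$. Decomposing $A=\bigoplus_{h\in H}(S*\Gamma)\cdot h$ as a left $S*\Gamma$-module and using $h\,e_\Gamma=e_\Gamma h$ gives
\[A\,e_\Gamma\;=\;\bigoplus_{h\in H}(S*\Gamma)e_\Gamma\cdot h\;\cong\;S\otimes_\CC\CC H\;=\;S*H\]
and, by the same bookkeeping,
\[e_\Gamma\,A\,e_\Gamma\;=\;e_\Gamma(S*\Gamma)e_\Gamma*H\;\cong\;T*H.\]
Both identifications are compatible with left multiplication by $A$ on $Ae_\Gamma$ and right multiplication by $T*H=e_\Gamma A e_\Gamma$, yielding the claimed module isomorphism $S*H\cong Ae_\Gamma$ of right $T*H$-modules.

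Finally, the progenerator property tensors up to $Ae_\Gamma A=\bigl((S*\Gamma)\,e_\Gamma\,(S*\Gamma)\bigr)*H=A$, so $e_\Gamma$ is a full idempotent in $A$. The standard left-multiplication map is therefore an isomorphism of $\CC$-algebras
\[A\;\xrightarrow{\sim}\;\End_{e_\Gamma A e_\Gamma}(Ae_\Gamma)\;\cong\;\End_{T*H}(S*H),\]
completing Theorem B; specializing to $H=1$ recovers Auslander's theorem. The only real obstacle is bookkeeping: one must check that the $H$-action on $\End_T(S)$ transported across the Auslander isomorphism agrees with conjugation by $H$ in $(S*\Gamma)*H$, so the full chain is an isomorphism of $\CC$-algebras and not merely of $T*H$-modules. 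Once this compatibility is verified the remaining steps are formal.
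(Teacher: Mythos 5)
The plan is genuinely different from the paper's, which constructs an explicit $K$-algebra isomorphism $\Phi\colon\End_T(S)*H\xrightarrow{\sim}\End_{T*H}(S*H)$ by hand (the unnumbered Proposition after Lemma~\ref{Lem:HtensorS}), checks that the composite $A\cong(S*\Gamma)*H\to\End_{T*H}(S*H)\cong\End_T(S)*H$ equals $f*H$ with $f\colon S*\Gamma\to\End_T(S)$ the Auslander map, and then invokes Auslander's theorem for the small group $\Gamma$. Your route tries to bypass this bookkeeping by deducing fullness of $e_\Gamma$ from Auslander and then appealing to the standard Morita isomorphism $A\cong\End_{e_\Gamma Ae_\Gamma}(Ae_\Gamma)$.

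Unfortunately that shortcut has a genuine gap: the claimed progenerator property $(S*\Gamma)\,e_\Gamma\,(S*\Gamma)=S*\Gamma$ is false whenever $\Gamma\neq 1$. Auslander's theorem does \emph{not} supply it. If $e_\Gamma$ were a full idempotent in $S*\Gamma$, then $S*\Gamma$ would be Morita equivalent to $e_\Gamma(S*\Gamma)e_\Gamma\cong T$, so the two rings would have the same global dimension. But $\gldim(S*\Gamma)=n=\dim V<\infty$, while $T=S^\Gamma$ is singular for every nontrivial small $\Gamma$ (e.g. a Kleinian surface singularity for $\Gamma\leqslant\SL(2,\CC)$), so $\gldim T=\infty$. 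Concretely, $(S*\Gamma)/(S*\Gamma)e_\Gamma(S*\Gamma)$ is nonzero because every nontrivial simple $K\Gamma$-module $L$ satisfies $e_\Gamma L=0$. The nontrivial content of Auslander's theorem is precisely that the natural map $S*\Gamma\to\End_{e_\Gamma(S*\Gamma)e_\Gamma}(\,(S*\Gamma)e_\Gamma\,)$ is an isomorphism \emph{without} $e_\Gamma$ being full; this cannot be derived from, nor does it imply, the Morita-theoretic fullness condition. Consequently the last step of your proof, ``The standard left-multiplication map is therefore an isomorphism,'' is unsupported, and the remaining steps cannot be made formal by tensoring a (nonexistent) progenerator property up through the $*H$ construction. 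The fix is essentially what the paper does: establish the $K$-algebra isomorphism $\End_T(S)*H\cong\End_{T*H}(S*H)$ directly, identify the composite with $f*H$, and then use Auslander for $\Gamma$ to conclude. Your observations that $e_\Gamma$ is $H$-stable, $Ae_\Gamma\cong S*H$ and $e_\Gamma Ae_\Gamma\cong T*H$ are correct and do match Lemma~\ref{Lem:HtensorS}, so only the final Morita step needs replacing.
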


In order to show that $\Abar$ is an endomorphism ring, we first view $A$ as a CM-module over the (noncommutative) ring $S^\Gamma*H$ and will use the functor $$i^*: \Mod(S^\Gamma*H) \xrightarrow{} \Mod(R/(\Delta)) \ ,$$ 
coming from a standard recollement. For this part we will need that $G$ is a \emph{true reflection group}, that is, generated by reflections of order $2$. Then clearly $H \cong \mu_2$. \\

In order to use the recollement, we consider more generally a regular ring $R$ that is an integral domain, a non-zero divisor $f \in R$, and define the path algebra  
\[
{\begin{tikzpicture}[baseline=(current  bounding  box.center)] 
\node (B) at (-0.2,0) {$B:=R$};

\node (C1) at (0.9,0) {$e_+$} ;
\node (C2) at (5.1,0)  {$e_- $};

\draw [thick ] (0.7,-0.7) to [round left paren ] (0.7,0.7);
\draw [thick ] (5.3,-0.7) to [round right paren] (5.3,0.7);

\draw [->,bend left=25,looseness=1,pos=0.5] (C1) to node[]  [above]{$v$} (C2);
\draw [->,bend left=20,looseness=1,pos=0.5] (C2) to node[] [below] {$u$} (C1);
%\draw[<->]  (C2) edge [ dashed] node[left] [above]{{\small $f$}}(C1);

\node (DD) at (5.7,0) {,};
\end{tikzpicture}} 
\]
with relations ,
\[e_{\pm}^2=e_{\pm}, \  e_+ + e_- =1, \ u=e_+ue_-, \ v=e_{-}ve_{+}, \ uv=fe_{+}, \text{ and } vu=fe_{-} \ .\]
Then matrix factorizations over $B/Be_{-}B \cong R/(f)$ (Lemma \ref{Cor:Bprops}) can be seen as CM-modules over $B$, which leads to the following reformulation of Eisenbud's theorem on matrix factorizations \cite{Eisenbud80}:

\begin{thmc}[Thm.~\ref{Thm:Eisenbud}]
Let $f \in R$ and $B$ as above and let $i^*: \Mod(B) \xrightarrow{} \Mod(B/Be_-B)$ be the functor $i^*=-\otimes_{B}B/Be_-B$ from the standard recollement. Then $i^*$ induces an equivalence of categories
$$\CM(B)/\langle e_-B \rangle \simeq \CM(R/(f)) \ ,$$
where $\langle e_-B \rangle$ is the ideal in the category $\CM(B)$ generated by the object $e_-B$. (Here $\CM(\Lambda)$ stands for the category of CM-modules over a ring $\Lambda$ as defined in Section \ref{Sub:MFandCM}).
\end{thmc}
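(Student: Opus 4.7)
The plan is to factor $i^*$ through an equivalence $\CM(B) \simeq \mathrm{MF}(R,f)$ with matrix factorizations of $f$ over $R$; under this identification $i^*$ becomes the classical cokernel functor, and the result essentially reduces to Eisenbud's theorem.

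First, I would set up the equivalence $\CM(B) \simeq \mathrm{MF}(R,f)$. Since $R$ is central in $B$ and $B$ is $R$-free of rank $4$ on the basis $\{e_+, e_-, u, v\}$, any finitely generated right $B$-module $M$ decomposes over $R$ as $M = Me_+ \oplus Me_-$, and is CM over $B$ iff both $Me_\pm$ are $R$-free (using regularity of $R$). Right multiplication by $u, v$ yields $R$-linear maps $\varphi_M\colon Me_+\to Me_-$ and $\psi_M\colon Me_-\to Me_+$ with $\psi_M\varphi_M = f\cdot\id$ and $\varphi_M\psi_M = f\cdot\id$ (from $uv = fe_+$, $vu = fe_-$), which is precisely a matrix factorization of $f$; a direct check on basis elements identifies $e_-B$ with the trivial factorization $(f,1)$ on $(R,R)$.

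Next I would compute $i^*$ in these terms. Using $B/Be_-B \cong R/(f)$ (Lemma~\ref{Cor:Bprops}) and $M\otimes_B B/Be_-B = M/(M\cdot Be_-B)$, a short bookkeeping with the $R$-basis of $B$ yields $M\cdot Be_-B = Me_- + \psi_M(Me_-)$, whence
\[
i^*(M) \;\cong\; Me_+/\psi_M(Me_-) \;=\; \coker(\psi_M)
\]
as an $R/(f)$-module. Under $\CM(B)\simeq \mathrm{MF}(R,f)$ the functor $i^*$ is thus the classical ``cokernel of $\psi$'' functor, and $i^*(e_-B) = 0$ shows it descends to $\CM(B)/\langle e_-B\rangle$. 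Essential surjectivity of the descended functor is the content of Eisenbud's theorem: every CM $R/(f)$-module arises as $\coker(\psi)$ for some matrix factorization.

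The observation that makes the remainder go through cleanly is that $\psi_M$ is $R$-injective for every $M\in\CM(B)$: since $\psi_M\varphi_M = f\cdot\id$ on the $R$-free module $Me_+$ and $f$ is a non-zero-divisor, $\psi_M$ has trivial kernel. Faithfulness modulo $\langle e_-B\rangle$ is then direct: if $i^*\alpha = 0$, write $\alpha_+ = \psi_{M'}h$ with $h\colon Me_+ \to M'e_-$; then $\psi_{M'}\alpha_- = \alpha_+\psi_M = \psi_{M'}h\psi_M$ combined with injectivity of $\psi_{M'}$ forces $\alpha_- = h\psi_M$, and $(\psi_{M'}h, h\psi_M)$ is visibly the factorization of $\alpha$ through a sum of copies of $e_-B$, parametrized by $h$. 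For fullness, lift $\bar\alpha$ to $\alpha_+$ using projectivity of $Me_+$; obtain the unique $\alpha_-$ with $\psi_{M'}\alpha_- = \alpha_+\psi_M$ using projectivity of $Me_-$ and injectivity of $\psi_{M'}$; verify $\varphi_{M'}\alpha_+ = \alpha_-\varphi_M$ by noting that the difference vanishes both after pre-composing with $\psi_M$ and after post-composing with $\psi_{M'}$, so injectivity of $\psi_{M'}$ finishes the job. The point to handle with care is the parametrization of $\langle e_-B\rangle$-morphisms by a single auxiliary $R$-map $h\colon Me_+\to M'e_-$; it is this reduction, rather than any null-homotopy argument, that makes the descended $i^*$ an equivalence onto the full (rather than stable) CM category.
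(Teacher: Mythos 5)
Your proof is correct and takes essentially the same route as the paper: identify $\CM(B)$ with matrix factorizations of $f$ over $R$, recognize $i^*$ as the cokernel functor, observe $i^*(e_-B)=0$, and verify that the descended functor on $\CM(B)/\langle e_-B \rangle$ is dense and fully faithful. Where you are more careful than the paper is the fully-faithfulness step: the paper dispenses with it in a sentence, while you explicitly parametrize the morphisms in the ideal $\langle e_-B\rangle$ by a single auxiliary $R$-linear map $h\colon Me_+\to M'e_-$, yielding the pair $(\psi_{M'}h,\,h\psi_M)$, and you use injectivity of $\psi_{M'}$ (which follows from $\psi_{M'}\varphi_{M'}=f\cdot\mathrm{id}$ on the $R$-projective module $M'e_+$ with $f$ a non-zero-divisor) to pin down the lift uniquely. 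Making this explicit is a genuine improvement, since it is exactly the ingredient that lets the quotient land in the ordinary, rather than stable, CM category. For density, you cite Eisenbud's theorem, whereas the paper reconstructs the $B$-module from a length-one projective $R$-resolution and checks the extra column is a copy of $e_-B$; both are acceptable here, since the paper itself presents this theorem as a reformulation of Eisenbud's. One small tidying remark: in the fullness check the precomposition with $\psi_M$ is superfluous; postcomposing the difference $\varphi_{M'}\alpha_+-\alpha_-\varphi_M$ with $\psi_{M'}$ and invoking injectivity of $\psi_{M'}$ already finishes the job.
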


In particular, consider $T:=R[Z]/(Z^2-f)$, so that $\Spec(T)$ is the double cover of $\Spec(R)$ ramified over $V(f)$. Then this theorem implies Kn\"orrer's result \cite{KnoerrerCohenMacaulay} that $\CM(T*\mu_2)\simeq MF(f)$, where $MF(f)$ stands for the category of matrix factorizations of $f$. 

The last ingredient comes from Stanley's work on semi-invariants: for a true reflection group $G \leqslant \GL(n,\CC)$ acting on $S$, set $R=S^G$, $T=S^\Gamma$, and $f=\Delta$ and $B=T*H$ in the above theorem. Then using that $T \cong R[J]/(J^2-\Delta)$ as $R$-modules (see Lemma \ref{Lem:RasTmod}), one can calculate $i^*(S*H) \cong S/(J)$ as $R/(\Delta)$-module (see Prop.~\ref{Prop:Smodz}). This leads directly to the main theorem: 

\begin{thmd}[=Thm.~\ref{Thm:main} and Corollaries]
Let $G$ be a true reflection group. Then with notation as just introduced, the quotient algebra $\Abar = A/AeA$ is isomorphic to the endomorphism ring $\End_{R/(\Delta)}(S/(J))$. \\
In particular, we have established a correspondence between the indecomposable projective $\Abar$-modules and the nontrivial irreducible $G$-representations on the one hand and the non-isomorphic $R/(\Delta)$-direct summands of $S/(J)$ on the other hand.\\
Moreover, $\Abar$ constitutes a noncommutative resolution of singularities (=NCR) of $R/(\Delta)$ of global dimension $n=\dim R +1$ for $G \neq \mu_2$. 
\end{thmd}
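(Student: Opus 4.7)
My plan is to chain Theorem B, Theorem \ref{Thm:Eisenbud}, and Proposition \ref{Prop:Smodz}, and then match ideals using the $\det$-twist automorphism of $A$.

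By Theorem B we have $A \cong \End_B(P)$ where $B = T*H$ and $P := Ae_\Gamma \cong S*H$ as right $B$-module (with $A$ acting by left multiplication). By Theorem \ref{Thm:Eisenbud} and Proposition \ref{Prop:Smodz}, the functor $i^* = -\otimes_B B/Be_-B$ induces an equivalence $\CM(B)/\langle e_-B\rangle \simeq \CM(R/(\Delta))$ sending $P$ to $S/(J)$. Passing to endomorphism rings yields a surjective $\CC$-algebra homomorphism
\[
A = \End_B(P) \;\twoheadrightarrow\; \End_{R/(\Delta)}(S/(J))
\]
whose kernel $\mathcal{I}$ consists of endomorphisms of $P$ factoring through some finite direct sum of copies of $e_-B$.

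The key bookkeeping step is to identify $\mathcal{I}$ as an explicit ideal of $A$. Under the embedding $B \cong e_\Gamma A e_\Gamma \hookrightarrow A$, the idempotent $e_- \in B$ — which is the sign-$H$-character idempotent of $T*H$, since $B/Be_-B \cong R/(\Delta)$ must recover the trivial-$H$ side $R = T^H$ — corresponds to the sign-representation idempotent
\[
e^\vee := \tfrac{1}{|G|}\sum_{g\in G}\det(g)\, g \in A,
\]
using that $\det|_\Gamma = 1$. Hence left multiplication by $e^\vee$ on $P$ has image $e_-B \subset B \subset P$, so $\mathcal{I} = Ae^\vee A$ and $A/Ae^\vee A \cong \End_{R/(\Delta)}(S/(J))$. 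To rewrite this quotient as $\Abar = A/AeA$, I use the $\CC$-algebra automorphism $\alpha: A \to A$, $sg \mapsto \det(g)\,sg$, an involution (routine check from $g\cdot s = g(s)\cdot g$) which satisfies $\alpha(e) = e^\vee$ and therefore $\alpha(AeA) = Ae^\vee A$. Composing gives the desired isomorphism $\Abar \cong \End_{R/(\Delta)}(S/(J))$.

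The remaining statements are now formal. Under $A \cong \End_B(P)$ the indecomposable summands of $P$ in $\CM(B)$ correspond bijectively to the irreducible $G$-representations via Theorem B; quotienting by $\mathcal I = Ae^\vee A$ removes the sign representation, and — transporting by $\alpha$ — this matches the removal of the trivial representation on the $\Abar$-side, producing the claimed bijection with non-isomorphic $R/(\Delta)$-direct summands of $S/(J)$ via the equivalence of Theorem \ref{Thm:Eisenbud}. The NCR statement is immediate from Theorem A: $\Abar$ is CM over $R/(\Delta)$ with $\gl\Abar = n$ for $G \ne \mu_2$, and we have just realized it as $\End_{R/(\Delta)}(S/(J))$ for the CM module $S/(J)$.

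The principal obstacle is the ideal-matching: the natural route through $i^*$ yields $A/Ae^\vee A$ — the quotient by the sign representation — whereas the stated result concerns the quotient by the trivial representation. Bridging the two via the involution $\alpha$ is the point where one must be careful, but it is quick; after that, everything is a formal assembly of the equivalences.
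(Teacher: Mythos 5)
Your proposal is correct and takes essentially the paper's route: the recollement equivalence from Theorem~\ref{Thm:Eisenbud} together with Prop.~\ref{Prop:Smodz} gives $i^*(Ae_\Gamma)\cong S/(J)$, the kernel of $A\twoheadrightarrow\End_{R/(\Delta)}(S/(J))$ is identified with $AfA$ (your $Ae^\vee A$, using $e_\Gamma e_-=f$), and the passage from $AfA$ to $AeA$ via your twist automorphism $\alpha$ is exactly the content of Cor.~\ref{Cor:quotientsiso}, which the paper invokes as a WLOG at the start rather than applying at the end. The one place you compress is the reverse containment $\mathcal{I}\subseteq Ae^\vee A$, for which the paper checks $\Hom_B(e_-B,Ae_\Gamma)\cong Af$ and $\Hom_B(Ae_\Gamma,e_-B)\cong fA$ explicitly; this is a standard consequence of $e_-B$ being the direct summand of $P$ cut out by the idempotent $e^\vee$, so there is no real gap.
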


For  a true reflection group $G \leqslant \GL(2,\CC)$ this implies that $S/(J)$ is a representation generator of $\CM(R/(\Delta))$, and so recovers the fact that $R/(\Delta)$ is an ADE-curve singularity \cite{Bannai}, (cf.~Cor.~\ref{Thm:dim2}). \\

%and $R/(\Delta)$ is an ADE-curve singularity (see Cor.~\ref{Thm:dim2}). \\

The remainder of the paper is dedicated to a more detailed study of  $S/(J)$ as $R/(\Delta)$-module, for any pseudo-reflection group $G \leqslant \GL(n,\CC)$: we determine the ranks of the isotypical components of $S/(J)$ over $R/(\Delta)$ using Hilbert--Poincar{\'e} series and can give precise formulas in terms of Young diagrams in the case $G=S_n$ (Prop.~\ref{RankisotypicalDiscriminant}). 
Then, using Solomon's theorem and results from Kyoji Saito and Hiroaki Terao we can identify some of  the isotypical components of $S/(J)$ (again for any pseudo-reflection group $G$): the isotypical component of the defining representation $V$ of $G$ and its higher exterior powers $\Lambda^lV$ are given by the cokernels of the natural inclusions $\Lambda^l \Theta_R(-\log \Delta) \xto{} \Lambda^l \Theta_R$ of the module of logarithmic derivations into the derivations on $R$, dubbed the \emph{ logarithmic co-residues}. In particular, for $l=1$ one gets that $j_{\Delta}$, the Jacobian ideal of the discriminant viewed as a module over $R/(\Delta)$, is a direct summand of $S/(J)$, see Thm.~\ref{logRes}. We also obtain the \emph{logarithmic residues} $\coker(\Lambda^{n-i}\mu: \Omega_R^{n-i} \rightarrow \Omega_R^{n-i}(\log \Delta))$ as direct summands of $S/(z)$.    The other isotypical components have yet to be determined in general. \\
The paper ends with the example of the discriminant of $G=S_4$ acting on $\CC^3$, the well-known \emph{swallowtail}. Here we can explicitly determine all matrix factorizations for the nonisomorphic direct summands of $S/(J)$.

%%%%%%%%%%%%
\section{Dramatis Personae} \label{Sec:Basics}
%%%%%%%%%%%

\begin{tabular}{ll}
$K$\dotfill & an algebraically closed field, mostly $\CC$\\
$\car K$\dotfill &the characteristic of $K$\\
$V$ \dotfill& a finite dimensional vector space over $K$\\
$n=\dim_{K}V$ \dotfill& the dimension of $V$ over $K$\\
$G\leqslant \GL(V)\cong \GL(n,K)$ \dotfill& a finite subgroup of $K$--linear automorphisms of $V$\\
$\Gamma = G\cap \SL(V)$\dotfill& the kernel of the determinant homomorphism restricted to $G$ \\
$|G|$\dotfill & the order of $G$, assumed not to be divisible by $\car K$\\
$KG$\dotfill & the group algebra on $G$ over $K$. According to our assumption,\\
& a semi--simple $K$--algebra, product of matrix algebras over $K$ \\
$S=\Sym_{K}V$\dotfill & the symmetric algebra on $V$ over $K$\\
$R=S^{G}$\dotfill & the invariant subring of the action of $G$ on $V$ \\

$S^{G}=K[f_{1},\ldots, f_{n}]$\dotfill& the invariant subring when $G\leqslant\GL(V)$ is a subgroup\\ 
&generated by pseudo-reflections \\ 
$d_{i}=\deg f_{i}$\dotfill &the degrees of basic invariants, so that $|G|=d_{1}\cdots d_{n}$\\
$m=\sum_{i=1}^{n}(d_{i}-1)$\dotfill& the number of pseudo-reflections in $G$\\
$J=\det\left(\frac{\partial f_{i}}{\partial x_{j}}\right)_{i,j=1,\ldots,n}$\dotfill & the Jacobian determinant of the basic 
invariants that is\\
&a polynomial in $S$ of degree $m$\\
$z$\dotfill & the squarefree polynomial underlying $J$\\
$m_1=\deg z$\dotfill& the degree of $z$, that is, the number of mirrors in $G$ \\
$\Delta=zJ\in S^{G}$\dotfill &the discriminant of the reflection group $G$ that is thus\\
& of degree $m+m_1$\\
$V_{i}, i=0,\ldots,r,$\dotfill & representatives of the isomorphism classes of irreducible\\
&$G$--representations.\\
$V_{0}=K_{\trv}=\mathrm{triv}$\dotfill & the trivial representation\\
$V_{1}=V$\dotfill& the defining representation $G \hookrightarrow \GL(V)$ if that is irreducible\\
$V_{\mathrm{\det}}=\det V=|V|$\dotfill &the linear one-dimensional representation of $G$ afforded by the\\
&determinant of the
defining representation $V$\\
$\rank_{C} M$\dotfill &the rank function on the minimal primes in $\Spec C$ for a module\\ 
&$M$ over a reduced commutative ring $C$ \\

\end{tabular}

\subsection{Conventions} 
Throughout the paper let $K=\CC$,\footnote{Most of our results also hold if the characteristic of the field $K$ does not divide the order $|G|$ of the group $G$. However, in order to facilitate the presentation, we restrict to $K=\CC$.} if not explicitly otherwise specified.  
Let $V$ be a finite dimensional vector space over the field $K$ and
$\GL(V)$ the group of invertible linear transformations on it. 
If we choose a basis to identify $V\cong K^{n}$, we identify, as usual, 
$\GL(n)=\GL(n,K)\cong \GL(V)$ with the group of invertible $n\times n$ matrices over $K$.  
Further, let $G$ be a finite subgroup of linear transformations on $V$. The group $G$ acts then linearly and faithfully on the polynomial ring $S=\Sym_{K}V\cong K[x_{1},\ldots,x_{n}]$ over $K$,
where $x_{1},\ldots, x_{n}$ constitutes a $K$--basis of $V$. We may consider $S$ as a graded ring with standard grading $|x_i|=1$ for all $i$. If $s=f(\bx)\in S$, then we write $g(s) = f(g\bf x)$ for the action 
of $g\in G$ on $s$, with $\bx=(x_{1},\ldots, x_{n})$ and 
$g\bx =(g(x_{1}),\ldots, g(x_{n}))$. Note that if $g=(a_{ij})_{i,j=1,\ldots,n}\in \GL(V)$, then $g\bx = (a_{ij})(x_{1},\ldots, x_{n})^{t}$, where
$(-)^{t}$ denotes the transpose\footnote{Let us point out that many authors use $S=\Sym_{K}(V^*)$ with $g$ acting on $s=f(x)$ as $g(s)=f(g^{-1}(x))$.}. 

The \emph{invariant ring} of the action of $G$ on $V$ will be denoted by $R:=S^{G}=\{ s \in S: g(s)=s$ for all $g \in G \}$.

%%%%%%%%
 \subsection{Twisted group rings} %\label{Sub:twistedgroupring}
%%%%%%%%%%

Assume that $G \leqslant \GL(V)$ is any finite subgroup. The group ring of $G$ will be denoted by $KG$. We denote by $Q=Q(S)$ the field of fractions of $S$ and note that $G$ acts on $Q$ as well. 
We consider the following $K$--algebras.

\begin{defn}
\label{skewgroupring}
Assume $G$ acts on a $K$--algebra $S$ through $K$--algebra automorphisms. 
The {\em twisted\/} or {\em skew group ring\/} defined by these data is 
$A=S{*}G$, where $A=S\otimes_{K}KG$ as a left $S$--, right $KG$--module, but the multiplication is twisted by the action of $G$ on $S$. 
%$A=S{\ast} G= S\tilde\otimes_{K}KG$, where the $\tilde\otimes$ is meant to indicate that
%$A=S\otimes_{K}KG$ as a left $S$--, right $KG$--module, but the multiplication is twisted by the action of $G$ on $S$. 

In more detail, $A$ is the free left $S$--module with basis indexed by $G$, thus, 
$A=\bigoplus_{g\in G}S\delta_{g}$, where $\delta_{g}$ stands 
for the basis element parametrized by $g\in G$. 

%The multiplication is defined to be {\em twisted by the action of $G$ on $S$\/} in that 
The multiplication is defined by $\delta_{g}s = g(s)\delta_{g}$, for $s\in S, g\in G$. In particular the multiplication of two elements $s'\delta_{g'}, s\delta_{g}$ is given by
\[
(s'\delta_{g'})(s\delta_{g})=(s' g'(s)) \delta_{g'g} \in S\delta_{g'g}\quad \text{for}\quad g',g\in G, s',s\in S\,.
\] 
\end{defn}

Our notation here follows \cite{KKu2} and is meant to clearly distinguish, say, the element $\delta_gs\in A$ from the element 
$g(s)\in S$.

However, even if $S$ is commutative, its image is usually not in the centre of
$A$, whence the ring homomorphism $S\to A$ only endows $A$ with an $S$--bimodule structure over $K$,
with the action from the left simply multiplication in $S$, while the action from the right is determined by
$\delta_{g} s= g(s) \delta_{g}$ for $g\in G, s\in S$. In particular, each left $S$--module direct summand 
$S\delta_{g}\subseteq A$ is already an $S$--bimodule direct summand of $A$.

Similarly  $Q * G \cong Q\otimes_{S}A$ and we have ring homomorphisms 
$Q\to Q*G$ and $QG\to Q*G$, where $Q=Q(S)$. As noted in \cite[p.515]{Auslander86}  or in \cite[Sect.2]{KKu1}, \cite[4.1($I_{23}$)]{KKu2} the map
\begin{align*}
\tau\colon Q*G \lto Q *G \quad, \quad \tau(f \delta_{g}) = g^{-1}(f) \delta_{g^{-1}} \quad g\in G, f\in Q
\end{align*}
is an involutive algebra anti-isomorphism that restricts to an anti-isomorphism, denoted by the same symbol, 
$\tau\colon A\xto{\ \cong\ }A$. In particular, $A \cong A^{op}$ as $K$-algebras.

 \label{Sub:twistedgroupring}
If $|G|$ is invertible in $S$, we can set  $e=\frac{1}{|G|}\sum_{g \in G}\delta_g$. 
It is an idempotent element of $A$ and 
$A\left(\sum_{g\in G}\delta_{g}\right)A=AeA \subseteq A$ is an idempotent ideal in $A$.

\begin{lemma} \label{Lem:Se}
Let $e$ be the idempotent just introduced.
\begin{enumerate}[\rm(a)]
\item  The left multiplication $e(\ )\colon S\to A, s\mapsto es,$ 
yields an isomorphism of right $A$--modules $S\xto{\ \cong\ } eS=eA$.
\item   \label{Se2} The right multiplication $(\ )e\colon S\to A, s\mapsto se,$ 
yields an isomorphism of left $A$--modules $S\xto{\ \cong\ } Se=Ae$.
\item The (two--sided) multiplication $e(\ )e\colon R\to A, r\mapsto ere=er=re,$ yields an 
isomorphism of rings $R\xto{\ \cong\ } eAe$, where $R=S^G$ as defined above.
\item \label{Se4}
In the commutative squares
\begin{align*}
\xymatrix{
S\times R \ar[rrr]^-{(s,r)\mapsto sr}\ar[d]_-{(\ )e\times e(\ )e}^{\cong}
&&&S\ar[d]^{(\ )e}_{\cong}
&&&R\times S\ar[rrr]^-{(r,s)\mapsto rs}\ar[d]_-{e(\ )e\times e(\ )}^{\cong}
&&&S\ar[d]^{e(\ )}_{\cong}
\\
Ae\times eAe\ar[rrr]^-{(ae,ea'e)\mapsto aea'e}&&&Ae&&&
eAe\times eA\ar[rrr]^-{(ea'e,ea)\mapsto ea'ea}&&&eA
}
\end{align*}
the vertical maps are bijections, thereby identifying the right $eAe$--module $Ae$ with the (right)
$R$--module $S$ and the left $eAe$--module $eA$ with the (left) $R$--module $S$.
In particular, the induced map
\begin{align*}
S\otimes_{R}S\xto[\cong]{(\ )e\otimes e(\ )} Ae\otimes_{eAe}eA
\end{align*}
is an isomorphism of $A$--bimodules.\qed
\end{enumerate}
\end{lemma}
In this way, there is a natural homomorphism of rings
\begin{align} \label{Eq:AtoHomS}
- \otimes_{A}Ae : A &\cong \Hom_{A}(A,A)\longrightarrow \Hom_{eAe}(Ae,Ae)\cong \Hom_{R}(S,S)\, ,
\end{align}
also cf.~the calculations in \cite[p.515]{Auslander86}.

Moreover, taking invariants with respect to the above action of $G$ defines a functor $\Mod A\to \Mod R$ as the $G$--invariants 
form a (symmetric) $R$--module.  \\
For any left $A$-modules $M, N$, one has $\Hom_A(M,N) \cong \Hom_S(M,N)^G$, where $g \in G$ acts on an $S$-linear map $f: M \xrightarrow{} N$ through $(g \cdot f)(m)=g(f(g^{-1}(m)))$.   Taking invariants $(-)^G$ is an exact functor, whence also $\Ext^i_A(M,N) = \Ext^i_S(M,N)^G$ for all $i$. In particular, an $A$-module $M$ is projective if and only if the underlying $S$-module is projective.

\begin{lemma}[see Section 1 of \cite{Auslander86}] \label{Lem:ProjCorresp}
Let $S$ be a regular complete local ring or a graded polynomial ring. One has a functor 
\[ \alpha: P(A) \xto{}  \Mod KG, \ P \mapsto S/\fm_S \otimes_S P \ , \]
where $P(A)$ denotes the category of projective $A$-modules and $\fm_S$ denotes the maximal ideal (in case $S$ is local) or the maximal ideal $(x_1,\ldots, x_n)$ in $S$ (in case $S$ is a polynomial ring in $n$ variables 
One also has a functor $\beta$ in the other direction that sends a $KG$-module $V$ to $S \otimes_K V$. This pair of functors induces inverse bijections on the isomorphism classes of objects.\qed
\end{lemma}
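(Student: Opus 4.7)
The plan is to lean on the observation (noted in the paragraph preceding the lemma) that an $A$--module $M$ is projective if and only if its underlying $S$--module is projective, combined with the fact that, in both the local regular and graded polynomial cases, every finitely generated projective $S$--module is free. Throughout, $\fm_S$ is $G$--stable, so the quotient $S/\fm_S\cong K$ is (trivially) a $G$--module and, for any $A$--module $P$, the quotient $P/\fm_S P$ inherits a natural $KG$--module structure from its $G$--action.

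First I would verify that $\alpha$ and $\beta$ are well-defined functors between the stated categories. For $\beta$, the module $S\otimes_K V$ carries the diagonal $G$--action $g(s\otimes v)=g(s)\otimes g(v)$ together with the obvious $S$--action, and these combine into an $A$--module structure; as $S$--module it is free of rank $\dim_K V$, hence projective as an $A$--module by the observation above. The composition $\alpha\circ\beta$ is then immediate: one has canonical $KG$--isomorphisms $S/\fm_S\otimes_S(S\otimes_K V)\cong K\otimes_K V\cong V$, so $\alpha\circ\beta\cong\id$ on isomorphism classes.

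The heart of the proof is to establish $\beta\circ\alpha(P)\cong P$ for every projective $A$--module $P$. Since $P$ is projective, hence free, as an $S$--module, the surjection $\pi\colon P\twoheadrightarrow P/\fm_S P$ is a $K$--linear surjection compatible with the $G$--actions on both sides. By Maschke's theorem, applied to $KG$ (semisimple because $|G|$ is invertible in $K$), one may choose a $KG$--linear section $\sigma\colon P/\fm_S P\to P$ of $\pi$. I would then define
\[
\varphi\colon S\otimes_K(P/\fm_S P)\lto P\,,\qquad \varphi(s\otimes \bar p)=s\cdot\sigma(\bar p)\,.
\]
This map is $S$--linear by construction and $G$--equivariant because $\sigma$ is, hence $A$--linear. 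Reducing modulo $\fm_S$ turns $\varphi$ into the identity on $P/\fm_S P$, so $\varphi$ is surjective by Nakayama's lemma (with its graded counterpart when $S$ is a polynomial ring). As both source and target are finitely generated free $S$--modules of the same rank $\dim_K(P/\fm_S P)$, a surjection between them over the commutative Noetherian ring $S$ is automatically an isomorphism.

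The one place that requires care is the existence of a $G$--equivariant lift $\sigma$; this is precisely where Maschke's theorem and the hypothesis that $|G|$ is invertible in $K$ enter. Once this is secured, the constructions $\alpha$ and $\beta$ are mutually inverse on isomorphism classes, which is the assertion of the lemma.
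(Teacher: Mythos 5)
Your proof is correct and follows the same Maschke-plus-Nakayama argument that underlies the references the paper cites (Auslander and Leuschke--Wiegand for the complete local case; in essence Swan's lifting theorem for the graded case): identify projective $A$-modules with free $S$-modules, lift a $K$-basis of $P/\fm_S P$ to $P$, use Reynolds averaging to make the lift $G$-equivariant, and conclude via Nakayama and a rank count. The one detail worth spelling out is that in the graded polynomial case the section $\sigma$ must be a \emph{graded} $KG$-linear map, so that $\varphi$ is homogeneous and graded Nakayama applies to its cokernel; this is automatic because one may choose a graded $K$-linear section $\sigma_0$ of $P\onto P/\fm_S P$ and the average $\sigma=\tfrac{1}{|G|}\sum_{g\in G}g\,\sigma_0\,g^{-1}$ remains degree-preserving, since $G$ acts in degree zero on both $P$ and $P/\fm_S P$ --- you gesture at this with ``graded counterpart'' but it deserves a sentence. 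With that noted, your direct construction of $\varphi$ gives a uniform, self-contained treatment of both the local and the graded setting and is a modest simplification over invoking Swan's lifting theorem as the paper's remark proposes.
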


\begin{remark} Auslander proved this result in the case where $S=K[[x,y]]$ the power series ring in two variables, a proof for the $n$-dimensional complete case can be found e.g.~in \cite{LeuschkeWiegand}. However, the correspondence also holds in the graded case, i.e., for graded modules over $S=K[x_1,\ldots, x_n]$ with $\deg x_i=1$. For this one uses Swan's theorem, see e.g.~\cite[XIV, Thm.~3.1]{Bass69}.
%For this note that it is clear that $\alpha \beta(V)=\alpha(V \otimes_K S) =V \otimes_K S \otimes_S S/\fm_S= V$. On the other hand, $\beta \alpha (Q)=\beta(Q\otimes_S S/\fm_S)=Q\otimes_S S/\fm_S \otimes_K  S$. This can be written as $Q/\fm_S Q \otimes_K S$, and thus one gets a surjection $Q/\fm_S Q \otimes_K S \xrightarrow{} Q/\fm_SQ \xrightarrow{} 0$. One also has the natural surjection $Q \xrightarrow{} Q/\fm_SQ \xrightarrow{} 0$. Since $Q$ is projective, there is a (graded) map from $Q$ to $Q/\fm_S Q \otimes_K S$, which is an isomorphism in degree $0$. Tensoring with the completion $\widehat S \otimes_S -$ yields that $\widehat S \otimes_S Q/\fm_S Q \otimes_K S \cong \widehat{Q}$ and since $\widehat S \otimes_S -$ is fully faithful, it follows that already $Q/\fm_S Q \otimes_K S \cong Q$.
\end{remark}

\subsubsection{Quotients of $A$ by idempotent ideals} Let $\chi$ be the  character of an irreducible $G$--rep\-resentation. This defines the central primitive idempotent associated to this representation as $e_{\chi}=\frac{1}{|G|}\sum_{g \in G} \chi(g^{-1}) g$ in $KG \subseteq A$. If we want to stress that $e_\chi \in A$, then we write $e_{\chi}=\frac{1}{|G|}\sum_{g \in G} \chi(g^{-1}) \delta_g$.  In particular, denote $e:=e_{\mathrm{triv}}=\frac{1}{|G|}\sum_{g\in G}\delta_g\in A$ the idempotent associated to the trivial representation of $G$, $\overline{e}:=e_{\det^{-1}}=\frac{1}{|G|}\sum_{g\in G}\det(g)\delta_g$, the idempotent associated to the inverse determinantal representation. \\In the following we will be interested in the quotient algebra $A/Ae_{\chi}A$, where $e_\chi$ is an idempotent associated to a linear character $\chi$. The next two results show that the choice of the one-dimensional character does not matter and thus we will sometimes switch between $A/AeA$ and $A/A\overline{e}A$.

With $\Hom_{\mathrm{gps}}(G, K^*)$ the group of linear characters, consider the map $\alpha: \Hom_{\mathrm{gps}}(G, K^*) \xto{} \Aut_{K-\mathrm{Alg}}(A)$,
\[ \lambda \mapsto \alpha_\lambda \]
\[\alpha_\lambda\left(\sum_{g \in G} s_g \delta_g\right)= \sum_{g \in G} s_g \lambda(g^{-1})\delta_g  \  . \]

\begin{lemma} \label{Lem:charIdempot}
The map $\alpha$ is a homomorphism of groups. If $L$ is the one-dimensional representation defined by $\lambda$ and $\chi$ the character of some $G$-representation $W$, then $L \otimes W$ has character $\lambda \cdot \chi$. If $W$ is irreducible and $e_{\chi}=\frac{1}{|G|}\sum_{g \in G} \chi(g^{-1})g$ the corresponding idempotent in $KG$, then $\alpha_\lambda(e_{\chi})=e_{\lambda \cdot \chi}$. \qed
\end{lemma}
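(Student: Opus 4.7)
The plan is to verify each of the three assertions directly from the definitions, leveraging (i) the twisted multiplication rule $(s'\delta_{g'})(s\delta_{g}) = s'g'(s)\delta_{g'g}$ of Definition~\ref{skewgroupring}, (ii) the fact that $\lambda\colon G\to K^*$ is a group homomorphism, and (iii) the standard formula $e_{\chi}=\tfrac{1}{|G|}\sum_{g\in G}\chi(g^{-1})\delta_{g}$ for a central primitive idempotent associated to an irreducible character $\chi$.

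First I would check that each $\alpha_{\lambda}$ is indeed a $K$-algebra automorphism of $A$. It is manifestly $K$-linear since $\lambda(g^{-1})\in K$, and rescales the left $S$-module basis $\{\delta_g\}_{g\in G}$ by nonzero scalars, so $\alpha_\lambda$ is bijective. For multiplicativity, note that $\alpha_\lambda$ fixes $S\subseteq A$ pointwise (since $\lambda(1)=1$), so it suffices to compute $\alpha_\lambda\bigl((s'\delta_{g'})(s\delta_{g})\bigr)=s'g'(s)\lambda((g'g)^{-1})\delta_{g'g}$ and compare with $\alpha_\lambda(s'\delta_{g'})\cdot\alpha_\lambda(s\delta_{g}) = s'\lambda((g')^{-1})\delta_{g'}\cdot s\lambda(g^{-1})\delta_{g}=s'g'(s)\lambda((g')^{-1})\lambda(g^{-1})\delta_{g'g}$. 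These agree precisely because $\lambda((g'g)^{-1})=\lambda(g^{-1})\lambda((g')^{-1})=\lambda((g')^{-1})\lambda(g^{-1})$ (the image of $\lambda$ lies in the abelian group $K^{*}$). An identical one-line computation gives $\alpha_{\lambda\mu}=\alpha_{\lambda}\circ\alpha_{\mu}$ and $\alpha_{1}=\id_{A}$, proving that $\alpha$ is a group homomorphism.

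Next, the character formula for $L\otimes W$ is standard: for $g\in G$ acting on the tensor product of vector spaces, $\tr_{L\otimes W}(g)=\tr_{L}(g)\cdot\tr_{W}(g)=\lambda(g)\chi(g)$, hence the character is $\lambda\cdot\chi$. I would record this as a one-sentence reminder rather than belaboring it.

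Finally, the key identity $\alpha_{\lambda}(e_{\chi})=e_{\lambda\cdot\chi}$ falls out of the definitions:
\[
\alpha_{\lambda}(e_{\chi}) \;=\; \alpha_{\lambda}\!\Bigl(\tfrac{1}{|G|}\sum_{g\in G}\chi(g^{-1})\delta_{g}\Bigr) \;=\; \tfrac{1}{|G|}\sum_{g\in G}\chi(g^{-1})\lambda(g^{-1})\delta_{g} \;=\; \tfrac{1}{|G|}\sum_{g\in G}(\lambda\cdot\chi)(g^{-1})\delta_{g} \;=\; e_{\lambda\cdot\chi}.
\]
Since $\lambda\cdot\chi$ is again the character of an irreducible representation by the previous paragraph (irreducibility of $L\otimes W$ follows from $L$ being one-dimensional), the right-hand side is indeed a central primitive idempotent. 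No step presents any real obstacle; the only point requiring attention is to remember that $\alpha_\lambda$ acts trivially on $S$ and only rescales the grouplike generators $\delta_g$, so that the twisted multiplication interacts with $\lambda$ solely through the group homomorphism property.
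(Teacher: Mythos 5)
Your proof is correct and is exactly the direct computational verification the paper leaves to the reader (the lemma is stated with an immediate \texttt{\textbackslash qed}, i.e.\ no proof is given). All three checks — that $\alpha_\lambda$ respects the twisted multiplication because $\lambda$ lands in the abelian group $K^*$ and scalars commute with the $G$-action on $S$, the standard character of a tensor product, and the one-line computation $\alpha_\lambda(e_\chi)=e_{\lambda\cdot\chi}$ — are exactly what the authors have in mind.
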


\begin{corollary} \label{Cor:quotientsiso}
Let $\lambda, \lambda'$ be  one-dimensional characters of $G$ with respective idempotents $e_{\lambda}, e_{\lambda'}$. Then the  quotient algebras $A/Ae_{\lambda}A$ and $A/Ae_{\lambda'}A$ are isomorphic $K$-algebras.  \qed
\end{corollary}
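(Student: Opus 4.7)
The plan is to exhibit an explicit $K$-algebra automorphism of $A$ carrying $Ae_\lambda A$ onto $Ae_{\lambda'}A$, and then simply pass to the quotients. Lemma~\ref{Lem:charIdempot} has already done most of the work: it constructs a group homomorphism
\[
\alpha\colon \Hom_{\mathrm{gps}}(G,K^*)\longrightarrow \Aut_{K\text{-}\mathrm{Alg}}(A),
\qquad \mu\longmapsto \alpha_\mu,
\]
satisfying $\alpha_\mu(e_\chi)=e_{\mu\cdot\chi}$ for every irreducible character $\chi$, in particular for every one-dimensional character.

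First, I would set $\mu := \lambda'\cdot \lambda^{-1}\in \Hom_{\mathrm{gps}}(G,K^*)$, which makes sense because the linear characters form an abelian group. By Lemma~\ref{Lem:charIdempot} applied with $\chi=\lambda$, the automorphism $\alpha_\mu$ of $A$ satisfies
\[
\alpha_\mu(e_\lambda)=e_{\mu\cdot\lambda}=e_{\lambda'}.
\]

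Next, since $\alpha_\mu$ is a $K$-algebra automorphism of $A$, it maps two-sided ideals to two-sided ideals and, in particular,
\[
\alpha_\mu(Ae_\lambda A)\;=\;\alpha_\mu(A)\,\alpha_\mu(e_\lambda)\,\alpha_\mu(A)\;=\;A\,e_{\lambda'}\,A.
\]
Therefore $\alpha_\mu$ descends to a well-defined $K$-algebra isomorphism
\[
\overline{\alpha_\mu}\colon A/Ae_\lambda A \xrightarrow{\ \cong\ } A/Ae_{\lambda'}A,
\]
with inverse induced by $\alpha_{\mu^{-1}}$. This is exactly the claim.

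There is no genuine obstacle here; the corollary is essentially a direct consequence of the functoriality of $\alpha$ together with the identity $\alpha_\mu(e_\chi)=e_{\mu\cdot\chi}$ established in the preceding lemma. The only thing worth being careful about is that $\alpha_\mu$ is a ring automorphism (not merely a $K$-linear map), so that it preserves products and hence two-sided ideals, which is immediate from the formula defining $\alpha_\mu$ on the $S$-basis $\{\delta_g\}_{g\in G}$ of $A$.
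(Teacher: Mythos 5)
Your argument is correct and is exactly what the paper intends: the corollary carries a \qed because it is an immediate consequence of Lemma~\ref{Lem:charIdempot}, and your choice $\mu=\lambda'\cdot\lambda^{-1}$ together with $\alpha_\mu(e_\lambda)=e_{\lambda'}$ and the fact that an algebra automorphism carries $Ae_\lambda A$ onto $Ae_{\lambda'}A$ is precisely the implicit reasoning. Nothing is missing.
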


In the next lemma we state some useful properties of the quotient.  For this we recall the  following notion: Let $G \leqslant \GL(V)$ be a finite group and let $\chi$ be a linear character. An element $f \in S$ is a \emph{relative invariant for $\chi$} if $g(f)=\chi(g) f$ for all $g \in G$.  The set of relative invariants for $\chi$ is denoted by $S^G_\chi = \{ f \in S: g(f)=\chi(g) f$ for all $g \in G\}$, cf.~\cite{StanleyInvariants}. Clearly one has $S^G_{\mathrm{triv}}=S^G=R$.

\begin{lemma} \label{Lem:quotienrelative}
Let $G \leqslant \GL(V)$ be a finite group and let $\chi$ be a linear character. Assume that $S^G_\chi$ is a free $R$-module of rank $1$, that is, there exists a $f_\chi \in S$ such that $S^G_\chi=f_\chi R$. Then 
\[S/(f_\chi) \cong (A/Ae_\chi A)e\]
 as $S$-modules.
\end{lemma}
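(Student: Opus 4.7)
The plan is to use the isomorphism $Ae \cong S$ from Lemma \ref{Lem:Se}\eqref{Se2} to rewrite $(A/Ae_\chi A)e$ as a quotient of $S$, and then to identify the submodule of $S$ corresponding to $Ae_\chi A e$ with the principal ideal $(f_\chi)$. As a preliminary, I first record the purely formal fact that for any idempotent $e$ and two-sided ideal $I \subseteq A$, one has $(A/I)e \cong Ae/Ie$: the surjection $Ae \twoheadrightarrow (A/I)e$ has kernel $Ae \cap I$, and if $ae \in I$ then $ae = (ae)e \in Ie$, so $Ae\cap I = Ie$. Applied to $I = Ae_\chi A$, this reduces the problem to identifying $Ae_\chi A e$ as a submodule of $Ae\cong S$.

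The key computational observation is that $\delta_g e = e$ for every $g\in G$, since left multiplication by $\delta_g$ merely permutes the basis $\{\delta_h\}_{h\in G}$ of $e=\tfrac{1}{|G|}\sum_h \delta_h$. Hence for any $a=\sum_g s_g \delta_g \in A$, the element $ae$ corresponds under $Ae\cong S$ to the ``coefficient sum'' $\sum_g s_g\in S$. Next I compute $e_\chi a e$ using the twisted relation $\delta_g s = g(s)\delta_g$; a direct manipulation gives that $e_\chi a e$ corresponds under $Ae\cong S$ to $\tfrac{1}{|G|}\sum_g \chi(g^{-1})\, g(s) \in S$, where $s=\sum_g s_g$ is the coefficient sum of $a$. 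This expression is precisely the Reynolds-type projection of $s$ onto the space of relative invariants $S^G_\chi$, which by hypothesis equals $f_\chi R$. As $a$ ranges over $A$, the coefficient sum ranges over all of $S$, so the image of $e_\chi A e$ under $Ae\cong S$ is exactly $f_\chi R$.

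It remains to propagate this to the full two-sided ideal. Under $Ae \cong S$ the left $A$-action translates to the natural action on $S$ in which $s'\in S$ acts by multiplication and $\delta_g$ acts as $g$. Because $g(f_\chi r) = \chi(g) f_\chi r$ for $r\in R$, the $G$-orbit of $f_\chi R$ is still $f_\chi R$, and so the $A$-submodule of $S$ generated by $f_\chi R$ reduces to the $S$-submodule $S \cdot f_\chi R = f_\chi S = (f_\chi)$. Chaining the identifications yields $(A/Ae_\chi A)e \cong Ae/Ae_\chi A e \cong S/(f_\chi)$, and all the maps involved are manifestly $S$-linear (indeed $A$-linear). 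The proof is essentially bookkeeping; the only place one must pay attention is the identity $\delta_g e = e$, together with the recognition of the Reynolds-type sum as the projection onto $S^G_\chi$, both of which follow from standard character orthogonality once the correct formula is written down.
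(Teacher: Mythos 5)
Your proof is correct and follows essentially the same route as the paper: both rest on the identification $Ae\cong S$, the relation $\delta_g e=e$ (equivalently $Ae=Se$), the Reynolds-type identity showing that $e_\chi$ projects onto $S^G_\chi=f_\chi R$, and the freeness hypothesis. The only cosmetic difference is that you first compute $e_\chi Ae\leftrightarrow f_\chi R$ and then pass to the $A$-submodule it generates, whereas the paper computes $Ae_\chi Ae=Ae_\chi Se$ in one step and proves the two inclusions directly; the content is the same.
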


\begin{proof}
Denote $\overline{A}:=A/Ae_{\chi}A$. Applying $- _{A}\otimes Ae$ to the exact sequence 
$$ 0 \xrightarrow{} Ae_{\chi}A \xrightarrow{} A \xrightarrow{} \overline{A} \xrightarrow{} 0$$
yields the exact sequence (since $Ae$ is a flat $A$-module)
\begin{equation} \label{Eq:exactSz} 0 \xrightarrow{} Ae_{\chi}Ae \xrightarrow{} Ae \xrightarrow{}  \overline{A}e \xrightarrow{} 0.
\end{equation}

We have seen in Lemma \ref{Lem:Se} \eqref{Se2} that $Ae \cong Se$. %(an explicit calculation shows that $Ae=Se$ as sets).  %\ecom{Can be omitted}
%For $Ae \cong Se$ first look at $s \delta_g e$ for some $s \delta_g \in A$:  one computes that 
%$$s \delta_g e= s e. $$
%Thus for any $\sum_{g \in G}  s_g \delta_g$ in $A$ it follows that 
%$$\sum_{g \in G}  s_g \delta_g e = \sum_{g \in G}  s_g e.$$
%where $g'= g^{-1}$ (use here: $\det(g)=\det(g^{-1})$, which holds for true reflection groups). 
%This means that $Ae \subseteq Se$ and since $(s \delta_1)e=se$ for any $s \in S$ it follows that $Ae \cong Se$.
Moreover $Ae_{\chi}Ae = (Sf_\chi)e \cong Sf_\chi$: for this we first use $Ae_{\chi}Ae = Ae_{\chi}Se$. Then using that $\delta_g e_{\chi}=\chi(g) e_\chi$, for an element $(\sum_{g \in G}  t_g \delta_g) e_{\chi} s e$ in $Ae_{\chi}Se$  we get
\begin{align*}
\sum_{g \in G} t_g \delta_g e_{\chi}se & = \left(\sum_{g \in G}\chi(g)t_{g}\right)e_{\chi}se=\left(\sum_{g \in G}\chi(g)t_{g}\right) \frac{1}{|G|}\sum_{h \in G}\chi(h^{-1}) h(s) \delta_{h}e \\ & = \left(\sum_{g \in G}\chi(g)t_{g}\right) \left(\frac{1}{|G|}\sum_{h \in G} \chi(h^{-1}) h(s)\right) e. 
\end{align*}
The element $\sum_{h \in G} \chi(h^{-1}) h(s)$ is a semi-invariant for $\chi$, so it is in the ideal in $R$ generated by $f_{\chi}$. Thus it follows that $Ae_\chi Ae \subseteq Sf_{\chi}e$. %\ecom{use here that we have true reflections, i.e., that $\det(g^{-1})=\det g$}. 
And  the element $f_{\chi}e=e_{\chi} f_{\chi} e$ is in $Ae_{\chi}Ae$, thus $Ae_{\chi}Ae \supseteq Sf_{\chi}e$.
This means that the sequence \eqref{Eq:exactSz} is isomorphic to
\[ 0 \xrightarrow{} Sf_{\chi}e \rightarrow Se \xrightarrow{}  \overline{A}  e \xrightarrow{} 0,
\]
which implies that  $ \overline{A} e \cong (S/(f_\chi))e\cong S/(f_\chi)$  as $S$-modules.
\end{proof}

\subsection{Reflection groups}
Here we recall some useful facts about complex reflection groups; see, for example, \cite{BourbakiLIE4-6,LehrerTaylor,OTe}. We mostly follow the notation in \cite{OTe}.

Recall that an element $g$ in $\GL(V)$, is
\begin{enumerate}[\rm(a)]
\item a {\em (true) reflection}, if it is conjugate to a diagonal matrix $\diag(-1,1,\ldots,1)$. %--- and the field $K$ is not of characteristic $2$. 
In other words, as a linear transformation $g$ fixes a unique hyperplane $H\subset V$ pointwise and has additionally $-1\neq 1$ 
as an eigenvalue. We call any nonzero eigenvector for the eigenvalue $-1$ a {\em root\/} of the reflection. We think of it as a vector
``perpendicular'' to the hyperplane $H$. We call the hyperplane $H$ the \emph{mirror} of $g$.
\item a {\em pseudo-reflection}, if it is conjugate to a diagonal matrix $\diag(\zeta,1,\ldots,1)$, where $\zeta\neq -1$ is a root of unity in $K$.
Again we call the hyperplane $H=\ker(g - \mathrm{Id}_V)$ the {\em mirror\/} of $g$. 
\end{enumerate}

For a finite subgroup $G\leqslant \GL(V)$, the subgroup $G' \leqslant G$ generated by the pseudo-reflections is normal in $G$
as the conjugate of a pseudo-reflection is again a pseudo-reflection. For the same reason the subgroup $G'' \leqslant G$
generated by (true) reflections is normal in $G$, contained, of course, in $G'$.

One distinguishes now the extreme possibilities.
\begin{defn} \label{Def:subgroups}
Given a finite subgroup $G\leqslant \GL(V)$,
\begin{enumerate}[\rm(a)]
\item $G$ is {\em small\/} if it contains {\em no pseudo-reflections}, thus, $G'=1$.
\item $G$ is a {\em (true) reflection\/} group if it is {\em generated by its (true) reflections}, thus,
$G''=G$.
\item $G$ is a {\em complex reflection\/} or {\em pseudo-reflection\/} group if it is 
{\em generated by its pseudo-reflections}, thus, $G'=G$.
\end{enumerate}
%In this paper we will always distinguish between true reflection groups as the ones generated by order $2$ reflections and pseudo-reflection groups as the ones generated by pseudo-reflections.
\end{defn}

\begin{example}
Any finite subgroup of $\SL(V)$ is small, since it only contains elements with determinant $1$, that is, it does not contain any pseudo-reflections.
\end{example}

The ring $S^{G}$ is a  normal Cohen--Macaulay domain by the Hochster--Roberts Theorem \cite{HochsterRoberts}. 
%We denote $Q(R)$ its field of fractions and recall that the field extension $Q(T)\subseteq Q$ is Galois with Galois group $G$. \\
If $G\leqslant \SL(V)$, then $S^{G}$ is Gorenstein and, conversely, if $G$ is small, then $S^{G}$ is Gorenstein only if 
${G}\leqslant \SL(V)$ according to a theorem by Kei-Ichi Watanabe \cite{WatanabeGorenstein}. Invariant rings of pseudo-reflection groups are distinguished by the following:

\begin{theorem}[Chevalley--Shephard--Todd] \label{Thm:CST} Let $G \leqslant \GL(V)$ be a finite group acting on $S$. Then the invariant ring $R=S^{G}$ is a polynomial ring itself, that is,
$R=K[f_{1},\ldots, f_{n}]\subseteq S$, where the $f_{i}$ are  algebraically independent homogeneous polynomials of degree 
  $d_{i}\geqslant 1$, if and only if $G$ is a pseudo-reflection group. Note that, equivalently, the $f_{i}$ form a homogeneous regular sequence in $S$.  \\
Moreover, if $G$ is a pseudo-reflection group, then $S$ is free as an $R$-module, more precisely $S \cong R \otimes_K KG$, as $G$-modules, where $KG$ denotes the group ring of $G$.
\end{theorem}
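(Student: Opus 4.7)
The plan is to prove the biconditional via Chevalley's averaging argument in one direction and Molien-type Hilbert--Poincar\'e series comparisons in the other, and then to deduce the freeness and the $G$-module isomorphism from Auslander--Buchsbaum together with a character computation on the fibre.

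For the direction ``$G$ a pseudo-reflection group $\Rightarrow R$ polynomial'' I would follow Chevalley's original strategy. Pick a minimal set $f_{1},\ldots,f_{N}$ of homogeneous generators of the ideal $SR_{+}\subseteq S$, where $R_{+}$ is the irrelevant ideal of $R$; these $f_{i}$ already lie in $R$. Algebraic independence of the $f_{i}$ over $K$ reduces to the key Chevalley lemma: if $\sum_{i}h_{i}f_{i}=0$ with $h_{i}\in S$ homogeneous and $h_{1}\notin (f_{2},\ldots,f_{N})S$, one reaches a contradiction. The pseudo-reflection hypothesis enters essentially through the divisibility property $\ell_{g}\mid s-g(s)$ for every $s\in S$ and every pseudo-reflection $g\in G$ with mirror $\{\ell_{g}=0\}$: applying such a $g$ to the relation, subtracting, and dividing by $\ell_{g}$ lowers the total degree, enabling an induction that eventually forces $h_{1}-g(h_{1})\in (f_{2},\ldots,f_{N})S$ for every $g\in G$. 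Reynolds-averaging then places $h_{1}$, modulo $(f_{2},\ldots,f_{N})S$, into $R_{+}\subseteq(f_{1},\ldots,f_{N})S$, contradicting minimality. A Krull-dimension count ($S$ is finite over $R$) shows $N=n$, so $R=K[f_{1},\ldots,f_{n}]$.

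For the converse, assume $R=K[f_{1},\ldots,f_{n}]$ with $\deg f_{i}=d_{i}$, and let $G'\trianglelefteqslant G$ be the normal subgroup generated by the pseudo-reflections of $G$. By the previous direction $S^{G'}$ is polynomial with some degrees $d_{1}',\ldots,d_{n}'$, and Molien's formula applied to the invariant subrings $R=S^G$ and $S^{G'}$, compared with the known rational expression for the Hilbert--Poincar\'e series of a polynomial ring and evaluated at $t\to 1$, yields the identities $|G|=\prod_{i} d_{i}$ and $|G'|=\prod_{i} d_{i}'$. Galois theory of $Q(S)/Q(R)$ gives $[Q(S^{G'}):Q(R)]=|G/G'|$, and computing the Hilbert series of $S^{G'}$ as a finite free module over the polynomial subring $R$ along this tower forces $|G/G'|=1$, whence $G=G'$.

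For the freeness and the $G$-module identification, $S$ is finite over $R$ (being integral over $R$ and finitely generated as a $K$-algebra), and both rings are Cohen--Macaulay of Krull dimension $n$ with $R$ regular. The Auslander--Buchsbaum formula yields
\[
\projdim_{R}S \;=\; \dim R - \operatorname{depth}_{R}S \;=\; n-n \;=\; 0,
\]
so $S$ is a finitely generated graded projective, hence free, $R$-module, of rank $|G|$ (since $Q(S)/Q(R)$ is Galois with group $G$). To obtain the isomorphism $S\cong R\otimes_{K}KG$ of graded $R[G]$-modules, I would pass to the fibre $\overline{S}:=S/R_{+}S$: it is a finite-dimensional graded $G$-representation whose graded character equals, via a direct Molien-type computation, that of the regular representation $KG$; lifting this identification through freeness provides the desired isomorphism. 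The principal obstacle is the Chevalley lemma in the first direction, where the averaging argument requires careful bookkeeping of how the differences $s-g(s)$ decompose along the mirror forms of generating pseudo-reflections; the remaining steps reduce to Molien-series manipulation and the standard homological fact that maximal Cohen--Macaulay modules over regular local rings are free.
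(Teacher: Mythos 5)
The paper does not prove this theorem itself --- it cites Chevalley \cite{Che} and \cite[Thm.~6.19]{OTe} --- so your proposal has to be judged on its own merits. Your treatment of the forward direction (pseudo-reflection implies polynomial invariants, via Chevalley's minimal-generator lemma and $\ell_g\mid s-g(s)$), and your deduction of freeness via Auslander--Buchsbaum followed by the fibre-character computation for $S\cong R\otimes_K KG$, are sound sketches of the standard arguments.

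The converse direction, however, has a genuine gap. You set $G'$ to be the subgroup generated by pseudo-reflections, extract $|G|=\prod d_i$ and $|G'|=\prod d_i'$ from the leading term of Molien's series, note $[Q(S^{G'}):Q(R)]=|G/G'|$, and then assert that ``computing the Hilbert series of $S^{G'}$ as a finite free module over $R$ along this tower forces $|G/G'|=1$.'' But that computation only reproduces what you already know: the rank of $S^{G'}$ over $R$ is $\lim_{t\to 1}\prod(1-t^{d_i})/(1-t^{d_i'}) = \prod d_i/\prod d_i'$, which is $|G|/|G'|=|G/G'|$ by Galois theory --- a tautology, not a constraint. Nothing in what you have written pins this quotient down to $1$. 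The missing input is the second-order coefficient of the Molien expansion at $t=1$: for any finite $G$ with polynomial invariants of degrees $d_i$, the number of pseudo-reflections in $G$ equals $\sum_i(d_i-1)$. Since every pseudo-reflection of $G$ lies in $G'$, this gives $\sum_i(d_i-1)=\sum_i(d_i'-1)$, i.e., $\sum d_i=\sum d_i'$. Pair this with the elementary lemma that an inclusion $K[f_1,\dots,f_n]\subseteq K[f_1',\dots,f_n']$ of graded polynomial subalgebras of $S$, finite in each step, forces $d_i'\leq d_i$ after ordering; equality of the sums then forces $d_i=d_i'$ for all $i$, hence $|G|=|G'|$ and $G=G'$. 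Without the reflection-count identity and the degree inequality your converse does not close.

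One minor point on the last step: the isomorphism $S\cong R\otimes_K KG$ holds as (ungraded) $G$- or $R[G]$-modules but not as graded modules, since $KG$ sits in degree $0$ while the coinvariant algebra $S/R_+S$ is spread across degrees $0$ through $\sum(d_i-1)$; you should make clear that you are lifting the \emph{ungraded} identification $\overline{S}\cong KG$, which is exactly what the statement in the paper asserts.
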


This was the second theorem in \cite{Che} and was as well generalized for pseudo-reflections in the separable case, see
 \cite[Thm. 6.19]{OTe}.

 \subsubsection{Reflection arrangement and discriminant}  \label{defnms} Let us now recall some facts regarding pseudo-reflection groups $G \leqslant \GL(V)$:
\begin{enumerate}[\rm (a)]
\item Finite pseudo-reflection groups over the complex numbers have been classified by 
Geoffrey C.~Shephard and John Arthur Todd \cite{STo}. They contain true reflection groups and thus all finite Coxeter groups, i.e., all finite groups that admit a realization as a reflection group over the real numbers. Coxeter groups are precisely those true reflection groups that have an invariant of degree $2$, see \cite{OTe}.
  Coxeter groups are moreover the pseudo-reflection groups for which $V$ is isomorphic to its dual $V^*$, see e.g.~\cite[Thm.~31]{Serre}.
  %[Reference is to Frobenius--Schur theorem] }%An important subclass of the Coxeter groups are the {\em Weyl groups}, also known as crystallographic Coxeter groups.
%Humphrey's monograph \cite{Hum} presents a detailed treatment.
\item  The polynomials $f_i$ in Theorem \ref{Thm:CST} are called the \emph{basic invariants} of $G$. They are not unique but their degrees $d_i$ are uniquely determined by $G$ and one has an equality $|G| = d_{1}\cdots d_{n}$ (for a proof of this fact see e.g. \cite{STo} or \cite[Ch.~5, \S 5, no.~3, Corollary to Theorem 3]{BourbakiLIE4-6}). Note that $S^G=K[f_1, \ldots, f_n]$ is a graded  polynomial $K$-algebra, with $\deg f_i=d_i$.

\item Let $H\subset V$ be the mirror of a pseudo-reflection $g_{H}\in G$ of order $\rho_{H}>1$. So for any $v \in V$, one has $g_H(v)=v+L_H(v)a_H$, where $a_H \in V$ and $L_H(v)$ is a linear form such that $H = \{v \in V \st L_H(v) =0 \}$.
The \emph{Jacobian} is defined as
\begin{align*}
J=Jac(f_{1},\ldots, f_{n}) &=\det
\left(\left(
\frac{\partial f_{i}}{\partial x_{j}}
\right)_{i,j=1,\ldots,n}\right) \ .
\end{align*}
One can show  that 
$$J=u \prod_{\mbox{mirrors }H}L_{H}^{\rho_{H}-1} \ , $$
where $u\in K^{*}$.  Therefore, each linear form $L_{H}$ occurs with multiplicity $\rho_{H}-1$. The degree of the Jacobian
is $m=\sum_{i=1}^{n}(d_{i}-1)$, which equals the number of pseudo-reflections in $G$ (see e.g. \cite[Ch.~5, \S 5, no.~5, Prop.~6]{BourbakiLIE4-6}). 

\item 
The differential form
\begin{align*}
df_{1}\wedge\cdots\wedge d_{f_{n}} = Jdx_{1}\wedge\cdots\wedge dx_{n}
\end{align*}
is $G$--invariant, whence $J$ transforms according to $gJ =(\det g)^{-1}J$. Thus, $JK$ is the one dimensional
{\em inverse determinant representation\/} of $G$.
 The element $z =\prod_{H} L_H$ is the reduced defining equation of the \emph{reflection arrangement} $\mathcal{A}(G)$ associated to $G$. It is easy to see that $z$ is a relative invariant for the linear character $\chi = \det$, that is, for all $g \in G$ we have $gz=\det(g) z$. The degree of $z$ is $m_1$, the number of mirrors of $G$. 
\item \label{defn:zJ}
The \emph{discriminant} of the group action is given by
\begin{align*}
\Delta = zJ = u \prod_{H \subset \mathcal{A}(G)}L_{H}^{\rho_{H}}\,,
\end{align*}
where $u \in K^*$. The polynomial $\Delta$ is an element of $S^G$ of degree $\sum_{\kappa}\rho_{\kappa}=m+m_1$, see e.g. \cite[Def.~6.44]{OTe}. The discriminant polynomial $\Delta \in S^G$ is always reduced (this follows e.g.~from Saito's criterion and the fact that $\Theta_S^G \cong \Theta_S(-\log \Delta)$, see \cite[Chapter 6]{OTe} for statements and notation). In particular, if $G$ is a true reflection group, then $\rho_H=2$ for all $H$, and thus $J=z$ (up to unit) and $z^{2}=\Delta$ 
represents the discriminant (also see Remark \ref{Rmk:unitDisc}). 
\item The preceding in geometric terms: if $G$ is a pseudo-reflection group, then the quotient $V/G=\Spec(S^G)$ is an affine regular variety isomorphic to $V \cong \mathbb{A}^n(K)$. Under the natural projection
\[ \pi: V\cong \Spec(S) \longrightarrow V/G \cong \Spec(S^G) \]
 the image of the hyperplane arrangement $\mathcal{A}(G)$ is the discriminant  hypersurface $V(\Delta) \subseteq V/G$. 
 \item  The discriminant $V(\Delta)$ in $V/G$ and the hyperplane arrangement $\cala(G)$ in $V$ are both \emph{free divisors}. This means that the module of logarithmic derivations $\Theta_{R}(-\log \Delta)=\{ \theta \in \Theta_R: \theta(\Delta) \in (\Delta)R \}$ is a free $R=S^G$-module and accordingly $\Theta_S(-\log z)$ is a free $S$-module. This was first shown by Kyoji Saito for Coxeter groups, cf.~\cite{SaitoReflexion} and by Hiroaki Terao for complex reflection groups \cite{Terao-free-complex} .
\end{enumerate}

\begin{remark} \label{Rmk:unitDisc}
For ease of notation we will consider the polynomials $$J':=u^{-1}J=\prod_{\mbox{mirrors }H}L_{H}^{\rho_{H}-1} \quad  \text{ and } \quad \Delta':=zJ'=\prod_{\mbox{mirrors }H}L_{H}^{\rho_{H}}$$ instead of $J$ for the Jacobian and $\Delta$ for the discriminant, since they generate the same ideals in $S$ resp. $S^G$. In abuse of notation we will also denote them with $J$ and $\Delta$. For true reflection groups we will then have $z=J$ and $J^2=\Delta$.
\end{remark}

\begin{example}
The true reflection groups $G \leqslant \GL(2,\CC)$ are classified via the ADE-Coxeter-Dynkin diagrams. The discriminant $\Delta$ of such a $G$ is the corresponding ADE-curve singularity, cf.~e.g. \cite[Section 3]{KnoerrerCurves}. For example, the $A_2$-curve singularity $K[x,y]/(x^3-y^2)$ is the discriminant of the group $S_3$ acting on $\CC^2$, see Fig.~\ref{Fig:cusp}.

%------BILD--------   5
\begin{figure}[!h]   
\begin{tabular}{c@{\hspace{1.cm}}c}
\includegraphics[width=0.4 \textwidth]{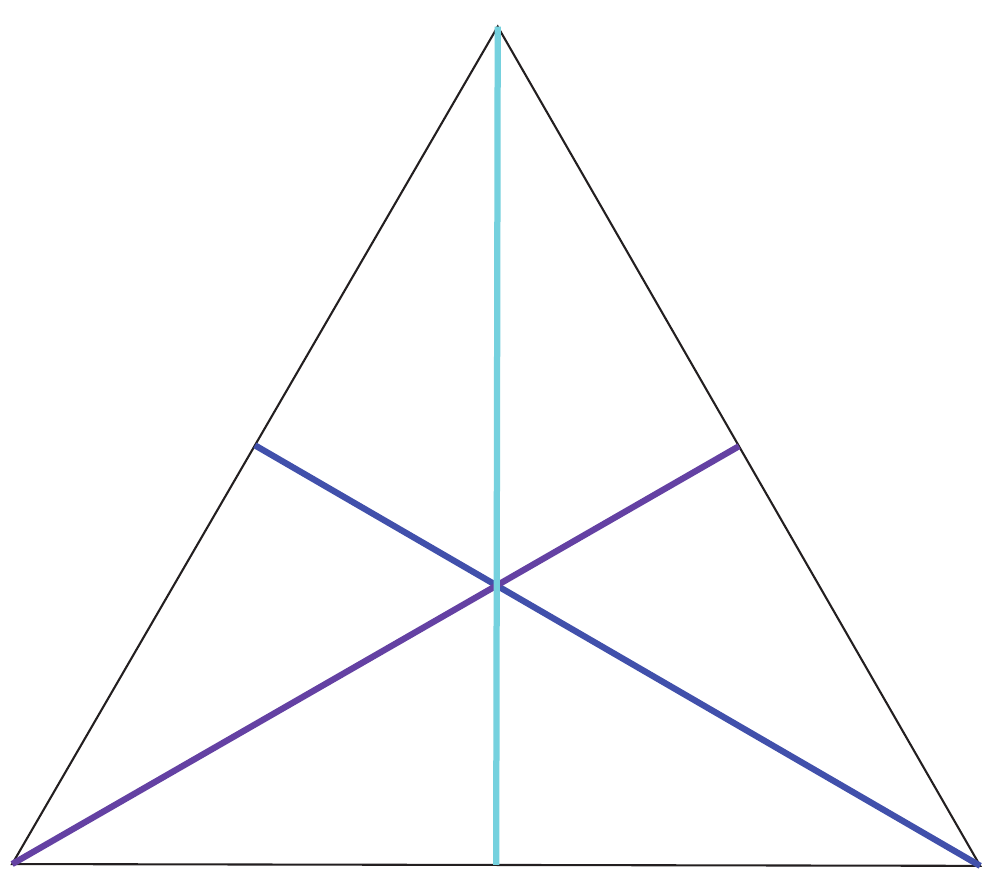}& 
\setlength{\unitlength}{1mm}%%%% Para decidir unidades
\begin{picture}(50,40)
%\put(0,27){\vector(1,0){150}}   %%%%%\thicklines para hacer el dibujo mas gordo 
%\put(102,7){\makebox(0,0){cusp}}
\put(30,8){\textcolor{blue}{\qbezier(8,0)(6,16)(0,16)\qbezier(0,16)(6,16)(8,32)}}
\put(25,13){\makebox(0,0){\footnotesize$\displaystyle \Delta=y^2-x^3$}}
\end{picture}
\end{tabular}
\caption{The three lines of the hyperplane arrangement of $S_3$ and the discriminant $\Delta$ on the right.}
\label{Fig:cusp}
\end{figure}
 
Etsuko Bannai calculated all discriminants for complex reflection groups $G \leqslant \GL(V)$, for $\dim V =2$ in \cite{Bannai}. In particular one sees from this list that all discriminants of reflection groups in $\GL(V)$ are curves of type ADE.
\end{example}

\begin{example}
The true reflection group $G_{24} \leqslant \GL(3,\CC)$ is a complex reflection group of order $336$ that comes from Klein's simple group, see \cite{OTe} ex. 6.69, 6.118\footnote{In Ex.~6.118 in \cite{OTe} the sign in front of $256x^7z$ is erroneous.} for more details. The reflection arrangement $\mathcal{A}(G_{24})$ consists of $21$ hyperplanes. In loc.~cit.~the basic invariants for this group, and the discriminant matrix are determined. One obtains the equation of the discriminant $\Delta$ as the determinant of the discriminant matrix, see Fig.~\ref{Fig:G24}. The discriminant $V(\Delta)$ is a non-normal hypersurface in $\CC^3$, whose singular locus consists of two singular cubic curves meeting in the origin.
\end{example}

%------BILD--------   5
\begin{figure}[!h]   \label{Fig:G24}
\begin{tabular}{c@{\hspace{1.cm}}c}
\includegraphics[width=0.4 \textwidth]{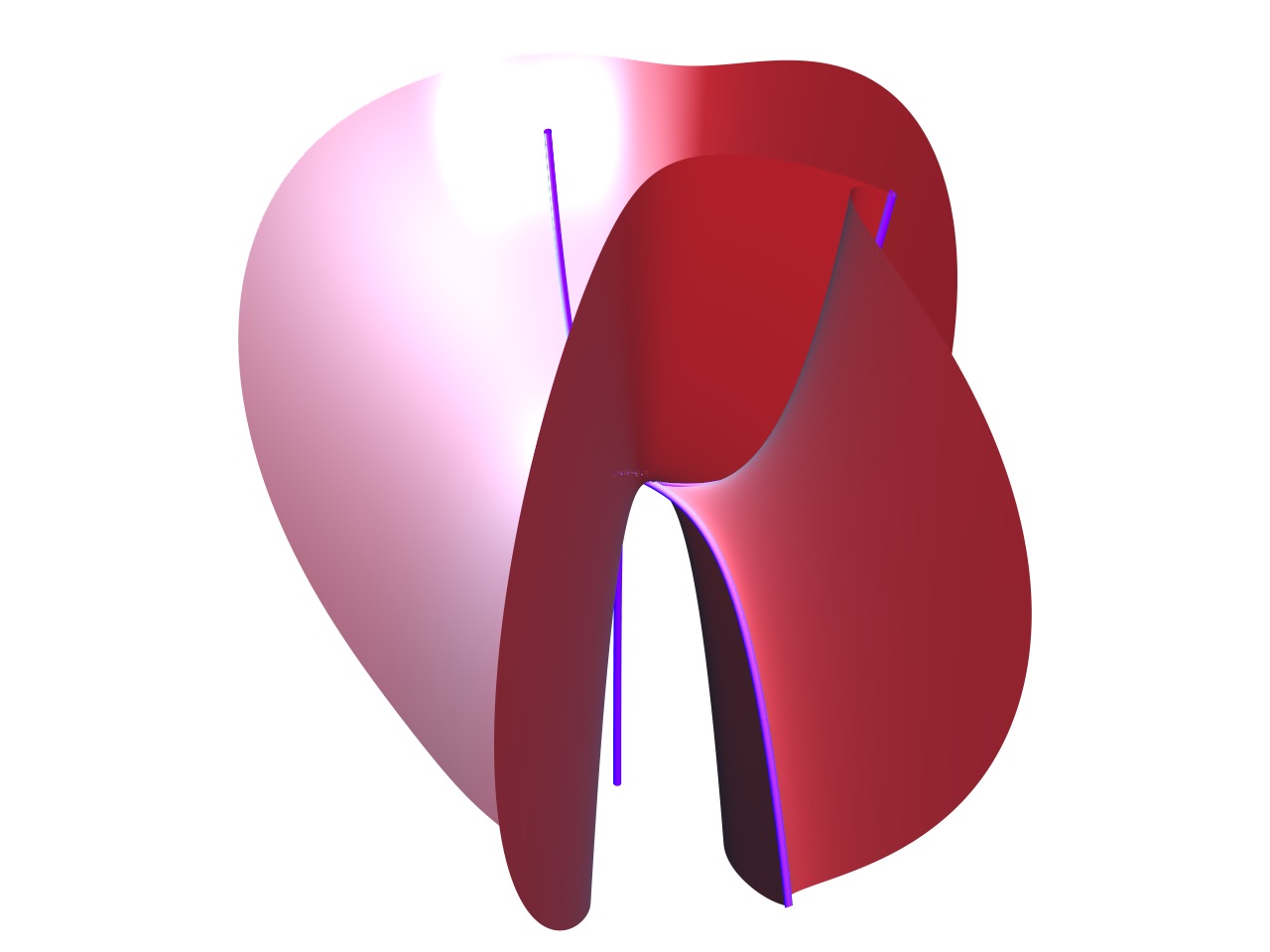}& 
\includegraphics[width=0.4 \textwidth]{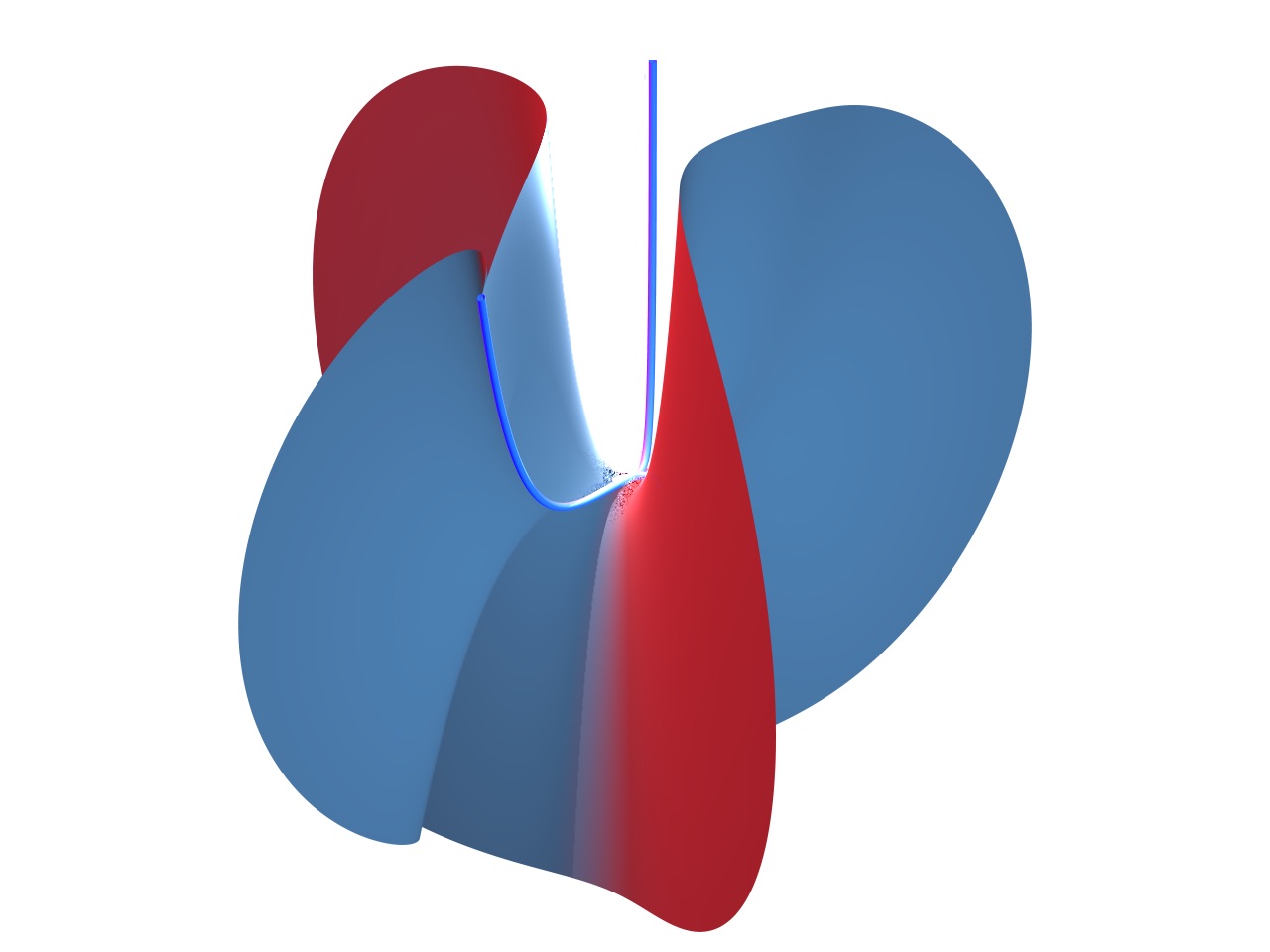}\end{tabular}
\caption{Two views of the discriminant of the group $G_{24}$ realized in $\RR^3$ with equation $\Delta=-2048x^9y+22016x^6y^3-256x^7z-60032x^3y^5+1088x^4y^2z+1728y^7+1008xy^4z-88x^2yz^2+z^3=0$. 
%fuer singular: -2048x9y+22016x6y3-256x7z-60032x3y5+1088x4y2z+1728y7+1008xy4z-88x2yz2+z3;
}
\end{figure}

\subsection{Isotypical components}  \label{Sub:isotypical}
\label{Sub:isotypical} Let $G \leqslant \GL(V)$ be a pseudo-reflection group, and adopt the notation from the last subsection for $R, S, z,J,$ and $\Delta$. 
Note that $R=S^G=K[f_1,\ldots,f_n]$ and $R/(\Delta)$ are graded rings with $\deg f_i = d_i$, the degrees of the basic invariants. The decomposition of $S$ as an $R$-module is given as follows: let $R_{+}$ be the set of invariants of $G$ with zero constant term, sometimes called the \emph{Hilbert ideal}. Then $S/(R_{+})$ is called the coinvariant algebra (here $(R_{+})$ denotes the ideal in $S$ generated by elements in $R_+$) and by the Theorem of Chevalley--Shephard--Todd (Thm.~\ref{Thm:CST}) one has
\[ S \cong R \otimes_K S/(R_{+})  \]
as graded $R$-modules. As $KG$-modules: 
\[ S\cong R \otimes_K KG \ .\]

With notation as above, one has the following simple fact.
\begin{lemma}\label{isotypical}
  Let $G$ be a finite group and $M$ a $K G$--module. Suppose that $r$ is the class number of $G$, i.e. the number of conjugacy class of $G$ or equivalently the number of isomorphism classes of irreducible representations of $G$. For $V_{i}$ an irreducible $G$--representation, the functors
  $\Hom_{K G}(V_{i},-)$ and $(-) \otimes_{K}V_{i}$ are adjoint.
We write  
$$\ev_{V_{i}}:\Hom_{K G}(V_{i}, M)\otimes_{K}V_{i}\to M$$
 for the evaluation map, which is the natural transformation of the composition of these functors to the identity functor.
The map $\ev_{V_{i}}$ is a split monomorphism of $K G$--modules, where $G$ acts on $\Hom_{KG}(V_{i}, M)\otimes_{K}V_{i}$ 
through the second factor. Its image is the isotypical component of $M$ of type $V_{i}$.
The sum of the evaluation maps, 
\[
\sum_{i=1}^{r}\ev_{V_{i}}:\bigoplus_{i=1}^{r}\Hom_{KG}(V_{i}, M)\otimes_{K}V_{i}\xto{\ \cong\ } M\,,
\]
is an isomorphism of $KG$--modules.\qed
\end{lemma}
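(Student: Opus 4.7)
The plan is to recognize this as a formal consequence of three classical facts: the tensor-hom adjunction, Maschke's theorem (valid since $|G|$ is invertible in $K$), and Schur's lemma (valid since $K$ is algebraically closed).

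First, I would verify the adjunction by pure generality. Viewing $- \otimes_K V_i \colon \Mod K \to \Mod{KG}$ (with $G$ acting on the $V_i$ tensor factor) as left adjoint to $\Hom_{KG}(V_i, -)\colon \Mod{KG}\to \Mod K$, one has the natural isomorphism
\[
\Hom_{KG}(W\otimes_K V_i,\,M) \;\cong\; \Hom_K\bigl(W,\,\Hom_{KG}(V_i,M)\bigr)\,,
\]
obtained by freezing the second slot, exactly as in the usual tensor-hom adjunction for $K$-modules but checking that the $G$-equivariance on the left corresponds under the adjunction to no further condition on the right. The map $\ev_{V_i}$ is then, by construction, the counit of this adjunction evaluated at $M$.

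Next, to show that $\ev_{V_i}$ is a split monomorphism with image the $V_i$-isotypical component, I would pass to the canonical isotypical decomposition. By Maschke's theorem $KG$ is semisimple, and the Artin--Wedderburn theorem combined with $K$ being algebraically closed gives $KG\cong\prod_{j=1}^r \End_K(V_j)$; consequently any $KG$-module $M$ admits a (unique) decomposition $M\cong\bigoplus_{j=1}^r M_j$, where $M_j$ is the sum of all $KG$-submodules isomorphic to $V_j$, and writing $M_j\cong V_j^{\oplus n_j}$ determines the multiplicities. Schur's lemma yields $\Hom_{KG}(V_i,V_j)=0$ for $i\neq j$ and $\Hom_{KG}(V_i,V_i)=K$, so $\Hom_{KG}(V_i,M)=\Hom_{KG}(V_i,M_i)\cong K^{n_i}$, and choosing a basis of this Hom-space produces an explicit identification $\Hom_{KG}(V_i,M)\otimes_K V_i \xrightarrow{\;\cong\;} V_i^{\oplus n_i}\cong M_i$ compatible with $\ev_{V_i}$. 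In particular $\ev_{V_i}$ is injective with image the isotypical summand $M_i$, hence is a split mono (the projection $M\to M_i\hookrightarrow M$ along the isotypical decomposition provides a $KG$-linear retraction).

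Finally, summing over $i$ one obtains the commutative diagram
\[
\bigoplus_{i=1}^{r}\Hom_{KG}(V_i,M)\otimes_K V_i \xrightarrow{\ \sum_i\ev_{V_i}\ } \bigoplus_{i=1}^{r}M_i = M,
\]
in which each factor is an isomorphism, so the sum is too. Since every step is formal once the semisimplicity and Schur-lemma inputs are in hand, there is no serious obstacle; the only point requiring a little care is checking naturality and $G$-equivariance of the adjunction isomorphism so that $\ev_{V_i}$ is genuinely a $KG$-linear map (with $G$ acting only through the second tensor factor on the left-hand side), which is immediate from tracing through the definitions.
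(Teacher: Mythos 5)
The paper marks this lemma with a \qed and provides no proof, treating it as a standard consequence of semisimplicity; your proof is correct and supplies exactly the routine details one would expect. You correctly identify $\ev_{V_i}$ as the counit of the adjunction $(-\otimes_K V_i)\dashv \Hom_{KG}(V_i,-)$, check $G$-equivariance by observing that the $G$-action on the source is through the second tensor factor, and then invoke Maschke, Artin--Wedderburn over an algebraically closed field, and Schur to reduce to the isotypical decomposition, after which each $\ev_{V_i}$ restricts to an isomorphism onto $M_i$ and the sum is the identity of $M=\bigoplus_i M_i$; this is precisely the argument the authors are implicitly appealing to.
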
\label{lemma:directsumDecomp}
%We set $M_{i}=\Hom_{KG}(V_{i}, M)$. 
\begin{lemma} \label{Lem:SMCM}
If $M$ is a projective module over the skew group ring $S*G$, then
each $\Hom_{KG}(V_{i}, M)$ is a maximal Cohen--Macaulay module over $R=S^{G}$. %which ring in turn is Cohen--Macaulay,
\end{lemma}

\begin{proof}
If $M$ is projective over $S*G$, then by definition, $M$ is also a projective $S$-module. Since $S$ is a CM-module over $R$ (see e.g., \cite[Prop.~5.4]{LeuschkeWiegand}), also $M$ is CM over $R$. By Lemma \ref{isotypical}, $M \cong \bigoplus_{i=1}^{r}\Hom_{KG}(V_{i}, M)\otimes_{K}V_{i}$ and each of the $\Hom_{KG}(V_{i}, M)\otimes_{K}V_{i}$ is a module over $R$. This implies that each $\Hom_{KG}(V_{i}, M)$ is CM over $R$.  \end{proof}

  Thus, we recover the well known
  decomposition as $G$-representations, see \cite{Auslander86}:
\[\label{eqn:Sdecomp} S  \cong \bigoplus_{i=1}^r \Hom_{KG}(V_i,S) \otimes_K V_i = \bigoplus_{i=1}^r S_i \otimes_K V_i \ , \]
with notation $S_i:=\Hom_{KG}(V_i,S)$. By Lemma \ref{Lem:SMCM}, each $S_i \otimes_K V_i$ is CM over $R$.

 The Jacobian $J\in S$ 
is an element of the isotypical component of $S$ to the inverse determinantal representation $\det^{-1}$, while
$z\in S$  is an element of the isotypical component of $S$ of the determinantal representation $\det$
of $G$, and, as $S$ is a free $R$--module, the pair $(J,z)$ constitutes, trivially, a matrix factorization of 
$\Delta\in R$. 

As $J$ and $z$ are relative invariants for $G$, multiplication with these elements on $S$ is
$G$--equivariant. More precisely, multiplication with $J, z$, respectively, yields for each $V_{i}$
a graded $G$--equivariant matrix factorization. For compact notation, set $V_{i}'= V_{i}\otimes \det$, which is 
again an irreducible $G$--representation. Further recall that the degrees of $J$ and $z$ resp., are $m$ and $m_1$. Now look at the exact sequence 
\[0 \xrightarrow{}S (-m) \otimes \textrm{det$^{-1}$}  \xto{J} S \xto{} S/(J) \xto{} 0.
\]
Apply $ \Hom_{KG}(V_i,-) $ %see Gonz-S-Verdier
to get
\[ 0 \xrightarrow{}\Hom_{KG}(V_i,S (-m) \otimes \textrm{det$^{-1}$})  \xto{} S_i \xto{} \Hom_{KG}(V_i,S/(J)) \xto{} 0 \ .\]
%where $M_V$ denotes the $V$-isotypical component.
Here $\Hom_{KG}(V_i,S (-m) \otimes \det^{-1}) \cong \Hom_{KG}(V_i \otimes \det,S)(-m)$. If we set as well $S_{i}'= \Hom_{K G}(V_{i}',S)$ this is $S_i'(-m)$.  Now denoting $\Hom_{KG}(V_i,S/(J))=M_i$, 
 we have short exact sequences of graded $R$--modules 
\begin{align}  \label{Eq:isotypical}
\xymatrix{
0\ar[r]&S_{i}'(-m)\ar[r]^-{J}&S_{i}\ar[r]&M_{i}\ar[r]&0 \\
0\ar[r]&S_{i}(-m-m_1)\ar[r]^-{z}&S_{i}'(-m)\ar[r]&N_{i}\ar[r]&0
}
\intertext{with $N_i=\Hom_{KG}(V_i, S/(z))(-m)$. Here the second one comes from the exact sequence}
\xymatrix{
0\ar[r]&S(-m-m_1) \ar[r]^-{z}& S  \otimes \mathrm{\det^{-1}} \ar[r]&S/(z) \ar[r]&0
}
\intertext{We also have the exact sequences} \label{eqn:ses}
\xymatrix{
0\ar[r]&N_{i}\ar[r]&S_{i}\otimes_{R}R/(\Delta)\ar[r]&M_{i}\ar[r]&0\\
0\ar[r]&M_{i}(-m-m_1)\ar[r]&S_{i}'\otimes_{R}R/(\Delta)\ar[r](-m)&N_{i}\ar[r]&0
}
\end{align}
which are already short exact sequences of maximal Cohen--Macaulay $R/(\Delta)$--modules.

To sum up this discussion, we can state the following
\begin{lemma} 
We have the direct sum decompositions: 
\[
S/(J) \cong \bigoplus_{i=0}^{r}M_{i}\otimes_{K}V_{i}\quad\text{and}\quad
S/(z)\cong \bigoplus_{i=0}^{r}N_{i}(m)\otimes_{K}V_{i}'
\]
as graded $R/(\Delta){-}KG$--modules. If $\Delta$ is irreducible it follows that 
\[
\rank_{R/(\Delta)}M_{i}+\rank_{R/(\Delta)}N_{i} = \dim_{K}V_{i}=\rank_{R}S_{i}=\rank_{R}S_{i}'\,.
\] 
\end{lemma}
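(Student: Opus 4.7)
My plan is to derive both decompositions as immediate consequences of the isotypical decomposition of Lemma \ref{isotypical} applied to the graded $S*G$--modules $S/(J)$ and $S/(z)$. First I would note that because $J$ and $z$ are relative invariants, the ideals $(J)$ and $(z)$ are $G$-stable, so both quotients inherit $S*G$--module structures; moreover $\Delta=Jz=zJ\in R$ lies in each of these ideals, so the $R$-action factors through $R/(\Delta)$ and commutes with the $G$-action, making both quotients into graded $R/(\Delta)\text{-}KG$--bimodules.

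For $S/(J)$, Lemma \ref{isotypical} directly yields
\[
S/(J)\cong\bigoplus_{i=0}^{r}\Hom_{KG}(V_{i},S/(J))\otimes_{K}V_{i}=\bigoplus_{i=0}^{r}M_{i}\otimes_{K}V_{i}
\]
by the very definition $M_{i}=\Hom_{KG}(V_{i},S/(J))$, giving the first decomposition as graded bimodules. For $S/(z)$ the same lemma produces $S/(z)\cong\bigoplus_{i}\Hom_{KG}(V_{i},S/(z))\otimes_{K}V_{i}$, which I would then recast in terms of the reindexed irreducibles $V_{i}'=V_{i}\otimes\det$ (a permutation of the family $\{V_{i}\}$). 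The identification $N_{i}(m)\cong\Hom_{KG}(V_{i}',S/(z))$, which bridges the excerpt's definition $N_{i}=\Hom_{KG}(V_{i},S/(z))(-m)$ with the claimed decomposition, follows from keeping track of the $\det^{-1}$-twist in the ambient sequence $0\to S(-m-m_{1})\xto{z}S\otimes\det^{-1}\to S/(z)\to 0$, the shift by $(-m)$, and the bijection between the families $\{V_{i}\}$ and $\{V_{i}'\}$.

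For the rank identity, under the hypothesis that $\Delta$ is irreducible, $R/(\Delta)$ is a domain and rank is additive on short exact sequences of torsion-free modules. The third short exact sequence in the excerpt then gives
\[
\rank_{R/(\Delta)}M_{i}+\rank_{R/(\Delta)}N_{i}=\rank_{R/(\Delta)}\bigl(S_{i}\otimes_{R}R/(\Delta)\bigr)=\rank_{R}S_{i}.
\]
The Chevalley--Shephard--Todd Theorem (Theorem \ref{Thm:CST}) supplies $S\cong R\otimes_{K}KG$ as $R\text{-}KG$--bimodules; decomposing the regular representation as $KG\cong\bigoplus_{i}V_{i}^{\oplus\dim V_{i}}$ yields $S_{i}\cong R^{\oplus\dim V_{i}}$, so $\rank_{R}S_{i}=\dim_{K}V_{i}=\rank_{R}S_{i}'$ (the last equality because $\dim V_{i}'=\dim V_{i}$).

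The step requiring the most care is the bookkeeping in the second decomposition: reconciling the $\det^{-1}$-twist in the ambient sequence for $S/(z)$, the grading shift $(-m)$ built into the definition of $N_{i}$, and the reindexing $V_{i}\leftrightarrow V_{i}'$. Everything else is a direct application of Lemma \ref{isotypical}, Theorem \ref{Thm:CST}, and additivity of rank over short exact sequences of torsion-free modules.
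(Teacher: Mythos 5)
Your proof is correct and mirrors the paper's implicit argument (the lemma is stated with no proof, following directly from Lemma \ref{isotypical}, the preceding exact sequences, and additivity of rank over $R/(\Delta)$ when $\Delta$ is irreducible). You also rightly notice that the paper's displayed formula $N_i=\Hom_{KG}(V_i,S/(z))(-m)$ should be read as $\Hom_{KG}(V_i',S/(z))(-m)$ (as the exact sequence $0\to S_i(-m-m_1)\xto{z}S_i'(-m)\to N_i\to 0$ in fact forces); with that correction the second decomposition is an immediate reindexed application of Lemma \ref{isotypical}, exactly as you describe.
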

\begin{proof} The direct sum decomposition follows from Lemma~\ref{lemma:directsumDecomp}, and the above discussion.  The second statement follows from the short exact sequences~\eqref{eqn:ses}
  above.
\end{proof}
 \begin{example} \label{Ex:isotypical-triv}
Consider the representation $V_{\mathrm{triv}}$ (instead of indexing the representations by $V_i$ we index $V_{\rho}$ by a specific representation $\rho$) and thus $V'_{\mathrm{triv}}=V_{\det}$. Then the exact sequence \eqref{Eq:isotypical} looks as follows
 \[ 0 \xto{} Rz(-m)\xto{J} R \xto{} R/(\Delta) \xto{} 0 \ , \]
 since $S'_{\mathrm{triv}} \cong Rz$ and $S_{\mathrm{triv}} \cong R$. This means that $M_{\mathrm{triv}}=R/(\Delta)$ and shows that $R/(\Delta)$ is a direct summand of $S/(J)$. \\
 For $V_{\det^{-1}}$  on the other hand we obtain from \eqref{Eq:isotypical}
  \[ 0 \xto{} R(-m) \xto{J} RJ \xto{}  0 \ . \]
  Thus $M_{\det^{-1}}=0$ and the inverse determinantal representation does not contribute a $R$-direct summand of $S/(J)$. Note that if $G$ is a true reflection group, then $\det=\det^{-1}$.
 \end{example}

\subsection{Endomorphism rings and Auslander's theorem}
One of the key results by Maurice Auslander in \cite[p.515]{Auslander86} asserts that the ring homomorphism \eqref{Eq:AtoHomS} from $A \xto{} \End_{R}(S)$
 is an isomorphism if 
$G$ is small, for a detailed proof see e.g. \cite[Ch.~5,Thm 5.15]{LeuschkeWiegand} or \cite[Thm 3.2]{IyamaTakahashi}:

\begin{theorem}[Auslander] \label{thm:Auslander}
Let $S$ be as above and assume that $G \leqslant \GL(V)$, with $\dim V=n$, is small and set $R=S^G$. Then we have an isomorphism of algebras:
$$ A=S * G \xrightarrow{\cong} \End_R(S) \ , \  s \delta_g \mapsto (x \mapsto sg(x)) \ .$$
Moreover, $S* G$ is a CM-module over $R$ and $\gl (S*G)=n$.%, and $Z(A)=R$.
\end{theorem}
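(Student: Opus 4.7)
The plan is to verify each of the three assertions in turn: that the natural map $\Phi\colon A=S{\ast} G\to\End_R(S)$ sending $s\delta_g$ to $x\mapsto s\,g(x)$ is a ring isomorphism, that $A$ is Cohen--Macaulay over $R$, and finally that $\gl A=n$. The map $\Phi$ is manifestly a well-defined ring homomorphism: the twisting relation $\delta_g s = g(s)\delta_g$ corresponds exactly to composition of the $R$-linear endomorphisms $s\cdot(-)$ and $g(-)$ on $S$, and $R$-linearity is automatic because $R=S^G$ commutes with each $g\in G$.

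For injectivity and surjectivity I would argue by reduction to codimension one. First I pass to the field of fractions. Write $Q=Q(S)$ and $F=Q(R)=Q^G$; the extension $Q/F$ is Galois with group $G$, so by the normal basis theorem $Q*G\xto{\cong}\End_F(Q)$ is an isomorphism of $F$-algebras. Tensoring $\Phi$ with $F$ over $R$ recovers this map, so $\Phi$ is generically an isomorphism and in particular injective. To upgrade this, I observe that $A=S\otimes_K KG$ is a free $S$-module, hence a reflexive (indeed maximal Cohen--Macaulay) $R$-module because $S$ is Cohen--Macaulay and finite over the regular ring $R$; similarly, $\End_R(S)$ is reflexive since $S$ is reflexive over $R$. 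Both sides have the same rank $|G|^{2}$ over $R$. Thus it suffices to prove $\Phi$ is an isomorphism after localization at every height one prime $\fp\subset R$. Here the key input is the smallness of $G$: the action of $G$ on $\Spec S$ is free in codimension one, so the ramification locus of $S/R$ has codimension $\geq 2$. Consequently $S_\fp$ is étale (in fact a product of localizations corresponding to the primes of $S$ over $\fp$), $G$ permutes these primes freely, and the classical identification of the skew group ring with matrices over the invariants gives the isomorphism in codimension one. Reflexivity then propagates it to all of $R$.

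The Cohen--Macaulay assertion is immediate from the above: $S$ is Cohen--Macaulay (being a polynomial ring), finite as an $R$-module, so it is maximal Cohen--Macaulay over the regular ring $R$; and $A\cong S^{|G|}$ as a left $R$-module. For the global dimension, I would use the correspondence of Lemma~\ref{Lem:ProjCorresp}: indecomposable projective $A$-modules are in bijection with irreducible $KG$-representations $V_i$ via $P_i=S\otimes_K V_i$, and the simple $A$-modules are $k\otimes_R P_i=V_i$, where $k=S/\fm$. Because $S$ is a polynomial ring on which $G$ acts linearly, the Koszul complex on $x_1,\dots,x_n$ is $G$-equivariant and yields a length-$n$ resolution of $k$ by free $S$-modules. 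Twisting by $V_i$ and descending through the skew group ring structure exhibits a projective resolution of the simple $A$-module $V_i$ of length exactly $n$, so $\gl A\leq n$; the bound is sharp because $\Ext^n_A(k,k)\neq 0$, as can be read off the top of the Koszul complex.

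The main obstacle is the surjectivity of $\Phi$, and more precisely the codimension-one analysis: one must translate the geometric condition ``$G$ small $\Rightarrow$ ramification in codimension $\geq 2$'' into the ring-theoretic statement that $\Phi_\fp$ is an isomorphism at every height one $\fp$. Everything else (the generic isomorphism, the reflexivity, the Cohen--Macaulayness, the Koszul resolution) is essentially automatic once this codimension-one step is in hand.
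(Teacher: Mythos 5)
The paper does not present its own proof of this statement: it cites Auslander's original paper and refers to \cite[Prop.~10.9]{Yos} for the details. Your outline follows that standard argument (generic isomorphism via Galois descent, reduction to height-one primes by reflexivity of both sides, \'etaleness in codimension one forced by smallness, Koszul resolution for the global dimension), so the architecture is the right one.

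There is, however, one concrete slip in your justification that the bound $\gl A\leq n$ is attained. You assert $\Ext^n_A(k,k)\neq 0$, ``read off the top of the Koszul complex.'' But
\[
\Ext^n_A(k,k)\;\cong\;\Ext^n_S(k,k)^G\;\cong\;\bigl(\Lambda^n V^*\bigr)^G ,
\]
and $G$ acts on $\Lambda^nV^*$ by the character $\det^{-1}$. This invariant space vanishes unless $\det$ is trivial on $G$, that is, unless $G\leqslant\SL(V)$ --- and a small group need not lie in $\SL(V)$. For instance, $G=\mu_3=\langle\diag(\zeta,\zeta)\rangle\leqslant\GL(2,\CC)$ with $\zeta$ a primitive cube root of unity is small (neither nontrivial element fixes a hyperplane), yet $\det=\zeta^2$ is nontrivial, so $\Ext^n_A(k,k)=0$ there. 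The sharpness is correctly obtained either by exhibiting $\Ext^n_A(k,\det V)\neq 0$, or, more directly, by observing that the twisted Koszul resolution of each simple $V_i$ is \emph{minimal} (its differentials have entries in $\fm_S$), whence $\pdim_A V_i=n$ for every $i$. With that repair your proof is complete; the remaining steps, in particular the codimension-one analysis where smallness is used, are sound.
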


\begin{remark} 
By an obvious calculation, one sees that the centre $Z(A)=R$.
\end{remark}

%%%%%%%%%%
\subsection{Noncommutative resolutions of singularities and the McKay correspondence} \label{Sec:ClassicalMcKay}

%%%%%%%%%%%%%

A resolution of singularities of an affine scheme $X=\Spec(R)$ is a proper birational map $\pi: \widetilde X \xto{} X$ from a smooth scheme $\widetilde X$ to $X$ such that $\pi$ is an isomorphism over the smooth points of $X$. Noncommutative resolutions of singularities of a ring $R$ (or of $\Spec(R)$) are certain noncommutative $R$-algebras that should provide an algebraic analog of this geometric notion. For the rationale behind the definition and more background about noncommutative (crepant) resolutions see \cite{Leuschke12, vandenBergh04,BFI-ICRA}. 

\begin{defn}
Let $R$ be a commutative noetherian ring. Let $M$ be a finitely generated $R$-module with $\supp M = \Spec(R)$. Then $\Lambda=\End_RM$ is called a \emph{noncommutative resolution (NCR)} of $R$ if $\gl \Lambda < \infty$.  \\
If $\Lambda$ is any finitely generated $R$-algebra that is faithful as $R$-module and $\gl \Lambda < \infty$, then we call $\Lambda$ a \emph{weak NCR} of $R$.
Note that in the case of a weak NCR we do not require that $\Lambda$ is an Endomorphism ring or even an $R$-order.
\end{defn}

\begin{remark} \label{Rmk:NCR}
In Michel Van den Bergh's original treatment \cite{vandenBergh04}, a \emph{noncommutative crepant resolution (=NCCR)} was defined over a Gorenstein domain. With our definition above, a NCCR over a commutative noetherian ring $R$ is an NCR that is additionally a nonsingular order over $R$. The (weak) NCRs constructed in this paper are (almost) never nonsingular orders: by definition if a finitely generated $R$-algebra $\Lambda$ is a nonsingular $R$-order, then $\gl(\Lambda)_{\mathfrak{p}}=\dim R_{\mathfrak{p}}$ for all $\mathfrak{p} \in \Spec(R)$. This implies in particular that $\gl \Lambda = \dim R$. But our NCRs are of global dimension $\dim R+1$. For more detail see Remark \ref{Rmk:mu2} and Cor.~\ref{Cor:NCRdisc}.   \\
NCRs were first defined in \cite{DaoIyamaTakahashiVial} over normal rings, we use here the more general definition of \cite{DFI}.
\end{remark}

In particular, Auslander's theorem can be reformulated in terms of noncommutative resolutions, cf.~\cite{vandenBergh04, IyamaWemyss10}:

\begin{theorem}
Let $G \leqslant \GL(V)$ small. Then $A=S*G$ yields a NCCR over $R=S^G$, that is, $A \cong \End_RS$ has global dimension $n$ and is a nonsingular order over $R$. 
\end{theorem}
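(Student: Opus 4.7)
The statement repackages Auslander's theorem (recalled immediately above) into the noncommutative resolutions language of Remark~\ref{Rmk:NCR}, so the bulk of the assertion requires no new argument. From Auslander I already have $A\cong\End_{R}S$, $\gl A=n$, and that $A$ is a maximal Cohen--Macaulay $R$-module. What remains is to verify that $A$ is a nonsingular $R$-order, that is, that it is an $R$-order \emph{and} that $\gl A_{\mathfrak{p}}=\dim R_{\mathfrak{p}}$ for every $\mathfrak{p}\in\Spec R$.

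For the order condition I would argue as follows. Because $|G|$ is invertible and $A=\bigoplus_{g\in G}S\delta_{g}$ is free of rank $|G|$ as a left $S$-module, $A$ is finitely generated and faithful over $R$. Since $S$ is regular and $R\hookrightarrow S$ splits via the Reynolds idempotent $e=\tfrac{1}{|G|}\sum_{g\in G}\delta_{g}$, the ring $S$ is maximal Cohen--Macaulay over $R$, and therefore so is the free $S$-module $A$. Combined with faithfulness and the CM property over $R$, this gives the $R$-order property.

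For nonsingularity, fix $\mathfrak{p}\in\Spec R$. Localisation does not increase global dimension, so $\gl A_{\mathfrak{p}}\leqslant \gl A=n<\infty$, while the MCM property localises to give $\mathrm{depth}_{R_{\mathfrak{p}}}A_{\mathfrak{p}}=\dim R_{\mathfrak{p}}$. The noncommutative Auslander--Buchsbaum formula (Ramras), applied to any simple $A_{\mathfrak{p}}$-module $M$ (whose depth over $R_{\mathfrak{p}}$ is zero), then reads
\[
\pdim_{A_{\mathfrak{p}}}M+\mathrm{depth}\,M=\mathrm{depth}\,R_{\mathfrak{p}}=\dim R_{\mathfrak{p}},
\]
yielding $\gl A_{\mathfrak{p}}=\dim R_{\mathfrak{p}}$ as required.

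The main, and really only, technical point is this last step, which promotes the \emph{inequality} $\gl A_{\mathfrak{p}}\leqslant n$ into an \emph{equality} $\gl A_{\mathfrak{p}}=\dim R_{\mathfrak{p}}$ at every prime; the crucial input making this work is precisely that $A$ is MCM over $R$. Everything else is bookkeeping on top of Auslander's theorem and the structure of $A=S\ast G$ as a free $S$-module.
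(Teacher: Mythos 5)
Your overall strategy — read off the endomorphism ring, global dimension, and CM claim directly from Auslander's theorem, verify the order condition through $R\subseteq S\subseteq A$, and then derive nonsingularity from the noncommutative Auslander--Buchsbaum formula — is sound and is essentially what underlies the citations to van den Bergh and to Iyama--Wemyss that the paper gives in lieu of a proof. However, the step you describe as the ``only technical point'' contains a genuine gap. The formula
\[
\pdim_{A_{\mathfrak{p}}}M + \mathrm{depth}_{R_{\mathfrak{p}}}M = \mathrm{depth}_{R_{\mathfrak{p}}}A_{\mathfrak{p}}
\]
does \emph{not} hold under the hypothesis you single out as crucial, namely that $A$ is maximal Cohen--Macaulay over $R$. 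Take $R=k[[x]]$ and $\Lambda$ the ring of upper-triangular $2\times 2$ matrices over $R$: then $\Lambda$ is $R$-free, hence MCM, and $\gl\Lambda=2<\infty$, yet the simple module $S$ at the second vertex has minimal projective resolution $0\to P_1\to P_1\oplus P_2\to P_2\to S\to 0$, so that $\pdim_{\Lambda}S+\mathrm{depth}_R S=2+0\neq 1=\mathrm{depth}_R\Lambda$. The correct statements of the formula (in Ramras, and in the version used by Iyama--Wemyss) carry the additional hypothesis that $\Lambda$ be a classical $R$-order, i.e.\ that $\Lambda\otimes_R Q(R)$ be a semisimple $Q(R)$-algebra — a hypothesis that fails for the triangular algebra. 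In your situation it does hold: $A\otimes_R Q(R)\cong Q(S)*G\cong\End_{Q(R)}(Q(S))$ by Galois descent (Proposition~\ref{prop:Galoisdescent}), a matrix algebra over $Q(R)$. This is the input your argument silently rests on and must be made explicit; it is not a consequence of the MCM property.

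For what it is worth, there is also a route that bypasses Auslander--Buchsbaum entirely. For any $\mathfrak{p}\in\Spec R$ one has $A_{\mathfrak{p}}\cong S_{\mathfrak{p}}*G$, where $S_{\mathfrak{p}}=S\otimes_R R_{\mathfrak{p}}$ is a semilocal regular ring with $\gl S_{\mathfrak{p}}=\dim R_{\mathfrak{p}}$ (integrality and going-down over the normal ring $R$). For any noetherian ring $T$ with $|G|$ invertible one has $\gl(T*G)=\gl T$: the inequality ``$\leq$'' follows from $\Ext^i_{T*G}(-,-)\cong\Ext^i_T(-,-)^G$, and ``$\geq$'' from Frobenius reciprocity applied to induced modules. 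Hence $\gl A_{\mathfrak{p}}=\gl S_{\mathfrak{p}}=\dim R_{\mathfrak{p}}$ directly, with no appeal to depth. This is also the content of the Brown--Hajarnavis ``homologically homogeneous'' property of $S*G$, which the paper itself invokes later in Section~\ref{Sec:Decomposition}.
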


For more details and information on the classical McKay correspondence we refer to the literature \cite{BuchweitzMFO,BFI-ICRA,GonzalezSprinbergVerdier,ReidBourbaki}.

%%%%%%%%%
\section{The Geometry} \label{Sec:Geometry}
%%%%%%%%%%

%%%%%%%%%%%%%
\subsection{Some general facts on group actions}
%%%%%%%%%%%%

We begin with the following general results on group actions that we quote from Bourbaki.
\begin{proposition}[{\cite[V.1.9 Cor.]{BourbakiAC}}]
Let $G$ be a finite group that acts through ring automorphisms on a commutative integral 
domain $S$. 
The group then acts as well through automorphisms on the field of fractions $Q(S)$ of $S$ 
and the fixed field $Q(S)^G$ is the field of fractions of the invariant integral subdomain 
$R=S^{G}$, that is, $Q(R)\cong Q(S)^{G}$.\qed
\end{proposition}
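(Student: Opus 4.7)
The plan is to prove the two inclusions $Q(R)\subseteq Q(S)^{G}$ and $Q(S)^{G}\subseteq Q(R)$ inside $Q(S)$, after first checking that the $G$-action on $S$ extends naturally to $Q(S)$. The extension is immediate: each $g\in G$ is a ring automorphism of the domain $S$, so $g(t)\neq 0$ whenever $t\neq 0$, and the rule $g(s/t):=g(s)/g(t)$ is well defined and yields a $G$-action on $Q(S)$ by field automorphisms. The inclusion $Q(R)\subseteq Q(S)^{G}$ is then trivial, since for $r_{1},r_{2}\in R$ with $r_{2}\neq 0$ we have $g(r_{1}/r_{2})=g(r_{1})/g(r_{2})=r_{1}/r_{2}$ for every $g\in G$.

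For the reverse inclusion I would use the standard \emph{conjugate denominator} trick. Given $x=s/t\in Q(S)^{G}$ with $s,t\in S$, $t\neq 0$, set
\[
N:=\prod_{g\in G}g(t)\in S,\qquad N':=\prod_{g\neq 1}g(t)\in S,
\]
so that $N=tN'$. Because $G$ permutes the factors of $N$, the product $N$ is $G$-invariant, hence $N\in R$. Now rewrite $x=sN'/N$. The numerator $sN'=xN$ is a product of two elements of $Q(S)^{G}$, so it is itself $G$-invariant; as it also lies in $S$, it lies in $S\cap Q(S)^{G}=S^{G}=R$. Therefore $x\in Q(R)$, completing the proof.

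There is no real obstacle here; the only place the hypothesis that $G$ is finite intervenes is in forming the finite product $N$, which would not make sense otherwise. No further technicalities are needed because $S$ is a domain, so all denominators that appear are automatically nonzero and the localization $Q(R)\hookrightarrow Q(S)$ is an honest inclusion of fields.
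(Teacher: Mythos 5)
Your proof is correct. The paper itself gives no argument — it simply cites Bourbaki V.1.9 Cor.\ and closes with \qed — and your ``conjugate denominator'' argument (multiply numerator and denominator by $\prod_{g\neq 1}g(t)$ to force an invariant denominator $N=\prod_{g\in G}g(t)\in R$, then observe the numerator $xN\in S\cap Q(S)^{G}=R$) is precisely the standard proof found in Bourbaki and most commutative algebra texts, so there is nothing to reconcile.
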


In the setting of the preceding Proposition, a crucial role will be played by the map 
$\vp:S\otimes_{R}S\to \Maps(G,S)$ given by 
$$\vp\left(\sum_{i=1}^{m}x_{i}\otimes y_{i}\right)(g) = \sum_{i=1}^{m}x_{i}g(y_{i})\in S $$
 with $(x_{i}, y_{i})\in S\times S, $ for  $i=1,
 \ldots,m,$ a finite family of pairs from $S$.  
Both source and target of this map are naturally $R$--modules and $\vp$ is $R$--linear 
with respect to these structures. 

Moreover, identifying naturally $Q(R)\otimes_{R}(S\otimes_{R}S) \cong Q(S)\otimes_{Q(R)}Q(S)$ 
and $Q(R)\otimes_{R}\Maps(G,S)\cong \Maps(G,Q(S))$, the induced map 
$\psi=Q(R)\otimes_{R}\vp$ of vector spaces over $Q(R)$ identifies with 
$\psi\left(\sum_{i=1}^{m}x_{i}\otimes y_{i}\right)(g) = \sum_{i=1}^{m}x_{i}g(y_{i})\in Q(S)$ for 
$(x_{i}, y_{i})\in Q(S)\times Q(S)$ a finite family of pairs from $Q(S)$.

Galois descent then yields the following fact%
\footnote{This result has also been called
``a strong form of Hilbert's Theorem $90$''; see \url{https://math.berkeley.edu/~ogus/Math_250A/Notes/galoisnormal.pdf}}:
%\end{sit}

\begin{proposition}[{\cite[V.\S10, no.4, Cor.~of Prop.~8]{BourbakiALG}}]
\label{prop:Galoisdescent}
If $G$ is a finite subgroup of the group of ring automorphisms of a commutative integral 
domain $S$, 
then the map 
\[
\psi\colon Q(S)\otimes_{Q(R)}Q(S)\lto \Maps(G,Q(S))
\] 
is bijective.\qed
\end{proposition}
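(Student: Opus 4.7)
My plan is to reduce the statement to classical Galois theory applied to the extension $L/K$ with $L=Q(S)$ and $K=Q(R)$. First I would note that, by the preceding proposition, $K=L^{G}$; since every automorphism of the domain $S$ extends uniquely to its field of fractions, the natural map $G\to \Aut(L)$ is injective, so Artin's theorem on invariant subfields yields that $L/K$ is a finite Galois extension with Galois group $G$, and in particular $[L:K]=|G|$.

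With this in hand, both source and target of $\psi$ carry natural $L$-vector space structures: the source $L\otimes_{K}L$ via the left tensor factor and the target $\Maps(G,L)$ by pointwise multiplication. A direct check shows that $\psi$ is $L$-linear for these structures (and in fact an $L$-algebra homomorphism when $\Maps(G,L)\cong L^{|G|}$ is equipped with the componentwise product). Each side is then free of rank $|G|$ over $L$, so by a dimension count it suffices to prove injectivity or surjectivity of $\psi$.

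I would establish surjectivity by dualizing and appealing to Dedekind's theorem on linear independence of characters. Any $L$-linear functional on $\Maps(G,L)$ has the form $(x_{g})_{g}\mapsto \sum_{g}c_{g}x_{g}$ for some tuple $(c_{g})\in L^{|G|}$; such a functional vanishes on the image of $\psi$ precisely when $\sum_{g}c_{g}\,g(b)=0$ for every $b\in L$, that is, when the $L$-linear combination $\sum_{g}c_{g}\,g$ of distinct automorphisms is the zero map $L\to L$. Dedekind's theorem then forces $c_{g}=0$ for all $g$, so no nonzero functional annihilates the image and $\psi$ is surjective, hence bijective.

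The only substantive ingredients are Dedekind's/Artin's linear independence of distinct field automorphisms together with the Galois-theoretic dimension equality $[L:K]=|G|$; both are entirely standard and there is no real obstacle. As a sanity check, the conclusion $L\otimes_{K}L\cong \prod_{g\in G}L$ is precisely the familiar étale splitting of a Galois extension after base change to $L$, and $\psi$ is the canonical isomorphism realizing it.
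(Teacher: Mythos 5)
Your argument is correct and is exactly the standard Galois descent proof that the paper references: the paper itself does not re-prove the statement but simply cites Bourbaki (\cite[V.\S10, no.4, Cor.~of Prop.~8]{BourbakiALG}), and that citation rests on the same two ingredients you isolate — Artin's theorem identifying $Q(S)/Q(R)$ as a Galois extension with group $G$ of degree $|G|$, and Dedekind's linear independence of distinct automorphisms to force surjectivity after the $L$-dimension count. Nothing is missing; your dualization step is sound because the elements $1\otimes y$ span $L\otimes_{K}L$ over its left $L$-structure, so a functional $(x_{g})\mapsto\sum_{g}c_{g}x_{g}$ kills the image of $\psi$ precisely when $\sum_{g}c_{g}\,g=0$ as a map $L\to L$.
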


\subsection{The structure of $\vp$} (See also \cite{Watanabe} for the material of this subsection.)\\
To study $\vp$ further, note next that with respect to the natural $R$--algebra structure on 
$S\otimes_{R}S$ and the diagonal $R$--algebra structure on $ \Maps(G,S)\cong S^{|G|}$, 
endowed with the componentwise operations, the map $\vp$ is a homomorphism of $R$--algebras.\\
Let $\ev_{g}:  \Maps(G,S)\to S$ be the evaluation at $g\in G$, so that
\[
\ev_{g}\vp\left(\sum_{i=1}^{m}x_{i}\otimes y_{i}\right) =  \sum_{i=1}^{m}x_{i}g(y_{i})\in S
\]
yields an $R$--algebra homomorphism $\ev_{g}\vp\colon S\otimes_{R}S\to S$.
\begin{lemma}
\label{kervp}
With notation as just introduced and with hypotheses as in Proposition {\em \ref{prop:Galoisdescent}}, one has
\begin{enumerate}[\rm (a)]
\item
\label{vp1}
%\cite[p.8]{Watanabe}
For each $g\in G$, the $R$--algebra homomorphism $\ev_{g}\vp$ is surjective with kernel the prime ideal
\begin{align*}
I_{g} =(1\otimes s - g(s)\otimes 1; s\in S)\subseteq S\otimes_{R}S\,.
\end{align*}
\item 
\label{vp2}
The family of $R$--algebra homomorphisms $(\ev_{g})_{g\in G}$ identifies $\Maps(G,S)$ with the 
$S\otimes_{R}S$--algebra $\prod_{g\in G}(S\otimes_{R}S)/I_{g}$.

\item
\label{vp3} 
The kernel of $\vp$ equals $\cap_{g\in G}I_{g}$. The image of $\vp$ is isomorphic to the reduced $R$--algebra 
$\Imm\vp \cong (S\otimes_{R}S)/\cap_{g\in G}I_{g}$.

\item
\label{vp4} 
The ideal $\cap_{g\in G}I_{g}$ is the nilradical of the ring $S\otimes_{R}S$.

\item
\label{vp5} 
The kernel and cokernel of $\vp$ are $R$--torsion modules.
\end{enumerate}
\end{lemma}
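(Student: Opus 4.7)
The five statements form a natural chain, and I would establish them in order. For \eqref{vp1}, the inclusion $I_g \subseteq \ker(\ev_g\vp)$ is the computation $\ev_g\vp(1\otimes s - g(s)\otimes 1) = g(s)-g(s)=0$, and surjectivity is clear from $\ev_g\vp(s\otimes 1)=s$. For the reverse inclusion, I would observe that modulo $I_g$ every simple tensor $x\otimes y$ reduces to $xg(y)\otimes 1$, so the composition $S\hookrightarrow S\otimes_R S \twoheadrightarrow (S\otimes_R S)/I_g$ is surjective and is inverse to the map induced by $\ev_g\vp$ on the quotient. This identifies $(S\otimes_R S)/I_g \cong S$, which in particular shows $I_g$ is prime and $\ker(\ev_g\vp)=I_g$.

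Part \eqref{vp2} is then reassembly: each component $\ev_g\vp$ of $\vp$ factors through the isomorphism $(S\otimes_R S)/I_g \cong S$ from \eqref{vp1}, and the product of these factorizations supplies the stated identification of $\Maps(G,S)$ with $\prod_{g\in G}(S\otimes_R S)/I_g$. Part \eqref{vp3} is then immediate: $\alpha\in\ker\vp$ iff $\ev_g\vp(\alpha)=0$ for every $g$, iff $\alpha\in\bigcap_g I_g$; and since $\Imm\vp$ embeds into the reduced ring $\Maps(G,S)\cong S^{|G|}$, it is itself reduced. For \eqref{vp5}, I would tensor the four-term sequence $0\to\ker\vp\to S\otimes_R S\xrightarrow{\vp}\Maps(G,S)\to\coker\vp\to 0$ with $Q(R)$ over $R$; the resulting central map is the isomorphism $\psi$ of Proposition \ref{prop:Galoisdescent}, forcing both $(R\setminus\{0\})^{-1}\ker\vp$ and $(R\setminus\{0\})^{-1}\coker\vp$ to vanish, which is precisely the $R$-torsion assertion.

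The technical heart is \eqref{vp4}. The inclusion $\mathrm{nilrad}(S\otimes_R S)\subseteq\bigcap_g I_g$ follows from \eqref{vp3}, each $I_g$ being prime. For the reverse direction I plan to show that the $I_g$ exhaust the minimal primes of $S\otimes_R S$. The key input is that $S\otimes_R S$ is integral over its subring $S\otimes 1\cong S$: each $s\in S$ satisfies its characteristic polynomial $p(X)=\prod_{g\in G}(X-g(s))\in R[X]$, and a direct calculation gives $p(1\otimes s)=1\otimes p(s)=0$, so $1\otimes S$ generates $S\otimes_R S$ as an algebra by elements integral over $S\otimes 1$. By lying-over in an integral extension, every minimal prime of $S\otimes_R S$ contracts to the unique minimal prime $(0)$ of $S\otimes 1$, hence to $(0)\subset R$. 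After inverting $R\setminus\{0\}$, the localization $Q(R)\otimes_R(S\otimes_R S)\cong Q(S)\otimes_{Q(R)}Q(S)$ becomes, via Proposition \ref{prop:Galoisdescent}, the reduced Artinian ring $\Maps(G,Q(S))\cong Q(S)^{|G|}$, whose $|G|$ primes are the kernels of the coordinate projections, i.e., the extensions of the $I_g$. Contracting back, the minimal primes of $S\otimes_R S$ are exactly $\{I_g\}_{g\in G}$, so $\mathrm{nilrad}(S\otimes_R S)=\bigcap_g I_g$.

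The main obstacle I anticipate is the integrality step in \eqref{vp4}: ensuring that the generality of the hypothesis (just $S$ an integral domain with faithful $G$-action, with no finiteness or flatness imposed on $S$ over $R$) still supports the lying-over argument. This is handled because integrality only requires monic relations over $R$, which are supplied universally by the characteristic polynomial of the $G$-action, independently of any module-theoretic finiteness of $S/R$.
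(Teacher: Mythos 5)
Your proof is correct, and for parts (1)--(3) and (5) it follows the paper's argument essentially verbatim (the paper's (1) shows $\ker(\ev_g\vp)\subseteq I_g$ by the same direct manipulation of tensors that you package as the isomorphism $(S\otimes_R S)/I_g\cong S$, and (5) is deduced exactly by tensoring with $Q(R)$). Where you diverge is part (4), and here the comparison is genuinely interesting. The paper disposes of (4) in one line --- ``Proposition~\ref{prop:Galoisdescent} shows that $S\otimes_RS$ and its image have the same reduction'' --- and defers the actual work to a citation of Watanabe. You instead give a self-contained argument: establish that $S\otimes_RS$ is integral over the subring $S\otimes 1$ by exhibiting the monic relation $p(1\otimes s)=1\otimes p(s)=0$ with $p(X)=\prod_{g\in G}(X-g(s))\in R[X]$, conclude via lying-over/incomparability that every minimal prime contracts to $(0)$ in $S\otimes 1$ and hence survives localization at $R\setminus\{0\}$, and then read off that the surviving primes are precisely the $I_g$ from the Galois descent isomorphism $Q(S)\otimes_{Q(R)}Q(S)\cong Q(S)^{|G|}$. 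This buys you an explicit identification of the minimal primes of $S\otimes_RS$ with the $I_g$, which is stronger than the equality of nilradical with $\bigcap_g I_g$ that the lemma actually asserts, and it requires no outside reference. The observation you flag at the end --- that the integrality is universally supplied by the $G$-equivariant characteristic polynomial and needs no finiteness or flatness of $S$ over $R$ --- is exactly the right point to check and it holds. In short: same proof for four of the five parts, and a complete, self-contained replacement for the paper's outsourced argument in the fifth.
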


\begin{proof}
(\ref{vp1}) For any $x\in S$, one has $\ev_{g}\vp(x\otimes 1) = x\in S$, whence 
$\ev_{g}\vp$ is surjective. 
Its kernel is as claimed due to the following standard argument: 
Clearly, $I_{g}\subseteq \Ker(\ev_{g}\vp)$, and if 
$\ev_{g}\vp\left(\sum_{i=1}^{m}x_{i}\otimes y_{i}\right) =  \sum_{i=1}^{m}x_{i}g(y_{i}) =0$ 
in $S$, then 
\begin{align*}
\sum_{i=1}^{m}x_{i}\otimes y_{i}&= \sum_{i=1}^{m}(x_{i}\otimes y_{i} - x_{i}g(y_{i})\otimes 1) =
 (x_{i}\otimes 1)\sum_{i=1}^{m}(1\otimes y_{i}-g(y_{i})\otimes 1) \in I_{g}\,.
\end{align*}
Because $S\otimes_{R}S/I_{g}\cong S$ is a domain, $I_{g}\subseteq S\otimes_{R}S$ is prime.

Regarding (\ref{vp2}), note that $(\ev_{g})_{g\in G}:\Maps(G,S)\to \prod_{g\in G}S$ is 
bijective by definition and (\ref{vp1}) reveals the $S\otimes_{R}S$--algebra structure 
induced by $\vp$ on $\Maps(G,S)$.

The first part of assertion (\ref{vp3}) is an immediate consequence of (\ref{vp1}) as $\vp=(\ev_{g}\vp)_{g\in G}$. 
The second assertion then follows.

As concerns (\ref{vp4}), Proposition \ref{prop:Galoisdescent} shows that $S\otimes_{R}S$ 
and its image have the same reduction. As the image is reduced, the claim follows 
--- see alternatively \cite[Lemma 2.5]{Watanabe} for a direct argument.

Likewise, (\ref{vp5}) follows from Proposition \ref{prop:Galoisdescent}.
\end{proof}

\begin{remark}
It seems worthwhile to point out the following consequence of (\ref{vp2}) above:
For $f\in \Maps(G,S)$, the map $sfs' =\vp(s\otimes s')f\in \Maps(G,S)$ is given by
$sfs'(g)= sg(s')f(g)$ for $g\in G$. In particular, even though $\Maps(G,S)\cong S^{|G|}$ as a ring,
it is not a symmetric $S$--bimodule when viewed as a $S$--bimodule via $\vp$.
%While $\Maps(G,S)\cong S^{|G|}$ as a ring, it is generally not a symmetric $S$--bimodule 
%as demonstrated by (\ref{vp2}) above.
\end{remark}

\subsection{The geometric interpretation of $\vp$}
With $X=\Spec S$, the reduced and irreducible affine scheme defined by the integral domain $S$, 
the scheme $Y=\Spec R$ identifies with the orbit scheme $Y=X/G$ of $X$ modulo the action of $G$. 
The canonical map $X\to Y$ corresponds to the inclusion $R\subseteq S$ and 
$\Spec(S\otimes_{R}S) \cong X\times_{Y}X\subseteq X\times X$ 
is the (schematic) graph of the equivalence relation defined by the action of $G$ on $X$. 

For $g\in G$, one may identify $\Spec(S\otimes_{R}S/I_{g})$ with the image of the map $(g,x)\mapsto (x,g(x))$ for $x\in X$, that is
$$\Spec(S\otimes_{R}S/I_{g}) \cong \Imm(\Spec(\ev_{g}\vp): 
\{g\}\times X  \to X\times_{Y} X) \ .$$
The map $\vp$ corresponds then to 
$$\Spec(\vp):G\times X =\coprod_{g\in G}\{g\}\times X\to X\times_{Y}X \ . $$
Proposition \ref{prop:Galoisdescent} says that this morphism of schemes is generically an 
isomorphism, and its image is the graph of the group action, 
$$GX := \bigcup_{g\in G}\Imm(\Spec(\ev_{g}\vp): \{g\}\times X \subseteq X\times_{Y}X)$$ 
with its reduced structure. Moreover, $GX=(X\times_{Y}X)_{\red}$ is the reduced underlying 
scheme of $X\times_{Y}X$, so that the only difference between $GX$ and $X\times_{Y}X$ 
can be embedded components in $X\times_{Y}X$.

\subsection{Interpretation in terms of the twisted group algebra}
The foregoing facts admit the following interpretation in terms of the twisted group algebra 
$A=S*G$, where $G$ is still a finite subgroup of the group of ring automorphisms of the
commutative domain $S$.

\begin{proposition} \label{Prop:commdiagA}
With the assumptions just made, the map $x\otimes y\mapsto \sum_{g\in G}x\delta_{g} y$ 
defines a surjective 
homomorphism $\alpha: S\otimes_{R}S \onto A\left(\sum_{g\in G}\delta_{g}\right)A$ of 
$S$--bimodules, while
$$\beta:\Maps(G,S)\to A, (s_{g})_{g\in G}\mapsto \sum_{g\in G}s_{g}\delta_{g}$$ is an 
isomorphism of $S$--bimodules over $R$ when $\Maps(G,S)$ is viewed as an 
$S$--bimodule via $\vp$. Note, however, that $\beta$ is clearly not an isomorphism of algebras.

There is a commutative diagram of $S$--bimodule homomorphisms
\begin{align} \label{Diag:AmAeA}
\xymatrix{
&&A\left(\sum_{g\in G}\delta_{g}\right)A\ar@{^(->}[r]&A\\
0\ar[r]&\bigcap_{g\in G}I_{g}\ar[r]&S\otimes_{R}S\ar@{{>>}}[u]^{\alpha}\ar[r]^-{\vp}&
\Maps(G,S)
%\bigoplus_{g\in G}(S\otimes_{R}S)/I_{g}
\ar[u]^{\cong}_{\beta}\,,
}
\end{align}
with the bottom row an exact sequence.
In particular, as $S$--bimodules
\[
A\left(\sum_{g\in G}\delta_{g}\right)A\cong (S\otimes_{R}S)/\cap_{g\in G}I_{g}\,.
\]
\end{proposition}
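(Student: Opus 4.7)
The plan is to verify each claim of the proposition in the order stated, leaning heavily on the preceding Lemma \ref{kervp} for the kernel computation, and to reduce the final isomorphism to a straightforward quotient statement once the diagram has been shown to commute.

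First I would check that $\alpha$ is well defined. Since $S \otimes_R S$ is the tensor product over $R = S^G$, the key point is $R$-balance: for $r \in R$, $\delta_g r = g(r)\delta_g = r\delta_g$, so $\alpha(xr \otimes y) = \sum_g xr\delta_g y = \sum_g x\delta_g(ry) = \alpha(x \otimes ry)$. The $S$-bimodule property is immediate from the formula. For surjectivity, observe that the element $\sigma := \sum_{g \in G}\delta_g \in A$ satisfies $\delta_h \sigma = \sigma = \sigma \delta_h$ for every $h \in G$, because left/right multiplication by $\delta_h$ simply permutes the summands. Consequently the two-sided ideal $A\sigma A$ coincides with $S\sigma S$, and every element of this ideal has the form $\sum_{i} x_i \sigma y_i = \sum_i \sum_g x_i \delta_g y_i = \alpha\bigl(\sum_i x_i \otimes y_i\bigr)$.

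Next I would verify that $\beta$ is an isomorphism of $S$-bimodules, the target being $A$ with its natural bimodule structure and the source $\Maps(G,S)$ being equipped with the bimodule structure inherited through $\vp$ (so $sfs'(g) = s\,g(s')f(g)$, as recorded in the remark after Lemma \ref{kervp}). That $\beta$ is a bijection of abelian groups is clear from the decomposition $A = \bigoplus_{g \in G} S\delta_g$. For bimodule equivariance, using $\delta_g s' = g(s')\delta_g$ and commutativity of $S$,
\begin{align*}
s\,\beta(f)\,s' &= s\Bigl(\sum_{g \in G} f(g)\delta_g\Bigr)s' = \sum_{g \in G} s\,f(g)\,g(s')\,\delta_g = \sum_{g \in G} s\,g(s')\,f(g)\,\delta_g = \beta(sfs').
\end{align*}

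With both maps in hand, commutativity of the diagram reduces to the identity $\beta(\vp(x \otimes y)) = \sum_{g \in G} x\,g(y)\,\delta_g = \sum_{g \in G} x\,\delta_g\,y = \alpha(x \otimes y)$, so $\alpha$ equals the composition $S \otimes_R S \xrightarrow{\vp} \Maps(G,S) \xrightarrow{\beta} A$ followed by the inclusion into $A$ (after corestricting to $A\sigma A$). Exactness of the bottom row is precisely Lemma \ref{kervp}(\ref{vp3}), which identifies $\ker \vp = \bigcap_{g \in G} I_g$.

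Finally, because $\beta$ is a bijection, $\ker\alpha = \ker\vp = \bigcap_{g \in G} I_g$, and the image of $\alpha$ is $A\sigma A$ by the surjectivity established above. Passing to the quotient yields the asserted $S$-bimodule isomorphism $A\bigl(\sum_{g \in G}\delta_g\bigr)A \cong (S \otimes_R S)/\bigcap_{g \in G}I_g$. I do not expect any serious obstacle here: all the genuine content sits in Lemma \ref{kervp} and in the formula $\delta_g s = g(s)\delta_g$; the only mild subtlety is keeping track of the nonstandard bimodule structure on $\Maps(G,S)$ transported through $\vp$, which is exactly what forces $\beta$ to fail to be an algebra map even though it is a bimodule isomorphism.
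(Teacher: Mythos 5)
Your argument is correct and follows essentially the same route as the paper: both establish surjectivity via the invariance $\delta_h\bigl(\sum_g\delta_g\bigr)\delta_{h'}=\sum_g\delta_g$, identify $\beta$ as a bimodule isomorphism, verify the commuting square $\alpha=\beta\vp$, and then read off the kernel from Lemma \ref{kervp}(\ref{vp3}). The only stylistic difference is that the paper avoids your re-indexing step by writing the right factor $a'=\sum_{h'}\delta_{h'}s'_{h'}$ in terms of the right $S$-basis, so that $\delta_h\sigma\delta_{h'}s'_{h'}=\sigma s'_{h'}$ falls out immediately; your identification $A\sigma A=S\sigma S$ is equivalent but implicitly uses $\sigma s'\delta_{h'}=\sigma\bigl((h')^{-1}(s')\bigr)$, which is worth one extra line.
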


\begin{proof}
It is clear that $\alpha$ is a homomorphism of $S$--bimodules with respect to the natural 
$S$--bimodule structures on $A$ and its ideal $A\left(\sum_{g\in G}\delta_{g}\right)A$. 
It is surjective as for $a= \sum_{h,h'\in G} s_{h}\delta_{h}$ and 
$a'= \sum_{h'\in G}\delta_{h'}s'_{h'}$ in $A$ one has 
%the element $a\left(\sum_{g\in G}\delta_{g}\right)a'
%= \sum_{h,h'\in G} s_{h}\delta_{h}\left(\sum_{g\in G}\delta_{g}\right)s'_{h'}\delta_{h'}\in A\left(\sum_{g\in G}\delta_{g}\right)A$, with 
%$s_{h}, s'_{h'}\in S$, one has
\begin{align*}
a\left(\sum_{g\in G}\delta_{g}\right)a' &= 
\sum_{h,h'\in G} s_{h}\delta_{h}\bigg(\sum_{g\in G}\delta_{g}\bigg)\delta_{h'}s'_{h'}
=\sum_{h,h'\in G} s_{h}\bigg(\sum_{g\in G}\delta_{g}\bigg)s'_{h'}
%&=
%\sum_{h,h'\in G} s_{h}\bigg\left(\sum_{g\in G}\delta_{g}\bigg)(h')^{-1}(s'_{h'})\\
=\alpha\bigg(\sum_{h,h'\in G}s_{h}\otimes s'_{h'}\bigg)\,,
\end{align*}
because $\delta_{h}\left(\sum_{g\in G}\delta_{g}\right)\delta_{h'}= \sum_{g\in G}\delta_{hgh'} 
= \sum_{g\in G}\delta_{g}$ for any $h,h'\in G$.

Note that establishing $\beta$ as an isomorphism of $S\otimes_{R}S$--modules uses 
Lemma \ref{kervp}(\ref{vp2}).

By definition of the various objects and morphisms we have
\begin{align*}
\alpha(x\otimes y)= \sum_{g\in G}x\delta_{g} y = \sum_{g\in G}xg(y)\delta_{g} =\beta\vp(x\otimes y)
\end{align*}
in $A$, whence the commutativity of the square in the diagram.

What we have established so far shows that the image of $\beta\vp$ equals 
$A\left(\sum_{g\in G}\delta_{g}\right)A\subseteq A$, isomorphic
as $S$--bimodule to $S\otimes_{R}S/\cap_{g\in g}I_{g}$.
\end{proof}

\begin{corollary}
If in the above setting $S\otimes_{R}S$ is reduced then $\alpha$ is a bijection and 
the bijections $\alpha,\beta$ identify the map $\vp$ with the inclusion of the two--sided ideal 
$A\left(\sum_{g\in G}\delta_{g}\right)A$  into $A$.\qed
\end{corollary}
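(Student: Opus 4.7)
The plan is to exploit the commutative diagram from the preceding proposition together with Lemma \ref{kervp}. All the technical work has already been done; what remains is to observe that reducedness of $S\otimes_{R}S$ forces $\vp$ to be injective, and then chase the diagram.

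First, I recall that the diagram established in the proposition expresses $\text{incl}\circ\alpha = \beta\circ\vp$, where $\text{incl}$ denotes the inclusion of the two--sided ideal $A\bigl(\sum_{g\in G}\delta_{g}\bigr)A$ into $A$. The proposition also says that $\alpha$ is surjective and that $\beta$ is an isomorphism of $S$--bimodules. Meanwhile, Lemma \ref{kervp}(\ref{vp3}) identifies $\Ker\vp = \bigcap_{g\in G}I_{g}$, and Lemma \ref{kervp}(\ref{vp4}) identifies this intersection with the nilradical of $S\otimes_{R}S$.

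Now assume $S\otimes_{R}S$ is reduced. Then its nilradical vanishes, so $\bigcap_{g\in G}I_{g}=0$, and therefore $\vp$ is injective. From the relation $\text{incl}\circ\alpha = \beta\circ\vp$ and the fact that $\beta$ is bijective, the composite $\text{incl}\circ\alpha$ is injective; hence $\alpha$ itself is injective. Combined with the surjectivity of $\alpha$ provided by the proposition, this gives that $\alpha$ is a bijection of $S$--bimodules.

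For the second assertion, one simply transports the inclusion along the bijections just established. Using $\alpha^{-1}\colon A\bigl(\sum_{g\in G}\delta_{g}\bigr)A\xrightarrow{\cong}S\otimes_{R}S$ on the source and $\beta^{-1}\colon A\xrightarrow{\cong}\Maps(G,S)$ on the target, the commutative diagram yields $\beta^{-1}\circ\text{incl}\circ\alpha = \vp$, which is the claimed identification. I do not anticipate a genuine obstacle here: the only subtlety is keeping straight which maps are isomorphisms of $S$--bimodules versus of $R$--algebras (recall that $\beta$ is \emph{not} multiplicative when $\Maps(G,S)$ carries its natural product ring structure, only when it carries the $S\otimes_{R}S$--algebra structure pulled back through $\vp$), but this does not affect the present bimodule statement.
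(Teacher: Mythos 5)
Your argument is correct and is precisely the implicit reasoning behind the paper's $\qed$: reducedness of $S\otimes_R S$ forces the nilradical $\bigcap_{g\in G}I_g$ to vanish (Lemma \ref{kervp}(\ref{vp4})), hence $\vp$ is injective (Lemma \ref{kervp}(\ref{vp3})), and then the commutative square $\mathrm{incl}\circ\alpha=\beta\circ\vp$ together with surjectivity of $\alpha$ and bijectivity of $\beta$ forces $\alpha$ to be a bijection and transports $\vp$ to the inclusion. One small imprecision in your closing parenthetical: $\beta$ is not an algebra homomorphism under \emph{any} natural multiplicative structure here (the paper says flatly that ``$\beta$ is clearly not an isomorphism of algebras''); it is only an $S$--bimodule isomorphism when $\Maps(G,S)$ is given the bimodule structure via $\vp$. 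This does not affect your proof, which correctly works entirely at the level of $S$--bimodules.
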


\begin{proof}
By Lemma \ref{kervp} \eqref{vp4} $\bigcap_{g \in G}I_g$ is the nilradical of $S\otimes_R S$ and thus  equal to $0$ in this situation. Thus, the homomorphism $\alpha$ of the proposition is bijective. 
\end{proof}

If $|G|$ is invertible in $S$, let again $e=\frac{1}{|G|}\sum_{g\in G}\delta_{g}\in A$, as defined in Section \ref{Sub:twistedgroupring}.

\begin{corollary}
  \label{cor.3.9}Denote the homomorphism of $A$--bimodules 
  $$\mu:Ae\otimes_{eAe}eA \to A \quad  \text{ by } \quad
\mu(ae\otimes ea')=aea' \ .$$ 
  Then the diagram below is commutative up to a factor of $|G|$, 
  \begin{align} 
\xymatrix{
Ae\otimes_{eAe}eA\ar[r]^{\mu}&A\\
S\otimes_{R}S\ar@{{>>}}[u]^{()e\otimes e()}\ar[r]^-{\vp}&
\Maps(G,S)
\ar[u]^{\cong}_{\beta}\,,
}
  \end{align}
  i.e. $\beta \circ \phi =|G| \mu \circ (()e\otimes e()).$
If  $|G|$ is invertible in $K$,  then one can identify $\vp:S\otimes_{R}S\to \Maps(G,S)$ with $\mu$.
\end{corollary}
\begin{proof}
By Lemma \ref{Lem:Se} \eqref{Se4} $S\otimes_RS \cong Ae\otimes_{eAe}eA$ as $A$-bimodules, and by Proposition~\ref{Prop:commdiagA}, we have that $\beta$ is an isomorphism.
\end{proof}

\subsection{The structure of $\vp$ for reflection groups}
Now we return to the situation where $S=\Sym_{K}V$ and the finite group $G\leqslant\GL(V)$
acts linearly on $S$. 
In the following key result the equivalence (\ref{wat1})$\Longleftrightarrow$(\ref{wat2}) is due to Junzo Watanabe \cite[Cor.2.9, Cor.2.12, Lemma 2.7]{Watanabe}.
\begin{theorem}
\label{thm:watanabe}
For a finite subgroup $G\leqslant\GL(V)$ with $|G|$ invertible in $K$ the following are equivalent.
\begin{enumerate}[\rm(a)]
\item 
\label{wat1}
The group $G$ is generated by pseudo--reflections in $\GL(V)$.
\item
\label{wat2}
The ring $S\otimes_{R}S$ is Cohen--Macaulay.
\item
\label{wat3}
The ring $S\otimes_{R}S$ is a complete intersection in the polynomial ring $S\otimes_{K}S$.
\end{enumerate}
If these equivalent conditions are satisfied then $S\otimes_{R}S$ is reduced and
$\vp\colon S\otimes_{R}S\to \Maps(G,S)$ is injective. This ring homomorphism is the 
normalization morphism for $S\otimes_{R}S$.
\end{theorem}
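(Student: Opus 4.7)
The plan is to establish the cycle $(1)\Rightarrow(3)\Rightarrow(2)\Rightarrow(1)$ and then deduce the remaining statements (reducedness, injectivity of $\varphi$, and the normalization property) from Lemma \ref{kervp} combined with Proposition \ref{prop:Galoisdescent}. Throughout, write $T:=S\otimes_{K}S$, which is a polynomial ring of Krull dimension $2n$, and let $I\subseteq T$ be the kernel of the multiplication $T\onto S\otimes_{R}S$; since $R$ is generated as a $K$-algebra by its positive-degree part, $I$ is generated by the elements $f\otimes 1-1\otimes f$ with $f\in R_{+}$. Because $R\subseteq S$ is module-finite with both rings of Krull dimension $n$, one always has $\dim(S\otimes_{R}S)=n$, so $I$ has height $n$ in $T$.

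For $(1)\Rightarrow(3)$, assume $G$ is generated by pseudo-reflections. By Chevalley--Shephard--Todd (Theorem \ref{Thm:CST}), $R=K[f_{1},\ldots,f_{n}]$, so the $n$ elements $g_{i}:=f_{i}\otimes 1-1\otimes f_{i}$ already generate $I$. Since $T$ is Cohen--Macaulay and $I$ has height equal to the number of its generators, the $g_{i}$ automatically form a regular sequence, and $S\otimes_{R}S$ is a complete intersection in $T$, proving (3). The implication $(3)\Rightarrow(2)$ is immediate, as complete intersections are Cohen--Macaulay.

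The key step is $(2)\Rightarrow(1)$, the substantive content due to Watanabe. Assume $S\otimes_{R}S$ is Cohen--Macaulay. Viewed as a finitely generated $S$-module via the second tensor factor it is faithful (the map $s\mapsto 1\otimes s$ is split by the multiplication $\mu\colon S\otimes_{R}S\to S$) and of Krull dimension $n=\dim S$, hence maximal Cohen--Macaulay over $S$; Auslander--Buchsbaum over the regular graded ring $S$ then forces $\pdim_{S}(S\otimes_{R}S)=0$, so $S\otimes_{R}S$ is a free $S$-module of some rank, which is $|G|$ by Proposition \ref{prop:Galoisdescent}. Because $R\subseteq S$ is a finite extension of graded local rings, depth over $R$ agrees with depth over $S$, so $S\otimes_{R}S\cong S^{|G|}$ is also maximal Cohen--Macaulay over $R$, and hence so is its $R$-summand $S$. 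Watanabe's argument then leverages the graded structure to upgrade this to $R$-freeness of $S$ of rank $|G|$, after which the converse direction of Chevalley--Shephard--Todd identifies $G$ as a pseudo-reflection group, giving (1). The passage from ``$S$ is a summand of the $S$-free, $R$-maximal Cohen--Macaulay module $S\otimes_{R}S$'' to ``$S$ is $R$-free'' is the main technical obstacle.

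Once conditions (1)--(3) hold, $S\otimes_{R}S$ is a complete intersection in a polynomial ring and thus has no embedded primes. Its total ring of fractions coincides with $\prod_{g\in G}Q(S)$ by Proposition \ref{prop:Galoisdescent}, which is reduced; combined with the absence of embedded components this promotes to reducedness of $S\otimes_{R}S$. Lemma \ref{kervp}(\ref{vp4}) then identifies $\ker\varphi=\bigcap_{g\in G}I_{g}$ as the nilradical, which is therefore zero, so $\varphi$ is injective. Finally, $\Maps(G,S)\cong S^{|G|}$ is finite over the image of $\varphi$ and, being a product of copies of the normal domain $S$, is itself a normal ring; since $\varphi$ becomes an isomorphism at the generic points by Proposition \ref{prop:Galoisdescent}, it realizes $\Maps(G,S)$ as the normalization of $S\otimes_{R}S$.
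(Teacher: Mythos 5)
Your proposal is correct and covers the same ground, but it routes two of the pieces differently from the paper, and it is worth noting where. For $(1)\Rightarrow(3)$ the paper shows directly that $(f_{i}\otimes 1 - 1\otimes f_{i})_{i}$ is a regular sequence in $S\otimes_{K}S$ by exhibiting it as part of the longer regular sequence $(f_{i}\otimes 1-1\otimes f_{i},\,1\otimes f_{i})_{i}$, using Chevalley--Shephard--Todd to get flatness of $S$ over $R$ and hence regularity of $(f_{i}\otimes 1)_{i}$, then $(1\otimes f_{i})_{i}$; you instead compute $\dim(S\otimes_{R}S)=n$, deduce $\operatorname{ht}(I)=n$, and invoke Macaulay's unmixedness in the Cohen--Macaulay ring $S\otimes_{K}S$ to conclude that $n$ generators of a height-$n$ ideal must form a regular sequence. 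Both are valid; yours is a bit more ``automatic'' and avoids the explicit flatness bookkeeping. For the final assertions the logical order is reversed: the paper goes CM $\Rightarrow$ torsion-free over $R$ $\Rightarrow$ $\varphi$ injective (via Lemma~\ref{kervp}(\ref{vp5})) $\Rightarrow$ reducedness follows by embedding into $\Maps(G,S)$, whereas you first prove reducedness (via unmixedness plus Proposition~\ref{prop:Galoisdescent} identifying the total ring of fractions) and then conclude $\ker\varphi=0$ from Lemma~\ref{kervp}(\ref{vp4}). Both orderings work, though your phrasing about the ``total ring of fractions coinciding with $\prod_{g}Q(S)$'' is slightly abbreviated --- what makes this identification legitimate is that the CM hypothesis forces $S\otimes_{R}S$ to be MCM, hence torsion-free, over the domain $R$, so that $R\setminus\{0\}$ consists of non-zero-divisors; that point deserves to be made explicit. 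Finally, for $(2)\Rightarrow(1)$: the paper simply cites Watanabe, and in the end so do you, which is fine. Be aware, though, that the intermediate facts you extract along the way (that $S\otimes_{R}S$ is $S$-free of rank $|G|$, and that $S$ is MCM over $R$) do not actually move you toward the goal --- $S$ is MCM over $R$ for \emph{any} finite $G$, so this carries no information --- and the honest conclusion is that the real content of Watanabe's implication is not captured by your sketch, only gestured at, as you yourself acknowledge.
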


\begin{proof}
As stated above, (\ref{wat1})$\Longleftrightarrow$(\ref{wat2}) is due to Watanabe and clearly
(\ref{wat3})$\Longrightarrow$(\ref{wat2}). It thus suffices to show 
(\ref{wat1})$\Longrightarrow$(\ref{wat3}).  
With $S=\Sym_{K}V\cong K[x_{1},\ldots, x_{n}]$ the polynomial ring,
$S\otimes_{K}S\cong \Sym_{K}(V\oplus V)\cong K[x'_{1},\ldots, x'_{n}; x''_{1},\ldots, x''_{n}]$ is
a polynomial ring in $2n$ variables, where we have set $x'_{i} = x_{i}\otimes 1$ and
$x''_{i}=1\otimes x_{i}$.

With $f_{i}\in R\subseteq S$ basic invariants, so that $R=K[f_{1},\ldots, f_{n}]\subseteq S$, 
%the $K$--algebra $S\otimes_{R}S$ 
one has the presentation
\begin{align*}
S\otimes_{R}S \cong K[x'_{1},\ldots, x'_{n}; x''_{1},\ldots, x''_{n}]/(f_{i}(\bx'')-f_{i}(\bx'); i=1,\ldots,n)\,.
\end{align*}
Since $S$ is flat (even free) over $R$ by the Chevalley--Shephard--Todd theorem, the 
$R$--regular sequence ${\bf f}=(f_{1},\ldots, f_{n})$ is also regular on $S$. 
As $S$ is flat over $K$, the sequence $(f_{i} \otimes 1)_{i}$ is regular in $S \otimes_{K} S$ with 
quotient $S/(\boldf) \otimes_{K} S$. 
As $S/(\boldf)$ is flat over $K$, it follows that $(1 \otimes f_{i} )_{i}$ forms a regular 
sequence in $S/(\boldf) \otimes_{K} S$. 
Hence $(f_{1} \otimes 1,\ldots, f_{n}\otimes 1, 1\otimes f_{1},\ldots, 1\otimes f_{n})$ is a regular 
sequence in $S \otimes_K S$. This implies that $(1\otimes f_{i} - f_{i}\otimes 1)_{i}$ is a regular 
sequence since it is part of the regular sequence $(f_i \otimes 1 - 1 \otimes f_i, 1 \otimes f_i)_{i}$.
Thus, 
\begin{equation} \label{eq:completeIntersection}
S \otimes_R S = S \otimes_{K} S /( f_i \otimes 1 - 1 \otimes f_i)_{i=1, \ldots, n}
\end{equation}
is a
complete intersection ring as claimed.

Concerning the remaining assertions, if $S\otimes_{R}S$ is Cohen--Macaulay it cannot contain 
any nontrivial torsion submodule, whence $\vp$ is injective by Lemma \ref{kervp}(\ref{vp5}). This means that $\ker(\vp)=\bigcap_{g \in G}I_g=0$ and hence by Lemma \ref{kervp}\eqref{vp4}, $S\otimes_RS$ is reduced.
As $\vp$ is injective and generically an isomorphism by Proposition \ref{prop:Galoisdescent},
it suffices to note that the ring $\Maps(G,S)\cong S^{|G|}$ is normal.
\end{proof}

\begin{question}
Can one strengthen Theorem \ref{thm:watanabe} by showing that $G$ is a group generated by 
pseudo--reflections if, and only if, $S\otimes_{R}S$ is reduced?
\end{question}

\subsection{A note on normalization and conductor ideals} If $\nu\colon C\to \widetilde C$ 
is the normalization homomorphism of a reduced commutative ring $C$, then applying
$\Hom_{C}(-,C)$ to $\nu$ yields an inclusion 
$\nu^{*}=\Hom_{C}(\nu, C)\colon \Hom_{C}(\widetilde C,C)\into C$. 
The image is the \emph{conductor ideal\/} $\fc\subseteq C$ (with respect
to its normalization.) It is also an ideal in the larger ring $\widetilde C$ and is the largest ideal of 
$C$ with this property. Alternatively, one may define the conductor ideal as the annihilator 
$\fc = \ann_{C}\widetilde C/C$.

Below we will use the following facts, for which we could not locate a compact reference. Therefore (and for the convenience of the reader) we include the proofs. Recall that a commutative ring is {\it equicodimensional} if all maximal ideals have the same height.

\begin{lemma}
\label{lem:cond}
Assume the commutative ring $C$ is noetherian, equicodimensional and Gorenstein with its normalization 
$\widetilde C$ a Cohen--Macaulay $C$--module. In this case,
\begin{enumerate}[\rm (a)]
\item
\label{lem:cond1}
The $C$--module $\widetilde C/C$ is Cohen--Macaulay of Krull dimension $\dim C -1$.
\item
\label{lem:cond2}
As $C$--modules $\Ext^{1}_{C}(\widetilde C/C,C)\cong C/\fc$.
\item
\label{lem:cond3} 
As $C$--modules $\Ext^{1}_{C}(C/\fc,C)\cong \widetilde C/C$.
\item
\label{lem:cond4} 
There are isomorphic short exact sequences of $C$--modules
\begin{align*}
\xymatrix{
0\ar[r]&C/\fc \ar[d]_{\cong}\ar[r]&\widetilde C/\fc\ar[d]_{\cong} \ar[r]
&\widetilde C/C\ar[r]\ar[d]^{\cong}&0\hphantom{\,.}\\
0\ar[r]&\Ext^{1}_{C}(\widetilde C/C,C)\ar[r]&\Ext^{1}_{C}(\widetilde C/\fc, C)\ar[r]
&\Ext^{1}_{C}(C/\fc, C) \ar[r]&0\,.
}
\end{align*}
\item
\label{lem:cond5} 
 The conductor ideal $\fc$ is a maximal Cohen--Macaulay $C$--module.
\item
\label{lem:cond6} 
If $\widetilde C$ is a regular ring, then the (reduced) vanishing locus 
$V(\fc)\subseteq \Spec C$ is the (reduced) singular locus of $C.$
\end{enumerate}
\end{lemma}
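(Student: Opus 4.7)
The plan is to run $\Hom_{C}(-,C)$ along the two short exact sequences
\[0\to C\to\widetilde C\to \widetilde C/C\to 0 \qquad\text{and}\qquad 0\to\fc\to C\to C/\fc\to 0,\]
writing $d:=\dim C$, and to combine the resulting long exact sequences with Gorenstein duality, which here says that $\Ext^{1}_{C}(-,C)$ is an involutive contravariant equivalence on the category of Cohen--Macaulay $C$-modules of dimension $d-1$.

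For (\ref{lem:cond1}), the depth lemma applied to the first sequence gives $\mathrm{depth}(\widetilde C/C)\ge d-1$; reducedness of $C$ (Gorenstein, hence $R_{0}$) forces $\widetilde C/C$ to be supported in codimension $\ge 1$, so $\dim(\widetilde C/C)\le d-1$, and the two bounds combine to show $\widetilde C/C$ is CM of dimension $d-1$. For (\ref{lem:cond2}), apply $\Hom_{C}(-,C)$ to the same sequence: $\Hom_{C}(\widetilde C/C,C)=0$ by torsion, $\Hom_{C}(\widetilde C,C)=\fc$ by definition of the conductor, and $\Ext^{1}_{C}(\widetilde C,C)=0$ since $\widetilde C$ is MCM over the Gorenstein ring $C$; the resulting four-term sequence is exactly $0\to\fc\to C\to \Ext^{1}_{C}(\widetilde C/C,C)\to 0$. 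Item (\ref{lem:cond3}) is then immediate by dualizing (\ref{lem:cond2}), and item (\ref{lem:cond5}) follows from the depth lemma applied to the second SES once (\ref{lem:cond1}) and (\ref{lem:cond2}) identify $C/\fc$ as CM of dimension $d-1$.

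For (\ref{lem:cond4}), the top row arises from the chain $\fc\subseteq C\subseteq \widetilde C$ by passage to quotients, and the bottom row is obtained by applying $\Ext^{1}_{C}(-,C)$ to the top: all three entries are torsion CM of dimension $d-1$, so the $\Hom_{C}(-,C)$ terms vanish and $\Ext^{2}_{C}(\widetilde C/C,C)=0$, producing a SES. The outer vertical isomorphisms are supplied by (\ref{lem:cond2}) and (\ref{lem:cond3}); the middle one, $\Ext^{1}_{C}(\widetilde C/\fc,C)\cong \widetilde C/\fc$, I would extract by applying $\Hom_{C}(-,C)$ to $0\to\fc\to\widetilde C\to\widetilde C/\fc\to 0$, which reduces to the auxiliary identification $\Hom_{C}(\fc,C)\cong\widetilde C$. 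Finally (\ref{lem:cond6}) is essentially tautological: $\fc=\ann_{C}(\widetilde C/C)$, so $V(\fc)=\supp(\widetilde C/C)=\{\fp:\widetilde C_{\fp}\neq C_{\fp}\}$, and when $\widetilde C$ is regular the equality $\widetilde C_{\fp}=C_{\fp}$ holds if and only if $C_{\fp}$ is itself regular, so $V(\fc)=\Sing C$.

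The main obstacle I anticipate is the auxiliary identification $\Hom_{C}(\fc,C)\cong\widetilde C$ used for the middle vertical isomorphism in (\ref{lem:cond4}). Multiplication in $Q(C)$ gives a natural map $\widetilde C\to\Hom_{C}(\fc,C)$ (well-defined as $\fc\widetilde C\subseteq\fc\subseteq C$), and the long exact sequence of $0\to\fc\to C\to C/\fc\to 0$ exhibits $\Hom_{C}(\fc,C)$ as an extension of $\widetilde C/C$ (via (\ref{lem:cond3})) by $C$; comparing this with the defining extension $0\to C\to\widetilde C\to\widetilde C/C\to 0$ through the Five Lemma promotes the natural multiplication map to an isomorphism. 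Commutativity of the vertical isomorphisms with the horizontal maps in (\ref{lem:cond4}) then reduces to naturality of the connecting homomorphisms in the Ext long exact sequence, which is routine.
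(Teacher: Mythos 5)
Your overall plan coincides with the paper's: dualize the two short exact sequences along $\Hom_C(-,C)$ and use that $\widetilde C$ is maximal Cohen--Macaulay over the Gorenstein ring $C$. Parts (1), (2), (5), and (6) are essentially the same arguments as in the paper. Two points are worth flagging, one minor and one a genuine gap.

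The minor point: the parenthetical ``(Gorenstein, hence $R_0$)'' is false as stated — $k[x]/(x^2)$ is Gorenstein but not $R_0$. What you actually need is reducedness of $C$, which is already part of the ambient setup (it is required for the normalization $\widetilde C$ and the conductor $\fc$ to make sense); the correct justification for $\dim(\widetilde C/C)\le d-1$ is, as in the paper, that the normalization homomorphism is generically an isomorphism for a reduced ring.

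The genuine gap is in your treatment of $\Hom_C(\fc,C)\cong\widetilde C$. You propose the multiplication map $\widetilde C\to\Hom_C(\fc,C)$ and a Five Lemma comparison of
\begin{align*}
0\to C\to\widetilde C\to\widetilde C/C\to 0 \qquad\text{and}\qquad 0\to C\to\Hom_C(\fc,C)\to\Ext^1_C(C/\fc,C)\to 0,
\end{align*}
invoking (3) to identify the right-hand terms. But the Five Lemma needs the \emph{induced} map $\widetilde C/C\to\Ext^1_C(C/\fc,C)$ to be an isomorphism, and the isomorphism furnished by (3) is a priori just an abstract one; you do not get commutativity for free. Worse, the fallback ``we have an injection between two abstractly isomorphic modules, hence an isomorphism'' is false in general (multiplication by $2$ on $\ZZ$ is a counterexample). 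You also rely on (3), which you justify as ``immediate by dualizing (2)''; this requires the Gorenstein biduality $\Ext^1_C(\Ext^1_C(M,C),C)\cong M$ for $(d-1)$-dimensional CM modules, which you are entitled to but should state. The paper avoids both issues at once: since $\fc=\Hom_C(\widetilde C,C)$ by definition of the conductor, and $\widetilde C$ is MCM over the Cohen--Macaulay ring $C$, hence reflexive, one has $\Hom_C(\fc,C)=\Hom_C(\Hom_C(\widetilde C,C),C)\cong\widetilde C$ directly. From this identity, (3) falls out of the long exact sequence for $0\to\fc\to C\to C/\fc\to 0$ (no appeal to biduality), and (4) then follows by dualizing $0\to\fc\to\widetilde C\to\widetilde C/\fc\to 0$. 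I would replace the Five Lemma step by this reflexivity argument.
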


\begin{proof}
Because the normalization homomorphism is generically an isomorphism, the Krull dimension
of $\widetilde C/C$ is at most $\dim C-1$. Because both $\widetilde C$ and $C$ are Cohen--Macaulay of Krull dimension $\dim C$ by assumption, the short exact sequence
\begin{align}
\tag{$\dagger$}
\label{dis:C}
\xymatrix{
0\ar[r]&C\ar[r]^-{\nu}&\widetilde C\ar[r]&\widetilde C/C\ar[r]&0
}
\end{align}
shows that the depth of $\widetilde C/C$ is at least $\dim C-1$, whence the dimension 
and depth coincide and are equal to $\dim C-1$, thus establishing (\ref{lem:cond1}).

For (\ref{lem:cond2}) note that $\Ext^{i}_{C}(\widetilde C,C)=0$ for $i\neq 0$ as $\widetilde C$
is a (necessarily maximal) Cohen--Macaulay $C$--module. Applying $\Hom_{C}(-,C)$ to the short 
exact sequence (\ref{dis:C}) and noting that $\Hom_{C}(\widetilde C/C,C)=0$ one obtains the short 
exact sequence of $C$--modules
\begin{align}
\tag{$\ddagger$}
\label{dis:CC}
\xymatrix{
0&\ar[l] \Ext^{1}_{C}(\widetilde C/C,C)&\ar[l] C&\ar[l]\fc&\ar[l]0\,,
}
\end{align}
and so (\ref{lem:cond2}) follows.

As to (\ref{lem:cond3}), just apply $\Hom_{C}(-,C)$ to the short exact sequence 
\begin{align*}
\xymatrix{
0\ar[r]&\fc\ar[r]^-{\nu}& C\ar[r]&C/\fc\ar[r]&0
}
\end{align*}
and observe that $\Hom_{C}(\fc, C)\cong \widetilde C$ as $\widetilde C$, being maximal Cohen--Macaulay over $C$, is a reflexive $C$--module.

Finally, apply $\Hom_{C}(- ,C)$ to the short exact sequence 
\begin{align*}
\xymatrix{
0\ar[r]&\fc\ar[r]^-{\nu}& \widetilde C\ar[r]&\widetilde C/\fc\ar[r]&0
}
\end{align*}
to obtain first $\Ext^{1}_{C}(\widetilde C/\fc,C)\cong \widetilde C/\fc$ and then (\ref{lem:cond4}).

Item (\ref{lem:cond5}) follows from (\ref{lem:cond2}) as $\Ext^{1}_{C}(\widetilde C/C,C)$ is 
a Cohen--Macaulay $C$--module of Krull dimension $\dim C-1$. %) along with $\widetilde C/C$. 
Now use the short exact sequence \eqref{dis:CC} to conclude.

Item  (\ref{lem:cond6}) follows from the fact that $V(\fc)$ describes the non--normal locus of 
$\Spec C$, thus, $V(\fc)\subseteq \Sing(C)$. Outside of $V(\fc)$, the normalization homomorphism is an isomorphism, thus, $\Spec C$ is regular there as this holds for $\widetilde C$ by assumption.
\end{proof}

Translating this into a statement for the twisted group algebra, we obtain the following 
structure theorem for the algebra $\overline A =A/AeA$.
Recall that a \emph{homological epimorphism\/} is a ring epimorphism $\varepsilon: \Lambda \rightarrow \Lambda'$  such that
 for any (left or right) $\Lambda'$--modules $M,N$, restriction of scalars along $\Lambda \onto \Lambda'$
 yields the natural map  $\Ext^{i}_{\Lambda'}(M,N)\xto{\ \cong\ } \Ext^{i}_{\Lambda}(M,N)$ for all integers $i$, see \cite[Thm.~4.4.(5), (5')]{GeigleLenzing}.
\begin{theorem}
\label{thm:Abar}
Assume the finite subgroup $G\leqslant\GL(V)$ with $|G|$ invertible in $K$ is generated by
pseudo--reflections%
\footnote{We allow $G$ to be the trivial group.}
and let $A=S*G$ be the twisted group algebra. 
\begin{enumerate}[\rm(a)]
\item 
\label{AeAprojective}
The ideal $AeA$ of $A$ is projective both as left and right $A$--modules.
\item \label{Extiso}
%For any (left or right) $\overline A$--modules $M,N$, restriction of scalars al%ong $A\onto \overline A$
%yields isomorphisms of $R$--modules 
%$\Ext^{i}_{\overline A}(M,N)\xto{\ \cong\ } \Ext^{i}_{A}(M,N)$ for all integers $i$.
The ring homomorphism $A\onto \overline A$ is a 
homological epimorphism.%; see \cite[Thm.~4.4.(5), (5')]{GeigleLenzing}.
\item \label{finglodim} $\overline A$ is of finite global dimension at most $n=\dim V$.
\item \label{normalcok} As an $S\otimes_RS$--module, $\overline{A}$ identifies with the cokernel of the normalization homomorphism $\varphi:S\otimes_RS \to \Maps(G,S).$
  \item \label{AbarCM} $\overline A$ is a Cohen--Macaulay $R$--module of Krull dimension $n-1$.
\end{enumerate}
%In particular, $\overline A$ is of finite global dimension at most $n=\dim V$. Moreover, $\overline A$ is a Cohen--Macaulay $R$--module of Krull dimension $\dim R-1$.}
\end{theorem}

\begin{proof}
(\ref{AeAprojective}) In view of Corollary \ref{cor.3.9} and Theorem \ref{thm:watanabe}, 
the multiplication map $Ae\otimes_{eAe}eA\to AeA$ is an isomorphism of $A$--bimodules. 
It thus suffices to prove that $Ae\otimes_{eAe}eA$ is projective as (one--sided) $A$--module. 
Using again Corollary \ref{cor.3.9}, we have also the identification 
$S\otimes_{R}S\cong Ae\otimes_{eAe}eA$ as $A$--bimodules. Moreover, by the 
Chevalley--Shephard--Todd theorem, $S$ is a free $R$--module, whence $S\otimes_{R}S$ 
is free as left or right $S$--module. Now $S\cong Ae$ is a projective left $A$--module and 
$eA\cong S$ is a projective left $eAe\cong R$--module. 
Thus, $Ae\otimes_{eAe}eA$ is projective as left $A$--module. The statement for the right module 
structure follows by symmetry.

It is well known that (\ref{AeAprojective}) implies (\ref{Extiso}). This is shown in 
\cite{AuslanderPlatzeckTodorov} for 
Artin algebras, but their arguments apply to any rings. For a reference
that makes no such restrictive assumption, see \cite[Thm.~4.4]{GeigleLenzing} and
\cite[Lemma 2.7]{Krause}. 

As $\gldim A=n$, property (\ref{Extiso}) implies $\gldim\overline A \leqslant \gldim A$ giving (\ref{finglodim}). \\
Since $S\otimes_{R}S\cong Ae\otimes_{eAe}eA$ and $A\cong\Maps(G,S)$, as in Corollary~\ref{cor.3.9}, we have that Theorem~\ref{thm:watanabe} gives us (\ref{normalcok}). %as
%$S\otimes_{R}S$--module $\overline A$ identifies with the cokernel of the normalization 
%homomorphism $\vp: S\otimes_{R}S\to \Maps(G,S)$.
Since $S\otimes_{R}S$ is a complete 
intersection, thus, Gorenstein, and $\Maps(G,S)\cong S^{|G|}$ is Cohen--Macaulay,
Lemma~\ref{lem:cond} (\ref{lem:cond1}) applies to show that $\overline A$ is a 
Cohen--Macaulay module of Krull dimension $n-1$, equivalently as $S\otimes_{R}S$ or 
$R$--module, as claimed in (\ref{AbarCM}). 
\end{proof}

\begin{cor} \label{Cor:gdimA}
Let $S$ be as above and $G \leqslant \GL(V)$ be a finite group generated by pseudo-reflections. Let  $A=S*G$ and $e_\chi=e_{\chi}^2 \in A$ an idempotent associated to a linear character $\chi$. Then: \\
(1) The quotient algebra $\overline{A}=A/Ae_{\chi}A$ is Koszul. \\
(2) If $G \neq \mu_2$, then $\gl \overline{A} =n$. 
\end{cor}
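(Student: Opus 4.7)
First, using Corollary~\ref{Cor:quotientsiso}, we may replace $e_{\chi}$ with the trivial-character idempotent $e$, so that $\overline{A} = A/AeA$ inherits the standard internal grading from $S$. Under this grading $A_{0} = KG$ is semisimple; since $AeA$ is generated by the degree-zero element $e$, the quotient $\overline{A}$ acquires a grading with $\overline{A}_{0} = KG/KGe \cong (1-e)KG$, still semisimple.

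For part (1) the plan is to exploit that $A = S \ast G$ is Koszul over the base ring $KG$: the Koszul complex $S \otimes_{K} \Lambda^{\bullet} V$, endowed with its diagonal $G$-action, provides a linear projective resolution of any $KG$-module $W$ viewed as an $A$-module via the augmentation $A \to KG$. Since $A \twoheadrightarrow \overline{A}$ is a homological epimorphism by Theorem~\ref{thm:Abar}(2), one has
\[\Tor^{\overline{A}}_{i}(\overline{A}_{0}, \overline{A}_{0}) \cong \Tor^{A}_{i}(\overline{A}_{0}, \overline{A}_{0}),\]
and the right-hand side is concentrated in internal degree $i$ by Koszulness of $A$ applied to $W = \overline{A}_{0}$. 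This is the Koszul condition for $\overline{A}$.

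For part (2) the upper bound $\gl \overline{A} \leq n$ is already contained in Theorem~\ref{thm:Abar}. For the lower bound the plan is to exhibit simple $\overline{A}$-modules with nonvanishing $\Ext^{n}$. By Lemma~\ref{Lem:ProjCorresp}, simple $A$-modules correspond to irreducible $G$-representations $V_{i}$, viewed as $A$-modules via the augmentation. The idempotent $e$ acts on $V_{i}$ as projection onto $V_{i}^{G}$, which vanishes exactly when $V_{i}$ is nontrivial; hence the simple $\overline{A}$-modules are precisely the nontrivial irreducible $G$-representations. Using the homological epimorphism and the Koszul resolution $\cdots \to S \otimes_{K} \Lambda^{k} V \otimes_{K} V_{i} \to \cdots \to V_{i} \to 0$ of $V_{i}$ over $A$, one computes
\[
\Ext^{n}_{\overline{A}}(V_{i}, V_{j}) \cong \Ext^{n}_{A}(V_{i}, V_{j}) \cong \Hom_{KG}(\Lambda^{n} V \otimes_{K} V_{i}, V_{j}) \cong \Hom_{KG}(V_{i} \otimes \det V, V_{j}),
\]
which is nonzero iff $V_{j} \cong V_{i} \otimes \det V$.

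It therefore suffices to find a nontrivial irreducible $V_{i}$ for which $V_{i} \otimes \det V$ is again nontrivial. Failure of this condition would force every nontrivial irreducible $G$-representation to be isomorphic to $(\det V)^{-1}$, forcing $G$ to have only two irreducibles, both one-dimensional; thus $|G|=2$, and since $G$ is generated by pseudo-reflections this means $G \cong \mu_{2}$. When $G \neq \mu_{2}$ the required $V_{i}$ exists and $\Ext^{n}_{\overline{A}}(V_{i}, V_{i} \otimes \det V) \neq 0$, giving $\gl \overline{A} \geq n$. The principal subtlety lies in part (1): one must justify carefully that the Koszul property descends along the homological epimorphism $A \twoheadrightarrow \overline{A}$ (equivalently, invoke a vertex-deletion principle for Koszul algebras); once this is granted, part (2) is a fairly direct $\Ext$ computation via the Koszul resolution.
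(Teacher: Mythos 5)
Your proposal is correct and follows essentially the same route as the paper: reduce to the trivial idempotent via Corollary~\ref{Cor:quotientsiso}, use the Koszul complex of $A=S*G$ together with the homological-epimorphism property of $A \twoheadrightarrow \overline{A}$ (Theorem~\ref{thm:Abar}), and then detect $\Ext^n_{\overline{A}}(W, \det V \otimes W)\neq 0$ for a nontrivial $W$ with $\det V\otimes W$ also nontrivial. The only cosmetic difference is that for part (1) you invoke the $\Tor$-concentration criterion for Koszulness, whereas the paper produces the linear projective $\overline{A}$-resolution explicitly by applying $-\otimes_A\overline{A}$ to the Koszul resolution of $W$ over $A$; these are two ways of packaging the same fact.
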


\begin{proof} 
By Cor.~\ref{Cor:quotientsiso} we may assume that $\chi$ is the trivial character and thus $e_\chi=e=\frac{1}{|G|}\sum_{g \in G}\delta_g$. Denote by $V$ the defining representation of $G$. Following \cite{Auslander86}, the Koszul complex of $K$ 
$$0 \longrightarrow S(-n) \otimes_K \det V \longrightarrow S(-n+1) \otimes_K \bigwedge^{n-1} V \longrightarrow \cdots \longrightarrow S \longrightarrow K \xto{} 0 $$
yields an $A$-projective resolution of $K$. A minimal projective $A$-resolution for any simple $A$-module $W$ is obtained by tensoring this complex with $W$ over $K$, with $G$ acting diagonally:
\begin{align*}  0 & \longrightarrow  S(-n) \otimes_K \det V\otimes_K W \longrightarrow S(-n+1) \otimes_K \bigwedge^{n-1} V \otimes_K W \longrightarrow \cdots \\
 \cdots & \longrightarrow S \otimes_K W \longrightarrow W \longrightarrow 0 
\end{align*}
 Since $e$ is the idempotent for the trivial representation $K$, any irreducible representation $W\not \cong K$ gives rise to a nonzero projective $\overline{A}$-module $S \otimes_K W$. \\
By the theorem, part \eqref{AeAprojective}, tensoring the above  $A$-projective resolution of $W$ with $-\otimes_A \overline{A}$ yields an $\overline{A}$-projective resolution of $W$ (cf.~Thm.~1.6.~and Ex.~1 of \cite{AuslanderPlatzeckTodorov}). From this follows that $\overline{A}$ is Koszul and $\gl \overline{A} \leq n$. \\
For the equality we show that $\Ext^n_{\overline{A}}(W, \det V \otimes W) \neq 0$. Since $G \neq \mu_2$ in statement (2),  there exists $W \neq K$ and $W \neq \det(V)^{-1}$ such that $W$ and $\det V \otimes W$ yield nonzero $\overline{A}$-modules. For any $A$- modules $M, N$ it holds that $\Ext^i_A(M,N)=\Ext_S^i(M,N)^G$. %(clearly $\Hom_A(M,N) \cong \Hom_S(M,N)^G$ holds, and since the order of $G$ is invertible in $K$, taking invariants is an exact functor and thus the isomorphism extends to the higher 
%Ext's). 
Moreover, by the theorem, one has $\Ext^i_{\overline{A}}(M,N) \xrightarrow{\cong} \Ext^i_A(M,N)$ for all $M,N \in 
\Mod(\overline{A})$.  The projection $S \otimes_K \det V \otimes_K W \xrightarrow{} \det V \otimes_K W$ represents a nonzero element of $\Ext^n_S(W,\det V \otimes_K W)$ that is $G$-invariant. Thus $\Ext^n_S(W,\det V \otimes_K W)^G \neq 0$. \end{proof}

\begin{remark} \label{Rmk:mu2}
 If $G=\mu_2$, then since $G$ is generated by a reflection we can choose a basis of $V$ so that $G$ is generated by $\diag(-1,1,\ldots,1)$.  Let $C=K[x_2,\ldots,x_n]$.  Then an explicit calculation shows that the global dimension of $\overline{A}$ drops indeed: one may realize $A=(K[x_1,\ldots,x_n]*\mu_2)$ as the order
  $$\begin{pmatrix} K[x_1^2] & K[x_1^2] \\ x_1^2K[x_1^2] & K[x_1^2] \end{pmatrix} \otimes C.$$ In this description, $e$ is the idempotent matrix $e_{11}$ and $AeA$ is of the form
  $$\begin{pmatrix} K[x_1^2] & K[x_1^2] \\ x_1^2K[x_1^2] & x_1^2K[x_1^2] \end{pmatrix}\otimes C.$$
  Thus $\overline{A} \cong C$, of global dimension $n-1 < n= \gl A$. Note here that $R/(\Delta) \cong C$ is regular and moreover $\Abar \cong R/(\Delta)$.
\end{remark}

Until the end of this section we assume the hypotheses of Theorem~\ref{thm:Abar} that $G\leqslant \GL(V)$ is generated by pseudo--reflections and that $|G|$ is invertible in $K$.

Our next goal is to determine the annihilator of $\overline A$ as $S\otimes_{R}S$--module,
equivalently, in view of the preceding Theorem~\ref{thm:Abar}(\ref{normalcok}) and Lemma~\ref{lem:cond}, 
the conductor ideal of the normalization of $S\otimes_{R}S$. 

We write $f_i$ for the basic invariants so $R=k[f_1,\ldots,f_n] \subseteq S = k[x_1,\ldots,x_n]$
  and $x_i' = x_i \otimes 1, x_i'' = 1 \otimes x_i$ in $S\otimes_KS$ as introduced in proof of Theorem~\ref{thm:watanabe}.
Let $$I_\Delta =(x_i \otimes 1 -  1 \otimes x_i\st i=1,\ldots,n) = (x'_i - x''_i \st i=1,\ldots,n)$$
be the ideal of the diagonal in $\Spec S\otimes_KS$.  Recall that $\Omega^1_S = I_\Delta/I_\Delta^2$ is a free $S$--module with basis $dx_i = (x'_i - x''_i)$ modulo $I_\Delta^2$.
Since $df=\sum_i(\partial f/\partial x_i) dx_i$ in $\Omega^1_S$, we can find $\nabla_{i}^{j}(\bx',\bx'')$ in $S\otimes_KS$ such that
the elements $df_i = f_{i}(\bx'')-f_{i}(\bx')$ in $S\otimes_{K}S$ can be expressed as
\begin{equation}
%\tag{$*$}
\label{diffquotient}
f_{i}(\bx'')-f_{i}(\bx') =\sum_{j=1}^{n}\nabla_{i}^{j}(\bx',\bx'')(x''_{j}-x'_{j}).
\end{equation}
The elements $\nabla_{i}^{j}(\bx',\bx'')$ are uniquely defined modulo $I_\Delta$, so if we let $\nabla_{i}^{j}(\bx,\bx)$ to be the image of
$\nabla_{i}^{j}(\bx',\bx'')$ under the multiplication map $S\otimes_KS \to S$
then $\nabla_{i}^{j}(\bx,\bx) = \partial f_{i}/\partial x_{j}\in S$.  Further recall that
\begin{align*}
J = \det \left(\frac{\partial f_{i}}{\partial x_{j}}\right) \in S\,,
\end{align*}
is the Jacobian of the basic invariants $f_{i}$.

\begin{lemma}
\label{lem:Jincond}
For $g\in G$, one has $\beta(\vp(\det(\nabla_{i}^{j}(\bx',\bx'')))) = J\delta_{1}\in A$, where $\beta$ is the map defined in Proposition \ref{Prop:commdiagA}. 
\end{lemma}

\begin{proof}
By definition of $\vp$, and because it is a ring homomorphism, one has
\begin{align*}
\vp(\det(\nabla_{i}^{j}(\bx',\bx'')))(g)&= \det(\nabla_{i}^{j}(\bx,g\bx))\in S\,.
\end{align*}
For $g=1\in G$, this evaluates to $J$. For $g\neq 1$, as the $f_{i}$ are $G$--invariant, one has in $S\otimes_{K}S$
\begin{align*}
f_{i}(\bx'')-f_{i}(\bx') &=f_{i}(g(\bx''))-f_{i}(\bx')\\
%&&\text{$f_{i}$ being $G$--invariant,}\\
&=\sum_{j=1}^{n}\nabla_{i}^{j}(\bx',g(\bx''))(g(x''_{j})- x'_{j})\,,
\end{align*}
and specializing $x'', x'\mapsto x$, this becomes
$0= \sum_{j=1}^{n}\nabla_{i}^{j}(\bx,g(\bx))(g(x_{j})- x_{j})$
%\begin{align*}
%0&= \sum_{j=1}^{n}\nabla_{i}^{j}(\bx,g(\bx))(g(x_{j})- x_{j})
%\end{align*} 
in $S$, whence the linear system 
$\left(\nabla_{i}^{j}(\bx,g(\bx))\right)(v_{1}, \ldots, v_{n})^{T}=\mathbf 0$ 
has the nontrivial solution $(v_{j})_{j=1,\ldots,n}= (g(x_{j})- x_{j})_{j=1,\ldots,n}\neq \mathbf 0$ 
over the domain $S$. This forces the determinant $\det(\nabla_{i}^{j}(\bx,g\bx))$ to vanish. This means that $\beta(\vp(\det(\nabla_{i}^{j}(\bx',\bx'')))) = J\delta_{1}$ in $A$.
\end{proof}

\begin{corollary}
In $A$, one has the containment of ideals $A(J\delta_{1})A \subseteq AeA$. In particular,
$\overline A$ is annihilated by $J$ both as left or right $S$--module.
\end{corollary}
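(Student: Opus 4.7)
My plan is to deduce the corollary in two short steps, leveraging Lemma~\ref{lem:Jincond} together with the comparison diagram between $S\otimes_{R}S$, $\Maps(G,S)$, and $A$ established just before Theorem~\ref{thm:watanabe}.

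First I would read off from Lemma~\ref{lem:Jincond} that $\vp(\det(\nabla_{i}^{j}(\bx',\bx''))) = J\delta_{1}$ in $\Maps(G,S)$. Applying the $S$--bimodule isomorphism $\beta\colon\Maps(G,S)\xrightarrow{\cong}A$ and invoking the commutativity $\beta\vp=\alpha$ displayed in the proposition preceding Corollary~\ref{cor.3.9}, this yields the identity
\[
\alpha(\det(\nabla_{i}^{j}(\bx',\bx''))) \;=\; J\delta_{1}\;\in\;A.
\]
Since $|G|$ is invertible in $K$ we have $\sum_{g\in G}\delta_{g}=|G|e$, and therefore the image of $\alpha$ equals $A\bigl(\sum_{g}\delta_{g}\bigr)A = AeA$. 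Consequently $J\delta_{1}\in AeA$. Because $AeA$ is a two--sided ideal in $A$, the containment $A(J\delta_{1})A\subseteq AeA$ is then immediate.

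For the second assertion, recall that the left (respectively right) $S$--module structure on $A$ is induced by the ring embedding $S\hookrightarrow A$, $s\mapsto s\delta_{1}$. Hence the left action of $J$ on any class $\bar a\in\overline A=A/AeA$ is represented by $J\delta_{1}\cdot a$, which lies in $AeA$ by the first step and is therefore zero in $\overline A$. The identical argument with $a\cdot J\delta_{1}$ settles the right $S$--module structure.

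No genuine obstacle is present here: the entire substance of the corollary is already packaged inside Lemma~\ref{lem:Jincond}, which exhibits $J\delta_{1}$ as the image under $\vp$ of an explicit element of $S\otimes_{R}S$. The corollary merely transports this equation across $\beta$ and observes that the image of $\alpha$ is precisely the ideal $AeA$.
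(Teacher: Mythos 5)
Your proof is correct and takes essentially the same approach as the paper: both transport the equation $\vp(\det(\nabla_{i}^{j}(\bx',\bx''))) = J\delta_{1}$ from Lemma \ref{lem:Jincond} across $\beta$ using $\beta\vp=\alpha$ and conclude $J\delta_{1}\in AeA$ because the image of $\alpha$ is $A(\sum_{g}\delta_{g})A = AeA$. You spell out the remaining bookkeeping (the two-sided-ideal observation and the identification of the $S$-bimodule action via $s\mapsto s\delta_{1}$) a bit more explicitly than the paper does, but this is the same argument.
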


\begin{proof}
By Lemma \ref{lem:Jincond}, $\beta\vp\left(\det\left(\nabla_{i}^{j}(\bx',\bx'')\right)\right) =J\delta_{1}$ in $A$. Thus by the commutative diagram \eqref{Diag:AmAeA} we have that 
$J\delta_{1}=\alpha\left(\det\left(\nabla_{i}^{j}(\bx',\bx'')\right)\right) \in AeA$.
\end{proof}

In fact we have the following precise description of the annihilator of $\overline A$ as
$S\otimes_{R}S$--module.

\begin{proposition}\label{prop:normConductor}
The annihilator ideal of $\overline A$ in $S\otimes_{R}S$ is the conductor ideal $\fc$
of the normalization of $S\otimes_{R}S$ and
\begin{align*}
\fc=\ann_{S\otimes_{R}S}\overline A =
\left(\det(\nabla_{i}^{j}(g(\bx'),\bx'')); g\in G\right)\,. 
%=\vp^{*}\left(\Hom_{S\otimes_{R}S}(\Maps(G,S),S\otimes_{R}S)\right)\,,
\end{align*}
where the right hand side indicates the ideal of $S\otimes_R S$ generated by
the $(\det(\nabla_{i}^{j}(g(\bx'),\bx'')$ as $g$ ranges over all elements in $G$.
%where $\vp^{*}=\Hom_{S\otimes_{R}S}(\vp,S\otimes_{R}S)$.
The image of this ideal under $\vp$ is $\vp(\fc)=\Maps(G,JS)=
J\Maps(G,S)\subseteq \Maps(G,S)$, the principal ideal generated by the Jacobian $J{\cdot} 1$ in
$\Maps(G,S)$.
\end{proposition}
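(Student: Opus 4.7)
The plan is first to extend Lemma \ref{lem:Jincond} from $g=1$ to an arbitrary $g\in G$, namely to establish
\[
\vp\bigl(\det(\nabla_{i}^{j}(g(\bx'),\bx''))\bigr) \,=\, (\det g)^{-1}J\,\delta_{g}\,\in\,\Maps(G,S).
\]
Using $G$-invariance of each basic invariant $f_{i}$, the identity \eqref{diffquotient} rewrites as
$f_{i}(\bx'')-f_{i}(\bx')=\sum_{j=1}^{n}\nabla_{i}^{j}(g(\bx'),\bx'')(x''_{j}-g(x'_{j}))$.
Applying $\ev_{h}\circ\vp$, that is, specialising $\bx'\mapsto\bx$, $\bx''\mapsto h(\bx)$, produces the matrix $\bigl(\nabla_{i}^{j}(g(\bx),h(\bx))\bigr)$, which acquires the nonzero kernel vector $\bigl(h(x_{j})-g(x_{j})\bigr)_{j}$ whenever $h\neq g$, forcing its determinant to vanish in the domain $S$. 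For $h=g$ the matrix becomes $\bigl(g(\partial f_{i}/\partial x_{j})\bigr)$, whose determinant is $g(J)=(\det g)^{-1}J$ by the semi-invariance of $J$.

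With this in hand I would read off the image of the proposed ideal $I:=\bigl(\det(\nabla_{i}^{j}(g(\bx'),\bx''));\,g\in G\bigr)\subseteq S\otimes_{R}S$ under $\vp$. Componentwise multiplication in $\Maps(G,S)\cong S^{|G|}$ gives $\delta_{h}\delta_{g}=\delta_{h,g}\delta_{g}$, and combined with $\vp(s\otimes 1)=\sum_{h}s\,\delta_{h}$ this yields $\vp(s\otimes 1)\cdot (\det g)^{-1}J\delta_{g}=(\det g)^{-1}sJ\delta_{g}$. Varying $s\in S$ and $g\in G$, the $\vp$-images sweep out $\bigoplus_{g}JS\,\delta_{g}=J\Maps(G,S)$; conversely each generator of $I$ already lies in $JS\,\delta_{g}$, so $\vp(I)=J\Maps(G,S)$ on the nose. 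This is an ideal of the overring $\Maps(G,S)$ entirely contained in $\vp(S\otimes_{R}S)$, hence contained in the conductor image $\vp(\fc)$ by the maximality characterisation of the conductor; since $\vp$ is injective by Theorem \ref{thm:watanabe} one obtains simultaneously $I\subseteq\fc$ and the inclusion $J\Maps(G,S)\subseteq\vp(\fc)$.

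The main obstacle is the reverse inclusion $\vp(\fc)\subseteq J\Maps(G,S)$. I would argue via reflexivity and codimension one: $\fc$ is a maximal Cohen--Macaulay $(S\otimes_{R}S)$-module by Lemma \ref{lem:cond}(\ref{lem:cond5}), so $\vp(\fc)$ is a reflexive ideal of $\Maps(G,S)\cong S^{|G|}$; $J\Maps(G,S)$ is likewise reflexive, being the principal ideal generated by the non-zerodivisor $J$ in the regular ring $\Maps(G,S)$. Containment of reflexive ideals can be tested after localisation at each codimension-one prime of $S\otimes_{R}S$ supported in the non-normal locus $V(\fc)$. Such primes correspond on the diagonal component $GX_{1}$ to the generic points of fixed loci $V^{\gamma}\subset X$, and only pseudo-reflections $\gamma$ contribute in codimension one: for all other nontrivial $\gamma\in G$ one has $\codim V^{\gamma}\geqslant 2$. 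At the generic point of a mirror $H$ the local stabiliser $G_{H}$ is cyclic of order $\rho_{H}$, and locally the $|G_{H}|=\rho_{H}$ components of $X\times_{Y}X$ meeting there reduce the question to a one-dimensional calculation for the cyclic reflection group $\mu_{\rho_{H}}$ acting on $K[x]$, in which $J$ becomes $L_{H}^{\rho_{H}-1}$ (up to a unit) and an explicit computation identifies the local conductor with $L_{H}^{\rho_{H}-1}$ times the local normalisation. Reassembling across all mirrors gives $\vp(\fc)=\bigl(\prod_{H}L_{H}^{\rho_{H}-1}\bigr)\Maps(G,S)=J\Maps(G,S)$ globally, completing the proof. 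The delicate step is precisely this codimension-one analysis: the pseudo-reflection hypothesis is essential both for the Cohen--Macaulayness of $S\otimes_{R}S$ that justifies the reflexivity arguments and for the absence of codimension-one contributions to $V(\fc)$ coming from non-reflection elements of $G$.
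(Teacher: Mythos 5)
Your plan follows a genuinely different route from the paper. The paper obtains $\fc$ directly as $\Ext^{1}_{C}(\overline A, C)\cong\Ext^{n+1}_{S\otimes_{K}S}(\overline A, S\otimes_{K}S)$, realizing $\overline A$ as the mapping cone of the Koszul complex on $(f_i(\bx'')-f_i(\bx'))_i$ mapping to the product of Koszul complexes on the linear regular sequences $(x_j''-g(x_j'))_j$, with the exterior powers $\Lambda^{\bdot}M_g$ providing the chain map; the determinants then fall out of the $S\otimes_K S$-dual of the last differential. You instead establish $\vp(I)=J\Maps(G,S)$ by direct ideal membership and then try to close the reverse inclusion by a reflexivity-and-codimension-one argument. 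Your first half is carried out carefully, and in fact your formula $\vp\bigl(\det(\nabla_i^j(g(\bx'),\bx''))\bigr)=(\det g)^{-1}J\,\delta_g$ is more precise than the $J\delta_g$ stated in the paper (the two differ by the unit $(\det g)^{-1}=g(J)/J$, which makes no difference to the generated ideal, but yours is the literal value). The ideal equality $\vp(I)=J\Maps(G,S)$, and consequently $I\subseteq\fc$ via the maximality characterization of the conductor together with injectivity of $\vp$, is sound.

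The second half is more a sketch than a proof, and several steps deserve more than assertion. First, you should justify that $\vp(\fc)$ is reflexive over $\Maps(G,S)$: the relevant chain is that $\fc$ is MCM over $C=S\otimes_R S$ (Lemma \ref{lem:cond}), the extension $C\subseteq\widetilde C=\Maps(G,S)$ is module-finite so $\fc$ is also MCM over the regular ring $\widetilde C$, and a finitely generated MCM module over a regular ring is projective, hence reflexive. Second, that it suffices to check at height-one primes requires that $V(\fc)$ has pure codimension one; this follows since $C/\fc\cong\Ext^1_C(\widetilde C/C,C)$ is CM of dimension $\dim C-1$ by Lemma \ref{lem:cond}(\ref{lem:cond1})--(\ref{lem:cond2}), but you do not say so. Third, and most substantively, the ``explicit computation'' at a generic mirror point is invoked but not performed. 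The outline is right: at the height-one prime $\fp=I_1+(L_H\otimes 1)$ of $C$, the components $C/I_g$ that survive localization are exactly those with $g\in G_H\cong\mu_{\rho_H}$, and étale-locally one can split coordinates so that $C_\fp$ is the union of $\rho_H$ lines times a polynomial ring, whose conductor in its normalization is generated by $L_H^{\rho_H-1}$, matching the local form of $J$. But this reduction (choosing coordinates in which $G_H$ acts only on $x_1$, passing to the local picture, and matching up both $\vp(\fc)_\fp$ and $(J\Maps(G,S))_\fp$ across all surviving components, not just the diagonal one) is precisely where the work lies, and it needs to be written out rather than asserted. With these pieces filled in, your argument would give a valid alternative to the paper's homological computation, trading the explicit mapping-cone resolution for a reduction to rank-one cyclic groups, which is perhaps more conceptually transparent but requires the full apparatus of $(S_2)$-modules and codimension-one analysis.
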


\begin{proof}
The first statement follows from Theorem~\ref{thm:Abar}(\ref{normalcok}).
  Observe that $C = S\otimes_{R}S \cong S\otimes_{K}S/(\text{regular sequence})$,
where the regular sequence is of length $n=\dim C$ as in \eqref{eq:completeIntersection} in the proof of Theorem \ref{thm:watanabe}. Combining this with Lemma \ref{lem:cond}(\ref{lem:cond2}) implies that naturally
\begin{align}\label{eqn:Cmodc}
   C/\fc \cong \Ext^{1}_{C}(\widetilde C/C,C) \cong \Ext^{1}_{C}(\overline A, C)\cong \Ext^{n+1}_{S\otimes_{K}S}(\overline A, S\otimes_{K}S)\,.
\end{align}
To determine the latter extension module, we make explicit the free $S\otimes_{K}S$--resolution
of $\overline A$ as mapping cone of the $S\otimes_{K}S$--resolutions of $C=S\otimes_{R}S$
and of $\widetilde C=\bigoplus_{g\in G}S\otimes_{R}S/I_{g}$, respectively.

As $S\otimes_{R}S$ is the complete intersection $S\otimes_{K}S$ modulo the regular sequence
$(f_{i}(\bx'')-f_{i}(\bx'))_{i}$, a free $S\otimes_{K}S$--resolution is given by the Koszul complex
on that regular sequence,
\begin{align*}
\KK_{\boldf}=\KK((f_{i}(\bx'')-f_{i}(\bx'))_{i},S\otimes_{K}S)\xto{\ \simeq\ } S\otimes_{R}S\,,
\end{align*}
where $\simeq$ is an isomorphism in the derived category since the complex is a resolution.

Now for $g\in G$ one has $I_{g}=(x_{i}''-g(x_{i}'); i=1,\ldots,n)\subseteq S\otimes_{R}S$ by 
Lemma \ref{kervp}(\ref{vp1}).
Applying $g$ to the first tensor factor in equation (\ref{diffquotient}) shows
\begin{align} \label{eqn:mat}
f_{i}(\bx'')-f_{i}(g(\bx')) &=\sum_{j=1}^{n}\nabla_{i}^{j}(g(\bx'),\bx'')(x''_{j}-g(x'_{j}))\,.
\end{align}
As $f_{i}$ is $G$--invariant, $f_{i}(g(\bx')) =f_{i}(\bx')$ and so there is a containment
\begin{equation*}
(f_{i}(\bx'')-f_{i}(\bx'); i=1,\ldots,n)\subseteq (x''_{j}-g(x'_{j}); j=1,\ldots,n)\subset S\otimes_{K}S
\end{equation*}
of ideals in $S\otimes_{K}S$. In particular, 
$S\otimes_{R}S/I_{g}\cong S\otimes_{K}S/(x''_{j}-g(x'_{j}); j=1,\ldots,n)$ as $S\otimes_{K}S$--modules.

The sequence $(x''_{j}-g(x'_{j}))_{j}$ is regular in $S\otimes_{K}S$ as it consists of linearly 
independent linear forms. Thus, $S\otimes_{R}S/I_{g}$ is also a complete intersection 
in $S\otimes_{K}S$ with free resolution the Koszul complex on that regular sequence,
\begin{align*}
\KK_{g}=\KK((x''_{j}-g(x'_{j}))_{j},S\otimes_{K}S)\xto{\ \simeq\ } S\otimes_{R}S/I_{g}\,.
\end{align*}
Let $M_{g} = \left(\nabla_{i}^{j}(g(\bx'),\bx'')\right)$ be the $n\times n$ matrix over 
$S\otimes_{K}S$ indicated in~(\ref{eqn:mat}).  Hence we have a commutative diagram of the form
\begin{align*}\xymatrixcolsep{5pc}
\xymatrix{
(S\otimes_KS)^n\ar[r]^-{(f_{i}(\bx'')-f_{i}(\bx'))_{i=1}^n}\ar[d]_{\prod_{g\in G}(M_{g})}&S\otimes_KS\ar[r]\ar[d]&S\otimes_{R}S\ar[d]^{\vp}\\
(S\otimes_KS)^{n|G|}\ar[r]_{\prod_{g\in G}(x''_{j}-g(x'_{j}))_{j=1}^n}&(S\otimes_KS)^{|G|}\ar[r]&\Maps(G,S)\,.
}
\end{align*}
So the matrices $(M_{g})_{g\in G}$ provide a morphism of the degree zero and one components of the Koszul complexes. Since the Koszul complex is functorial
the exterior powers $\prod_{g\in G}(\Lambda^{\bdot}M_{g})$ provide a lift of the evaluation homomorphism
$ev_{g}\vp:S\otimes_{R}S\to S\otimes_{R}S/I_{g}$ to a morphism between the resolutions.
So we obtain
\begin{align*}
\xymatrix{
\KK_{{\bf f}}\ar[r]^-{\simeq}\ar[d]_{\prod_{g\in G}(\Lambda^{\bdot}M_{g})}&S\otimes_{R}S\ar[d]^{\vp}\\
\prod_{g\in G}\KK_{g}\ar[r]^-{\simeq}&\Maps(G,S)\,.
}
\end{align*}
With $\Phi = (\Lambda^{\bdot}M_{g})_{g\in G}$ the indicated morphism between resolutions,
the mapping cone on $\Phi$ yields a resolution of $\overline A \cong \Maps(G,S)/\Imm(\vp)$ as 
$S\otimes_{K}S$--module. This mapping cone is a complex of free $S\otimes_{K}S$--modules of 
length $n+1$, whence we can calculate $\Ext^{n+1}_{S\otimes_{K}S}(\overline A, S\otimes_{K}S)$
simply as the cokernel of the last differential in the  $S\otimes_{K}S$--dual of that mapping cone.
The result is easily seen to be
\begin{align*}
\Ext^{n+1}_{S\otimes_{K}S}(\overline A, S\otimes_{K}S)&=
S\otimes_{R}S/\left(\det M_{g}; g\in G\right)\\
&=S\otimes_{R}S/\left(\det\left(\nabla_{i}^{j}(g(\bx'),\bx'')\right); g\in G\right)\,.
\end{align*}
Combining the above isomorphism of $C$-modules with~(\ref{eqn:Cmodc}) we 
conclude that
$$\fc = \left(\det\left(\nabla_{i}^{j}(g(\bx'),\bx'')\right); g\in G\right)\subseteq S\otimes_{R}S$$ as 
claimed.

By the same reasoning as in Lemma \ref{lem:Jincond}, it follows that 
$\beta(\vp\left(\det\left(\nabla_{i}^{j}(g(\bx'),\bx'')\right)\right) )= J\delta_{g}\in A$.
\end{proof}

\begin{proposition} \label{JisAnnBarA}
If $G$ is generated by (pseudo--)reflections of order $2$, then $J$ is a squarefree 
product of linear forms and so, with $C=S\otimes_RS$, $C/\fc\subseteq (S/(J))^{|G|}$ is reduced. Moreover, 
$$V(\fc)=\Sing(S\otimes_{R}S)\subseteq \Spec(S\otimes_{R}S) \ .$$
\end{proposition}

\begin{proof}
  The fact that $J$ is a square free product of linear forms is
  noted in Section~\ref{defnms}(\ref{defn:zJ}).  We always have
  a natural inclusion $C/\fc \subseteq \widetilde{C}/\fc$
  since $\fc$ is the conductor.  Proposition~\ref{prop:normConductor} shows that
  $\widetilde{C}/\fc \subseteq (S/J)^{|G|}.$  We know that $\widetilde{C}=\Maps(G,S)$ by the last statement of Theorem~\ref{thm:watanabe} and so is regular.  Now by Lemma~\ref{lem:cond}(\ref{lem:cond6}) we obtain that $V(\fc)=\Sing(S\otimes_{R}S)$.
\end{proof}

\begin{corollary}\label{MF}
For $G$ generated by pseudo--reflections, consider the map
$\psi:\Maps(G,S)\to S\otimes_{R}S$ given by
\begin{align*}
\psi((s_{g})_{g\in G}) &=
\sum_{g\in G}s_{g}\det\left(\nabla_{i}^{j}(g(\bx'),\bx'')\right)\,.
\end{align*}
This is $S$--linear on the left and $\vp\psi = J\id_{\Maps(G,S)}$.

As both $\Maps(G,S)$ and $S\otimes_{R}S$ are free (left) $S$--modules,
the pair $(\vp,\psi)$ constitutes a matrix factorization of $J\in S$ whose cokernel is
$\overline A$ as left $S$--module. In particular, $\overline A$ is a maximal 
Cohen--Macaulay module over the hypersurface ring $S/(J)$.\qed
\end{corollary}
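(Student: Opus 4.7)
The plan is to verify, in order, the left $S$-linearity of $\psi$, the factorization relation $\vp\psi = J\cdot\id_{\Maps(G,S)}$, the companion relation $\psi\vp = J\cdot\id_{S\otimes_R S}$, and then to identify the cokernel of $\vp$ with $\overline A$ and conclude via Eisenbud's theorem.

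Left $S$-linearity of $\psi$ is immediate from the defining formula, since left multiplication by $s \in S$ distributes over the sum. For the identity $\vp\psi = J\cdot\id$, the computation extends Lemma \ref{lem:Jincond}. Evaluating $\vp(\psi(\delta_g))$ at $h \in G$ yields the determinant $\det(\nabla_i^j(g(\bx), h(\bx)))$. Substituting $\bx' \mapsto g(\bx)$ and $\bx'' \mapsto h(\bx)$ in the identity \eqref{diffquotient} and using $G$-invariance of the basic invariants $f_i$ produces the relation
\[
0 = \sum_{j=1}^n \nabla_i^j(g(\bx), h(\bx))\,(h(x_j) - g(x_j))
\]
in $S$. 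For $h \neq g$, the column vector $(h(x_j) - g(x_j))_j$ is nonzero and $S$ is a domain, forcing the determinant to vanish. For $h = g$ the diagonal substitution recovers the Jacobian, so $\vp\psi(\delta_g) = J\delta_g$.

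For the companion relation $\psi\vp = J\cdot\id$, I would use that both $S \otimes_R S$ and $\Maps(G,S)$ are free left $S$-modules of rank $|G|$ (by Chevalley--Shephard--Todd and, respectively, by definition), and that $\vp$ is injective by Theorem \ref{thm:watanabe}, since it is the normalization map of the reduced ring $S \otimes_R S$. The identity $\vp(\psi\vp - J\id) = (\vp\psi - J\id)\vp = 0$ together with injectivity of $\vp$ then forces $\psi\vp = J\id$, so $(\vp, \psi)$ is a matrix factorization of $J$ in the standard sense.

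Finally, the cokernel of $\vp$, viewed as a left $S$-module, identifies with $\Maps(G,S)/\Imm(\vp) \cong A/AeA = \overline A$ by Corollary \ref{cor.3.9} together with Theorem \ref{thm:watanabe} (the latter ensuring that $\vp$ realizes $S \otimes_R S$ as the image $AeA$). Because $J$ is a nonzerodivisor in $S$ and $(\vp, \psi)$ is a matrix factorization between free $S$-modules of equal rank, Eisenbud's theorem on matrix factorizations yields that $\coker\vp = \overline A$ is a maximal Cohen--Macaulay module over the hypersurface ring $S/(J)$. The main obstacle is the clean diagonal computation $\vp\psi(\delta_g) = J\delta_g$: one must carefully reconcile how $g$ acts on $J$ via the relative invariance $g(J) = (\det g)^{-1}J$ with the left $S$-module structure on $\Maps(G, S)$, so that the evaluation at the diagonal is precisely $J$ and not the naive $g(J)$.
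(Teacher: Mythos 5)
Your outline is exactly what the paper intends: left $S$-linearity is immediate, the off-diagonal vanishing follows from the identity $(*)$ together with $G$-invariance of the $f_i$ and the fact that $S$ is a domain, the identification $\coker\vp\cong\overline A$ follows from Corollary~\ref{cor.3.9} and Theorem~\ref{thm:watanabe}, and Eisenbud's theorem then gives the MCM conclusion. The paper treats all of this as an immediate corollary of the preceding proposition, so your reconstruction of the argument is on target.

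However, the difficulty you flag in your last sentence is a genuine gap that your proposal does not close, and your framing of it is misleading: there is nothing to ``reconcile'' so that the diagonal entry comes out as exactly $J$. The honest computation gives, for $h=g$,
\[
\ev_g\bigl(\vp\psi(\delta_g)\bigr)=\det\nabla_i^j\bigl(g(\bx),g(\bx)\bigr)
=\det\Bigl(\tfrac{\partial f_i}{\partial x_j}(g\bx)\Bigr)=g(J)=(\det g)^{-1}J,
\]
so $\vp\psi$ is the diagonal endomorphism $\delta_g\mapsto(\det g)^{-1}J\,\delta_g$, not $J\cdot\id$ (the last sentence of the paper's proposition, which asserts $\vp(\det\nabla_i^j(g(\bx'),\bx''))=J\delta_g$, has the same imprecision). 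Since you asserted $\vp\psi=J\cdot\id$ literally and then derived $\psi\vp=J\cdot\id$ from it by injectivity of $\vp$, both of those steps as written are false. The repair is cheap but must be said: $\vp\psi=J\,U$ where $U=\diag\bigl((\det g)^{-1}\bigr)_{g\in G}$ is a \emph{constant} invertible diagonal matrix over $K$, so replacing $\psi$ by $\psi\circ U^{-1}$ yields a genuine matrix factorization $(\vp,\psi\circ U^{-1})$ of $J$ with unchanged image $\Imm\vp$ and unchanged $\coker\vp=\overline A$; alternatively one notes directly that $\coker\vp$ is annihilated by $J$ and that $\det\vp$ is a unit multiple of a power of $J$, which is all Eisenbud's theorem needs. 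Without some such statement your step two simply asserts a false identity, and acknowledging the discrepancy as an ``obstacle'' without resolving it leaves the proof incomplete.
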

\begin{proof}
That $\vp\psi = J\id_{\Maps(G,S)}$ is a variation on Lemma~\ref{lem:Jincond}.
  That $\cok =\overline A,$ is in Theorem~\ref{thm:Abar}(\ref{normalcok}).  That matrix factorizations give maximal Cohen-Macaulay modules is well known.
\end{proof}

\subsection{The map $\vp$ and the group matrix} \label{Sub:groupmatrix}

Let $G$ be a finite group.  Let
$\MM = (x_{gh^{-1}})_{g,h\in G}$, a matrix with entries from the polynomial ring $K[\{G\}]=K[x_{g}| g\in G]$, 
the variables thus indexed by the group elements. 
This is the classical
{\em group matrix\/} of $G$. This matrix essentially represents the multiplication table of the group written in commuting independent variables.

In a famous letter to Frobenius, Dedekind observed that in all examples he could handle, with $K=\CC$ the field
of complex numbers the {\em group determinant\/} $\det \MM$ decomposed as
\begin{align*}
\det \MM = \prod_{i=1}^{r}F_{i}^{d_{i}}
\end{align*}
for irreducible homogeneous polynomials $F_{i}\in P$ of degree $d_{i}\geqslant 1$. He asked 
Frobenius for an explanation. Dedekind had already himself established the case of finite abelian groups, 
for which he found each $d_{i}=1$.
That was the birth of the representation theory of general finite groups. See Section~4.11 of \cite{Etingofetal} for a beautiful 
short account of this story. We describe a direct relation between our matrix factorization of $J$ and the group matrix.

Let now $G \leqslant \GL(V)$ be a finite pseudo-reflection group acting on $S=\Sym_{K}V$ and let $R=S^G=K[f_1, \ldots, f_n]$ as above. Let $R_{+}\subset R$ the Hilbert ideal, cf.~Section \ref{Sub:isotypical}. %it is also known that the {\em Hilbert ideal\/} $R_{+}S\subset S$
It is known that $R_+ S \subseteq S$ has a $G$--stable complement $U\subseteq S$ that is isomorphic to the regular representation
$KG$ as a $G$--module. This complement can be realized as the vector space of all $G$--harmonic polynomials, cf.~ for example \cite[Cor.~9.37]{LehrerTaylor} and also see \cite{Steinberg}.
 Equivalently, see \cite[Thm.~9.38]{LehrerTaylor}, it is the subspace in $S$ generated by all (higher) partial derivatives of the Jacobian
$$J=\det\left(\left(\frac{\partial f_{i}}{\partial x_{j}}\right)_{i,j=1,\ldots,n}\right).$$ In particular, we can find a generic harmonic polynomial
$x\in U$ so that the set $\{x_{g}:= g(x)|g\in G\}$ forms a basis of $U$. Further, the multiplication map
$R\otimes_{K}U\to S$ is an isomorphism of $RG$--modules, thereby identifying $S$ with $RG$ as an $RG$--module.

\begin{proposition}
The matrix $X_{g,h}$ of the $S$-linear map $\varphi: S \otimes_RS \xrightarrow{} \Maps(G,S)$ is the group matrix $\MM$ of $G$ evaluated at the harmonic polynomials $x_g \in S$, $g \in G$.
\end{proposition}

\begin{proof}
The map $\vp\colon S\otimes_{R}S\to \Maps(G,S)$ is determined by $x\otimes y\mapsto \vp(x\otimes y)(g) = xg(y)$ 
with respect to the $S$--bases $\{1\otimes x_{h^{-1}}''\mid h\in G\}$ of $S\otimes_{R}S$ and 
$\{\delta_{g}\mid g\in G\}\subset \Maps(G,S)$. Thus its matrix is $X_{g,h} = \vp(1\otimes x''_{h^{-1}})(g) = g(x_{h^{-1}})=x_{gh^{-1}}$, that is, $\MM$ evaluated at the harmonic polynomials $x_{g}\in S, g\in G$.
\end{proof}

  We note that the matrix factorization of $J$ has a particularly nice form.
  Let the irreducible components of the discriminant $\Delta$ be $\Delta_j$ and let the ramification index of the cover $S$ over $R$ on $\Delta_j$
be $r_j,$ which is also the order of the cyclic subgroup of $G$ that fixes the mirror which is a component of the inverse image of $V(\Delta_j).$

\begin{proposition} \label{Prop:groupmatrix}
In the matrix factorization $(\vp,\psi)$ of $J$ with $\coker\vp =\overline A$,
  \begin{enumerate}[\rm(a)] 
    \item \label{phidual} The morphism $\psi$ is the transpose of $\vp$ up to base change.
      \item \label{syzdual} %\ecom{should this be $\Hom_{S/(J)}(\overline{A},S/(J))$?}  
      $\Hom_S(A,S/(J)) \cong$ $\mathrm{syz}^1_{S/(J)}\overline{A},$ where  $\mathrm{syz}^1_{S/(J)}$ denotes the first $S/(J)$-syzygy .
              \item \label{detphi} $\det(\vp)=J^{|G|/2}$
\item \label{rankAeA} The rank of $\overline A$ along the component $\Delta_j$ of the discriminant is
$$  \rank_{\Delta_j}\overline{A} = \frac{(r_j-1)|G|^2}{2r_j}={ r_j \choose 2}\frac{|G|^2}{r_j^2}.$$
  \end{enumerate}
\end{proposition}

\begin{proof}
  The map $\varphi : S\otimes_R S \to \Maps(G,S)$ can be identified with
  the normalization map $C \to \widetilde{C}$ as in Theorem~\ref{thm:watanabe}.  As noted in the proof of Lemma~\ref{lem:cond} (b), the dual of this map $\Hom_C(-,C)$  is naturally the map
  $$\Hom_C(\varphi,C):\fc \to C.$$
  As in Prop.~\ref{prop:normConductor}, we note that the composition
  $$\varphi \circ \Hom_C(\varphi,C) : \fc \to C \to \widetilde{C}$$
  can be identified with the inclusion $J\Maps(G,S) \to \Maps(G,S)$ and
  so $\varphi \circ \Hom_C(\varphi,C)  =  J \id_{\Maps(G,S)}.$
  Since $C$ is a finitely generated free $S$-module, we obtain the same statement when dualizing over $S$ instead of $C$.  Lastly, by
 Prop.~\ref{MF} we have that
  $$ \varphi \circ \Hom_S(\varphi,S)  =  J \id_{S\otimes_RS} =  \varphi\circ\psi $$
  giving statement (\ref{phidual}).
  Statement (\ref{syzdual}) follows from \cite[Prop.~7.7]{Yos} together with statement (\ref{phidual}).
The equation $\det(\vp\psi) = J^{|G|}$ that follows from $\vp\psi=J\id_{\Maps(G,S)}$ then entails that 
\[J^{|G|}=\det(\vp)\det(\psi) = \det(\vp)^2 \, , \]
giving statement (\ref{detphi}).
To establish statement (\ref{rankAeA}) we first note that $J=\prod_{i=1}^{m_1}L_i^{r_i-1}$, where the $L_i$ are the linear forms defining the mirrors of $G$ and $r_i$ is the order of the cyclic group that leaves the mirror invariant. The hyperplanes $\{L_i=0\}$
are the irreducible components of the hyperplane arrangement and on such a component the rank of $\overline A$
is accordingly $(r_i-1)|G|/2$. Note that this is an integer, as $|G|$ odd implies that each $r_i$ is odd too.

Next note that $\Delta = zJ=\prod_iL_i^{r_i}$. Grouping the hyperplanes into orbits under the action of $G$, we get $\Delta=\prod_{j=1}^{q}\Delta_j$, where
$\Delta_j=\prod_{L_k\in O_j}L_k^{r_k}$ with $O_j$ an orbit and $q$ the number of such orbits. 
These $\Delta_j$ are the irreducible factors of $\Delta$ in $R$.
Note that the exponents $r_k$ are the same for each linear form in an orbit. We abuse notation and denote this common value for $O_j$ also by $r_j$, giving 
$\Delta_j=(\prod_{L_k\in O_j}L_k)^{r_j}$. Since the stabilizer of a hyperplane in $O_j$ has order $r_j$ we have that $|O_j| \cdot r_j  = |G|$. Hence we obtain the result that $\rank_{\Delta_j}\overline{A} = (r_j-1)|G||O_j|/2$ which gives statement (\ref{rankAeA}).
\end{proof}

\begin{remark}Recall that $m$ the number of pseudo-reflections in $G$
  and $m_1$ is the number of mirrors as discussed in Section \ref{defnms}.  If
  the rank of  $\overline A$ is $r$ on every component we obtain
\begin{eqnarray*}
\binom{r}{2}  \frac{|G|^2}{r^2}  & = &  \frac{r(r-1)|G|^2}{2r^2} \\
& = & \frac{(r-1)|G|^2}{2r} \\
& = & \frac{|G|^2}{2} \frac{m_1(r-1)}{m_1+m_1(r-1)}\\
& = & \frac{|G|^2}{2} \frac{m}{m_1+m} \ . \\
\end{eqnarray*}
\end{remark}

\begin{example}\label{Ex:ranks}
One can explicitly compute the rank of $\overline{A}$ for any finite unitary reflection group $G \leqslant \GL(V)$ with irreducible discriminant in the Shephard--Todd list  with the above: 
\[ \rank_{R/(\Delta)}(\overline{A})=\frac{|G|^2}{2}\frac{m}{m+m_1}.\]
The number $m$ of reflections is given as $\sum_{i=1}^n(d_i -1)$, where $n$ is the dimension of $V$, and $d_i$ are the degrees of the basic invariants. The number $m_1$, that is, the number of different mirrors is given by the sum of the \emph{co-exponents} of $G$. These are the degrees of the homogeneous generators minus $1$ of the logarithmic derivation module of the reflection arrangement corresponding to $G$ \cite[Cor.~6.63]{OTe}. All these numbers can be found in the literature, see e.g. \cite[Table VII]{STo} for the orders and degrees and \cite[Table B.1]{OTe} for the co-exponents. \\
Note that the groups labeled $G_1$ in the Shephard--Todd list are the symmetric groups, which are true reflection groups, so $$\rank_{R/(\Delta)}(\overline{A})=\left(\frac{|G|}{2}\right)^2 \ .$$ For the remaining groups one can determine in which cases the discriminant is irreducible, cf.~Appendix C in \cite{OTe}. 
\end{example}

\begin{corollary} \label{Cor:AbarEndomorphismring}
  If $G \leqslant \GL(V)$ is generated by pseudo-reflections, some of which have order $\geq 3$,
  then $\overline A$ is not an endomorphism ring over $R/(\Delta)$.
\end{corollary}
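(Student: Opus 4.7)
The plan is to argue by contradiction, using the generic local structure of $\overline{A}$ computed in Corollary \ref{RankAbar} as an obstruction. Suppose that $\overline{A} \cong \End_{R/(\Delta)}(M)$ for some finitely generated $R/(\Delta)$-module $M$. By hypothesis, $G$ contains a pseudo-reflection of order $r \geq 3$; the orbit of its mirror contributes an irreducible factor $\Delta_j$ of $\Delta$ with exponent $r$, and I would take $\fp = (\Delta_j) \subset R$. This $\fp$ is a minimal prime of $R/(\Delta)$, and the ramification of the cover $S/R$ along $\fp$ equals the stabilizer order $r$, so Corollary \ref{RankAbar} applies at $\fp$ with this value of $r$.

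Next, localize at $\fp$ and tensor with the algebraic closure $L$ of the residue field $k(\fp)$. On the endomorphism-ring side, $(R/(\Delta))_\fp = k(\fp)$ is a field, so $M_\fp$ is a finite-dimensional $k(\fp)$-vector space. It is nonzero, because Corollary \ref{RankAbar} gives $\dim_{k(\fp)} \overline{A}_\fp = (r-1)g^2 \neq 0$ with $g = |G|/r$; using the standard identification $\End_{R/(\Delta)}(M)_\fp \cong \End_{k(\fp)}(M_\fp)$ (valid for finitely generated $M$ over a noetherian ring), I would conclude that $\overline{A} \otimes_R L \cong \End_L(M_\fp \otimes L)$ is a full matrix algebra over $L$, hence simple and in particular semisimple.

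On the other hand, Corollary \ref{RankAbar} identifies $\overline{A} \otimes_R L$ with the algebra of block upper-triangular matrices of size $(r-1) \times (r-1)$ whose entries lie in $M_g(L)$. Because $r - 1 \geq 2$, the strictly upper-triangular blocks form a nonzero nilpotent two-sided ideal, so this algebra has nontrivial Jacobson radical and is therefore \emph{not} semisimple. Comparing with the conclusion of the previous paragraph yields the desired contradiction. I expect the main subtlety to be simply the bookkeeping around matching the ramification index $r$ at the chosen prime $\fp = (\Delta_j)$ with the order of the pseudo-reflection stabilizing the corresponding mirror; this is already encoded in the decomposition $\Delta = \prod_j \Delta_j$ indexed by mirror-orbits, and no further technical machinery beyond Corollary \ref{RankAbar} is needed.
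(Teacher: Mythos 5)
Your proof is correct and follows essentially the same route as the paper: assume $\overline A \cong \End_{R/(\Delta)}(M)$, pass to an associated prime $\fp$ of $R/(\Delta)$, observe that $\End_{R/(\Delta)}(M)\otimes L$ is a full matrix algebra (hence has zero Jacobson radical) over the algebraic closure $L$ of $k(\fp)$, and contrast this with the block--upper--triangular description of $\overline A\otimes L$ from Corollary~\ref{RankAbar}, which has nontrivial radical when the ramification index $r$ is at least $3$. The only cosmetic difference is that you explicitly fix $\fp=(\Delta_j)$ for a mirror-orbit with $\rho_H\geq 3$, whereas the paper ranges over all associated primes and concludes $r=2$ must hold at each; the two presentations are logically equivalent. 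One small slip: the $L$-dimension of $\overline A\otimes L$ is $\binom{r}{2}g^2$, not $(r-1)g^2$ (count the upper-triangular blocks, each of size $g\times g$, in an $(r-1)\times(r-1)$ block matrix), but you only use that it is nonzero, so this does not affect the argument.
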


\begin{proof}
  Suppose there is an $R/(\Delta)$-module $M$ such that
  $\overline A \cong \End_{R/\Delta}(M)$.  Let $\fp$ be an associated prime ideal of $R/(\Delta)$ and let $L$ be the algebraic closure of its residue field.
Then since $R/(\Delta)$ is reduced, we see that
$$\End_{R/\Delta}(M) \otimes L \cong \End_L(L^n) \cong L^{n\times n}$$
where $n$ is the rank of $M$ on the component corresponding to $\fp$.  On the other hand, let $\fp$ be the image of a mirror with a pseudo-reflection of order $r$.  We know that there is an \'etale extension $R'$ of the DVR $R_\fp$ with residue field $L'$ such that $A=R'=A\otimes_{R_{\fp}} R'$ is a matrix algebra over a standard hereditary order with ramification index equal to $r$~\cite{Reiner75}.  A computation shows that we can move any rank one idempotent in a standard hereditary order to the matrix idempotent $e_{11}$.  So we see that $\overline{A}\otimes R'$ will be the algebra of matrices over upper triangular matrices of size $(r-1) \times (r-1)$ which has a nontrivial Jacobson radical unless the ramification index $r=2$.  Therefore $G$ must be a true reflection group.
\end{proof}

%%%%%%%%%%
\section{Noncommutative resolutions of discriminants} \label{Sec:Main}
%%%%%%%%%%%

%%%%%%%%%%%%%%%%%
\subsection{Matrix factorizations as quiver representations, Kn\"orrer's functors} \label{Sub:MFandCM}
%%%%%%%%%%%%%%%

In order to compare modules over the discriminant and the skew group ring, we will reinterpret  Kn\"orrer's functors 
from \cite{KnoerrerCurves} and \cite{KnoerrerCohenMacaulay}.  This yields a reformulation of Eisenbud's theorem \cite{Eisenbud80} and a variation of Kn\"orrer's result (\cite[Prop.~2.1]{KnoerrerCohenMacaulay}) in Remark \ref{Rmk:Knoerrer}. Most of this section follows with standard proofs from the cited results - we will give a short account only and then will  work with the skew group ring $B=T*\mu_2$, which is the ring of our interest (the interested reader may skip to Section \ref{Subsub:Knoerrer} directly for this).

\subsubsection{Modules over path algebras:} Let $R$ be a commutative regular ring, $f \in R$ a non-zero divisor %, such that $f$ is not a square in $R$, 
 and let
 \begin{equation} \label{Eq:Bquiver}
 {\begin{tikzpicture}[baseline=(current  bounding  box.center)] 
 \node (B) at (-0.1,0) {$B=R$};

 \node (C1) at (0.9,0) {$e_+$} ;
 \node (C2) at (5.1,0)  {$e_- $};

 \draw [thick ] (0.7,-0.7) to [round left paren ] (0.7,0.7);
 \draw [thick ] (5.3,-0.7) to [round right paren] (5.3,0.7);

 \draw [->,bend left=25,looseness=1,pos=0.5] (C1) to node[]  [above]{$v$} (C2);
 \draw [->,bend left=20,looseness=1,pos=0.5] (C2) to node[] [below] {$u$} (C1);
% \draw[<->]  (C2) edge [ dashed] node[left] [above]{{\small $f$}}(C1);

 \node (DD) at (5.7,0) {.};
 \end{tikzpicture}} 
 \end{equation}
 This stands for the associative $R$-algebra generated by $e_+$, $e_-$, $u$, $v$, modulo the relations 
 \begin{gather*}
 e_+^2=e_+   \,, \quad  e_-^2=e_-\, , \quad   e_-+e_+=1 \, , \\
 u  =e_+ue_- \, , \quad  v=e_-v e_+\,  ,   \\
   uv=fe_+ \,, \quad vu=fe_-  \, .
 \end{gather*}
Note that $B$ is free as a $R$-module with basis the four elements $e_-, e_+$, $u, v$. \\
  A right $B$-module $M$ corresponds to a quiver representation of the form 
 \[M:= (M_+ \stackrel[u_M]{v_M}{\rightleftarrows} M_-) \ , \]
 where $M_+=Me_+$ and $M_-=Me_-$ are $R$-modules and $u_M$ and $v_M$ are $R$-linear and must satisfy $u_M v_M = f \mathrm{Id}_{M_+}$ and $v_M u_M = f \mathrm{Id}_{M_-}$. Note here that $M$ is isomorphic to $M_+ \oplus M_-$ as $R$-modules via restriction of scalars.
 %In the following we use the shorthand notation 
 %\[M:= (M_+ \stackrel[u_M]{v_M}{\rightleftarrows} M_-) \ .\]
 A morphism between $B$-modules $M=(M_+ \qurep{v_M}{u_M} M_-)$ and $M'=(M'_+ \qurep{v_M'}{u_M'} M'_-)$ corresponds to a pair $(\alpha_-, \alpha_+)$ of $R$-module homomorphisms such that the diagram 
 \begin{equation} 
 \xymatrix{
 M_+ \ar[r]^{v_M} \ar[d]^{\alpha_+} & M_-\ar[r]^{u_M} \ar[d]^{\alpha_-}    &   M_+ \ar[d]^{\alpha_+}   \\
 M_+' \ar[r]_{v_M'} & M_-'\ar[r]_{u_M'} & M_+'  
 } 
 \end{equation}
 commutes.

Conversely, if we start with a quiver representation $(M_+ \qurep{v_M}{u_M} M_-)$, then $M:=M_+ \oplus M_-$ is naturally a right $B$-module. 
  If $M$ is  finitely generated projective as a $R$-module, then the pair $(u_M,v_M)$ is called a \emph{matrix factorization of $f$}  over $R$ and the $B$-module $(M_+ \qurep{v_M}{u_M} M_-)$ is called a \emph{(maximal) Cohen--Macaulay module} over $B$. The category of such modules is denoted $\CM(B)$.

\begin{lemma}  \label{Cor:Bprops} 
Let $B$ be an algebra of the form \eqref{Eq:Bquiver}. Then:
\begin{enumerate}[\rm(a)]
 \item $B=e_+ B \oplus e_- B$ is the sum of two projective $B$-modules.
 \item $B/Be_+B  \cong B/Be_-B \cong R/(f)$. In particular, there is a natural algebra surjection $B \onto R/(f)$. \label{DiskQuiv}
 %\item $e_\pm B=e_\pm R= R_\pm$ \label{RQuiv}
 \item $e_+ B e_+ \cong e_- B e_- \cong  R$ . \label{TQuiv}
 \item The centre of $B$ is $R$. \label{Bcentre}
 \end{enumerate}
 \end{lemma}

 \begin{proof}
 All four assertions follow from straightforward calculations.
 \end{proof}

In order to relate modules over $B$ and over $R/(f)$ we look at the standard recollement 
 induced by $e_-$. It is given by 
 \[
 \begin{tikzpicture}
 \node at (4,0) {\begin{tikzpicture} 
 \node (C1) at (0,0) {$\Mod B/Be_- B$} ;
 \node (C2) at (5,0)  {$\Mod B$};
 \node (C3) at (10,0)  {$\Mod e_-Be_-$ \ .};

 \draw [->,bend right=38,looseness=1,pos=0.5] (C2) to node[]  [above]{$i^*=-\otimes_B B/Be_-B$} (C1);
 \draw [->,bend left=30,looseness=1,pos=0.5] (C2) to node[] [below] {$i^!=\Hom_B(B/Be_-B,-)$} (C1);
 \draw[->]  (C1) edge [] node[left] [above]{$i_*$}(C2);

 \draw [->,bend right=38,looseness=1,pos=0.5] (C3) to node[]  [above]{$j_!=-\otimes_{e_-Be_-} e_-B$} (C2);
 \draw [->,bend left=30,looseness=1,pos=0.5] (C3) to node[] [below] {{\small $j_*=\Hom_{e_-Be_-}(Be_-,-)$}} (C2);
 \draw[->]  (C2) edge [] node[left] [above]{{\small $j^*=\Hom_B(e_-B,-)$}}(C3);

 \end{tikzpicture}}; 
 \end{tikzpicture}
 \] 
We refer to \cite{FranjouPirashvili} for the properties of the six functors. We will only be interested in the left hand side of the recollement, in particular the functor $i^*$ relating $\Mod B/Be_- B$ and $\Mod B$. Note that with Lemma~\ref{Cor:Bprops}, one can write $i^*= -\otimes_B R/(f)$, $i^!=\Hom_B(R/(f), -)$.  Moreover, $j^*=\Hom_B(e_-B, - ) \cong - \otimes_B Be_-$. \\
One also easily verifies the following statements: Let $M$ be a $B$-module and $C$ be a $R/(f)=B/Be_-B$-module. Then $i_*C=(0 \qurep{}{} C)$ and $i^*(M) = \cok u_M$. The other functors have similar simple expressions.

Recall that an associative ring $\Lambda$ is called \emph{Iwanaga--Gorenstein} if it is noetherian on both sides and the injective dimension of $\Lambda$ as a left and right $\Lambda$-module is finite. 

\begin{remark} We will use the following Theorem \ref{Thm:Eisenbud} for $R$ either a polynomial ring or a power series ring over $K$. However, it can be stated more generally for regular rings, for ease of notation and to keep extra assumptions, such as existence of ranks, at a minimum, we choose $R$ to be an integral domain.
\end{remark} 

\begin{theorem} \label{Thm:Eisenbud}
Let $R$ be a commutative regular ring assume that $R$ is an integral domain. Let $f \in R$, $f \neq 0$ (in particular, $f$ is a non-zero divisor) and $B$ with relations defined as in \eqref{Eq:Bquiver}. \\
Then the ring $B$ is Iwanaga--Gorenstein and $M=(M_+\qurep{v_M}{u_M}M_-)$ is in $\CM(B)$ if and only if $i^*M$ is in $\CM(R/(f))$, where $i^*$ is coming from the recollement as described above. The functor $i^*$ induces an equivalence of categories
 \[\label{eqn:equiv}  \CM(B)/ \langle e_-B \rangle \simeq \CM(R/(f)) \ , \]
 where $e_{-}B$ is the ideal in the category $\CM(B)$ generated by the object $e_{-}B$.
\end{theorem}

 \begin{proof}
The statement that $B$ is Iwanaga--Gorenstein can either be shown directly using properties of the recollement or one can refer e.g. to \cite[Prop.~1.1(3)]{GotoNishida}.\\
Now assume that $M=(M_+\qurep{v_M}{u_M}M_-)$ is in $\CM(B)$. Recall that this means that $M_+$ and $M_-$ are projective over $R$. 
Set $C:=\coker(u_M)=i^*(M)$.  Because $f$ is a non-zero-divisor in $R$, multiplication by $f$ on $M_-$ is injective and so is $u_M$ as $v_M u_M = f \id_{M_-}$. Therefore 
 \begin{equation} \label{Eq:Cres} 0 \xto{} M_- \xto{ \ u_M \ } M_+ \xto{} C \xto {} 0
 \end{equation}
 is a projective resolution of $C$ over $R$. Note that a simple rank calculation shows that the ranks of $M_-$ and $M_+$ have to coincide and that $C$ cannot be projective, thus has projective dimension $1$ over $R$. This implies $C=i^*(M) \in \CM(R/(f))$ by the Auslander--Buchsbaum formula, and so $i^*$ defines a functor $\CM(B)$ to $\CM(R/(f))$.

 Conversely, take any Cohen--Macaulay module $C$ over $R/(f)$ and let \eqref{Eq:Cres} be a projective resolution of $C$ over $R$ with $M_+$ and $M_-$ projective $R$-modules. One can find $u_M$ and $v_M$ %and free $R$-modules $M_+$ and $M_-$ such that $C=i^*(M)$ and 
 such that
 \begin{equation} \label{Eq:MFoverB} {\begin{tikzpicture}[baseline=(current  bounding  box.center)] 
 %\node (B) at (-0.1,0) {$B=R$};

 \node (C1) at (0,0) {$0$} ;
 \node (C1o) at (0,1.5)  {$0$};
 \node (C2) at (2,0)  {$M_-$};
 \node (C2o) at (2,1.5)  {$M_-$};
 \node (C3) at (4,0)  {$M_+$};
 \node (C3o) at (4,1.5)  {$M_-$};
 \node (C4) at (6,0)  {$C$};
 \node (C4o) at (6,1.5)  {$0$};
 \node (C5) at (8,0) {$0$} ;
 \node (C5o) at (8,1.5)  {$0$};

%\draw [thick ] (0.7,-0.7) to [round left paren ] (0.7,0.7);
%\draw [thick ] (5.3,-0.7) to [round right paren ] (5.3,0.7);

 \draw [->,bend left=25,looseness=1,pos=0.5] (C2) to node[]  [left]{$f$} (C2o);
 \draw [->,bend left=20,looseness=1,pos=0.5] (C2o) to node[] [right] {$\id$} (C2);
 \draw [->,bend left=25,looseness=1,pos=0.5] (C3) to node[]  [left]{$v_M$} (C3o);
 \draw [->,bend left=20,looseness=1,pos=0.5] (C3o) to node[] [right] {$u_M$} (C3);
 \draw [->,bend left=25,looseness=1,pos=0.5] (C4) to node[]  [left]{$0$} (C4o);
 \draw [->,bend left=20,looseness=1,pos=0.5] (C4o) to node[] [right] {$0$} (C4);

 \draw[->]  (C1) to  [] node[left] [above]{}(C2);
 \draw[->]  (C2) to  [] node[left] [above]{$u_M$}(C3);
 \draw[->]  (C3) to  [] node[left] [above]{}(C4);
 \draw[->]  (C4) to  [] node[left] [above]{}(C5);

 \draw[->]  (C1o) to  [] node[left] [above]{}(C2o);
 \draw[->]  (C2o) to  [] node[left] [above]{$\id$}(C3o);
 \draw[->]  (C3o) to  [] node[left] [above]{}(C4o);
 \draw[->]  (C4o) to  [] node[left] [above]{}(C5o);
 \end{tikzpicture}} 
 \end{equation}
 is a short exact sequence of $B$-modules, that is, $u_M v_M=v_M u_M=f$. 
 The leftmost column of this diagram corresponds to the $B$-module $(M_-\qurep{f}{\id}M_-)=j_{!}j^*M $, which is isomorphic to a direct summand of $(e_-B)^m$ for some $m \geq 0$, since $M_-$ is projective over $R$. In particular, that $B$-module is projective and the $B$-module $M=(M_+\qurep{v}{u}M_-)$ is in $\CM(B)$ and $\coker(u)=i^*(M)=C$. This shows that $i^*$ is a dense functor from $\CM(B)$ to $\CM(R/(f))$. Note that we just established that there is a short exact sequence of functors
 \begin{equation} \label{Eq:exactFunctors}
  0 \xto{}  j_! j^* \xto{}  \id_{\CM(B)} \xto{} i_* i^* \xto{} 0 
  \end{equation}
 from $\CM(B)$ to $\fmod(B)$, where $\fmod(B)$ stands for the category of finitely generated $B$-modules. Here $\id_{\CM(B)} \xto{} i_* i^* $ is the restriction of the unit of the adjunction $(i^*,i_*)$ to $\CM(B)$. Further, $i^*(e_- B)=0$, whence $i^*$ factors through the quotient $\CM(B)/ \langle e_- B \rangle$. \\
  From the exact sequence \eqref{Eq:exactFunctors} one easily sees that the functor $\CM(B)/ \langle e_-B \rangle \xto{} \CM(R/(f))$ induced by $i^*$ is fully faithful.
  % For fullness, note that any morphism $\alpha: i_* i^* M \to i_* i^* M'$ in $\CM(R/(f))$ lifts to a morphism in $\CM(B)$ through the projective resolutions of $i_* i^* M$ and $i_* i^* M'$, and for faithfulness note that if $\alpha=0$, then any morphism representing $\alpha$ in $\CM(B)$ factors through $e_-B$.
 \end{proof}

%% \begin{example}
%% The matrix factorizations to the two indecomposable projective $B$-modules $e_-B$ and $e_+B$ are $(R\qurep{f}{\id}R)$ and $(R\qurep{\id}{f}R)$, respectively. In particular, one sees that $i^*(e_-B)=0$ and $i^*(e_+B)=R/(f)$.
%% \end{example}

Interpreting Theorem \ref{Thm:Eisenbud} in terms of matrix factorizations, note that $\CM(B) \simeq MF(f)$,  the category of matrix factorizations of $f$. Let $\cali$ be the ideal in the category $MF(f)$ generated by the matrix factorization $R\qurep{f}{\id}R$. 
 If $f$ is a non-zero divisor in $R$ then by the above result, the functor coker$(u): MF(f) \longrightarrow \CM(R/(f))$ induces an equivalence of categories
 \begin{equation} \label{Eisenbud:MF} MF(f)/\cali \simeq \CM(R/(f)),
 \end{equation}
 which is a reformulation of \cite[Section 6]{Eisenbud80}.

\subsubsection{Reformulation in terms of the skew group ring} \label{Subsub:Knoerrer}
Let $R$ and $f\in R$ be as in Theorem \ref{Thm:Eisenbud}.
Let $T:=R[Z]/(Z^2-f)$, so that $\Spec(T)$ is the double cover of $\Spec(R)$ ramified over $V(f)=\{ f=0 \}$. The canonical $R$-involution on $T$ that sends $Z$ to $-Z$ defines a group action of $\mu_2=\langle \sigma \mid \sigma^2=1 \rangle$ on $T$. Let $T * \mu_2$ be the corresponding twisted group algebra.  In the next lemma, we describe the quiver structure of $T*\mu_2$.

\begin{lemma}  \label{Lem:Bequiv} 
Consider $T*\mu_2$, as just described, and suppose 2 is invertible. Then $T*\mu_2 \cong R\langle Z, \delta_\sigma \rangle / \langle Z^2 - f, \delta_\sigma Z + Z \delta_\sigma , \delta_\sigma^2-1 \rangle$. Furthermore, with
 $e_\pm = \frac{1}{2}(1 \pm \delta_\sigma)$ and $u =  \frac{1}{2}(1 + \delta_\sigma)Z$ and $v = \frac{1}{2}(1 - \delta_\sigma)Z$,  one has %
 $$ {\begin{tikzpicture}
% \node (B) at (-0.1,0) {$B\cong R\,$};
\node (B) at (-0.45,0) {$T*\mu_2 \cong R\,$};
 
 \node (C1) at (0.9,0) {$e_+$} ;
 \node (C2) at (5.1,0)  {$e_- $};

 \draw [thick ] (0.7,-0.7) to [round left paren ] (0.7,0.7);
 \draw [thick ] (5.3,-0.7) to [round right paren] (5.3,0.7);

 \draw [->,bend left=25,looseness=1,pos=0.5] (C1) to node[]  [above]{$v$} (C2);
 \draw [->,bend left=20,looseness=1,pos=0.5] (C2) to node[] [below] {$u$} (C1);
 %\draw[<->]  (C2) edge [ dashed] node[left] [above]{{\small $f$}}(C1);

 \node (DD) at (5.7,0) {.};
      \end{tikzpicture}}
 $$  with relations $uv=f e_+, vu =f e_-$. 
In particular, $T * \mu_2$ is the path algebra of a quiver as in \eqref{Eq:Bquiver}.

 \end{lemma}

\begin{proof}
All statements are easily verified using the fact that $B=T*\mu_2$ is a free $R$-module with basis $1, Z, \delta_\sigma, Z\delta_\sigma$ and the path algebra is a free $R$-module with basis $e_+, e_-, u, v$.
\end{proof}

\begin{remark} \label{Rmk:Knoerrer}
With $T=R[Z]/(Z^2-f)$ and the isomorphism of $T*\mu_2$ with the path algebra $B$ as in \eqref{Eq:Bquiver}  (Lemma \ref{Lem:Bequiv}) and Theorem \ref{Thm:Eisenbud} we have
 \begin{equation}\label{eqn:equiv}  \CM(T*\mu_2)/ \langle e_-B \rangle \simeq \CM(R/(f)) \simeq MF(f)/\cali \ ,
 \end{equation}
which is a reformulation of the equivalence \eqref{Eisenbud:MF}.
 Assuming that $2$ is a unit in $R$, and expressing the recollement in terms of $T*\mu_2$,
 one regains the functors in \cite{KnoerrerCohenMacaulay}. In particular, Theorem \ref{Thm:Eisenbud} implies Horst Kn\"orrer's result
\[ \CM(T *\mu_2 ) \simeq  MF(f) \, \]
as established in \cite[Prop. 2.1]{KnoerrerCohenMacaulay}. 
\end{remark}

%%%%%%%%%%%%%%
\subsection{The skew group ring and Bilodeau's isomorphisms}
\label{Sub:Bilodeau}
%%%%%%%%%%%%%%%%
The results here were inspired by work of Jos\'ee Bilodeau \cite{Bilodeau}.  In the following, $\mathbb{K}$ is a commutative ring and $G$ a finite group such that the order $|G|$ of $G$ 
is invertible in $\mathbb{K}$. Set $e_{G}=\tfrac{1}{|G|}\sum_{g\in G}g\in \mathbb{K}G$, the idempotent in the group 
algebra that belongs to the trivial representation of $G$. Similarly, for a subgroup $H\leqslant G$ 
we set $e_{H}= \tfrac{1}{|H|}\sum_{h\in H}h\in \mathbb{K}G$ and say that this idempotent element in
$\mathbb{K}G$ is defined by $H$.

If $\Gamma,H\leqslant G$ are complementary subgroups in that $\Gamma\cap H=\{1\}$, where 
$1\in G$ is the identity element, and $H\Gamma =G$, then every element $g\in G$ can be written 
uniquely as $g=h \gamma$ with $h\in H, \gamma\in \Gamma$, and also uniquely as 
$g=\gamma'h'$ with $\gamma'\in \Gamma, h'\in H$. 
%Moreover, $h=h'$ and $\gamma' = h^{-1}\gamma h$ if $\Gamma$ 
%is normal in $G$.

Note that one has $e_{G}= e_{H}e_{\Gamma}=e_{\Gamma}e_{H}$ in $\mathbb{K}G$.

\begin{lemma}
\label{Gammainv}
Let $M$ be a left $\mathbb{K}\Gamma$--module. The $\mathbb{K}$--submodule
$M^{\Gamma}=\{m\in M\mid  \gamma m = m \text{\ for each\ } \gamma\in \Gamma\}$ equals 
$e_{\Gamma}M$.
\end{lemma}

\begin{proof}
If $\gamma m= m$ for each $\gamma\in\Gamma$, then 
$\left(\sum_{\gamma \in \Gamma} \gamma\right)m=|\Gamma| m$,
that is, $e_{\Gamma}m = m$, and so $M^{\Gamma}\subseteq e_{\Gamma}M$. On the other hand, 
$\gamma e_{\Gamma}=e_{\Gamma}$ for each $\gamma \in \Gamma$, thus, 
$e_{\Gamma}M\subseteq M^{\Gamma}$.
\end{proof}

\begin{cor}
\label{invsubring}
If $\Gamma$ acts through $\mathbb{K}$--algebra automorphisms on a $\mathbb{K}$--algebra $S$, then 
$T: = S^{\Gamma}= e_{\Gamma}S$ is a $\mathbb{K}$--subalgebra of $S$.
\end{cor}
\begin{proof}
This is obvious from the description 
$T=S^{\Gamma}=\{s\in S\mid \gamma s = s \text{\ for each\ } \gamma\in \Gamma\}$.
\end{proof}

\begin{lemma} \label{Lem:HtensorS}
With notation as before, let $\Gamma\leqslant G$ be a normal subgroup and set $T= S^{\Gamma}$ 
and $H=G/\Gamma$. The quotient group $H$ acts naturally on $T$ through $\mathbb{K}$--algebra 
automorphisms and one can form $T*H$ accordingly. There is a natural isomorphism
$S\otimes_\mathbb{K} \mathbb{K}H \cong Ae_{\Gamma}$ as right $T*H$--modules and a $\mathbb{K}$--algebra isomorphism 
$T{*} H\cong e_{\Gamma}Ae_{\Gamma}$, where $A=S*G$, as before.  

%If the extension $1\to \Gamma\to G\to H\to 1$ is split, then this isomorphism is one of $T*H$--bimodules.
\end{lemma}

\begin{proof} Note that for $\Gamma$ normal in $G$ it holds that $g e_{\Gamma}=e_{\Gamma}g$ for all $g\in G$, 
thus, $e_{\Gamma}$ is then a central idempotent.
Further, $\gamma e_{\Gamma} = e_{\Gamma} =  e_{\Gamma} \gamma$,
whence the element $g e_{\Gamma}= e_{\Gamma} g$ depends solely on the coset $g \Gamma$. 
In that way, $h e_{\Gamma} = e_{\Gamma} h$ is a well--defined element of $\mathbb{K}G$ for any $h\in H$.

Accordingly, the map $S\otimes_{\mathbb{K}} \mathbb{K}H\to Ae_{\Gamma}$ that sends 
$s\otimes h\mapsto s(he_{\Gamma})\in Ae_{\Gamma}$ is well defined. It is bijective as
for $a=\sum_{g\in G}s_{g}\delta_{g}\in A$ one has
$$
ae_{\Gamma} = \sum_{g\in G}s_{g}\delta_{g}e_{\Gamma} = \sum_{g\Gamma\in H}\sum_{\gamma\in\Gamma}s_{g\gamma}\delta_{g}\delta_{\gamma}e_{\Gamma} = \sum_{h=g\Gamma\in H}\left(\sum_{\gamma\in\Gamma}
s_{g\gamma}\right)(he_{\Gamma})$$
whence $ae_{\Gamma}\mapsto \sum_{h=g\Gamma\in H}(\sum_{\gamma\in\Gamma}
s_{g\gamma})\otimes h$ yields the inverse map. It also follows from this calculation that
$\sum_{h'\in H}t_{h'}\delta_{h'}\in T*H$ acts from the right on $Ae_{\Gamma}$ by 
\begin{align*}
ae_{\Gamma}\left(\sum_{h'\in H}t_{h'}\delta_{h'}\right)&= \left(\sum_{h=g\Gamma\in H}s_{h}he_{\Gamma}\right)
\left(\sum_{h'\in H}t_{h'}\delta_{h'}\right)\\
&=\sum_{h,h'\in H}s_{h}h(t_{h'})(hh'e_{\Gamma})\\
&=\sum_{h''\in H}\left(\sum_{hh'=h''}s_{h}h(t_{h'})\right)h''e_{\Gamma}\,,
\end{align*}
where we have used that $e_{\Gamma}t = te_{\Gamma}$ and $e_{\Gamma}h = he_{\Gamma}$
for $t\in T$ and $h\in H$.

Transporting this structure to $S\otimes_\mathbb{K}  \mathbb{K}H$ under the bijection onto $Ae_{\Gamma}$, we obtain that
$(s\otimes h)\sum_{h'\in H}t_{h'}\delta_{h'} = \sum_{h'}sh(t_{h'})\otimes hh'$ defines the right
$T*H$--module structure on $S\otimes_\mathbb{K} \mathbb{K}H$ that makes the bijection above $T*H$--linear.

Furthermore, that bijection is $\Gamma$--equivariant with respect to the left $\Gamma$--actions
$\gamma(s\otimes h) = \gamma(s)\otimes h$ and $\gamma (a e_{\Gamma}) = 
\delta_{\gamma}a e_{\Gamma}\in Ae_{\Gamma}\subseteq A$. Taking $\Gamma$--invariants
returns the isomorphism of right $T*H$--modules
\begin{align*}
(S\otimes H)^{\Gamma} &\cong S^{\Gamma}\otimes H = T\otimes H
\intertext{and}
(Ae_{\Gamma})^{\Gamma} &= e_{\Gamma}Ae_{\Gamma}\,,
\intertext{whence}
T\otimes H&\cong e_{\Gamma}Ae_{\Gamma}\,.
\end{align*}

For all $h \in H$ choose a lift $h'$ in $G$ so that $h'\Gamma =h$.
%Recall that $ge_\Gamma$ only depends on $h = g\Gamma \in H$.
The morphism
\begin{eqnarray*} T*H & \xrightarrow{} & e_\Gamma A e_\Gamma \\
  \sum_{h \in H} t_h \delta_h & \mapsto & e_\Gamma t_h \delta_{h'}e_\Gamma
\end{eqnarray*} 
is well defined since $\delta_{h'}e_\Gamma$ does not depend on the choice of coset
representative.  It is clearly an algebra homomorphism since $e_\Gamma t_h =e_\Gamma t_h$ and $e_\Gamma \delta_{h'} = \delta_{h'}e_\Gamma$. It is bijective since it identifies with the morphism above from $T\otimes H \to e_\Gamma A e_\Gamma.$
\end{proof}

\begin{remark}\label{rem:SstarH}
If $\Gamma\leqslant G$ admits a complement, necessarily isomorphic to $H$, then the natural 
$\mathbb{K}$--algebra homomorphism $T*H\to S*H$ induces the $T*H$--module structure on 
$S*H\cong S\otimes_\mathbb{K} \mathbb{K}H$ described in the above proof.
\end{remark}
Now we come to the key result.
\begin{proposition}\label{Phi}
Let $\Gamma,H\leqslant G$ be complementary subgroups with $\Gamma$ normal in $G$.
With $G$ acting through $\mathbb{K}$--algebra automorphisms on some $\mathbb{K}$--algebra $S$ and with
$T=S^{\Gamma}$, 
%A=S{\ast} G$ as before, 
the group $H$ acts naturally on $\End_{T}(S)$ through
algebra automorphisms and there is an isomorphism of $\mathbb{K}$--algebras
$\Phi\colon \End_T(S)*H\xto{\ \cong\ }\End_{T{*} H}(S*H)$, where $S*H$ is considered a right $T*H$--module.
\end{proposition}

\begin{proof}
If $h\in H$ and $\alpha\in \End_{T}(S)$, then $(h\alpha)(s) = h(\alpha(h^{-1}(s)))$ defines the action 
of  $H$ on $\End_{T}(S)$ through algebra automorphisms. Namely, $h\alpha$ is $T$--linear because
\begin{align*}
(h\alpha)(st)&= h(\alpha(h^{-1}(st)))\\
&=h(\alpha(h^{-1}(s)h^{-1}(t)))
\intertext{as $H$ acts through algebra automorphisms on $S$,}
&=h(\alpha(h^{-1}(s))h^{-1}(t))
\intertext{as $\alpha$ is $T$--linear and $h^{-1}(t)\in T$,}
&=h(\alpha(h^{-1}(s)))h(h^{-1}(t))\\
&=(h\alpha)(s)t\,.
\intertext{That $H$ acts through algebra automorphisms on $\End_{T}(S)$ follows from}
 (h(\alpha\beta))(s) &= h(\alpha\beta(h^{-1}(s)))\\
 &=h(\alpha(h^{-1}(h\beta h^{-1}(s))\\
 &= (h\alpha)(( h\beta)(s))\,.
\end{align*}
Accordingly one can form the twisted group algebra  $\End_{T}(S)*H$ as in Definition 
\ref{skewgroupring}.

The map $\Phi$ sends $\alpha=\sum_{h\in H}\alpha_{h}\delta_{h}$, 
with $\alpha_{h}\in  \End_T(S)$, to the map 
$\Phi(\alpha)\colon S*H\to S*H$ defined by 
\begin{align*}
\Phi(\alpha)\left(\sum_{h'\in H}s_{h'}\delta_{h'}\right) &= \left(\sum_{h\in H}\alpha_{h}\delta_{h}\right)
\left(\sum_{h'\in H}s_{h'}\delta_{h'}\right)\\
&=\sum_{h,h'\in H}\alpha_{h}(h(s_{h'}))\delta_{h}\delta_{h'}\\
&=\sum_{h''\in H}\left(\sum_{hh'=h''}\alpha_{h}(h(s_{h'}))\right)\delta_{h''}\,.
\end{align*}
To show that $\Phi$ is a homomorphism of $\mathbb{K}$--algebras, with
$\beta=\sum_{h'\in H}\beta_{h'}\delta_{h'}\in \End_{T}(S)*H$ one finds first
\begin{align*}
\alpha\beta &= \sum_{h,h'\in H}\alpha_{h}h(\beta_{h'})\delta_{hh'}
\intertext{and then}
\Phi(\alpha\beta)\left(\sum_{h''\in H}s_{h''}\delta_{h''}\right) &=
\Phi\left(\sum_{h,h'\in H}\alpha_{h}h(\beta_{h'})\delta_{hh'}\right)\left(\sum_{h''\in H}s_{h''}\delta_{h''}\right)\\
&=\sum_{h,h',h''\in H}(\alpha_{h}h(\beta_{h'}))((hh')(s_{h''}))\delta_{hh'h''}\,,
\intertext{whereas}
\Phi(\alpha)\Phi(\beta)\left(\sum_{h''\in H}s_{h''}\delta_{h''}\right) 
&=\Phi(\alpha)\left(\sum_{h',h''\in H}\beta_{h'}(h'(s_{h''}))\delta_{h'h''}\right)\\
&=\sum_{h,h',h''\in H}\alpha_{h}(h(\beta_{h'}(h'(s_{h''})))\delta_{hh'h''}\\
&=\sum_{h,h',h''\in H}\alpha_{h}(h(\beta_{h'})(h(h'(s_{h''}))))\delta_{hh'h''}\\
&=\sum_{h,h',h''\in H}(\alpha_{h}h(\beta_{h'})((hh')(s_{h''}))\delta_{hh'h''}\,.
\end{align*}
Thus, $\Phi(\alpha\beta)=\Phi(\alpha)\Phi(\beta)$ as claimed.

To check that $\Phi(\alpha)$ constitutes an $T{*} H$--linear endomorphism of
$S*H$ it suffices to note that there is a commutative diagram of homomorphisms of $\mathbb{K}$--algebras
\begin{align*}
\xymatrix{
\End_{T}(S)*H\ar[rr]^-{\Phi}&&\End_{T*H}(S*H)\\
&T*H\ar[ul]^{\vp}\ar[ur]_{\psi}
}
\end{align*}
where $\vp$ is induced by the $\mathbb{K}$--algebra homomorphism $T\to \End_{T}(S)$ that sends $t\in T$
to $\lambda_{t}$, the (left) multiplication by $t$ on $S$, and $\psi$ represents left multiplication by 
$T*H$ on $S*H$.
Indeed,
\begin{align*}
\Phi\vp(t\delta_{h})\left(\sum_{h'\in H}s_{h'}\delta_{h'}\right) &= \Phi(\lambda_{t}\delta_{h})\left(\sum_{h'\in H}s_{h'}\delta_{h'}\right)\\
&=\sum_{h'\in H}t h(s_{h'})\delta_{hh'}\\
&=(t\delta_{h})\left(\sum_{h'\in H}s_{h'}\delta_{h'}\right) \\
&= \psi(t\delta)\left(\sum_{h'\in H}s_{h'}\delta_{h'}\right) \,.
\end{align*}
Finally, we show that $\Phi$ is an isomorphism by exhibiting the inverse.
Let $f:S*H\to S*H$ be a right $T*H$--linear map. Then
\begin{align*}
f\left(\sum_{h\in H}s_{h}\delta_{h}\right)&= \sum_{h\in H}f(s_{h}\delta_{1})\delta_{h}
\end{align*}
as $f$ is $T*H$--linear. Therefore, $f$ is uniquely determined by 
$f(s_{h}\delta_{1}) = \sum_{h\in H}f_{h}(s)\delta_{h}$, where in turn $f_{h}(s)\in S$ is uniquely determined as
the $\delta_{h}$ form a basis of the (right) $S$--module $S*H$. Now $f$ is $T$--linear on the right, 
whence necessarily for any $s\in S, t\in T$ the expression
\begin{align*}
f(st\delta_{1})&=  \sum_{h\in H}f_{h}(st)\delta_{h}
\intertext{equals}
f(s\delta_{1})t&=  \left(\sum_{h\in H}f_{h}(s)\delta_{h}\right)t\\
&=\sum_{h\in H}f_{h}(s)h(t)\delta_{h}
\end{align*}
Comparing coefficients of $\delta_{h}$ it follows that $f_{h}(st) = f_{h}(s)h(t)$ 
for each $h\in H$. This implies that the map $\alpha_{h}(s) = f_{h}(h^{-1}(s))$ is in $\End_{T}(S)$
and $\Psi(f) = \sum_{h\in H}\alpha_{h}\delta_{h}$ yields the inverse of $\Phi$.
Indeed, 
\begin{align*}
\Phi\Psi(f)\left(\sum_{h'\in H}s_{h'}\delta_{h'}\right) &=
\Phi(\sum_{h\in H}\alpha_{h}\delta_{h})(\sum_{h'\in H}s_{h'}\delta_{h'}) \\
&=\sum_{h''\in H}\left(\sum_{hh'=h''}\alpha_{h}(h(s_{h'}))\right)\delta_{h''}\\
&=\sum_{h''\in H}\left(\sum_{hh'=h''}(f_{h}(h^{-1}(h(s_{h'}))))\right)\delta_{h''}\\
&=\sum_{h, h'\in H}f_{h}(s_{h'})\delta_{h}\delta_{h'}\\
&=f\left(\sum_{h'\in H}s_{h'}\delta_{h'}\right)\,.
\end{align*}
One checks analogously that $\Psi\Phi(\alpha)=\alpha$ for any $\alpha\in \End_{T}(S)*H$.
\end{proof}

To sum up, let us interpret the preceding result in terms of $A=S*G$:

\begin{proposition} \label{Prop:BintermsofA}
  Let $G$ be a finite group, and let $\Gamma$ be a split normal subgroup with complement $G/\Gamma \cong H \leq G$.  Let $G$ act linearly on $S=\mathbb{K}[V]$, and $A=S*G$, $T =S^\Gamma$, $e_\Gamma = \frac{1}{|\Gamma|}(\sum_{\gamma \in \Gamma}\delta_\gamma),$ then 
\begin{enumerate}[\rm(a)]
\item \label{TstarH}
$T*H \cong e_{\Gamma}A e_{\Gamma}$, as an isomorphism of $\mathbb{K}$--algebras.
\item \label{SstarH}
$S*H \cong Ae_{\Gamma}$, as an isomorphism of right $e_{\Gamma}A e_{\Gamma}$--modules.
\item \label{fromLam} $A = S*G\cong (S*\Gamma)*H$, as $\mathbb{K}$--algebras.
\end{enumerate}
Let $\lambda : S*\Gamma \to \End_{T}(S)$ be the natural homomorphism.  Then
the composition of the sequence of $\mathbb{K}$--algebra homomorphisms
\begin{align*}
(S*\Gamma)*H\cong A\to \End_{e_{\Gamma}A e_{\Gamma}}(Ae_{\Gamma})\cong 
\End_{T*H}(S*H)\xto[\ \cong\ ]{\Psi} \End_{T}(S)*H\,.
\end{align*}
is $\lambda*H$. 
\end{proposition}
\begin{proof}
  Statement (\ref{TstarH}) is in Lemma~\ref{Lem:HtensorS}.
Statement (\ref{SstarH}) is noted in Remark~\ref{rem:SstarH}.
  Statement (\ref{fromLam}) is~\cite[Ex.~1.11]{lam2001first}. 
In the sequenece of maps above, we see that 
left multiplication by elements of $A$ defines a $\mathbb{K}$--algebra homomorphism
\[
A\to \End_{e_{\Gamma}A e_{\Gamma}}(Ae_{\Gamma}).\]
The first isomorphism in 
\[\End_{e_{\Gamma}A e_{\Gamma}}(Ae_{\Gamma}) \cong \End_{T{*}H}(S*H)
\xto[\ \cong\ ]{\Psi}\End_{T}(S)*H\, 
\]
follows from (\ref{TstarH}) and (\ref{SstarH}) above and the second isomorphism is Proposition~\ref{Phi}.
Moreover, as $\Gamma, H$ are complementary subgroups in $G$ and any skew group ring is isomorphic to its
opposite, the sequence of ring homomorphisms above identifies with
$\lambda * H.$
\end{proof}

\begin{remark} \label{StarHmakessense}
  By Auslander's Theorem~\ref{thm:Auslander}, when $\mathbb{K}$ is a field, if $G\leqslant \GL(V)$ and $S=\mathbb{K}[V]$ or $S=\mathbb{K}[[V]]$,
then $\lambda$ as in Proposition~\ref{Prop:BintermsofA}, and as a consequence also $\lambda*H$, are isomorphisms if $\Gamma$ contains no pseudo--reflections in its linear action on $S$. Thus the above result extends Auslander's theorem to the case where $G \leqslant \GL(V)$ is a pseudo-reflection group. Here $\Gamma=G \cap \SL(V)$ is small and $H$ is the quotient $G/\Gamma$ in the exact sequence $1 \xrightarrow{} \Gamma \xrightarrow{} G \xrightarrow{} G/\Gamma \xrightarrow{} 1$. 
%, see e.g.~\cite[Thm.~5.15]{LeuschkeWiegand}. 
\end{remark}

%%%%%%%%%%%%
\subsection{Intermezzo: Specializing to reflection groups}
%%%%%%%%%%%

\subsubsection{The Invariant ring $S^\Gamma$ in terms of $S^G$}

In the following let $V$ be a finite dimensional vector space over $K$ and $G \leqslant \GL(V)$ be a true reflection group. Set $\Gamma:= G \cap SL(V)$ and $H:=\det G \cong \{\pm 1\}=\langle \sigma \rangle$. This means that we have an exact sequence of groups
$$1 \longrightarrow \Gamma \longrightarrow G \xrightarrow{\det|_G} H \longrightarrow 1.$$
This sequence splits (by definition $G$ is generated by pseudo-reflections).
Let $S=K[x_1, \ldots, x_n]$, $S^\Gamma$ the invariant ring of $\Gamma$, $S^G=K[f_1, \ldots, f_n] \subseteq S$ the invariant ring of $G$,  and $J=\det \left((\frac{\partial f_i}{\partial x_j})_{ij} \right)$ the Jacobian of $G$. Note that, since $G$ is generated by order $2$ reflections, $J$ is equal to $z$, the polynomial defining the hyperplane arrangement of $G$ and the discriminant of $G$ is $\Delta = z^2 \in R$.  \\

\begin{lemma} \label{Lem:RasTmod}
The invariant ring
 $S^\Gamma$ satisfies $S^\Gamma \cong S^G \oplus JS^G$  %\ecom{Note here that we consider everywhere right modules, so I wrote the generator $J$ on the left} 
 as an $S^G$-module and $S^\Gamma \cong S^G[J]/(J^2 - \Delta)$ as rings.
\end{lemma}

\begin{proof}
This  follows from Stanley \cite{StanleyInvariants}: let $S^G_{\chi}$ be the set of invariants relative to the linear character $\chi$, i.e., $S^G_\chi = \{ f \in S: g(f)=\chi(g) f$ for all $g \in G\}$.  In Lemma 4.1 loc.~cit.~it is shown that
\[S^\Gamma = S^G_{\mathrm{triv\phantom{^{1}}}}\! \! \oplus S^G_{\det^{-1}}\]
as $S^G$-modules, where $\mathrm{triv}$ denotes the trivial character and $\mathrm{det}^{-1}$ denotes the inverse of the determinantal character.  Since $S^G_{\mathrm{triv}} = S^G$ and $S^G_{\det^{-1}}$ is generated by $J=z$ over $S^G$ (see either \cite{StanleyInvariants} or \cite[Chapter 6]{OTe}), it follows that 
$$S^\Gamma \cong S^G \oplus JS^G \ .$$
From Stanley's description of $S^\Gamma$ as $S^G$-module, we also see how $H=G/\Gamma \cong \mu_2=\langle \sigma \rangle$ acts on $S^G[J]$: $\sigma$ is the identity on $S^G$ and $\sigma(J)=\det^{-1}(\sigma)(J)=-J$, since the Jacobian is a semi-invariant for $\det^{-1}$ of the reflection group. 
\end{proof}

\begin{corollary}
The skew group ring $S^\Gamma*H$ is isomorphic to the path algebra of a quiver, as in \eqref{Eq:Bquiver} (in the notation of Section \ref{Subsub:Knoerrer}: $R=S^G$, $\mu_2=H$, $f=\Delta$, $Z=J$, and $T=S^\Gamma$)
\end{corollary}

\begin{proof}
By Lemma \ref{Lem:RasTmod}, $S^\Gamma \cong S^G[J]/(J^2-\Delta)$. The rest follows as in Section \ref{Subsub:Knoerrer}.
\end{proof}

\begin{remark} If $G$ is a pseudo-reflection group, then by \cite{StanleyInvariants}, the module of relative invariants $S^G_{\det}$ is generated by $z$, the reduced equation for the hyperplane arrangement and $S^G_{\det^{-1}}$ is generated by the Jacobian $J$. Then the relation for the discriminant is $zJ= \Delta$ (see \cite{OTe}, Examples 6.39, 6.40 and Def. 6.44). 
\end{remark}

%%%%%%%%%%%
\subsubsection{The hyperplane arrangement $S/(J)$}
%%%%%%%%%%%

Let $G$ be a true reflection group in $\GL(V)$, and let $H =\langle\sigma\rangle$ be the split subgroup $H\cong\det G = \mu_2,$ with complement
$\Gamma = \SL(V) \cap G$.\\
 In the following we use the notation as suggested in Section \ref{Subsub:Knoerrer}. Let $S=K[V]$, $T=S^\Gamma$ and $R=S^G$. Further write $A=S*G$, $B=T*H$ and set $e=\frac{1}{|G|}\sum_{g \in G}\delta_g$, $e_\Gamma=\frac{1}{|\Gamma|}\sum_{\gamma \in \Gamma}\delta_\gamma$, $e_{-}=\frac{1}{2}(1 - \delta_\sigma)$ and the (inverse) determinantal idempotent $\overline{e}= \frac{1}{|G|}\sum_{g \in G} \det^{-1}(g) \delta_g$.
Here  we show how the module $S/(J)$ over the discriminant $R/(\Delta)$ can be seen as the image of the $B$-module  $Ae_\Gamma$.  

\begin{proposition} \label{Prop:Smodz}
Denote by $i^*=-\otimes_{B}B/Be_{-}B:\Mod(B) \xto{} \Mod(B/Be_{-}B)$ the standard recollement functor. Then $i^* Ae_{\Gamma} \cong S/(J)$ as $B/Be_{-}B\cong R/(\Delta)$-module.
\end{proposition}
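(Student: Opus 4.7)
The plan is to realize $i^{*}(Ae_\Gamma)$ explicitly via the quiver-representation picture developed for $B$ and to reduce the computation to Theorem \ref{Thm:Eisenbud}.

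I would begin by combining Lemma \ref{Lem:HtensorS} and Proposition \ref{Prop:BQuiver}: the former identifies $Ae_\Gamma \cong S*H$ as right $B$-modules (with $B = T*H = T*\mu_2$), while the latter identifies $B$ with the path algebra of \eqref{Eq:Bquiver} applied to the pair $(R,\Delta)$, where the generator $z$ of $T$ over $R$ corresponds to $J$. Every right $B$-module therefore acquires the form of a quiver representation $(M_+\qurep{v_M}{u_M} M_-)$, and $i^{*}$ becomes $\coker(u_M)$ by Theorem \ref{Thm:Eisenbud}.

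Next I would compute the two vertex components of $M = S*H$. Using $e_\pm = \tfrac{1}{2}(1\pm\delta_\sigma)$ and $\delta_\sigma^2 = 1$, a direct calculation yields $(s_1 + s_\sigma\delta_\sigma)e_\pm = \tfrac{1}{2}(s_1\pm s_\sigma)(1\pm\delta_\sigma)$, so $M_\pm = S\cdot\tfrac{1}{2}(1\pm\delta_\sigma)$. Since $\delta_\sigma$ commutes with every $r\in R = S^G$, the map $s\mapsto s\cdot\tfrac{1}{2}(1\pm\delta_\sigma)$ is an isomorphism of right $R$-modules $S\xrightarrow{\cong}M_\pm$.

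The crux of the argument is identifying the structure map $u_M\colon M_-\to M_+$, which is right multiplication by the arrow $v\in B$. Under the isomorphism of Proposition \ref{Prop:BQuiver}, $v$ corresponds to $\tfrac{1}{2}(1-\delta_\sigma)J\in T*\mu_2$. Here the hypothesis that $G$ is a true reflection group enters decisively: since $H\cong \mu_2$ acts on $T$ through the quotient $G\onto G/\Gamma=\det(G)$ and $J\in S^G_{\det^{-1}}$, we have $\sigma(J)=-J$, hence $\delta_\sigma J = -J\delta_\sigma$ and consequently $\tfrac{1}{2}(1-\delta_\sigma)J = J\cdot\tfrac{1}{2}(1+\delta_\sigma)$ inside $T*\mu_2$. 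Evaluating on $m = s\cdot\tfrac{1}{2}(1-\delta_\sigma)\in M_-$ then gives $m\cdot v = sJ\cdot\tfrac{1}{2}(1+\delta_\sigma)\in M_+$. Thus, under the identifications $M_\pm\cong S$, the map $u_M$ is multiplication by $J$; an identical computation shows that $v_M$ is also multiplication by $J$, producing the trivial matrix factorization $(J,J)$ of $J^2=\Delta$.

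Finally I would apply Theorem \ref{Thm:Eisenbud} to obtain $i^{*}(Ae_\Gamma) = \coker(u_M) \cong S/(J)$ as an $R/(\Delta)\cong B/Be_-B$-module, which is the stated claim. The only genuinely delicate step is the identification of $u_M$ with multiplication by $J$, and this in turn is powered by the single identity $\sigma(J)=-J$; everything else is routine bookkeeping inside the recollement and the Bilodeau-type isomorphism $Ae_\Gamma\cong S*H$.
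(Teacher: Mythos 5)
Your proof is correct, and it takes a genuinely different route from the paper's. The paper's argument stays entirely on the idempotent side: it rewrites $i^{*}(Ae_\Gamma)\cong Ae_\Gamma/Ae_\Gamma e_- B$, uses $e_-e_\Gamma = e_\Gamma e_- = f$ and the identity $B\cong e_\Gamma Ae_\Gamma$ to collapse this to $(A/AfA)e_\Gamma$, then uses $e+f=e_\Gamma$ (exploiting $[G:\Gamma]=2$) to replace $e_\Gamma$ by $e$, and finally invokes Lemma~\ref{Lem:quotienrelative} with $\chi=\det^{-1}$ and $f_\chi=J$ to read off $S/(J)$. Your proof instead unwinds $Ae_\Gamma\cong S*H$ as a concrete right quiver-representation: you compute the two vertex components $M_\pm = S\cdot\tfrac12(1\pm\delta_\sigma)\cong S$, translate $v\in B$ to $\tfrac12(1-\delta_\sigma)J\in T*\mu_2$ via Proposition~\ref{Prop:BQuiver}, and use the single identity $\sigma(J)=-J$ to commute $J$ past $\delta_\sigma$ and conclude that both structure maps are multiplication by $J$. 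The payoff of your approach is that it exhibits the underlying matrix factorization of $\Delta$ for $Ae_\Gamma$ explicitly as the trivial one $(J,J)$, rather than deducing the cokernel indirectly from the relative-invariant lemma; the paper's route, by contrast, requires no coordinate computation and works uniformly with the idempotent calculus already set up for $\overline A$. One cosmetic remark: the identification $i^{*}(M)=\coker(u_M)$ that you attribute to Theorem~\ref{Thm:Eisenbud} is really established in the unnamed lemma immediately preceding it describing the recollement functors, though the content is the same.
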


\begin{proof}
First compute 
$$i^* Ae_\Gamma  = Ae_\Gamma \otimes_B B/Be_{-}B \cong  Ae_\Gamma / Ae_\Gamma e_{-}B.$$
Since $e_- e_\Gamma = e_\Gamma e_- =\overline{e}$ and $B \cong e_\Gamma A e_\Gamma$ (see Proposition~\ref{Prop:BintermsofA}), this is 
$$ A e_\Gamma / A  e_\Gamma e_{-} B \cong  A e_\Gamma /  A  e_\Gamma  e_- e_\Gamma A e_\Gamma \cong A e_\Gamma /  A  (e_\Gamma e_{-}) (e_- e_\Gamma) A e_\Gamma \cong  A e_\Gamma /  A \overline{e} A e_\Gamma \cong  (A/A\overline{e}A)e_\Gamma.$$
Consider the trivial idempotent $e$ in $A$. Since $\Gamma$ is of index $2$ in $G$ and $H$ is the cokernel of $\Gamma \xto{} G$, it follows that $e +\overline{e} = e_\Gamma$ and with $\overline{A}=A/A\overline{e}A$ one sees that $\overline{A} e_\Gamma \cong  \overline{A}e$. From  Lemma \ref{Lem:quotienrelative} it follows that $\overline{A}e \cong S/(J)$, since $J$ generates the $R$-module of relative invariants for $\chi=\det^{-1}$.
\end{proof}

%%%%%%%%%%%%
\subsection{The main theorem}
%%%%%%%%%%%

\begin{theorem} \label{Thm:main}
Let $G \leqslant \GL(V)$ be a finite true reflection group with $H =\langle \sigma \rangle \cong \det G  = \mu_2.$
and set $\Gamma= G \cap SL(V)$. Let $T=S^\Gamma$, $R=S^G \subseteq S$,  $J$ the Jacobian of $G$ and the discriminant $\Delta = J^2 \in R$. Further denote by $A= S*G$ the skew group ring, $\overline{A} = A/ Ae_{\chi}A$, with $e_{\chi} \in A$ an idempotent for a linear representation $\chi$, and $B=T*H$. Then:
\begin{enumerate}[\rm(a)]
\item \label{catEquiv} Then there is an equivalence of categories
$$\CM (R / \Delta) \simeq \CM(B)/ \langle e_{-}B \rangle,$$
where $e_{-}$ is the idempotent $e_- = \frac{1}{2}(1 - \delta_{\sigma})$ in $B$. 
\item \label{skgrprng} The skew group ring $A$ is isomorphic to $\End_{B}( A e_{\Gamma})=\End_{B}(S*H)$ , where $e_{\Gamma}=\frac{1}{|\Gamma|}\sum_{\gamma \in \Gamma}\delta_\gamma$. 
\item \label{mainthm} The quotient algebra $\overline{A} = A/ Ae_{\chi}A$ is isomorphic to $\End_{R / \Delta}(i^*(Ae_\Gamma))$, where $i^*$ comes from the standard recollement of $\fmod B$, $\fmod Be_-B$ and $\fmod B/Be_-B$. 
\item \label{iAgammaisSmodJ} The $R/( \Delta)$-module $i^*(Ae_\Gamma )$ is isomorphic to $S/(J)$, which implies that 
$$\overline{A}   \cong \End_{R/(\Delta)}(S/(J)).$$
\end{enumerate}
\end{theorem}

\begin{proof}
  Without loss of generality we may assume that $e_\chi=\overline{e}=\frac{1}{|G|}\sum_{g \in G}\det^{-1}(g)\delta_g$, cf.~Cor.~\ref{Cor:quotientsiso}. As noted in Remark~\ref{Rmk:Knoerrer},
  $$i^*\!: \Mod (B) \xrightarrow{- _B\otimes B/Be_-B  } \Mod(B/Be_-B),$$
  induces an equivalence
\[ \CM(B)/\langle e_- B  \rangle  \simeq \CM (B/ Be_- B). \]

%The equivalence is induced by $i^*\!: \Mod (B) \xrightarrow{- _B\otimes B/Be_-B  } \Mod(B/Be_-B)$, as seen in Proposition \ref{Thm:Eisenbud}.
Since $B/Be_-B$ is isomorphic to $R/(\Delta)$ (see Lemma~\ref{Cor:Bprops}, \eqref{DiskQuiv}), it follows that 
$$i^*: \CM(B)/\langle  e_- B \rangle \xrightarrow{\simeq}    \CM(R/(\Delta)),$$
establishing statement (\ref{catEquiv}).
Statement (\ref{skgrprng}) follows from Proposition~\ref{Prop:BintermsofA} and Remark~\ref{StarHmakessense}.
\\By Prop.~\ref{Prop:BintermsofA}, we have that $A = S*G$ is isomorphic to $\Hom_B(S \otimes H, S \otimes H)$.  
Since $i^*$ is an equivalence, it follows that
$$i^*(\Hom_B(S \otimes H, S \otimes H)) \cong \Hom_{R/\Delta}(i^*(S\otimes H), i^*(S \otimes H)).$$
  Now using $S \otimes H \cong  Ae_\Gamma$ (as right $B$-module) from Lemma \ref{Lem:HtensorS} yields that $i^*(S \otimes H)=i^* A e_\Gamma= S/(J)$ by Prop.~\ref{Prop:Smodz}, establishing the first statement of (\ref{iAgammaisSmodJ}). 
Thus in total we get 
$$i^*A  {\cong}  \End_{R/(\Delta)}(S/(J))$$
in $\CM(R/(\Delta))$.
  To complete the proof of (\ref{iAgammaisSmodJ}), we need to establish (\ref{mainthm}).  To this end, we first claim that there is a ring isomorphism
  \begin{equation} \label{claim} \overline{A} \cong \End_{\CM(B)/\langle  e_- B \rangle} (Ae_\Gamma).
  \end{equation}
Note that the known equivalence of categories noted in Remark~\ref{Rmk:Knoerrer},
induces an isomorphism of rings
$$\overline{A} \cong \End_{\CM(B)/\langle  e_- B \rangle } (Ae_\Gamma) \xrightarrow{\cong} \End_{R/(\Delta)}(i^*(Ae_\Gamma))$$
which gives us statement (\ref{mainthm}). To establish the claim (\ref{claim}) we need to show that the ideal $\langle e_-B\rangle$ in $A \cong \End_{\CM(B)}(Ae_\Gamma)$ is equal to $AeA$. 

So we compute the image of $A=\Hom_B( A e_\Gamma, A e_\Gamma )$ in $\CM(B)/ \langle e_-B \rangle$: we have to identify all morphisms $A e_\Gamma \xrightarrow{} A e_\Gamma$ that factor through copies of $e_-B$. These are sums of elements of the form $\alpha \circ \beta$ with  $\alpha \in \Hom_B(e_- B,  Ae_\Gamma)$ and $\beta \in \Hom_B( Ae_\Gamma, e_-B)$. Since $e_-$ is an idempotent, it follows e.g. from \cite[Lemma 4.2]{Assem06} that the first Hom is isomorphic (as right $e_-Be_- = R$-modules)
\[ \Hom_B(e_-B, Ae_\Gamma )\cong  \Hom_B(B, Ae_\Gamma)e_-\cong  A e_\Gamma e_- =A \overline{e},\]
 since $e_-e_\Gamma =e_\Gamma e_- =\overline{e}$. For the other Hom, note that $e_- B=e_- e_\Gamma A e_\Gamma = \overline{e} A e_\Gamma$ and thus $\Hom_B( Ae_\Gamma, e_-B)=\Hom_B( Ae_\Gamma, \overline{e} A e_\Gamma)$.
For each $\overline{e}\beta \in \Hom_B(A e_\Gamma, Ae_\Gamma)$ one sees that the natural map $\Phi: \overline{e}\Hom_B(A e_\Gamma, Ae_\Gamma) \xto{} \Hom_B(A e_\Gamma, \overline{e}Ae_\Gamma)$ sending $\overline{e}\beta$ to $(a e_\Gamma \mapsto \overline{e}\beta(a e_\Gamma))$ is surjective and moreover injective. Thus $\Phi$ is an isomorphism.
% of right A-modules. 
It follows that 
\[
\Hom_B( Ae_\Gamma, e_-B)  \cong   \overline{e} \Hom_B( Ae_\Gamma, A e_\Gamma)  \cong   \overline{e}A  \]
as rings. In total we get 
\[
 \Hom_B(Ae_\Gamma ,  A e_\Gamma)/ \langle e_- B \rangle   \cong  A/\left( (A\overline{e})(\overline{e}A) \right)   \cong  A/A\overline{e}A \ . 
\]
\end{proof}

This theorem immediately yields that $A/A\overline{e}A$ is a noncommutative resolution of the discriminant $R/(\Delta)$.

\begin{remark}
By Example \ref{Ex:isotypical-triv} $R/(\Delta)$ is a direct summand of $S/(J)$. Using \cite[Thm.~5.3]{DoFI} it follows that the centre of $\Abar$ is equal to $Z(\End_{R/(\Delta)}(S/(J))=R/(\Delta)$.
\end{remark}

\begin{corollary} \label{Cor:NCRdisc}
Notation as in the theorem. If $G \not \cong \mu_2$, then $A/Ae_{\chi}A\cong \End_{R/(\Delta)}(S/(J))$ yields a NCR of $R/(\Delta)$ of global dimension $n$. If $G \cong \mu_2$, then $A/Ae_{\chi}A \cong R/(\Delta)$ is a NCCR of $R/(\Delta)$.
\end{corollary}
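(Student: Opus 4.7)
The plan is to verify the two defining conditions of an NCR (respectively NCCR) directly from the preceding results, rather than reprove anything from scratch. I would treat the main case $G\not\cong\mu_2$ first. By Theorem~\ref{Thm:main}(iv) we already have the identification $\overline{A}=A/Ae_\chi A\cong \End_{R/(\Delta)}(S/(J))$, so $\overline{A}$ is the endomorphism ring of the finitely generated $R/(\Delta)$-module $M:=S/(J)$. By Corollary~\ref{Cor:gdimA} the global dimension of $\overline{A}$ equals $n$, which is finite, so the only remaining point for the NCR property is the support condition $\supp_{R/(\Delta)} M = \Spec R/(\Delta)$.

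For the support condition I would invoke Example~\ref{Ex:isotypical-triv}, which exhibits $R/(\Delta)$ as an $R/(\Delta)$-direct summand of $S/(J)$ (coming from the trivial isotypical component). Consequently $M$ is faithful over $R/(\Delta)$; since $\Delta$ is a nonzero element of the polynomial ring $R$, the quotient $R/(\Delta)$ is a hypersurface of pure dimension $n-1$, so $\supp_{R/(\Delta)}M=\Spec R/(\Delta)$. This completes the verification that $\overline{A}$ is an NCR of $R/(\Delta)$ of global dimension $n=\dim R/(\Delta)+1$, as claimed.

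For the remaining case $G\cong\mu_2$ I would simply quote Remark~\ref{Rmk:mu2}, where the explicit order-theoretic description of $A=S\ast\mu_2$ is used to compute $\overline{A}$ directly. That calculation identifies $\overline{A}$ with the polynomial ring $C=K[x_2,\ldots,x_n]$, which is itself isomorphic to $R/(\Delta)$ and in particular regular. Thus $\overline{A}$ is a finitely generated $R/(\Delta)$-algebra which is even a nonsingular $R/(\Delta)$-order of global dimension $n-1=\dim R/(\Delta)$, hence an NCCR in the sense of Remark~\ref{Rmk:NCR}.

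The only step requiring any real thought is the faithfulness of $S/(J)$ as an $R/(\Delta)$-module; in principle this could be delicate, but the direct-summand observation from Example~\ref{Ex:isotypical-triv} makes it immediate. Beyond that, the corollary is simply an assembly of Theorem~D, Corollary~\ref{Cor:gdimA}, and the definitions recalled in Remark~\ref{Rmk:NCR}, so I anticipate no further technical obstacle.
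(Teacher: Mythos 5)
Your proposal is correct and follows the same overall route as the paper: invoke Theorem~\ref{Thm:main}(iv) for the identification $\overline A\cong\End_{R/(\Delta)}(S/(J))$, Corollary~\ref{Cor:gdimA} for the global dimension, and Remark~\ref{Rmk:mu2} for the degenerate case $G\cong\mu_2$. The one genuine addition over the paper's terse proof is that you explicitly verify the support condition $\supp_{R/(\Delta)}(S/(J))=\Spec R/(\Delta)$ required by the NCR definition, and you do so by observing via Example~\ref{Ex:isotypical-triv} that $R/(\Delta)$ is a direct summand of $S/(J)$, whence $S/(J)$ is faithful. The paper takes this for granted; your version closes that small gap and is the more careful argument. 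Your treatment of the $\mu_2$ case is also correct: from Remark~\ref{Rmk:mu2} one has $\overline A\cong R/(\Delta)\cong K[x_2,\ldots,x_n]$ regular of dimension $n-1$, which trivially makes $\overline A=\End_{R/(\Delta)}(R/(\Delta))$ a nonsingular order, hence an NCCR in the sense of Remark~\ref{Rmk:NCR}.
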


\begin{proof}
By the theorem $A/Ae_{\chi}A \cong \End_{R/(\Delta)}(S/(J))$.  By Cor.~\ref{Cor:quotientsiso} $A/Ae_{\chi}A \cong A/AeA$.  By Corollary \ref{Cor:gdimA} the global dimension of $A/AeA$ is $n$ if $G \not \cong \mu_2$. For the remaining case, cf.~Rmk.~\ref{Rmk:mu2} and note that $R/(\Delta)$ is regular.
\end{proof}

\begin{corollary}[McKay correspondence] The nontrivial irreducible graded $G$-representations are in $1-1$-correspondence to the graded indecomposable projective $\overline{A}$-modules, that are in $1-1$-correspondence to the isomorphism classes of graded $R/(\Delta)$-direct summands of $S/(J)$.  Moreover, we also obtain $1-1$ correspondences of these objects up to grading shifts.
\end{corollary}

\begin{proof} Take $\overline{A}=A/AeA$. Similar as in Lemma \ref{Lem:ProjCorresp} one has functors $\alpha, \beta$ between $\gr P(\overline{A})$ and $\gr \Mod(KG)$. This yields a bijection between the irreducible graded representations of $KG$ (except the trivial ones) and graded indecomposable projective $\overline{A}$ modules. On the other hand, we can uniquely decompose $S/(J)=\bigoplus_{i}M_i^{a_i}$ as a finite direct sum of CM-modules over $R/(\Delta)$ by Krull-Schmidt for the graded category as in~\cite[Cor.~of Lemma 3, Thm.~1]{Atiyah}. Then the indecomposable graded projective $\End_{R/(\Delta)}(S/(J))$-modules are of the form $\Hom_{R/(\Delta)}(S/(J),M_i)$, which yields the second bijection.
\end{proof}

\begin{remark} If one prefers, similar results can be established by passing to the Henselization $S'$ of $S$ at the origin where $\overline{A}\otimes S'$ is semi-perfect, applying results of~\cite{Vamos}, or by passing to the power series ring.\end{remark}

\begin{example}(The normal crossings divisor as discriminant and its skew group ring)
This example was our main motivation for investigating the relationship between $A/AeA$ and $\End_{R/(\Delta)}(S/(J))$: The reflection group
$G=(\mu_2)^n$ acts on $V=K^n$ via the reflections $\sigma_1, \ldots, \sigma_n$ with
\begin{align*}
\sigma_i(x_j)& =\begin{cases}  \phantom{-}x_j \textrm{ if } &i  \neq j \\ 
                                                         -x_j \textrm{ if } &i  =j \  .
                                                         \end{cases}
\intertext{So $G$ can be realized as the subgroup of $GL(V)$ generated by the diagonal matrices}
s_i & = \begin{pmatrix} 1 & 0 & 0 & 0 & 0 \\
0 & \ddots & 0 &0 &0 \\
0 & 0 & -1 &0 &0 \\
0 & 0 & 0 & \ddots & 0 \\
0 & 0 & 0 & 0 & 1
\end{pmatrix} \ .
\end{align*}
It is easy to see that the invariant ring $R=S^G=K[x_1^2, \ldots, x_n^2]=K[f_1, \ldots, f_n]$. %(use for example the Reynolds operator). 
Then the Jacobian determinant $J=z$ of the basic invariants $(f_1(x), \ldots, f_n(x))$ is $J=2^n x_1 \cdots x_n$. We may omit the constant factor $2^n$ for the remaining considerations. The hyperplane arrangement corresponding to $G$ is the normal crossing divisor $S/(J)=K[x_1, \ldots, x_n]/(x_1 \cdots x_n)$. 
The discriminant $\Delta$ 
is given by $\Delta=J^2=f_1 \cdots f_n$. So the coordinate ring of the discriminant is $R/(\Delta)=K[f_1, \ldots, f_n]/(f_1 \cdots f_n)$. \\ 
By Theorem \ref{Thm:main}, the ring $\overline{A}=A/AeA \cong \End_{R/(\Delta)}(S/(J))$ yields a NCR of $R/(\Delta)$. Here we can explicitly compute the decomposition of $S/(J)$ as $R/(\Delta)$-module: 
$$S/(J) \cong \bigoplus_{I \subsetneq [n]}x^{I}\cdot  \left(R/ (f^{[n] \backslash I}) \right),$$
where $[n]$ denotes the set $\{1, \ldots, n\}$ and $f^L=\prod_{l \in L}f_l$ for a subset $L \subseteq [n]$.
This holds because $ S \cong \bigoplus_{I \subsetneq [n]} R x^I$ as $R$-module and $\Ann_{R}(S/(J))=J^2=\Delta$. Thus $(Rx^I)/(f^{[n]}) \cong (R/( f^{[n] \backslash I})) \cdot x^I$ for any $I \subsetneq [n]$, and it follows that $S/(J)$ is a faithful $R/(\Delta)$-module. \\
In \cite[Thm.~5.5]{DFI} it was shown that the module $M=\bigoplus_{I  \subsetneq [n]}R/(\prod_{i \in I} f_i)$ gives a noncommutative resolution of global dimension $n$ of the normal crossing divisor $R/(\Delta)$. This was proven by showing that $\End_{R/(\Delta)}(M)$  is isomorphic to the order 
\begin{equation} \label{Eq:orderNC} \left( x^{J \backslash I} K[x_1, \ldots, x_n]\right)_{I,J} \subset K[x_1,\ldots,x_n]^{2^{n} \times 2^n}, \text{ where $I,J \subseteq [n]$.}
\end{equation}. 
On the other hand, one can also compute that the skew group ring $A$ is in this case is $A= (K[x_1, \ldots, x_n] * (\mu_2)^n ) \cong \bigotimes_{i=1}^n \Lambda_1$, where $\Lambda_1$ is the skew group ring $K[x] * \mu_2$.  Forming the quotient by $AeA$ yields the order \eqref{Eq:orderNC}. 

\end{example}

%%%%%%%%%%%%
\subsubsection*{Coda: Results in dimension $2$} \label{Sub:dim2}
%%%%%%%%%%%

If $G \leqslant \GL(V)$, $\dim V=2$, is a  true reflection group, then the relation between $R=S^G$, $T=S^\Gamma$ and $R/(\Delta)$ can be interpreted in context of the classical McKay correspondence, cf.~Section \ref{Sec:ClassicalMcKay}: in this case  $T$ is isomorphic to $K[x,y,z]/(z^2 + \Delta(x,y))$, where $\{ z^2 + \Delta=0 \}$ is an Kleinian surface singularity. Moreover, $T$ is of finite CM-type, that is, there are only finitely many isomorphism classes of indecomposable CM-modules. By Herzog's Theorem \cite{Herzog78}, $S$ is a representation generator for $T$, that is, $\add_T(S)=\CM(T)$. \\

In the following we show that with Theorem \ref{Thm:main} we recover that $R/(\Delta)$ is an ADE-curve and furthermore we show that the hyperplane arrangement $S/(J)$ yields a natural representation generator for $R/(\Delta)$:
\begin{corollary} \label{Thm:dim2}
Let $G \leqslant \GL(V)$, $\dim V=2$, be a true reflection group, with invariant ring $S^G=R$ and discriminant $R/(\Delta)$. Then $R/(\Delta)$ is of finite CM-type and consequently $\Spec(R/(\Delta))$ is an ADE curve singularity. Moreover, $\add_{R/(\Delta)}(S/(J)) = \CM(R/(\Delta))$.
\end{corollary}

\begin{proof}
By Corollary~\ref{Cor:NCRdisc} we have that $\overline{A}= A/AeA \cong \End_{R/(\Delta)}(S/(J))$ has global dimension $2$. Moreover, by Example~\ref{Ex:isotypical-triv}, we see that $R/(\Delta)$ is a direct summand of $S/(J)$.
Since $R/(\Delta)$ is Gorenstein,
one can use the Auslander lemma,  cf. \cite{IyamaRejective,DFI} to see that $R/(\Delta)$ is of finite CM-type, and thus $\add(S/(J))=\CM(R/(\Delta))$. The only Gorenstein curves of finite CM-type are the ADE-curves, see \cite{GreuelKnoerrer}. 
\end{proof}

%%%%%%%%%%%%
\section{Isotypical components and matrix factorizations} \label{Sec:Decomposition}
%%%%%%%%%%

Let $G \leqslant \GL(V)$ be any finite pseudo-reflection group.  In this section we study direct sum decompositions of $S/(J)$ and $\overline{A}$. Moreover, the Hilbert--Poincar\'e series of the direct summands of $S/(J)$ as a $R/(\Delta)=S^G/(\Delta)$-modules are computed. Thus we also able to compute the ranks of these direct summands over $R/(\Delta)$ in case $\Delta$ is irreducible. In the case of $G=S_n$ we can even give a more explicit description using Young diagrams. We also compute the rank of $\overline{A}$ for any finite pseudo-reflection group  in two ways:  using the codimension $1$ structure and with Hilbert--Poincar\'e series (in case $\Delta$ is irreducible).

%%%%%%%%%%%
\subsection{Hilbert--Poincar\'e series of isotypical components of $S/(J)$}
%%%%%%%%%%%%

Here we look at the Hilbert--Poincar\'e series of the direct summands $M_i$ of $S/(J)$: recall from Section \ref{Sec:Basics} that $M_i$ was defined to be the $R/(\Delta)$-module $\Hom_{KG}(V_i, S/(J))$, where $V_i$ is an irreducible $G$-representation. Further we have $S_i=\Hom_{KG}(V_i,S)$ and $S_i'=\Hom_{KG}(V_i',S)=\Hom_{KG}(V_i\otimes \det,S)$. From the exact sequence \eqref{Eq:isotypical} it follows that
 $$H_{M_i}(t) = H_{S_i}(t) - t^{m} H_{S'_i}(t) \ . $$
  Let $K_{S_i}(t)$ and $K_{S_i'}(t)$ be the numerator polynomials of the Hilbert-Poincar\'e series of $S_i$ and $S'_{i}$ respectively and $H_R(t)=\frac{1}{\prod_{i=1}^n (1-t^{d_i})}$ and $H_{R/(\Delta)}(t)=\frac{1-t^{m+m_1}}{\prod_{i=1}^n (1-t^{d_i})}$ the Hilbert--Poincar{\'e} series of $R$ and $R/(\Delta)$ respectively. Then $H_{M_i}(t)$ can be written as
\begin{equation} \label{Eq:HSforV} H_{M_i}(t)  =H_R(t)\left( K_{S_i}(t)- t^m K_{S'_i}(t) \right) 
  = H_{R/(\Delta)}(t)\frac{\left( K_{S_i}(t)- t^m K_{S'_i}(t) \right)}{1 - t^{m+m_1}} \ .
 \end{equation}
 
 \begin{remark}
 The numerator polynomials $K_{S_i}$ of the $H_{M_i}$ are called \emph{fake degree polynomials}, see e.g.~\cite{Carter}, or \emph{generalized Kostka polynomials}, see \cite{GarsiaProcesi}. %see p.86
 \end{remark}

\begin{example} \label{Ex:HSforSn}
In the case of $G=S_n$, the irreducible representations of $G$ correspond to partitions $\lambda$ of $n$ and each partition $\lambda$ is given by a Young diagram, see e.g.~\cite{FultonHarris}. Then the corresponding Hilbert--Poincar{\'e} series for the $\lambda$-isotypical component $S_\lambda$ of $S$ is given as
\begin{equation} \label{Eq:Kirillov1}H_{S_\lambda}(t)=\prod_{k=1}^n\frac{t^{f_k}}{1-t^{h_k}} \ ,
\end{equation}
where $f_k$ denotes the length of the leg of the hook of the $k$-cell and $h_k$ denotes the length of the hook of the $k$-cell (see  \cite[Thm.~1]{Kirillov}) [Note here: for $f_k$ the $k$-cell itself is not counted and for the hooklength it is counted once, cf.~\cite{FultonHarris}]. 
\end{example}

\subsection{Ranks of the isotypical components of $S/(J)$}  The ranks of the $M_i=\Hom_{KG}(V_i, S/(J))$ over $R/(\Delta)$ can be computed by evaluating $H_{M_i}(t)$ in $t=1$, at least when $\Delta$ is irreducible:

\begin{lemma} \label{Lem:rank}
Let $R=K[x_1, \ldots, x_n]$ be graded by $\deg x_i=d_i \in \NN$, let $\Delta \in R$ be a quasi-homogeneous polynomial, $R/(\Delta)$ be a domain, and let $M$ be a finitely generated CM module over $R/(\Delta)$.  Then 
$$\rank_{R/(\Delta)}(M)=\lim_{t \rightarrow 1}\frac{H_M(t)}{H_{R/(\Delta)}(t)}.$$
\end{lemma}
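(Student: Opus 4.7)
Set $A := R/(\Delta)$, which is by hypothesis a graded $K$-algebra domain. Since the homogeneous element $\Delta \neq 0$ is a nonzerodivisor on the polynomial ring $R$, Krull's principal ideal theorem gives $\dim A = n-1$. My plan is to reduce the claim to the classical fact that the Hilbert--Poincar\'e series of a finitely generated graded $A$-module $N$ is a rational function whose order of pole at $t=1$ equals the Krull dimension $\dim N$ (Hilbert--Serre, adapted to the weighted grading $\deg x_i = d_i$). In particular $H_A(t)$ has pole of order exactly $n-1$ at $t=1$.

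Let $r = \rank_{A} M$, computed via the generic stalk $M \otimes_A \Frac(A)$, which is unambiguous because $(0)$ is the unique minimal prime of $A$. The key step is to choose homogeneous elements $m_1,\ldots,m_r \in M$ of degrees $a_1,\ldots,a_r$ whose images in $M \otimes_A \Frac(A)$ form a basis, and to consider the graded $A$-linear map
\[
\varphi \colon F := \bigoplus_{i=1}^{r} A(-a_i) \lto M, \qquad e_i \mapsto m_i.
\]
Because $F$ is torsion-free (here we use that $A$ is a domain), $\ker\varphi$, being torsion, vanishes; and $C := \coker\varphi$ satisfies $C \otimes_A \Frac(A) = 0$, so $C$ is a torsion $A$-module, hence $\dim C \leqslant n-2$.

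Taking Hilbert--Poincar\'e series of the short exact sequence $0 \to F \to M \to C \to 0$ yields
\[
H_M(t) \;=\; \bigg(\sum_{i=1}^{r} t^{a_i}\bigg) H_A(t) \;+\; H_C(t).
\]
Dividing by $H_A(t)$ and passing to the limit as $t\to 1$, the first summand contributes $\sum_{i=1}^{r} 1 = r$, while the second ratio $H_C(t)/H_A(t)$ tends to $0$ since the pole of $H_C$ at $t=1$ has order at most $n-2$, strictly less than the pole order $n-1$ of $H_A$. This gives $\lim_{t\to 1} H_M(t)/H_A(t) = r$, as required. The Cohen--Macaulay hypothesis on $M$ is not logically needed for the formula: if $\dim M < \dim A$ then $r = 0$ and the same pole-order estimate shows the limit vanishes.

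The only real obstacle is the pole-order statement for Hilbert series of graded modules; in our weighted-graded setting it follows from the standard argument that writes $H_N(t) = p_N(t)/\prod_i(1-t^{d_i})$ and cancels the vanishing of $p_N$ at $t=1$ down to the contribution from the associated primes of maximal dimension. Everything else is just bookkeeping with the short exact sequence above.
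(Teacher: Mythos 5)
Your argument is correct, but it takes a genuinely different route from the one in the paper. The paper appeals to Northcott's formula $\rank_{R/(\Delta)}(M)=e_{S'}(M)/e_{S'}(R/(\Delta))$ for graded CM modules over a Noether normalization $S'$ of $R/(\Delta)$, and then invokes Smoke's interpretation of multiplicity as $\lim_{t\to 1}\bigl(\chi_{S'}(K)H_M(t)\bigr)$ to convert that ratio of multiplicities into the desired ratio of Hilbert--Poincar\'e series. You instead build a graded free submodule $F=\bigoplus_i A(-a_i)\hookrightarrow M$ of full rank $r$ (injectivity from torsion-freeness of $F$ over the domain $A=R/(\Delta)$), observe that the cokernel $C$ is torsion and hence of dimension at most $n-2$, and then read off the limit from additivity of Hilbert series on $0\to F\to M\to C\to 0$ combined with the pole-order-equals-dimension statement. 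Both proofs ultimately rest on the same Hilbert--Serre-type fact about pole order of $H_N$ at $t=1$ in the weighted-graded setting (which you flag and which the paper delegates to Smoke), but your reduction avoids the multiplicity machinery entirely, is more self-contained, and has the small bonus of making visible that the Cohen--Macaulay hypothesis on $M$ is not actually used --- the formula holds for any finitely generated graded module $M$, the CM case being the one the authors need downstream.
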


\begin{proof} Let $S'=K[y_1, \ldots, y_{n-1}]$, where the $y_i$ form a system of parameters of $R/(\Delta)$, then $M$ is a finitely generated module over $S'$ %[this holds because the dimension of $M$ is equal to the dimension of $R/(\Delta)$].  
Note that the Hilbert--Poincar{\'e} series of $M$ (and of $R/(\Delta)$) does not change if we consider both modules over $S'$. If $M$ is a graded CM-module over the graded CM ring $R/(\Delta)$, then $\rank(M)=\frac{e_{S'}(M)}{e_{S'}(R/(\Delta))}$, see \cite[Theorem 18]{Northcott68} (cf.~also Thm.~4.7.9 in \cite{BrunsHerzog93}).  
Here $e_{S'}(-)$ denotes the multiplicity of a module over $S'$.  By work of William Smoke \cite{Smoke}, one can interpret $e_{S'}(M)$ as $\lim_{t \rightarrow 1}(\chi_{S'}(K) H_M(t))$, where $\chi_{S'}(M)$ is the so-called generalized multiplicity of $M$, also cf.~\cite{Stanley-Hilbert}, and $\chi_{S'}(K)$ is equal to $\prod_{i=1}^{n-1}(1-t^{d_i'})$, where $d_i'=\deg y_i$. Since both $M$ and $R/(\Delta)$ have ranks, both limits $\lim_{t \rightarrow 1}(\chi_{S'}(K) H_M(t))$ and $\lim_{t \rightarrow 1}(\chi_{S'}(K) H_{R/(\Delta)}(t))$ exist and thus 
\[ \rank_{R/(\Delta)}(M)=\frac{\lim_{t \rightarrow 1}(\chi_{S'}(K) H_M(t))}{\lim_{t \rightarrow 1}(\chi_{S'}(K) H_{R/(\Delta)}(t))}=\lim_{t \rightarrow 1}\frac{H_M(t)}{H_{R/(\Delta)}(t)} \ . \]
\end{proof}

\begin{proposition} \label{RankisotypicalDiscriminant} With notation as above, let $V_i$ be an irreducible representation of $G$. Then the rank over $R/(\Delta)$ of the $V_i$-isotypical component of $S/(J)$, $M_i$, is given by
$$\rank_{R/(\Delta)}M_i=\frac{1}{m+m_1}\left( m\dim V_i' + \frac{d K_{S_i'}}{d t}(1)-\frac{d K_{S_i}}{d t}(1) \right),$$
where $V'_i$ stands again for the twisted representation $V_i \otimes \det$. If $G$ is a true reflection group, this simplifies to
$$\rank_{R/(\Delta)}M_{i}=\frac{1}{2}\left( \dim V_i' + \frac{\frac{d K_{S_i'}}{d t}(1)-\frac{d K_{S_i}}{d t}(1)}{m} \right).$$
\end{proposition}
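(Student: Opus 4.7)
The plan is to combine the rank formula of Lemma \ref{Lem:rank} with the explicit Hilbert--Poincar\'e series computed in \eqref{Eq:HSforV}, and then simply apply L'H\^opital's rule.

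First, I would invoke Lemma \ref{Lem:rank} (applicable since $\Delta$ is assumed irreducible, so $R/(\Delta)$ is a domain, and the $M_i$ are Cohen--Macaulay over $R/(\Delta)$ by the discussion in Section~\ref{Sec:Basics}) to write
\[
\rank_{R/(\Delta)} M_i \;=\; \lim_{t\to 1}\frac{H_{M_i}(t)}{H_{R/(\Delta)}(t)} \;=\; \lim_{t\to 1}\frac{K_{S_i}(t)-t^{m}K_{S_i'}(t)}{1-t^{m+m_1}},
\]
using \eqref{Eq:HSforV}. The second key input is the observation that $S_i=\Hom_{KG}(V_i,S)$ is a free graded $R$--module of rank $\dim V_i$: this follows from the Chevalley--Shephard--Todd isomorphism $S\cong R\otimes_K KG$ as graded $R$-$KG$-bimodules (Theorem \ref{Thm:CST}) combined with the isotypical decomposition of Lemma \ref{isotypical}. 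Consequently $K_{S_i}(1)=\dim V_i$ and, likewise, $K_{S_i'}(1)=\dim V_i'=\dim V_i$ (the twist by a one-dimensional character does not change the dimension of a representation).

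Therefore both numerator and denominator of the limit above vanish at $t=1$, and I would apply L'H\^opital's rule. Differentiating the numerator gives
\[
\frac{d}{dt}\bigl(K_{S_i}(t)-t^{m}K_{S_i'}(t)\bigr)\Big|_{t=1} \;=\; \frac{dK_{S_i}}{dt}(1)-m\dim V_i'-\frac{dK_{S_i'}}{dt}(1),
\]
while the denominator derivative at $t=1$ equals $-(m+m_1)$. Dividing and negating yields
\[
\rank_{R/(\Delta)} M_i \;=\; \frac{1}{m+m_1}\!\left(m\dim V_i'+\frac{dK_{S_i'}}{dt}(1)-\frac{dK_{S_i}}{dt}(1)\right),
\]
which is the first claimed formula. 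For a true reflection group all reflections have order two, so $J=z$, i.e.\ $m=m_1$; substituting $m+m_1=2m$ and factoring out $\tfrac{1}{2}$ produces the second claimed expression.

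There is no serious obstacle here: the argument is a direct computation. The only subtle point is the justification that Lemma \ref{Lem:rank} applies, which requires recognizing (a) that $R/(\Delta)$ is a quasi-homogeneous domain when $\Delta$ is irreducible, and (b) that each $M_i$ is a Cohen--Macaulay module over $R/(\Delta)$, as already noted in the short exact sequences preceding \eqref{Eq:isotypical}.
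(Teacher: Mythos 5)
Your proposal is correct and follows essentially the same route as the paper: apply Lemma \ref{Lem:rank} together with the expression \eqref{Eq:HSforV}, cancel $H_{R/(\Delta)}(t)$, and evaluate the resulting $0/0$ limit by L'H\^opital. The only (welcome) addition you make is to explicitly check $K_{S_i}(1)=\dim V_i = \dim V_i' = K_{S_i'}(1)$, which the paper leaves implicit when invoking L'H\^opital.
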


\begin{proof}
Using expression \eqref{Eq:HSforV} and Lemma \ref{Lem:rank} for $H_{M_i}(t)$ we get
$$\rank_{R/(\Delta)}M_i=\lim_{t \rightarrow 1}\left(\frac{ H_{R/(\Delta)}(t) \frac{K_{S_i}(t) - t^m K_{S'_i}(t)}{1-t^{m+m_1}}}{H_{R/(\Delta)}(t)} \right)=\lim_{t \rightarrow 1}\left(\frac{K_{S_i}(t) - t^m K_{S'_i}(t)}{1-t^{m+m_1}} \right).$$
By the rule of l'Hospital this limit is equal to 
$$\lim_{t \rightarrow 1} \left(\frac{\frac{d K_{S_i}}{d t}(t) -m t^{m -1} K_{S'_i}(t) - t^m \frac{d K_{S'_i}}{d t}(t)}{-(m+m_1)t^{m+m_1-1}} \right).$$
Evaluating this expression in $t=1$ yields 
the above expression.
If $G$ is generated by order $2$ reflections, then $m=m_1$ and also $\det=\det^{-1}$, so one obtains the second formula.
\end{proof}

\begin{proposition} In case of $G=S_n$ and an irreducible representation $\lambda$ the rank of the $\lambda$-isotypical component $M_{\lambda}$ of $S/(J)$ is given by
\begin{equation} \label{Eq:Kirillov}\rank_{R/(\Delta)}(M_\lambda) = \dim (V_\lambda) \left(\frac{1}{2} +\frac{A-F}{2 m}\right) \ , 
\end{equation}
where $F=\sum_{k}f_k$ is the total footlength and $A=\sum_k a_k$ is the total armlength of the Young diagram corresponding to $\lambda$.
\end{proposition}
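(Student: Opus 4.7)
The strategy is to specialize the general formula for the rank of an isotypical component (Proposition~\ref{RankisotypicalDiscriminant}) using the explicit Kirillov product expression for the Hilbert--Poincar\'e series of $S_\lambda$ given in Example~\ref{Ex:HSforSn}. Since $S_n$ is generated by transpositions, it is a true reflection group on $\CC^n$, with degrees $d_i = i$, so $m = m_1 = \binom{n}{2}$, and the determinant representation coincides with the sign, giving $V_\lambda' = V_\lambda \otimes \mathrm{sgn} = V_{\lambda'}$, where $\lambda'$ denotes the conjugate partition. In particular $\dim V_\lambda = \dim V_{\lambda'}$.

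First, I would apply the simplified formula from Proposition~\ref{RankisotypicalDiscriminant} in the true reflection group case:
\begin{equation*}
\rank_{R/(\Delta)} M_\lambda \;=\; \tfrac{1}{2}\!\left(\dim V_{\lambda'} \;+\; \frac{K'_{S_{\lambda'}}(1) - K'_{S_\lambda}(1)}{m}\right).
\end{equation*}
Thus the whole computation reduces to evaluating the derivatives of the numerator polynomials $K_{S_\lambda}$ and $K_{S_{\lambda'}}$ at $t=1$.

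Next, starting from Kirillov's identity $H_{S_\lambda}(t) = \prod_{k=1}^n t^{f_k}/(1-t^{h_k})$ and the analogous one for $\lambda'$, where the crucial observation is that conjugating the Young diagram preserves hooklengths while interchanging armlengths and leglengths (so $f_k(\lambda')=a_k(\lambda)$), I factor
\begin{equation*}
K_{S_\lambda}(t) = t^F P(t), \qquad K_{S_{\lambda'}}(t) = t^A P(t), \qquad \text{where } P(t) = \frac{\prod_{i=1}^n(1-t^{d_i})}{\prod_{k=1}^n(1-t^{h_k})}.
\end{equation*}
Both numerator and denominator of $P$ have exactly $n$ factors of $(1-t)$ (one per basic invariant, resp.\ one per box of $\lambda$), so after cancelling $(1-t)^n$ top and bottom and evaluating at $t=1$, the hook length formula gives
\begin{equation*}
P(1) = \frac{\prod_{i=1}^n d_i}{\prod_{k=1}^n h_k} = \frac{n!}{n!/\dim V_\lambda} = \dim V_\lambda.
\end{equation*}
Hence $K'_{S_\lambda}(1) = F\dim V_\lambda + P'(1)$ and $K'_{S_{\lambda'}}(1) = A\dim V_\lambda + P'(1)$, so the contribution of $P'(1)$ cancels in the difference, leaving $K'_{S_{\lambda'}}(1)-K'_{S_\lambda}(1) = (A-F)\dim V_\lambda$. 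Substituting back and using $\dim V_{\lambda'}=\dim V_\lambda$ yields the claimed formula.

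The one mildly delicate point is the cancellation of the $(1-t)^n$ factors and the appeal to the hook length formula to identify $P(1)$ with $\dim V_\lambda$; this is where the combinatorics of Young diagrams really enters. Once that is in hand, the rest is bookkeeping, and the pleasing feature is that the unknown quantity $P'(1)$ drops out of the difference, so one never needs to compute it.
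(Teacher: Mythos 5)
Your proof is correct and, at its core, performs the same computation as the paper: substitute Kirillov's product formula, use that conjugation preserves hook lengths while swapping arm and foot lengths, and invoke the hook length formula to identify $\prod_i d_i/\prod_k h_k$ with $\dim V_\lambda$. The only organizational difference is that the paper applies l'Hospital directly to $\lim_{t\to 1} H_{M_\lambda}(t)/H_{R/(\Delta)}(t)$ rather than routing through Proposition~\ref{RankisotypicalDiscriminant}; your factorization $K_{S_\lambda}=t^F P(t)$, $K_{S_{\lambda'}}=t^A P(t)$ with $P'(1)$ cancelling in the difference is a tidy way to package the same bookkeeping.
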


\begin{proof}
The rank of $M_\lambda$ over $R/(\Delta)$ is given as
\begin{align*}
\rank_{R/(\Delta)}M_\lambda & = \lim_{t \rightarrow 1} \frac{H_{M_\lambda}(t)}{H_{R/(\Delta)}(t)} =
\lim_{t \rightarrow 1} \frac{H_{S_\lambda}(t) - t^m H_{S_{\lambda'}}(t)}{H_{R/(\Delta)}(t)} \ ,
\end{align*}
where $\lambda'$ is the conjugate partition to $\lambda$. Note that the hooklengths of the conjugate partitions are the same, that is, the hooklength $h_k$ of the $k$-cell in $\lambda$ is the same as the hooklength $h'_k$ of the corresponding $k$-cell in $\lambda'$. On the other hand, one has that the footlength $f_k$ in $\lambda$ is equal to the armlength $a'_k$ in $\lambda'$ and vice versa. Moreover, these are connected to the hooklength via $h_k=f_k + a_k +1$. Now substitute Kirillov's formula \eqref{Eq:Kirillov1} in the above equation:
\begin{align*}
\rank_{R/(\Delta)}M_\lambda & = \lim_{t \rightarrow 1} \left( \frac{\frac{1}{\prod_{k=1}^n (1-t^{h_k})} (t^F - t^{m+A})}{\frac{1-t^{2m}}{\prod_{k=1}^n (1-t^{d_k})}} \right) =  \lim_{t \rightarrow 1} \left[ \left( \frac{t^F-t^{m+A}}{1-t^{2m}} \right) \cdot \left( \frac{\prod_{k=1}^n (1-t^{d_k})}{\prod_{k=1}^n (1-t^{h_k})} \right) \right] \ ,
\end{align*}
where $d_k$ are the degrees of the basic invariants of $S_n$. Now using l'Hospital's rule for the two factors in the product yields: 
\begin{equation*}
\rank_{R/(\Delta)}M_\lambda = \frac{m+A-F}{2m} \cdot \prod_{k=1}^n \frac{d_k}{h_k} \ .
\end{equation*}
Since $d_k=k$ for all $k=1, \ldots, n$, the product $\prod_{k=1}^n \frac{d_k}{h_k} = \frac{n!}{\prod_{k=1}^n h_k} = \dim(V_\lambda)$ by the hooklength formula, see e.g.~\cite{FultonHarris}. This yields the formula in \eqref{Eq:Kirillov}. 
\end{proof}

%%%%%%%%%%%%%%%
\subsection{Identifying isotypical components}  
%%%%%%%%%%%%%%

The main result of this section is to identify the module of logarithmic vector fields $\Theta_{R}(-\log \Delta)\cong \Theta_S^G$ and its exterior powers $\Theta^m_R(-\log \Delta)=\bigwedge^m(\Theta_{R}(-\log \Delta))\cong(\bigwedge^m \Theta_S)^G$ as isotypical components of the natural representation $V$ and its exterior powers $\bigwedge^m V$ and their corresponding matrix factorizations. The modules of logarithmic differential forms and logarithmic residues were 
first defined and studied by Kyoji Saito in \cite{Saito80}.    \\

We start with recalling some facts from linear algebra, and introducing the notation for logarithmic vector fields, where 
we follow \cite{OTe}.  \\

\label{sit:linalg} 
Recall the following result from linear algebra: Let $\vp:P\to Q$ be a linear map between
finite projective modules of same rank $m$ over some commutative ring $C$.
With $\Lambda^{i}$ the $i^{th}$ exterior power over $C$ and 
$|P|=\det P = \Lambda^{m}P, |Q|=\det Q= \Lambda^{m}Q$ the invertible
$C$--modules given by the top exterior powers of $P$ and $Q$, respectively, one has
isomorphisms of $C$--modules $\Lambda^i P\cong |P|\otimes_{C}\Lambda^{m-i}P^{*}$ and
$\Lambda^i Q\cong|Q|\otimes_{C}\Lambda^{m-i}Q^{*}$ induced from the nondegenerate pairing
$\Lambda^{i}\otimes_{C}\Lambda^{m-i}\to \Lambda^{m}$ . Consider the composition
\[
\vp^{\adj}\vp\colon \Lambda^i P\xto{\ \Lambda^i \vp\ } \Lambda^i Q \cong  
|Q|\otimes_{C}\Lambda^{m-i}Q^{*}\xto{\ |Q|\otimes_{C} \Lambda^{m-i}\vp^{*}\ }
|Q|\otimes_{C}\Lambda^{m-i}P^{*}\cong
|Q|/ |P|\otimes_{C}\Lambda^iP\,,
\]
where the adjugate morphism $\vp^{\adj}$ is the composition of the maps to the right of $\vp$,
while $|Q|/|P|$ is shorthand for the invertible $C$--module $|Q|\otimes_{C}|P|^{-1}$.
The top exterior power of $\vp$ defines the $C$--linear map $\Lambda^{m}\vp:|P|\to |Q|$ 
and the associated $C$--linear section 
$\det \vp = \Lambda^{m}\vp\otimes_{C} |P|^{-1}:C\to |Q|/|P|$ of the invertible line bundle $|Q|/|P|$
is the determinant of $\vp$. The Laplace expansion of the determinant  
then translates into
\begin{align*}
\vp^{\adj}\vp = (\det\vp)\id_{P}\colon \wedge^iP\xto{\quad} (|Q|/|P|)\otimes_{C}\wedge^iP\,.
\end{align*}

 We maintain our usual set-up: $G\leqslant \GL(V)$ is a finite group generated by pseudo-reflections as
subgroup of $\GL(V)$, and $S=\Sym_{K}(V)$ denotes the polynomial ring defined by $V$ over $K$,
with $R=S^{G}$ the invariant subring. Recall that $R\cong \Sym_{K}W$ is a polynomial ring
in its own right, with $W\cong R_{+}/R_{+}^{2}$ the graded $K$--vector space generated 
by the classes of the basic invariants $f_{i}\in R_{+}$.

We denote by $\Omega^{1}_S$ the K\"ahler differential forms on $S$ over $K$ and by 
$\Theta_S=\Hom_S(\Omega^{1}_S,S)$ its $S$--dual, isomorphic to the $S$--module
of $K$-linear derivations, or vector fields, on $S=\Sym_K(V)$. We define $\Omega^1_R$ and $\Theta_R$ similarly by replacing $V$ with $W$.

Restricting a derivation on $S$ to $V=\Sym^{1}_{K}V\subset S$ yields canonical isomorphisms
\[
\Theta_{S} = \Hom_{S}(\Omega^{1}_{S},S)\xto{\ \cong\ } \Hom_{K}(V, S)
\cong S\otimes_{K}V^*\,,\quad
D\mapsto D|_{V}\,.
\]
Similarly,
$$\Theta^i_S = \Hom_{K}(\Lambda^iV, S) = S\otimes_K \Lambda^iV^*$$
$$\Omega^i_S = \Hom_{K}(\Lambda^iV^*, S) = S\otimes_K \Lambda^iV.$$
and these hold if we replace $S$ with $R$ and $V$ with $W$.

If a group $G$ acts on $S$ through $K$--algebra automorphisms then $G$ also acts naturally on 
$\Omega^{1}_S$ and $\Theta_S$, respectively.

Let again denote $R=S^G$, then $\Theta_S^G$ is the $R$-module of $G$-invariant 
derivations and $(\Omega^{1}_S)^G$ the $R$-module of $G$-invariant differential forms.
Employing the isomorphisms above, it follows that 
\[
\Theta_S^G\xto{\ \cong\ } \Hom_{K}(V,S)^{G}\cong \Hom_{KG}(V,S)\, ,
\]
or, in other words, that the $V$-isotypical component of $S$ is $\Theta_S^G\otimes V$.
\begin{lemma} \label{lem:Visotyp}
If the defining representation $V$ is an  irreducible $G$--representation then the evaluation map
\begin{align*}
\ev:\Theta_S^G\otimes_{K}V\to S
\end{align*}
identifies $\Theta_S^G$ with the isotypical component of $S$ that belongs to $V$.
In particular, the evaluation map is a split $R$--monomorphism.\qed
\end{lemma}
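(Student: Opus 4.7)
The plan is to deduce the lemma as a direct application of Lemma \ref{isotypical} (the general statement about isotypical components of $KG$-modules). First I would invoke the identification established in the paragraph immediately preceding the lemma: restriction of derivations to $V = \Sym^{1}_{K}V \subset S$ yields isomorphisms of $R$-modules
\[
\Theta_{S}^{G}\xto{\ \cong\ }\Hom_{K}(V,S)^{G}\cong \Hom_{KG}(V,S)\,.
\]
Under this identification, the evaluation map $\ev\colon \Theta_{S}^{G}\otimes_{K}V\to S$ sending $D\otimes v\mapsto D(v)$ coincides with the evaluation map $\ev_{V}$ appearing in Lemma \ref{isotypical} applied to $M=S$ and $V_{i}=V$.

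Next, since $V$ is by assumption an irreducible $G$-representation, Lemma \ref{isotypical} applies directly: the map $\ev_{V}\colon \Hom_{KG}(V,S)\otimes_{K}V\to S$ is a split monomorphism of $KG$-modules whose image is exactly the $V$-isotypical component of $S$. Combined with the previous identification, this proves the first assertion.

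The only point that is not completely formal is upgrading ``split $KG$-monomorphism'' to ``split $R$-monomorphism''. For this I would observe that the canonical splitting, namely the projection of $S$ onto its $V$-isotypical component supplied by Lemma \ref{isotypical}, is automatically $R$-linear: the $R=S^{G}$-action on $S$ commutes with the $G$-action, so $R$ preserves each isotypical summand, and the projection onto any one of them is therefore $R$-linear. Since the source $\Theta_{S}^{G}\otimes_{K}V$ is already an $R$-module (via the first factor) and $\ev$ is visibly $R$-linear, this gives the desired $R$-linear splitting.

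I do not anticipate any real obstacle here; the content of the lemma is essentially bookkeeping combining the isomorphism $\Theta_{S}^{G}\cong\Hom_{KG}(V,S)$ with the general isotypical decomposition. The only thing worth flagging is the irreducibility hypothesis on $V$: without it, $\Theta_{S}^{G}$ would still equal $\Hom_{KG}(V,S)$, but this would no longer coincide with the isotypical component of a single irreducible, and the statement would need to be reformulated as a sum over the irreducible summands of $V$.
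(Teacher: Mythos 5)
Your proposal is correct and carries out precisely the argument the paper leaves implicit (the lemma is stated with a bare $\qed$): combine the identification $\Theta_S^G\cong\Hom_{KG}(V,S)$ from the sentence just before the lemma with Lemma \ref{isotypical} applied to $M=S$, $V_i=V$. Your extra paragraph justifying $R$-linearity of the splitting is also sound — the projection onto the $V$-isotypical component is given by a central idempotent of $KG$, and since the $R$-action on $S$ commutes with the $G$-action this projection is automatically $R$-linear — and it is a worthwhile point to make explicit.
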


  We have the Jacobian map of $S$-modules $\Omega^{1}_{R} \otimes_{R} S \xto{\jac} \Omega^{1}_{S}$ defined by the inclusion of $K$--algebras $R\ \into\  S$. This gives the Zariski--Jacobi sequence $0 \xto{} \Omega^{1}_{R} \otimes_{R} S \xto{\jac} \Omega^{1}_{S} \xto{} \Omega^1_{S/R} \xto{} 0$, 
see e.g.~ \cite[Thm~25.1]{Matsumura86}.
Note that $\jac$ is injective because $ \Omega^{1}_{R} \otimes_{R} S$ is a free 
$S$--module as  $R$ is smooth over $K$, while the potential kernel is supported on the critical 
locus of the morphism $\Spec S\to \Spec R$, thus must be zero as the morphism is generically 
smooth.

Applying $(\ )^{*}=\Hom_S(-,S)$ yields the map $\jac^*:\Theta_{R}\otimes S \rightarrow \Theta_S.$
\[ 
\tag{$\dagger$}
\label{dualZJ}
0 \xleftarrow{} T^{1}_{S/R} \xleftarrow{} \Theta_{R}\otimes S \xleftarrow{ {\jac}^* } 
\Theta_{S} \xleftarrow{} 0\,,
\]
where $T^{1}_{S/R}\cong \Ext^{1}_{S}(\Omega^{1}_{S/R}, S)$ is the first tangent cohomology of 
$S$ over $R$. 
 In particular, the determinant of the (transposed) Jacobian matrix is given by the $S$--linear 
co-section
\begin{align*}
\det(\jac^{*}): S\lto \Theta^n_{R}\otimes_{S}(\Theta^{n}_{S})^{*}
\cong S\otimes |V|/|W| \ ,
\end{align*}
where $|V|/|W|$  is again shorthand for $\det V \otimes_K (\det W)^{-1}$.

Taking $G$--invariants is exact and applied to the short exact sequence (\ref{dualZJ}) above
it returns
\[ 
0 \xleftarrow{} j_\Delta \xleftarrow{} \Theta_{R} \xleftarrow{\mathrm{(jac}^*)^G} 
\Theta^{G}_{S} \xleftarrow{} 0\,.
\]
In \cite[Cor.~6.57]{OTe} it is shown that the $R$--linear inclusion $\mathrm{(jac}^*)^G$ 
identifies $\Theta_{S}^{G}$ with 
$\Theta_{R}(-\log \Delta)=\{ \theta \in \Theta_{R}: \theta(\Delta) \in \Delta R \}$, 
the $R$-module of logarithmic vector fields along the discriminant $\Delta$.
We have the natural inclusions 
\begin{align*} \mu^*:  \Theta_R(-\log \Delta)\cong \Theta_S^G & \xto{(\jac^*)^G} \Theta_R \\
\zeta^*:  \Theta_R(-\log \Delta) \otimes S  \cong \Theta_S^G\otimes S & \xto{\phantom{(\jac^*)^G}} \Theta_S \ .
\end{align*}
Accordingly, $j_{\Delta}=\Coker \left(\mu^*\right)$ can be identified with the 
Jacobian ideal of the discriminant, 
\[
j_{\Delta}\cong \left\{D(\Delta)+(\Delta)\mid D\in \Theta_{R} \right\}\subseteq R/(\Delta)\,,
\]
and the determinant of $\mu^{*}$ is the discriminant of $G$.
It yields the $R$--linear co-section
\begin{align*}
\det(\mu^*): R\lto \Theta^n_R(-\log \Delta) \otimes_{R} (\Theta^{n}_{R})^{*}
\cong R\otimes |W|/|W'|\,,
\end{align*}
where $W'$ is a graded $K$--vector space so that $\Theta_R(-\log \Delta)\cong R\otimes (W')^{*}$.
In particular, $R \otimes |W'|$ is a free $R$--module of rank $1$ generated in degree $-c$, where 
$c=\sum_{i=1}^{n}c_{i}$ is the sum of the \emph{co--degrees\/} $0=c_{1}\leqslant \cdots
\leqslant c_{n}$, so that $\Theta_R(-\log \Delta) \cong \oplus_{i=1}^{n}R(-c_{i})$.

As $\Theta_{R}\cong \oplus_{i=1}^{n}R(d_{i})$, the degree of the discriminant is 
$|\Delta| = \sum_{i=1}^{n}(d_{i} + c_{i})$.

\begin{example}
If $G$ is a \emph{duality group\/}, then $d_{i}-c_{i}=d_{1}$, while $d_{i}+c_{n-i+1}= d_{n}$, the
\emph{Coxeter number\/} of $G$. Thus for such a group, $|\Delta| = h\cdot n =
\sum_{i=1}^{n}(2d_{i}-d_{1})$.
Coxeter and Shephard groups are duality groups, and for Coxeter groups $d_{1}=2$ 
so that for these groups $|\Delta| = 2\sum_{i=1}^{n}(d_{i}-1)$, twice the number of reflections in that 
group, as it should be.
\end{example}

  By \cite[Thm.~6.59]{OTe} the map $\Theta_R(-\log \Delta) \otimes_R S \xto{} \Theta_S$ is an inclusion as well and identifies in this way 
the $S$--modules $\Theta_R(-\log \Delta)\otimes_RS \cong \Theta_S(-\log z)$, where 
$\Theta_S(-\log z)\subseteq \Theta_{S}$ is the $S$--module of logarithmic vector fields 
along the hyperplane arrangement given by $\{z=0\}\subseteq \Spec S$.

Using the same analysis as before, it follows that $z$ has degree $|z| = \sum_{i=1}^{n}(c_{i}+1)$,
equal, by definition, to the sum  of the \emph{co-exponents\/} of $G$, equal as well to the number of
mirrors or reflecting hyperplanes defined by $G$ (this number has been denoted earlier as $m_1$).

We note the following facts.
\begin{proposition}
  \begin{enumerate}[(a)]
  \item   If the defining representation $V$ of the pseudo-reflection group $G\leqslant \GL(V)$ is irreducible, then $\Lambda^i V$ are irreducible for $1 \leq i \leq \dim(V)$. 
    \item $\Omega_R^{i} \cong (\Omega_S^{i})^G \cong (S\otimes \Lambda^iV)^G$ 
      \item $\Theta_R^i(-\log \Delta) \cong (\Theta^i_S)^G \cong (S\otimes \Lambda^iV^*)^G$
  \end{enumerate}
  \end{proposition}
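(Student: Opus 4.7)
My plan for the three parts is as follows.

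For part (a), I would invoke the classical irreducibility theorem for exterior powers of the defining representation of an irreducible complex reflection group: each $\Lambda^iV$, $0\leq i\leq n$, is irreducible and these are pairwise non-isomorphic. A character-theoretic proof proceeds by computing $\dim_K\Hom_G(\Lambda^iV,\Lambda^iV)$ via Molien's formula and checking that it equals $1$, which can be related to the fake-degree polynomial of $\Lambda^iV$ that factors as a product indexed by the basic invariants. Standard references include Steinberg, \emph{Trans.\ AMS} 112 (1964), and the treatment in \cite{LehrerTaylor}.

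For part (b), the second isomorphism is immediate from the $G$-equivariant identification $\Omega_S^1\cong S\otimes_K V$, which on taking $i$-th $S$-exterior powers gives $\Omega_S^i\cong S\otimes_K \Lambda^iV$ and then $G$-invariants. For the first isomorphism I would apply Solomon's theorem on invariant differential forms: the graded $R$-algebra $(\Omega_S^{\bullet})^G$ is the free exterior $R$-algebra on the differentials $df_1,\ldots,df_n$ of the basic invariants. Since $R=K[f_1,\ldots,f_n]$ is polynomial, $\Omega_R^{\bullet}$ is likewise the free exterior $R$-algebra on these same generators, so the natural map $\Omega_R^{\bullet}\to (\Omega_S^{\bullet})^G$ induced by $R\hookrightarrow S$ is an isomorphism in every degree.

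For part (c), the second isomorphism is analogous to (b), using $\Theta_S^i\cong S\otimes_K\Lambda^iV^*$. For the first, I would begin from the Saito--Terao result $(\Theta_S)^G\cong \Theta_R(-\log\Delta)$ already quoted from \cite[Cor.~6.57]{OTe}, which extends to the $S$-module isomorphism $\Theta_S(-\log z)\cong\Theta_R(-\log\Delta)\otimes_R S$ of \cite[Thm.~6.59]{OTe}. Taking $i$-fold exterior powers over $S$ and using flatness of $S$ over $R$ produces
\[
\textstyle\bigwedge^i_S\,\Theta_S(-\log z)\ \cong\ \Theta_R^i(-\log\Delta)\otimes_R S\ \hookrightarrow\ \Theta_S^i,
\]
and applying $(-)^G$ gives an inclusion $\Theta_R^i(-\log\Delta)\hookrightarrow (\Theta_S^i)^G$. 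Both sides are free $R$-modules of rank $\binom{n}{i}$: the left because $\Delta$ is a free divisor, so $\Theta_R(-\log\Delta)$ is free of rank $n$; the right by Chevalley--Shephard--Todd ($S\cong R\otimes_K KG$ as $G$-modules) together with the count $\dim_K(KG\otimes_K \Lambda^iV^*)^G=\dim_K\Lambda^iV=\binom{n}{i}$.

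The main obstacle is to upgrade this rank-preserving inclusion to an equality, i.e.~to rule out $G$-invariant $i$-polyvector fields on $V$ that are not $R$-spanned by wedges of invariant $1$-vector fields. My plan is to localize at height-one primes of $R$: outside $V(\Delta)$ the cover $\Spec S\to\Spec R$ is \'etale and both sides coincide with $\Theta_R^i$, while at a generic point of an irreducible component of $V(\Delta)$ the inertia is cyclic, generated by a single pseudo-reflection, and the comparison reduces to the rank-one case of a $\mu_d$-action on a one-dimensional normal direction where the structure of logarithmic vector fields is transparent. Equivalently, since both modules are free of the same rank, one finishes by checking that the determinant of the inclusion (as a map between free $R$-modules of rank $\binom{n}{i}$) is a unit in $R$; this follows from the local description at each mirror together with Saito's criterion for the freeness of $\Delta$ as a divisor.
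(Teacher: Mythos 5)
Your treatment of (a) and (b) matches the paper's: (a) is the well-known irreducibility of $\Lambda^iV$ (the paper cites Geck--Malle, you cite Steinberg; both fine), and (b) is Solomon's theorem on invariant differential forms (the paper cites \cite[Thm.~6.49]{OTe}).

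For (c), the paper simply cites the derivation-side version of Solomon's theorem, \cite[Prop.~6.70]{OTe}, which directly states that $(\Theta^i_S)^G$ is a free $R$-module on the wedges of a basic set of invariant derivations, and hence equals $\Lambda^i\Theta_R(-\log\Delta)$. You instead take a different route: you push the $i=1$ statement $(\Theta_S)^G\cong\Theta_R(-\log\Delta)$ through exterior powers to obtain a graded $R$-linear inclusion $\Theta_R^i(-\log\Delta)\hookrightarrow(\Theta_S^i)^G$ between free modules of the same rank $\binom{n}{i}$, and then try to upgrade this to an isomorphism by localizing. The plan is sound — the cokernel of an inclusion of free modules of equal rank is either zero or supported on a hypersurface, so it suffices to check surjectivity at height-one primes, and at a generic point of the discriminant the inertia is generated by a single pseudo-reflection, reducing everything to the model of $\mu_r$ acting on one coordinate — but the decisive local computation is only gestured at. To make it a proof you would actually have to write down both sides in the local model $R_{\mathrm{loc}}=K[x_1^r,x_2,\dots,x_n]\subset S_{\mathrm{loc}}=K[x_1,\dots,x_n]$ and observe that each is free on $x_1\partial_{x_1},\partial_{x_2},\dots,\partial_{x_n}$ (and their wedges), so that the inclusion is an equality there. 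As written, "the structure of logarithmic vector fields is transparent" and "this follows from ... Saito's criterion" name the idea without carrying out the check, and Saito's criterion is in fact not needed once you have the explicit local basis. So: a legitimate alternative route, more self-contained than the paper's citation, but with the key local verification left at the level of a plan; the shortest complete argument remains the one the paper uses, namely citing \cite[Prop.~6.70]{OTe} as the $\Theta$-side special case of Solomon's theorem.
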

 \begin{proof}
   The first statement is well known, e.g., \cite[Thm.~4.6]{GeckMalle}.  The second is \cite[Theorem.~6.49]{OTe}, and the third statement follows immediately from \cite[Prop.~6.70]{OTe}, which are both special cases of Solomon's theorem \cite[Prop.~6.47]{OTe}.
\end{proof}

 Now we come to main goal of this section to identify some of the $R$--direct summands of $S/(J)$.  By the above, we have the following pair of dual commutative diagrams: 
  \[ \xymatrixcolsep{3pc}
  \xymatrix{  %line1
 \Theta_S \ar@{=}[r] & \Theta_S \ar[d]^{\mathrm{jac}^*} & & \Omega_S \ar[d]_{\zeta} \ar@{=}[r] & \Omega_S \\
 %line 2
 \Theta_R(-\log \Delta)\otimes_R S \ar[r]^-{\mu^*\otimes S} \ar[u]^{\zeta^*}& \Theta_R \otimes_R S & & \Omega_R(\log \Delta)\otimes_R S & \Omega_R\otimes_R S \ar[l]_-{\mu \otimes S} \ar[u]_{\mathrm{jac}} \\
 %line 3
 \Theta_R(-\log \Delta) \ar[r]^-{\mu^*} \ar[u] & \Theta_R \ar[u] & & \Omega_R(\log \Delta) \ar[u] & \Omega_R \ar[l]_-{\mu} \ar[u]
 }
 \]
Here the top squares are commutative diagrams of $S$-modules and the bottom squares are commutative diagrams of $R$-modules. 
 Let $\iota: \Lambda^i V \rightarrow \Omega^i_S$ and $\iota^*: \Lambda^{n-i}V^* \otimes \Theta^{n-i}_S$ be the natural inclusions.
 The above maps give us the following pair of commutative diagrams where the vertical maps of the top two
   squares are the multiplication in the exterior algebra,  $a\otimes b \mapsto a\wedge b.$ 
    \begin{equation} \label{thetafig}
 \xymatrixcolsep{8pc} \xymatrix{
 %line 1
 \Theta_S^n \ar[r]^{\Lambda^n\jac^*} & \Theta_R^n \otimes_R S \\
 %line 2
 \Theta^i_S \otimes \Theta_S^{n-i} \ar[u]^{\wedge} \ar[r]^{\Lambda^i\jac^*\otimes \Lambda^{n-i}\jac^*} & \Theta^i_R \otimes \Theta^{n-i}_R\otimes_R S \ar[u]_{\wedge \otimes S} \\
 %line 3
 \Theta^i_R(-\log \Delta) \otimes_R \Lambda^{n-i}V^* \ar[r]^{\Lambda^i\mu^*\otimes \Lambda^{n-i}V^*} \ar[u]^{\Lambda^i\zeta^* \otimes \iota^*} &  \Theta^i_R \otimes_R \Lambda^{n-i}V^* \ar[u]_{\Theta^i_R\otimes (\Lambda^{n-i}\jac^*\circ \iota^*)}
 }
 \end{equation}

\begin{equation} \label{omegafig}
 \xymatrixcolsep{9pc} \xymatrix{
 %line1
  \Omega^n_S  \ar[r]^{\Lambda^n \zeta}  & \Omega^n_R(\log \Delta)\otimes_R S \\
  %line2
 \Omega_S^{n-i} \otimes \Omega_S^i  \ar[u]^{\wedge} \ar[r]^{\Lambda^{n-i}\zeta \otimes \Lambda^i \zeta} & \Omega^{n-i}_R(\log \Delta) \otimes \Omega_R^i(\log \Delta) \otimes_R S \ar[u]_{\wedge \otimes S} \\
 %line 3
 \Omega^{n-i}_R\otimes \Lambda^iV    \ar[r]^{\Lambda^{n-i}\mu \otimes \Lambda^iV} \ar[u]^{\Lambda^{n-i} \jac \otimes \iota} &  \Omega^{n-i}_R(\log \Delta) \otimes \Lambda^iV \ar[u]_{\Omega_R^{n-i}(\log\Delta) \otimes (\Lambda^i\zeta \circ \iota)}
 }
\end{equation}
  The top squares of these diagrams commute since if $\phi:P\rightarrow Q$
  is a map of free $S$-modules, then $\Lambda^\bullet \phi : \Lambda^\bullet P \rightarrow \Lambda^\bullet Q$ is an homomorphism of $S$-algebras.

 We know that $\Lambda^n \Omega_R^1(\log \Delta) \cong \Omega^n_R(\log \Delta)
 \cong \Omega_R^n(|\Delta|)$, since $\Delta$ is a free divisor, and $zJ = \Delta$ so we obtain the following maps
 $$\Omega_R^n \otimes_R S \xto{\hspace{0.5cm}J\hspace{0.5cm}} \Omega^n_S \xto{\hspace{0.5cm}z\hspace{0.5cm}} \Omega^n_R(|\Delta|) \otimes_R S.$$

 Now we apply the functor $\Hom_{KG}(\Lambda^i V,-)\otimes_K\Lambda^iV$
 to this sequence.  We first simplify the terms 
 \begin{align*}
 \Hom_{KG}(\Lambda^i V,\Omega^n_S) \cong (\Omega^n_S\otimes \Lambda^i V^*)^G  
& \cong (\Omega^{n-i}_S)^G \cong \Omega^{n-i}_R \cong \Omega^n_R \otimes \Theta^i_R \ ,\\
   \Hom_{KG}(\Lambda^i V,\Omega^n_R\otimes S) & \cong   (\Omega^n_R \otimes S \otimes \Lambda^iV^*)^G \\
& \cong   \Omega^n_R\otimes (\Theta^i_S)^G\\
&  \cong  \Omega^n_R\otimes \Theta_R^i(-\log \Delta)\\
&  \cong  \Omega^n_R(\log \Delta)(-|\Delta|) \otimes \Theta_R^i(-\log \Delta)\\
   & \cong  \Omega_R^{n-i}(\log \Delta) \ . \end{align*}
Here we have used the fact that $\Omega^n(\log \Delta) \cong \Omega^n(|\Delta|)$.
Now applying the functor with its natural transformation to the identity functor yields the following commutative diagram:
\[  \xymatrixcolsep{5pc} \xymatrix{ 
%line 1
\Omega_R^n \otimes S  \ar[r]^-{\Lambda^n\jac^*\otimes \Omega^n_R\otimes \Lambda^nV} & \Omega^n_S \ar[r]^{\Lambda^n\zeta} & \Omega^n_R(|\Delta|) \otimes S \\
%line 2
\Omega_R^{n-i}(\log \Delta)(-|\Delta|)\otimes \Lambda^i V \ar[r] \ar[u] & \Omega^{n-i}_R \otimes \Lambda^i V \ar[u] \ar[r]^{\Lambda^{n-i}\mu\otimes \Lambda^iV} & \Omega_R^{n-i}(\log \Delta) \otimes \Lambda^i V \ar[u]  \\
%line 3
 \Omega^n_R\otimes \Theta_R^i(-\log \Delta) \otimes \Lambda^{i} V \ar[r]^-{\Omega^n_R \otimes \Lambda^i \mu^*\otimes \Lambda^{i}V} \ar[u]^{\sim}  & \Omega^n_R \otimes \Theta^i_R  \otimes  \Lambda^i V \ar[r] \ar[u]^{\sim} &  \Omega^n_R\otimes \Theta_R^i(-\log \Delta)(|\Delta|) \otimes \Lambda^{i} V \ar[u]^{\sim}
} 
\]
where we have presented two isomorphic interpretations of the bottom row.  It is clear that this diagram commutes since the left square with the bottom row is the outer square of the diagram \eqref{thetafig} tensored with $\Omega_R \otimes \Lambda^nV$ after applying the isomorphisms $\Lambda^{n-i}V^*\otimes V^n \cong \Lambda^iV$ and $\Theta^i_R\otimes \Omega^n_R \cong \Omega^{n-i}_R$, and the upper right square is the diagram \eqref{omegafig}. Note that the cokernel of $\Lambda^{n-i}\mu: \Omega_R^{n-i} \rightarrow \Omega_R^{n-i}(\log \Delta)$ is the \emph{$(n-i)$-th logarithmic residue}, see \cite{Saito80}. We call the cokernel $\Lambda^i\mu^*:\Theta^i_R(-\log \Delta)\rightarrow \Theta^{i}_R$ the \emph{$i$-th logarithmic co-residue of $\Delta$.}
It is clear that the vertical maps are the evaluations of the natural transformation. 
  Lastly, since the maps on the bottom row are uniquely determined by commuting with the diagram, we obtain the following result.
\begin{theorem} \label{logRes}
  For all $i$ with $1 \leq i \leq \dim V$,  there is a matrix factorization of $\Delta$ given by the pair of maps $\Lambda^{n-i}\mu$ and $\Lambda^i\mu^*\otimes \Omega^n_R$.  The cokernels of these maps are the logarithmic residues and co-residues with a degree shift $|\Omega^n_R|$, 
  which occur as $R/(\Delta)$-direct summands of $S/(z)$ and $S/(J)$ respectively, with multiplicity $\binom{n}{i} = \dim \Lambda^iV$. In particular, the first logarithmic residue $\coker(\mu)$ and $\coker(\mu^*) = j_\Delta$ are summands of $S/(z)$ and $S/(J)$ respectively, of multiplicity $n$. 
  \end{theorem}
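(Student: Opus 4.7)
The plan is to read off the statement from the commutative diagrams constructed just above the theorem, using two ingredients: the Laplace adjugate identity of Situation~\ref{sit:linalg} and the splitting of isotypical components from Lemma~\ref{isotypical}.

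First I would verify the matrix-factorization identity. The inclusion $\mu^{*}\colon \Theta_{R}(-\log\Delta)\to\Theta_{R}$ is a morphism of free $R$-modules of rank $n$ whose top exterior power equals multiplication by the discriminant, i.e.\ $\det(\mu^{*})=\Delta$ up to a unit, by \cite[Cor.~6.57]{OTe}. Applying Situation~\ref{sit:linalg} to $\vp=\Lambda^{i}\mu^{*}$, the adjugate morphism identifies canonically, via the perfect pairing $\Lambda^{i}\otimes\Lambda^{n-i}\to\Lambda^{n}$ applied to both $\Theta_{R}$ and $\Theta_{R}(-\log\Delta)$, with $\Lambda^{n-i}\mu$ tensored with $\Omega^{n}_{R}$. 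The Laplace expansion recalled there then yields
\[
(\Lambda^{n-i}\mu\otimes\Omega^{n}_{R})\circ\Lambda^{i}\mu^{*} \;=\; \Delta\cdot \id\,,
\]
and symmetrically in the opposite order, giving the matrix-factorization property of the pair $(\Lambda^{n-i}\mu,\, \Lambda^{i}\mu^{*}\otimes\Omega^{n}_{R})$ with respect to $\Delta$.

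Next I would identify the cokernels. By Saito's definition, the cokernel of $\Lambda^{n-i}\mu\colon \Omega^{n-i}_{R}\to\Omega^{n-i}_{R}(\log\Delta)$ is the $(n-i)$-th logarithmic residue of $\Delta$; we declare the cokernel of $\Lambda^{i}\mu^{*}\colon \Theta^{i}_{R}(-\log\Delta)\to\Theta^{i}_{R}$ to be the $i$-th logarithmic co-residue. Both maps are injective with cokernel annihilated by $\Delta$, so by Theorem~\ref{Thm:Eisenbud} the cokernels are maximal Cohen--Macaulay modules over $R/(\Delta)$.

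To exhibit these cokernels inside $S/(z)$ and $S/(J)$, I would apply the isotypical-component functor $\Hom_{KG}(\Lambda^{i}V,-)\otimes_{K}\Lambda^{i}V$ to the short exact sequences arising from multiplication by $z$ and $J$ on $\Omega^{n}_{S}\cong S\otimes\det$. Because $V$ is irreducible, so is each $\Lambda^{i}V$, and Lemma~\ref{isotypical} guarantees that the $\Lambda^{i}V$-isotypical component appears as a direct $R$-summand. Solomon's theorem in the forms $(\Theta^{i}_{S})^{G}\cong\Theta^{i}_{R}(-\log\Delta)$ and $(\Omega^{i}_{S})^{G}\cong\Omega^{i}_{R}$, combined with the commutative diagrams \eqref{thetafig} and \eqref{omegafig}, identifies the induced maps on the isotypical components precisely with $\Lambda^{n-i}\mu$ and $\Lambda^{i}\mu^{*}$, modulo the twist by $\Omega^{n}_{R}$ that accounts for the degree shift $|\Omega^{n}_{R}|$. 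Consequently $\coker(\Lambda^{i}\mu^{*})$ (resp.\ $\coker(\Lambda^{n-i}\mu)$) appears as an $R/(\Delta)$-summand of $S/(J)$ (resp.\ $S/(z)$) with multiplicity $\dim\Lambda^{i}V=\binom{n}{i}$. The specialization $i=n-1$ gives the first residue $\coker(\mu)\subseteq S/(z)$, and $i=1$ gives $j_{\Delta}=\coker(\mu^{*})\subseteq S/(J)$, each with multiplicity $n$.

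The main obstacle is purely notational: simultaneously tracking the twists by $\det^{\pm 1}$ attached to $J$ and $z$ as semi-invariants, the degree shifts by $|\Omega^{n}_{R}|$ and $|\Delta|$, and the canonical isomorphism $\Lambda^{i}V^{*}\cong \Lambda^{n-i}V\otimes\det^{-1}$ needed to match both sides of the diagram. Once these are aligned via the commutative squares preceding the theorem, the argument is formal.
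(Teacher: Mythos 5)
Your proposal follows essentially the same route as the paper's proof: apply the isotypical-component functor $\Hom_{KG}(\Lambda^i V,-)\otimes_K\Lambda^iV$ to the factorization of $\Delta = zJ$ on $\Omega^n_S$, use Solomon's theorem to identify the $G$-invariants $(\Theta^i_S)^G\cong\Theta^i_R(-\log\Delta)$ and $(\Omega^i_S)^G\cong\Omega^i_R$, and match the induced maps against $\Lambda^{n-i}\mu$ and $\Lambda^i\mu^*$ via the two commutative ladders preceding the theorem. Two small imprecisions: you write ``Applying Situation~\ref{sit:linalg} to $\vp=\Lambda^i\mu^*$'' where the rank-$n$ map to feed into that setup is $\vp=\mu^*$ itself (one then takes its $i$-th exterior power inside the adjugate construction); and the paper obtains the matrix-factorization identity more directly from commutativity of the diagram together with $zJ=\Delta$ on the top row, rather than by re-deriving the Laplace adjugate identity — both are valid, but yours supplies a verification the paper leaves implicit.
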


\begin{example}
  Let $G = G(r,1,n) \cong \mu_r \wr S_n$ be the full monomial group acting in the usual way on $S=K[x_1,\ldots,x_n]$.  Let  $p_i = \frac{1}{ir}\sum_j x_j^{ri}$ be the $i^{th}$ power sum function of the $x_i^m,$ for $i\geq 1$.
  One choice of generators for the invariants is $p_1,\ldots,p_n$.  So $R=K[p_1,\ldots,p_n]$ as in~\cite[Section 6]{STo}.
  It is now easy to compute that $\jac = ( x_i^{jr-1})_{ij}$ in terms of the bases $dp_i$ and $dx_j$ of
  $\Omega_R$ and $\Omega_S$ respectively.  A basis of $\Theta_R(-\log \Delta) \cong \Theta_S^G$
  is given by $\theta_i = \sum_j x^{(i-1)r+1}_j \frac{\partial}{\partial x_j}$ as seen in~\cite[Appendix B.1]{OTe},
  and so $\zeta^* = ( x^{(i-1)r+1}_j )_{ij}$ in terms of the bases $\theta_i$ and $\frac{\partial}{\partial x_j}$
  of $\Theta_R(-\log \Delta)$ and $\Theta_S$ respectively.
  Now we can compute $\mu = \zeta \jac = r((i+j-1)p_{i+j-1})_{ij}$ in terms of the bases $dp_i$ and $\theta_i$.
  From this it is easy to compute
  $$J = \det(\jac) = (x_1\cdots x_n)^{r-1} \prod_{i<j} (x_i^r-x_j^r)$$
  $$z =\det(\zeta) =  x_1\cdots x_n\prod_{i<j} (x_i^r-x_j^r).$$
Lastly, the maps $\Lambda^i\mu^*$ and $\Lambda^{n-i}\mu$ will determine a matrix factorization for $\Delta$ for each $i$.
\end{example}

\section{Extended example: $S_4$ and the swallowtail}

Consider the case of $G=S_4$ acting on $K^3$. We will give an explicit description of the direct summands of $S/(J)$ over the discriminant. \\
For this example, $S=k[x,y,z]$. Let $s=-x-y-z$ and $\sigma_i(s,x,y,z)$ be the elementary symmetric function. Then $R=k[u,v,w]$
where $u = 6\sigma_2$, $v=4\sigma_3$ and $w=3\sigma_4$ and  $$J=(x-y)(x-z)(y-z)(2x+y+z)(2y+x+z)(2z+x+y).$$ 
A generator of the discriminant ideal $(\Delta)$ can be computed as the determinant of the matrix $(Jac)^T(Jac)$, where $Jac=\left(\frac{\partial f_i}{\partial x_j} \right)$ is the Jacobian matrix, cf.~\cite{SaitoReflexion,OTe}. An explicit equation is: % the polynomial $P(\alpha)=\alpha^4+u\alpha^2+v\alpha+w$. An explicit equation for the discriminant is
%\[   - 27 v^4 - 4 u^3 v^2  + 16 u^4 w + 144 u v^2 w - 128 u^2 w^2 + 256 w^3 =0  \] % agrees with hovinen's notation in 4.4 of his thesis - he simplifies coefficients
%Applying the scaling $(u,v,w) \mapsto (6u, 4v, 3w)$ one gets the equation 
\[ \Delta = -  v^4 - 2 u^3 v^2  + 9 u^4 w + 6 u v^2 w - 6 u^2 w^2 + w^3. \]
$\Spec(R/(\Delta))$ is called the {\it swallowtail}. Its singular locus consist of two curves: a parabola (the ``self-intersection locus'') and a cusp, meeting at the origin, see Fig.~\ref{Fig:swallowtail}. \\

Now let us sketch the computation of the matrix for multiplication by $J$.
Consider the map induced by multiplication by $J$ on $S$
$$S \xto{J} S$$
We know that $S$ is a free $R$-module and that $J^2=\Delta \in R$, so
$$S \xto{J} S \xto{J} S $$
is a matrix factorization of $\Delta$ over $R$ by definition.
We wish to decompose $S/(J)$ into indecomposable CM-modules over $R/(\Delta)$.
We can use the grading and the $G$-action to provide information about the decomposition.

First recall that $R=K[f_1,f_2,f_3]$ and let $(R_+)$ be the ideal in $S$ generated by $f_1,\ldots,f_n$.  Recall that (see Section \ref{Sub:isotypical})
$S/(R_+) \cong KG$
as $G$-representations, and
$S/(R_+) \otimes_K R \cong S$ as graded $RG$-modules.

In this example $G=S_4$.  Let us call the irreducible representations
$$K,V,W,V',\det$$
corresponding to the partitions 
$$\ytableausetup{centertableaux,boxsize=0.3em} 4=\ydiagram{4} \ , \ 3+1=\ydiagram{3,1}\ , \ 2+2=\ydiagram{2,2} \ , \ 2+1+1=\ydiagram{2,1,1} \ ,\ 1+1+1+1=\ydiagram{1,1,1,1} \ .$$
  We have that
$$S/(R_+) \simeq K(0) \oplus V(-1) \oplus V(-2) \oplus W(-2) \oplus V(-3) \oplus V'(-3)
\oplus V'(-4) \oplus W(-4) \oplus V'(-5) \oplus \det(-6) \ , $$
where the number in $(-)$ indicates the degree shift.

By Section \ref{eqn:Sdecomp}, $S$ decomposes into isotypical components via the isomorphism
$$S \cong \bigoplus_{V_i \text{ irreps of } G}\Hom_{KG}(V_i,S) \otimes_K V_i$$
which gives us that the map $ S \xto{J} S $
decomposes into components of the form
$$\Hom_{KG}(U,S)  \otimes_K U  \xto{J} \Hom_{KG}(U\otimes \det,S)  \otimes_K U\otimes \det$$
for each irreducible representation $U$ of $G$.

So for our example $S_4$ we have the following components 
\begin{align}
\label{rep1} K(0) \otimes R   & \rightarrow \det(-6) \otimes R \\
\label{rep2} \det(-6) \otimes R & \rightarrow K(0) \otimes R \\
\label{rep3}(V(-1)\oplus V(-2) \oplus V(-3))\otimes R & \rightarrow (V'(-3) \oplus V'(-4) \oplus V'(-5))\otimes R \\
\label{rep4} (V'(-3) \oplus V'(-4) \oplus V'(-5))\otimes R & \rightarrow  (V(-1)\oplus V(-2) \oplus V(-3))\otimes R \\
\label{rep5} (W(-2) \oplus W(-4))\otimes R & \rightarrow (W(-2) \oplus W(-4))\otimes R 
\end{align}
where the maps are the $R$-linear maps given by multiplication by $J$ restricted to each component.
Combining the first two components of lines \eqref{rep1} and \eqref{rep2} we obtain the matrix factorization
$$ R \xto{ } JR \xto{} R$$ 
where both maps are multiplication by $J$.  The cokernels of the two maps are $0$ and $R/\Delta$ respectively.
By choosing bases of  $V(-1)\oplus V(-2) \oplus V(-3)$ and $V'(-3) \oplus V'(-4) \oplus V'(-5) $ we can express multiplication by $J$ in the other components as matrices with entries in $R$.  From \eqref{rep3} and \eqref{rep4} we get a pair of $9\times 9$ matrices
\begin{align*} M_1:(V(-1)\oplus V(-2) \oplus V(-3))\otimes R & \rightarrow (V'(-3) \oplus V'(-4) \oplus V'(-5))\otimes R \\
M_2:(V'(-3) \oplus V'(-4) \oplus V'(-5))\otimes R & \rightarrow  (V(-1)\oplus V(-2) \oplus V(-3))\otimes R
\end{align*}
By choosing bases appropriately one can show that both matrices are Kronecker products with the $3\times 3$ identity matrix $I_3$ so $M_1 = A \otimes I_3$ and $M_2=B \otimes I_3$.  Similarly, for \eqref{rep5} we can compute a matrix 
\[M_3:(W(-2) \oplus W(-4))\otimes R \rightarrow (W(-2) \oplus W(-4))\otimes R\]
 and $M_3 = C \otimes I_2$ for some choice of basis.

We can identify the matrices $A,B,C$ involved in the matrix factorizations of $\Delta$ by using
Bradford Hovinen's thesis \cite[Thm.~4.4.7]{Hovinen}, where the graded rank one CM-modules over $R/\Delta$ are classified (via matrix factorizations).
Within his classification we have
$$M_{2,0}=\coker \begin{pmatrix} w + u^2 & v^2+4u^3 \\ v^2+4u^3 \phantom{22}&  w^2 + 6uv^2 -7u^2w + 16 u^4 \end{pmatrix},$$
and
\[ M_{4,-3,-2}=\coker \begin{pmatrix} -w - u^2 & 0 & v^2 -5uw - u^3 \\  v & -w + 3u^2 & 0 \\ u & v & -w - u^2 \end{pmatrix}, \]
which is is the matrix factorization of the normalization $\widetilde{R/(\Delta)}$.

The result is that $S/(J)$ is a direct sum of $4$ nonisomorphic CM-modules corresponding to the nontrivial irreducible representations of $S_4$. One can calculate the ranks explicitly or use the formulas in Section \ref{Sub:isotypical}:

\begin{theorem}
As a $R/\Delta$-module, 
$$
\ytableausetup{centertableaux,boxsize=0.25em}
S/(J) \cong M_{\ydiagram{4}}\oplus M^3_{\ydiagram{3,1}} \oplus M^3_{\ydiagram{2,1,1}} \oplus M_{\ydiagram{2,2}}^2,$$
where $M_{\ydiagram{4}}\cong R/(\Delta)$, $M_{\ydiagram{2,1,1}} \cong M_{4,-3,-2}$, the Jacobian ideal of $R/(\Delta)$ (also isomorphic to the normalization of $R/\Delta$), $M_{\ydiagram{3,1}}$ is the syzygy of $M_{\ydiagram{2,1,1}}$, i.e., the module of logarithmic derivations along $\Delta$, and $M_{\ydiagram{2,2}} \cong M_{2,0}$, which is isomorphic to the ideal defining the singular cusp in $\Delta=0$. \\
The ranks of the modules over $R/(\Delta)$ are $\rank(M_{\ydiagram{4}})=\rank(M_{\ydiagram{2,1,1}})=\rank(M_{\ydiagram{2,2}})=1$ and $\rank(M_{\ydiagram{3,1}})=2$.
\end{theorem}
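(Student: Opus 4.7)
The plan is to combine the isotypical decomposition of $S/(J)$ from Section~\ref{Sub:isotypical}, the rank and multiplicity formulas of Section~\ref{Sec:Decomposition}, Theorem~\ref{logRes} on logarithmic (co-)residues, and Hovinen's classification of graded rank-one Cohen--Macaulay modules over the swallowtail.

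First I would write $S/(J) \cong \bigoplus_{\lambda \vdash 4} M_\lambda \otimes_K V_\lambda$ and eliminate the extreme cases. Example~\ref{Ex:isotypical-triv} immediately yields $M_{\ydiagram{4}} \cong R/(\Delta)$; since $S_4$ is a true reflection group and therefore $\det = \det^{-1}$, the same example also gives $M_{\ydiagram{1,1,1,1}} = 0$. For each of the remaining three partitions I would compute the rank of $M_\lambda$ over $R/(\Delta)$ via Proposition~\ref{RankisotypicalDiscriminant} using the fake-degree polynomials from Kirillov's formula (Example~\ref{Ex:HSforSn}), adjusted by a factor of $(1-t)$ to pass from the permutation representation of $S_4$ on $K^4$ to the standard representation on $K^3$. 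This produces $\rank M_{\ydiagram{3,1}} = 2$ and $\rank M_{\ydiagram{2,1,1}} = \rank M_{\ydiagram{2,2}} = 1$, and together with the multiplicities $\dim V_\lambda$ this already pins down the numerical shape $S/(J) \cong M_{\ydiagram{4}}\oplus M_{\ydiagram{3,1}}^3 \oplus M_{\ydiagram{2,1,1}}^3 \oplus M_{\ydiagram{2,2}}^2$.

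Second, since $V_{\ydiagram{3,1}} = V$ is the defining representation and $V_{\ydiagram{2,1,1}} \cong \Lambda^2 V$, Theorem~\ref{logRes} supplies two matrix factorizations of $\Delta$ (cases $i=1$ and $i=2$) whose cokernels at the appropriate positions account for $M_{\ydiagram{2,1,1}}$ and $M_{\ydiagram{3,1}}$. Concretely, the rank-one first logarithmic co-residue $\coker(\mu^*) = j_\Delta$ occupies $M_{\ydiagram{2,1,1}}$, and its matrix-factorization partner, the module of logarithmic derivations along $\Delta$, occupies the rank-two slot $M_{\ydiagram{3,1}}$. To match $j_\Delta$ with Hovinen's list I would use Lemma~\ref{lem:cond} to identify it with the conductor quotient of $R/(\Delta)$, hence with the normalization, and compare Fitting ideals to single out the entry $M_{4,-3,-2}$ in the appropriate one-parameter family.

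The principal obstacle is the remaining summand $M_{\ydiagram{2,2}}$, because $V_{\ydiagram{2,2}}$ is not a wedge power of $V$ and Theorem~\ref{logRes} does not apply. My plan here is computational: exhibit explicit $S_4$-semi-invariants generating $\Hom_{KS_4}(V_{\ydiagram{2,2}}, S)$ in degrees $2$ and $4$ (the two degrees at which the $(2,2)$-isotypical component appears in the coinvariant algebra, as listed earlier in Section~7), express multiplication by $J$ as a $2 \times 2$ matrix over $R$, and compare the resulting matrix factorization of $\Delta$ with Hovinen's family $\{M_{2,t}\}_{t \in \CC}$. The parameter value $t = 0$ should be pinned down geometrically by checking that the first Fitting ideal of the matrix coincides with the ideal $(v,w) \subset R/(\Delta)$ defining the cusp curve in the discriminant. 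This explicit semi-invariant computation, together with the verification of the precise Hovinen parameter, is the technically demanding part of the argument; the other three summands follow from the conceptual inputs above.
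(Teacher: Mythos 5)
Your approach is a genuine variant of the paper's. The paper's ``proof'' is essentially computational: it sets up the matrix factorization $S\xto{J}S$, splits it into the five isotypical pieces, reduces each block to a small matrix by a Kronecker-product observation, and then matches those small matrices against Hovinen's classification. You instead obtain the numerical shape from the rank formula (Proposition~\ref{RankisotypicalDiscriminant} with Kirillov's fake degrees) and then identify the $\ydiagram{3,1}$ and $\ydiagram{2,1,1}$ summands \emph{conceptually} via Theorem~\ref{logRes}, as the first logarithmic co-residue $j_\Delta$ and its matrix-factorization partner. That buys you two of the four identifications for free and avoids the explicit $9\times 9$ block computation; the price is that you still have to trace carefully through the diagrams preceding Theorem~\ref{logRes} to see that $\coker\mu^*$ sits in the $\ydiagram{2,1,1}$ slot rather than the $\ydiagram{3,1}$ slot (the theorem only gives the multiplicity $n$, which is the same for both), and you still rely on a nontrivial identification $j_\Delta\cong\widetilde{R/(\Delta)}$ that Lemma~\ref{lem:cond} alone does not deliver (it relates the conductor $\fc$ to $\widetilde C$, not the Jacobian ideal; you'd need a Piene-type equality $j_\Delta=\fc$ and a self-duality of $\fc$). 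Both the paper and you fall back on an explicit semi-invariant computation for the $\ydiagram{2,2}$ summand, so there the approaches coincide.

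There is one concrete error. You propose to pin down the Hovinen parameter by checking that the Fitting ideal of the $2\times 2$ block ``coincides with the ideal $(v,w)\subset R/(\Delta)$ defining the cusp curve.'' But $(v,w)=0$ is the $u$-axis, which is \emph{not} in the singular locus of $\Delta$ (e.g.~$\partial_w\Delta=9u^4+\cdots$ is nonzero along it). The cuspidal edge is instead parametrized by $(u,v,w)=(-6a^2,\,8a^3,\,-3a^4)$ up to scaling, so its ideal is of the form $(u^2+c_1 w,\ 8u^3+c_2 v^2)$ for suitable constants, not $(v,w)$. Indeed, Hovinen's theorem as quoted in the paper lists $(v,w)$ and its dual as a \emph{separate} item (item (2)) from the one-parameter family $\{M_{2,t}\}$ (item (4)), and the modules in the list are pairwise non-isomorphic by the Fitting-ideal characterization; so comparing the Fitting ideal against $(v,w)$ would rule out \emph{every} member of the family $\{M_{2,t}\}$ and in particular would not produce the value $t=0$. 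Replace that check with the actual cusp ideal (or simply compare the matrix you compute directly against Hovinen's explicit $2\times 2$ family).
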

In particular, this shows that $\rank_{R/(\Delta)}(S/(J))=12$, and thus $\rank_{R/(\Delta)}(\End_{R/(\Delta)}(S/(J))=\rank(\overline{A})=144$, by Example~\ref{Ex:ranks}. 
In Fig.~\ref{Fig:swallowtail} below the curves corresponding to the modules $M_{\ydiagram{2,1,1}}$ and $M_{\ydiagram{2,2}}$ are sketched on the swallowtail from two different perspectives.

%------BILD--------   5
\begin{figure}[!h]   
\begin{tabular}{c@{\hspace{1.cm}}c}
\includegraphics[width=0.4 \textwidth]{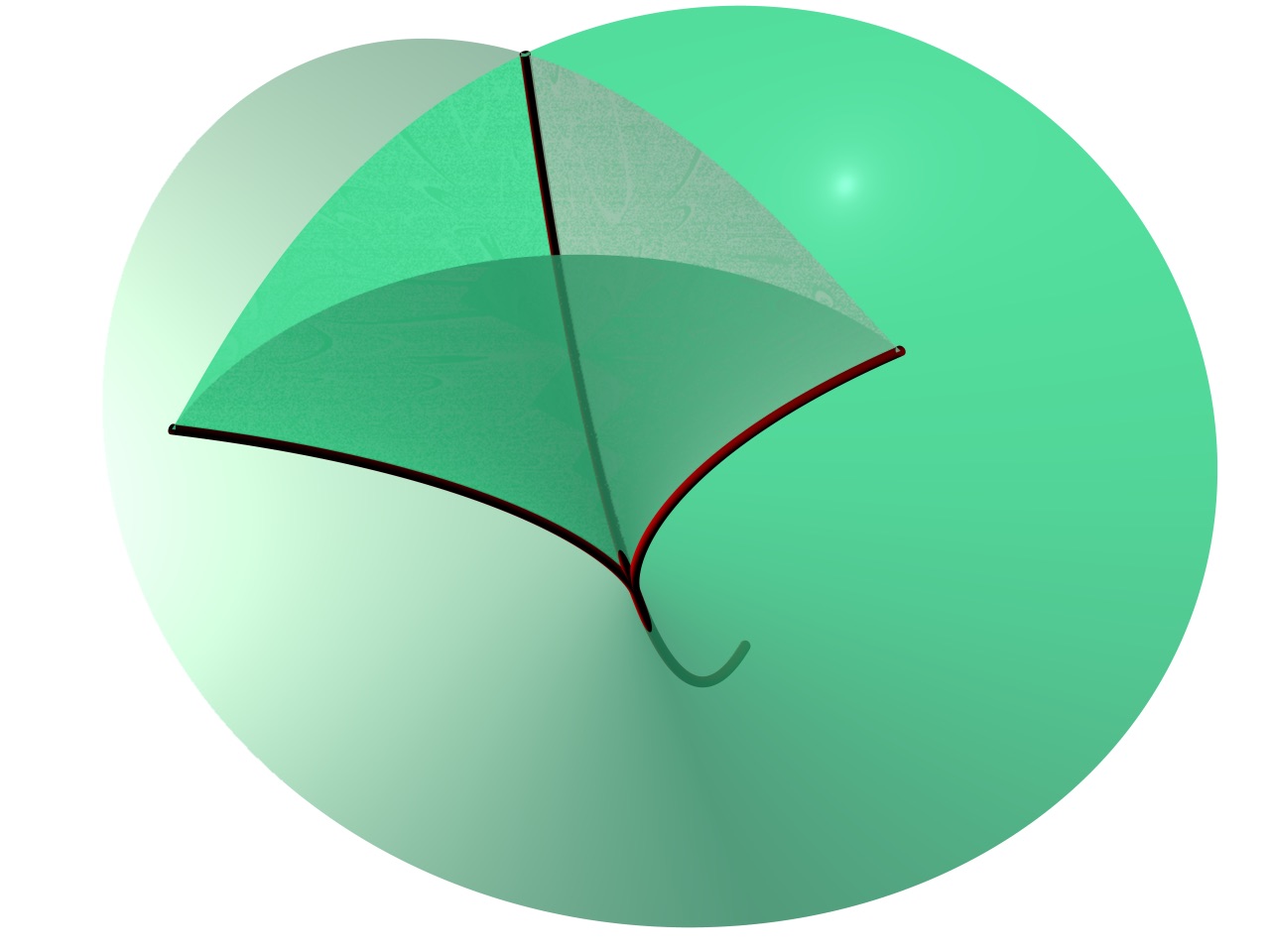}& 
\includegraphics[width=0.4 \textwidth]{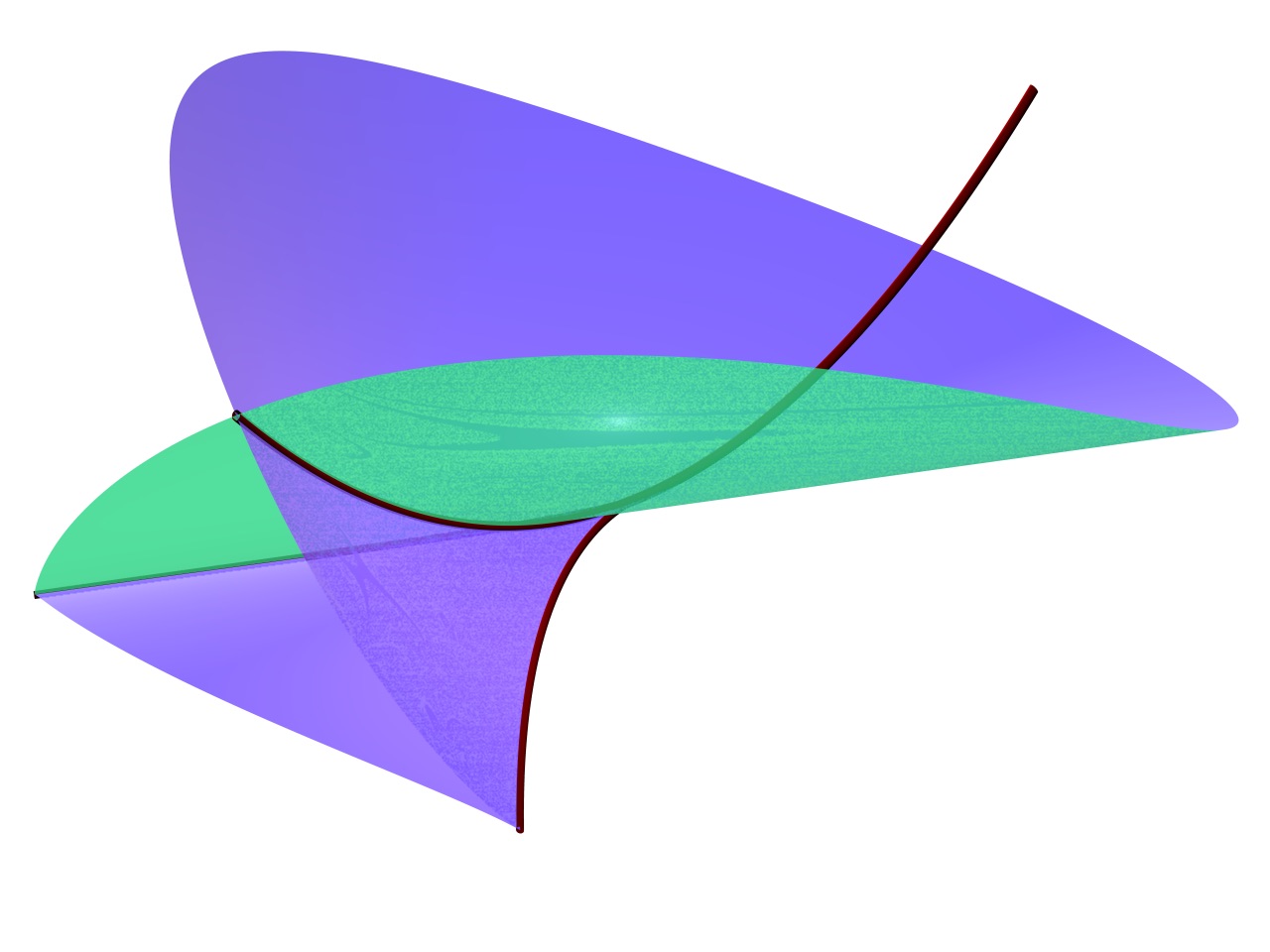}
\end{tabular}
\caption{ The swallowtail.} 
\label{Fig:swallowtail}
\end{figure}
%%--------------
For this example, one can also draw the McKay quiver, see \eqref{Eq:S4-McKayQuiver}. The quiver of $\overline{A}$ is obtained from \eqref{Eq:S4-McKayQuiver} by deleting the vertex and incident arrows corresponding to the determinantal representation $\ydiagram{1,1,1,1} \ $.

\begin{equation} \label{Eq:S4-McKayQuiver}
{\begin{tikzpicture}[baseline=(current  bounding  box.center)] 
\ytableausetup{centertableaux,boxsize=0.4em}
\node (triv) at (0,0) {$\ydiagram{4}$};

\node (R1) at (2,0) {$\ydiagram{3,1}$} ;
\node (R2) at (4,0)  {$\ydiagram{2,1,1}$};
\node (det) at (6,0)  {$\ydiagram{1,1,1,1}$};
\node (W) at (3,-1.5)  {$\ydiagram{2,2}$};

\draw [->,bend left=25,looseness=1,pos=0.5] (triv) to node[]  [above]{$ $} (R1);
\draw [->,bend left=25,looseness=1,pos=0.5] (R1) to node[]  [above]{$ $} (triv);

\draw [->,bend left=20,looseness=1,pos=0.5] (R1) to node[] [below] {$ $} (R2);
\draw [->,bend left=20,looseness=1,pos=0.5] (R2) to node[] [below] {$ $} (R1);

\draw [->,bend left=25,looseness=1,pos=0.5] (R2) to node[] [below] {$ $} (det);
\draw [->,bend left=25,looseness=1,pos=0.5] (det) to node[] [below] {$ $} (R2);

\draw [->,bend left=20,looseness=1,pos=0.5] (R1) to node[] [below] {$ $} (W);
\draw [->,bend left=20,looseness=1,pos=0.5] (W) to node[] [below] {$ $} (R1);
\draw [->,bend left=20,looseness=1,pos=0.5] (R2) to node[] [below] {$ $} (W);
\draw [->,bend left=20,looseness=1,pos=0.5] (W) to node[] [below] {$ $} (R2);

\path[->,every loop/.style={looseness=5.8}] (R2)
         edge  [in=120,out=60,loop]  ();
         
 \path[->,every loop/.style={looseness=8}] (R1)
         edge  [in=120,out=60,loop]  ();
%\draw[<->]  (C2) edge [ dashed] node[left] [above]{{\small $f$}}(C1);

\node (DD) at (6.3,0) {.};
\end{tikzpicture}} 
\end{equation}

%\section{Further questions?}

\section{Acknowledgements}
This research was supported through the program ``Research in Pairs'' by the Mathematisches Forschungsinstitut Oberwolfach in 2015, as well as by the program ``Oberwolfach Leibniz Fellows'' in 2016, 2017. Support by the Institut Mittag-Leffler (Djursholm, Sweden) is gratefully acknowledged. The authors thank both institutes for the perfect working conditions and inspirational atmosphere. The authors also want to thank the anonymous referee for his/her thorough reading of the paper, and the many useful comments and suggestions. \\

\newcommand{\etalchar}[1]{$^{#1}$}
\def\cprime{$'$} \def\cprime{$'$}

%Bibliography with bibtex (uses biblioMcKay.bib) 
%If you want to change to \thebibliography, comment next two lines and uncomment \thebibliography below
%\bibliographystyle{alpha}
%\bibliography{biblioMcKay} 
%%If you add new items to the bibliography, please put them in the bibfile, then bibtex

\end{document}